\newcommand{\Bv}{\Bc_{\V}}
\renewcommand{\t}{\tau}
\renewcommand{\u}{\upsilon}
\DeclareMathOperator{\pseudo}{pseudo}
\newcommand{\Mt}{\M_{\times}}
\DeclareMathOperator{\co}{co}
\begin{document}

\title{Sheaves on a bicategory}
\author{Olivia Caramello and Elio Pivet}
\date{}

\maketitle

\begin{abstract}

We give a detailed account of the theory of enrichment over a bicategory and show that it establishes a two-fold generalization of enrichment over both quantaloids and monoidal categories. We define complete $\Bc$-categories, a generalization of Cauchy-complete enriched categories serving as a basis for the development of sheaf theory in the enriched setting. We prove an adjunction between complete $\Bc$-categories and $2$-presheaves on the category $\Map(\Bc)$ of left adjoints in $\Bc$. We express conditions under which this adjunction becomes a left-exact reflection, yielding back the usual results linking sheaves on sites and enriched categories. We prove that our adjunction recovers the already existing results about quantaloids, and discuss the fixed points of the adjunction in the monoidal case.
\end{abstract}

\tableofcontents

\newpage
\section{Introduction}

The use of enrichment techniques to give an alternative description of sheaf conditions dates back to \cite{WaltersCahiers}, where Walters proved that sheaves on a locale $X$ or a site $(\C,J)$ could be represented as categories enriched in some object constructed from $X$ or $(\C,J)$; the object in question is a quantaloid, a multi-object generalization of quantales, which are themselves a non-commutative generalization of locales. Broadly speaking, a quantale is a locale with an additional operation which is bound to play the role of a ``non-commutative disjunction''; this concept was introduced in 1983 by Mulvey~\cite{et} in order to study $C^*$-algebras. Later, stemming from Walters' ideas, several authors gave definitions for sheaves over quantales, generalizing the already-existing notions around locales~\cite{BVDBQS,NawazThesis,BCSBCTQS,GylysQSSQ}. At first most of these articles required the quantale to satisfy several ``locale-like'' conditions, but soon enough the theory evolved to include the case of quantaloids~\cite{Gylys,VDBQNCRR,CSEQCDF,HeymansThesis}. More recently, several articles pursuing the study of sheaves over quantales and quantaloids appeared. The theory developed in \cite{TMM2k22} notably addresses the case of those associated with sites as in~\cite{WaltersCahiers}. Sheaves on a quantaloid $\Q$ are defined as categories enriched in $\Q$ which satisfy a Cauchy-completion property: in the geometric case of sheaves over a locale, the glueing and restriction conditions appear as representability conditions for certain distributors (see section~\ref{exSecPresh} and the examples in it for more details). 

On another hand, enriched category theory is usually developed over a monoidal category~\cite{Kelly}, and enrichment over a quantaloid, and albeit very akin to that notion, is not a direct application of the already existing theory for monoidal categories. In order to work in a unified setting subsuming both these theories, we have chosen to develop in the present paper a theory of enrichment over a bicategory, and extend to this setting the fibrational representation of sheaves arising in the context of quantaloids. We indeed produce such a fibrational representation, not only for ordinary, discrete-values sheaves but for $2$-sheaves (in the sense of~\cite{StreetCBS}) or enriched sheaves (in the sense of~\cite{BQTES}), paving the way for the development of a general theory of \emph{enriched toposes}. Our main result is, for a given bicategory $\Bc$, an adjunction between the category $\Catk(\Bc)$ of complete $\Bc$-categories (in a sense which generalizes the usual Cauchy-completeness for enriched categories) and a category of pseudofunctors $[\Map(\Bc)^{\coop},\Cat]$, as in the following diagram, where $\si:[\Map(\Bc)^{\coop},\Cat] \to \Cat(\Bc)$ is a kind of enriched ``Grothendieck construction'' functor, $P$ is a kind of functor of ``fibers'' and $\C$ is a bicategorical analogue of the Cauchy-completion functor (providing the role of sheafification in our context): 

\[
\begin{tikzcd}[row sep=40, column sep = 20]
{\Cat(\Bc)} 
\arrow[d,"{\C}"', shift right=2]
\arrow[d,"{\scriptstyle\dashv}"description, phantom]
&& {[\Map(\Bc)^{\coop},\Cat]} \arrow[dll,"\C\si "',bend right=12]
\arrow[ll,"{\si}"']\\ 
{\Catk(\Bc)}
\arrow[u,"{i}"', shift right=2]
\arrow[urr,"P"',bend right=12, start anchor=real east]
\arrow[urr,"{\scriptstyle \rotatebox{305}{$\dashv$}} "description, phantom]
\end{tikzcd}
\]
 
This result motivates us to define $\Catk(\Bc)$ as the ``\emph{category of sheaves on $\Bc$}'', yielding a notion of ``enriched topos''. Indeed, we can expect these categories to satisfy, at least in the case of well-behaved bicategories $\Bc$, good categorical properties generalizing those of $2$-Grothendieck toposes, notably as a consequence of the existence of such an adjunction; a systematic investigation of these properties will be the subject of future work. 

This bicategorical setting subsumes both sheaves over quantaloids and sheaves enriched over monoidal categories (the eludication of the precise relationship between the specialization of our theory in the monoidal setting and the theory of enriched sheaves over a monoidal category of \cite{BQTES} will be addressed in future work). In the case of a quantaloid $\Q$, the above adjunction restricts to a left-exact reflection $\Catsk(\Q) \to [\Map(\Q)^{\op},\Set]$, yielding back the results of~\cite{HeymansThesis,Walters}. In the case of a monoidal category $\V$, the adjunction is not always a left-exact reflection; we give some conditions in which it is, for example when $\V = \Set$ or $\V =\Ab$. 

The structure of the paper is as follows. The first part is devoted to an exposition of the theory of enrichment over a bicategory, based on~\cite{StreetEnrichment}. With respect to \cite{StreetEnrichment}, we provide more details, introduce new terminology and offer concrete proofs of coherence conditions. We also discuss a number of examples and establish a Yoneda and co-Yoneda lemma for $\Bc$-categories that generalize those of ordinary category theory. The second part is devoted to establishing the adjunction $\C\si \dashv P$ as in the above diagram. In the third and final part of the article, we look at two classes of examples: quantaloids, which were studied in~\cite{HeymansThesis} and which provide a particularly well-behaved case of our adjunction, and monoidal categories, for which we give conditions characterizing the fixed points of the adjunction.

\section{Categories enriched in a bicategory} 

This section presents an extensive account of the theory of categories enriched in a bicategory, following Street's article~\cite{StreetEnrichment} but using a more modern notation heavily inspired by~\cite{HeymansThesis}; most results of this section can already been found in~\cite{StreetEnrichment}. 

There are at least two important classes examples of bicategories to be kept in mind, which will be considered in detail in section~\ref{secEx}: these are, on one hand, \textit{quantaloids}, with their posetal $2$-structure, and, on the other, (deloopings of) \textit{monoidal categories}, with their trivial $0$-structure. Together, these orthogonal classes of examples form a good way to understand the theory of enrichment in a bicategory. \\ 

In all this paper $\Bc$ will denote a bicategory which is assumed to be locally cocomplete, meaning that all hom-categories admit all colimits, and closed, meaning that right Kan extensions and lifts exist. This is to ensure that all pre- and post- composition functors respect colimits. We will sometimes consider \textit{involutive} bicategories, meaning that $\Bc$ is equipped with a involution $(-)^{\circ}: \Bc^{\op} \to \Bc$ which is the identity on objects.

\subsection{$\Bc$-categories and $\Bc$-functors}

\begin{deft}
An \textbf{$\Ob(\Bc)$-typed set} is the data of: 
\begin{enumerate}
\itb A set $A$.
\itb A function: $t: A \to \Ob(\Bc)$, that we call the \textbf{typing function}.
\end{enumerate}
An element $a \in A$ is thus said to be \textbf{of type $x$} with $x \in \Ob(\Bc)$ if $ta = x$.
\end{deft}

\begin{ex}[Sections of a presheaf]\label{exSecPresh}
Consider $X$ a topological space and $U$ an open subset of $X$. We can, following~\cite{WaltersCahiers}, look at the bicategory $\R(X)$, of which objects are given by open subsets of $X$, arrows $U \to V$ are given by open subsets $W \subseteq U \wedge V$, and $2$-cells are given by the inclusion of open subsets. This yields what is called a quantaloid, i.e. a category enriched in the monoidal category of complete lattices and sup-preserving morphisms; alternatively, a quantaloid is simply a locally posetal, locally cocomplete closed bicategory. We can then associate to any continuous function $f: U \to \Rb$ the open subset $U$ of $X$ over which $f$ is defined. This produces a $\Ob(\R(X))$-typed set, with base set the set of all continuous functions going into $\Rb$.
More generally, if $F$ is a (pre)sheaf over a topological space $X$, the set of its sections $\coprod_{U \subseteq X} F(U)$ is naturally endowed with the structure of a $\Ob(\Bc)$-typed set~; this notion of typed set is the first step towards a fibrational view of sheaf theory. There is a general correspondence between sheaves on the topological space $X$ and enriched categories on a particular quantaloid associated to $X$~\cite{HeymansThesis}.
\end{ex}

\begin{deft}
Let $A,C$ be two $\Ob(\Bc)$-typed sets. A \textbf{$\Bc$-matrix} $\M: A \to C$ is the data, for all $a \in A$, $c \in C$, of a $1$-cell $\M(c,a): ta \to tc$ in $\Bc$.
\end{deft}

\begin{rk}
In the case of a quantaloid $\Q$ as in~\cite{HeymansThesis}, one can define the composition of $\Q$-matrices by $(\M\N)(c,a) = \bigvee_b \M(c,b)\N(b,a)$. In the case of a general bicategory $\Bc$, we could very well generalize this definition to produce a category $\Matr(\Bc)$ by using coproducts (which generalize suprema in quantaloids). However, doing that would hinder many of the properties that are desirable in this context; for example the composite of two distributors, if defined like so, would not in general yield a distributor. Moreover, the $\Bc$-matrices associated to $\Bc$-categories would not be idempotent, meaning that we would not be able to express $\Bc$-categories as monads in $\Matr(\Bc)$. This is because coproducts are too discrete of a colimit; as we shall see in section \ref{sec:distributors}, a richer diagram is needed (such a diagram is trivial in the case of quantaloids due to the locally posetal nature of these bicategories, but not in general). This is the object of definition~\ref{CompDistrib}, which properly defines composition for distributors.
\end{rk}

\begin{deft}
A \textbf{$\Bc$-category} is the data of:
\begin{enumerate}
\itb A $\Ob(\Bc)$-typed set $A$.
\itb A $\Bc$-matrix $\M: A \to A$.
\itb For all $a,b,c \in A$, an \textbf{idempotency} $2$-cell: $\iota_{abc}: \M(a,b)\M(b,c) \To \M(a,c)$.
\itb For all $a \in A$, a \textbf{reflexivity} $2$-cell: $\rho_a: \id_{ta} \To \M(a,a)$.
\end{enumerate}
This data must satisfy the following coherence properties, for all $a,b,c,d \in A$:
\begin{enumerate}
\itb \textbf{Associativity of $\iota$}: $\iota_{a,c,d} \bullet (\iota_{a,b,c} * 1_{\M(c,d)}) = \iota_{a,b,d} \bullet (1_{\M(a,b)} * \iota_{b,c,d})$.
\itb \textbf{Unitality conditions}: 
\begin{enumerate}
\item[-] $\iota_{a,b,b} \bullet (1_{\M(a,b)} * \rho_b) = 1_{\M(a,b)}$
\item[-] $\iota_{a,a,b} \bullet (\rho_a * 1_{\M(a,b)}) = 1_{\M(a,b)}$
\end{enumerate}
\end{enumerate}
\end{deft}

\begin{rk}
Let us express again the coherence conditions of a $\Bc$-category in a more visual, diagrammatic way. \\ 

\textbf{Associativity of $\iota$:}

\[
\begin{tikzcd}[column sep = 50]
ta
&tb
\arrow[l,"{\M(a,b)}"{name={11}}]
&tc
\arrow[l,"{\M(b,c)}"{name={12}}]
\arrow[ll,""{name={112}}, phantom]
&td
\arrow[l,"{\M(c,d)}"{name={13}}]\\ 
ta
&& tc
\arrow[ll,"{\M(a,c)}"{name={212}}]
& td
\arrow[l,"{\M(c,d)}"{name={23}}]
\arrow[lll,""{name={2123}},phantom]\\
ta
&&& td
\arrow[lll,"{\M(a,d)}"{name={3}}]
\arrow[Rightarrow, from={112}, to={212}, "{\iota_{abc}}", shorten >=3, shorten <=5]
\arrow[Rightarrow, from={13}, to={23}, "{1_{\M(c,d)}}", shorten >=3]
\arrow[Rightarrow, from={2123}, to={3}, "{\iota_{acd}}", shorten >=3, shorten <=5]
\end{tikzcd}
\]
Must be equal to:
\[
\begin{tikzcd}[column sep = 50]
ta
&tb
\arrow[l,"{\M(a,b)}"{name={11}}]
&tc
\arrow[l,"{\M(b,c)}"{name={12}}]
&td
\arrow[l,"{\M(c,d)}"{name={13}}]
\arrow[ll,""{name={123}}, phantom]\\ 
ta
& tb
\arrow[l,"{\M(a,b)}"{name={21}}]
&& td
\arrow[ll,"{\M(b,d)}"{name={223}}]
\arrow[lll,""{name={2123}},phantom]\\
ta
&&& td
\arrow[lll,"{\M(a,d)}"{name={3}}]
\arrow[Rightarrow, from={11}, to={21}, "{1_{\M(a,b)}}", shorten >=3]
\arrow[Rightarrow, from={123}, to={223}, "{\iota_{bcd}}", shorten >=3, shorten <=5]
\arrow[Rightarrow, from={2123}, to={3}, "{\iota_{abd}}", shorten >=3, shorten <=5]
\end{tikzcd}
\]

\textbf{Unitality conditions:}
\[
\begin{tikzcd}[column sep = 50]
ta
&tb
\arrow[l,"{\M(a,b)}"{name={11}}]
&tb
\arrow[l,"{\id_{tb}}"{name={12}}]\\
ta
&tb
\arrow[l,"{\M(a,b)}" {name={21}}]
&tb
\arrow[l,"{\M(b,b)}"{name={22}}]
\arrow[ll,""{name=2}, phantom]\\
ta
&& tb
\arrow[ll,"{\M(a,b)}"{name={3}}]
\arrow[Rightarrow, from={11},to={21},"{1_{\M(a,b)}}", shorten >=3]
\arrow[Rightarrow, from={12},to={22},"{\rho_b}", shorten >=3]
\arrow[Rightarrow, from={2},to={3},"{\iota_{abb}}", shorten >=3, shorten <=5]
\end{tikzcd}
= 
\begin{tikzcd}[column sep = 50]
ta
& tb
\arrow[l,"{\M(a,b)}"{name={1}}]\\ 
\\ 
ta 
& tb
\arrow[l,"{\M(a,b)}"{name={2}}]
\arrow[Rightarrow, from={1},to={2},"{1_{\M(a,b)}}", shorten >=3]
\end{tikzcd}
\]

\hspace*{2pt}

\[
\begin{tikzcd}[column sep = 50]
ta
&ta
\arrow[l,"{\id_{ta}}"{name={11}}]
&tb
\arrow[l,"{\M(a,b)}"{name={12}}]\\
ta
&ta
\arrow[l,"{\M(a,a)}" {name={21}}]
&tb
\arrow[l,"{\M(a,b)}"{name={22}}]
\arrow[ll,""{name=2}, phantom]\\
ta
&& tb
\arrow[ll,"{\M(a,b)}"{name={3}}]
\arrow[Rightarrow, from={11},to={21},"{\rho_a}", shorten >=3]
\arrow[Rightarrow, from={12},to={22},"{1_{\M(a,b)}}", shorten >=3]
\arrow[Rightarrow, from={2},to={3},"{\iota_{abb}}", shorten >=3, shorten <=5]
\end{tikzcd}
= 
\begin{tikzcd}[column sep = 50]
ta
& tb
\arrow[l,"{\M(a,b)}"{name={1}}]\\ 
\\ 
ta 
& tb
\arrow[l,"{\M(a,b)}"{name={2}}]
\arrow[Rightarrow, from={1},to={2},"{1_{\M(a,b)}}", shorten >=3]
\end{tikzcd}
\]
\end{rk}

\begin{ex}[Monoidal categories]
Let $\V$ be a closed monoidal category; then $\V$ can be seen as a bicategory $\Bc_{\V}$ with only one object, in which the $1$-cells correspond to the objects of $\V$, the $2$-cells to the arrows of $\V$, and the composition to the monoidal product. With this representation, the coherence relations of the monoidal product correspond exactly to the properties for $\Bv$ to be a bicategory. Then $\Bv$-categories and $\V$-categories are the same thing; the ``idempotency'' $2$-cell correspond to the composition operation, while the ``reflexivity'' $2$-cell corresponds to the choice of the identity over $a$. Through this prism, the coherence properties can be understood as follows: the associativity of $\iota$ corresponds to the fact that composition is associative, while the unitality condition assert that the identity over an object is indeed an identity with respect to composition. 
\end{ex}

\begin{ex}[Sections of a presheaf]\label{secpreshf}
Let $X$ be a topological space, and consider the bicategory $\R(X)$ already described in Example~\ref{exSecPresh}. For a presheaf $F$ on $X$, denote $\si F$ for the $\Ob(\R(X))$-typed set of its sections. This typed set can be endowed with the structure of a $\R(X)$-category by defining $\M(f,g)$ to by the greatest open subset of $X$ on which $f$ and $g$ are both defined and are equal. This is the ``local equality'' point of view that is already well-known to describe sheaves over locales; all presheaves can be described in this way, and sheaves over $X$ correspond precisely to the $\R(X)$-categories which are \textit{complete}~\cite{WaltersCahiers}.
\end{ex}

\begin{ex}[Arrows of $\Bc$]
Recall that we have asked our bicategory $\Bc$ to be closed, i.e. that right Kan extensions and lifts exist. If $\Bc$ is of the form $\Bv$, this amounts to asking that $\V$ be closed as a monoidal category. A closed monoidal category $\V$ has the property that $\V$ is itself a $\V$-category, using the internal hom, which in the delooping $\Bv$ corresponds to the right Kan extension operation. Closed bicategories therefore enjoy the property that they have a $\Bc$-category of their arrows. We denote by $\Bc_1$ the set of all $1$-cells of $\Bc$; this set can by typed through the codomain function $t = \codom: \Bc_1 \to \Ob(\Bc)$; for any $f: b_1 \to b_2$, we have $tf = b_2$. Let us consider the $\Bc$-endomatrix $[-,-]$ on the $\Ob(\Bc)$-typed set $\Bc_1$ defined as follows, for all $f,g \in \Bc_1$: if $\dom(f) = \dom(g)$, we define $[g,f]: tf \to tg$ as the right Kan extension of $g$ along $f$. Else, we define $[g,f]$ to be the initial element of $\Bc(tf,tg)$. Then $([-,-],\Bc_1)$ is a $\Bc$-category.
\end{ex}

\begin{ex}[Objects of $\Bc$]
Let $*$ be a $0$-cell in $\Bc$. Consider the $\Ob(\Bc)$-typed set $\{*\}$ with $t*=*$. Let $(\id_*)$ be the endomatrix on this $\Ob(\Bc)$-typed set $\{*\}$, with the reflexivity and idempotency $2$-cells given by $1_{\id_*}$. This yields a structure that we denote by $(\id_*,*)$. Then $(\id_*,*)$ is a $\Bc$-category. This important example will allow us to define a functor $\Bc \to \Dist(\Bc)$ and the notion of ``singleton'' for a $\Bc$-category.
\end{ex}

\begin{deft}
Let $(\M,A)$ and $(\N,C)$ be two $\Bc$-categories. A \textbf{$\Bc$-functor} $f$ from $(\M,A)$ to $(\N,C)$ consists in the data of:
\begin{enumerate}
\itb A type-preserving function $f: A \to C$, i.e. a function such that for all $a \in A$, $tf(a) = ta$.
\itb For all $a,a' \in A$, a $2$-cell $f_{aa'}: \M(a,a') \To \N(f(a),f(a'))$.
\end{enumerate}
This data must satisfy the following coherence conditions, for all $a,b,c \in A$:
\begin{enumerate}
\itb $f_{a,c}\bullet \iota_{a,b,c} = \iota_{f(a),f(b),f(c)} \bullet (f_{a,b} * f_{b,c})$
\itb $\rho_{f(a)} = f_{a,a} \bullet \rho_a$
\end{enumerate}
\end{deft}

\begin{ex}[$\V$-functors]
If $\V$ is a monoidal category, then $\Bv$-functors between $\Bv$-categories are the same things as $\V$-functors between $\V$-categories. In this paradigm where $\M$ is to be viewed as a hom-matrix, we can give the following description of the definition of $\Bc$-functors. The underlying function is the action of the functor on objects, and the $2$-cells $f_{aa'}$ encode the action of the functor on arrows, mapping an arrow between $a$ and $a'$ to an arrow between their respective images. The coherence conditions respectively state that the functor respects compositions and identities.
\end{ex}

\begin{ex}[Morphisms of presheaves]
For any topological space $X$, any morphism $f$ between two presheaves on $X$ yields a $\R(X)$-functor between the corresponding $\R(X)$ categories. In this view, where $\M$ is the ``local equality'' between sections of presheaves, the family of $2$-cells $f_{aa'}$ express the fact that the images of the sections must be equal at least where the sections are. Because $\R(X)$ is a quantaloid, the $2$-structure is trivial (posetal), and all coherence conditions between $2$-cells will always be verified. 
\end{ex}

\begin{ex}[Metric maps]\label{realnumbersquantale}
Consider $\Bc$ to be the one-object locally ordered category $\overline{\Rb}_+$, whose $1$-cells are positive real numbers together with $0$ and $\infty$: $\Ob(\Bc) = [0,\infty]$ and where the $2$-cells are given by the \textit{reverse} order; therefore $0$ is the greatest element. It is well-known~\cite{Lawvere73} that $\Bc$-categories are \textbf{generalized metric spaces}, i.e. metric space with possibly infinite distance between points. Let $E$ and $F$ be two such a $\Bc$-categories; then a $\Bc$-functor $E \to F$ is a function which needs to satisfy $d(f(a),f(b)) \leq d(a,b)$, i.e. a function which is a \textbf{metric map}.
\end{ex}

\begin{prop}
$\Bc$-categories and $\Bc$-functors between them together form a category $\Cat(\Bc)$.
\end{prop}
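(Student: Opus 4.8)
The plan is the routine but genuinely structural verification that the stated data organize into a category: I must exhibit identities and a composition law, check that the composite of two $\Bc$-functors again satisfies the two coherence conditions defining a $\Bc$-functor, and finally verify that this composition is associative and unital. First I would fix the identity on $(\M,A)$ to be the $\Bc$-functor whose underlying map is $\id_A$ (type-preserving by construction) and whose structure $2$-cells are the identities $1_{\M(a,a')}$. Both coherence conditions then collapse under the unit laws for vertical composition, once one observes that $1_{\M(a,b)} * 1_{\M(b,c)} = 1_{\M(a,b)\M(b,c)}$ by functoriality of horizontal composition.

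For composition, given $f \colon (\M,A) \to (\M,C)$ and $g \colon (\M,C) \to (\M,D)$ (overloading $\M$ for the three hom-matrices and $\iota, \rho$ for their structure cells, exactly as the paper does), I would set the underlying map of $gf$ to be $g \circ f$ and its structure cells to be the vertical pastes $(gf)_{aa'} := g_{fa,fa'} \bullet f_{aa'}$, i.e. the composite $\M(a,a') \To \M(fa,fa') \To \M(gfa,gfa')$. The crux is the first coherence condition. I would expand $(gf)_{ab} * (gf)_{bc}$ and apply the interchange law to rewrite it as $(g_{fa,fb} * g_{fb,fc}) \bullet (f_{ab} * f_{bc})$; precomposing with $\iota_{gfa,gfb,gfc}$ and using the coherence condition for $g$ at the objects $fa,fb,fc$ turns the leading factor into $g_{fa,fc} \bullet \iota_{fa,fb,fc}$, after which the coherence condition for $f$ turns $\iota_{fa,fb,fc} \bullet (f_{ab} * f_{bc})$ into $f_{ac} \bullet \iota_{abc}$, leaving exactly $(gf)_{ac} \bullet \iota_{abc}$. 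The reflexivity condition is the easier analogue: $(gf)_{aa} \bullet \rho_a = g_{fa,fa} \bullet (f_{aa} \bullet \rho_a) = g_{fa,fa} \bullet \rho_{fa} = \rho_{gfa}$, using only the reflexivity conditions of $f$ and $g$ together with associativity of vertical composition.

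It then remains to verify that composition is associative and unital. On underlying maps this is immediate from the corresponding properties of set-functions; on structure cells, for a further $\Bc$-functor $h$ both $(h(gf))_{aa'}$ and $((hg)f)_{aa'}$ equal $h_{gfa,gfa'} \bullet g_{fa,fa'} \bullet f_{aa'}$ by associativity of vertical composition, and the unit laws follow from $1 \bullet f_{aa'} = f_{aa'} = f_{aa'} \bullet 1$. I expect the only real obstacle to be the first coherence check for the composite: this is precisely the point where the interchange law of $\Bc$ (functoriality of the horizontal-composition functors on the hom-categories) is indispensable, whereas every other step is formal manipulation of vertical composites inside the hom-categories. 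A pedantic remark is that in a genuinely non-strict bicategory the displayed coherence equations tacitly involve the associator and unitor constraints; since the paper states them with these constraints suppressed, I would carry out the computation within that same convention, the mediating coherence cells playing no essential role in the argument above.
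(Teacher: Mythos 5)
Your proof is correct and follows essentially the same route as the paper's: identities with structure cells $1_{\M(a,a')}$, composites defined by vertical pasting $g_{f(a)f(a')} \bullet f_{aa'}$, and the key coherence check carried out by combining the interchange law with the coherence conditions of $f$ and $g$ (you merely run that computation from right to left where the paper runs it left to right). If anything, your write-up is slightly more complete, since the paper dismisses the identity-functor verification, the unit laws, and associativity as obvious, while you spell them out.
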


\begin{proof}
The identity $\Bc$-functor on a $\Bc$-category $(\M,A)$ is given by $(\id_A, (1_{\M(a,b)})_{a,b \in A})$; it is easily checked that it is indeed a $\Bc$-functor. Now for any diagram in $\Cat(\Bc)$ as follows:
\[
\begin{tikzcd}
(\M,A)
\arrow[r,"f"]
& (\N,C)
\arrow[r,"g"]
& (\Pb,D)
\end{tikzcd}
\]

Define the composite $\Bc$-functor as $(gf,(g_{f(a)f(b)} \bullet f_{ab})_{a,b \in A})$;  it is indeed a $\Bc$-functor, as we have, for all $a_1,a_2,a_3 \in A$:
\begin{align*}
(gf)_{a_1a_2} \bullet \iota_{a_1a_2a_3}     &= g_{f(a_1)f(a_2)} \bullet f_{a_1a_2} \bullet \iota_{a_1a_2a_3}\\ 
                                            &= g_{f(a_1)f(a_2)} \bullet \iota_{f(a_1)f(a_2)f(a_3)} \bullet (f_{a_1a_2} * f_{a_2a_3})\\ 
                                            &= \iota_{gf(a_1)gf(a_2)gf(a_3)} \bullet (g_{f(a_1)f(a_2)}*g_{f(a_2)f(a_3)}) \bullet (f_{a_1a_2} * f_{a_2a_3})\\ 
                                            &= \iota_{gf(a_1)gf(a_2)gf(a_3)} \bullet ((g_{f(a_1)f(a_2)} \bullet f_{a_1a_2}) * (g_{f(a_2)f(a_3)} \bullet f_{a_2a_3}))\\ 
                                            &= \iota_{gf(a_1)gf(a_2)gf(a_3)} \bullet ((gf)_{a_1a_2} * (gf)_{a_2a_3})
\end{align*}
Now for all $a \in A$, we have:
\begin{align*}
(gf)_{aa} \bullet \rho_a   &= g_{f(a)f(a)} \bullet f_{aa} \bullet \rho_a\\ 
&= g_{f(a)f(a)}\bullet \rho_{f(a)}\\ 
&= \rho_{gf(a)}
\end{align*}
To conclude, all we have left to do is to mention that it is obvious that the composite of any $\Bc$-functor by a composable identity $\Bc$-functor yields back the original $\Bc$-functor, i.e. that the identity $\Bc$-functors are well indeed neutral elements for the composition of $\Bc$-functors.
\end{proof}

\begin{deft}
Suppose that the bicategory $\Bc$ is endowed with an involution. A $\Bc$-category $(\M,A)$ is said to be \textbf{symmetric} if: 
\begin{enumerate}
\itb For all $a,a' \in A$, $\M(a,a') = \M(a',a)^{\circ}$. 
\itb For all $a \in A$, $\rho_a^{\circ} = \rho_a$.
\itb For all $a,b,c \in A$, $\iota_{abc}^{\circ} = \iota_{cba}$.
\end{enumerate}
\end{deft}

\begin{ex}[Symmetric monoidal categories]
Let $\V$ be a symmetric monoidal category. Then $\Bv$ is naturally endowed with an involution, and symmetric $\Bv$-categories correspond exactly to $\V$-enriched categories.
\end{ex}

\begin{ex}[Sections of a presheaf]
For any presheaf $F$ over a topological space, the quantaloid $\R(X)$ is naturally endowed with an involution as follows: for any open subsets $U$ and $V$ of $X$ and any ``arrow'' $W: U \to V$, i.e. $W \leq U\wedge V$, define $W^{\circ}$ as $W: V \to U$, since the ``meet'' operation is commutative. The $\R(X)$-category $\si F$ is then symmetric: the greatest open subset of $X$ on which two section $s$ and $t$ are equal does not depend on the order in which we consider $s$ and $t$.
\end{ex}

\begin{deft}
We denote by $\Cat_{\s}(\Bc)$ the full subcategory of $\Cat(\Bc)$ whose objects are symmetric $\Bc$-categories.
\end{deft}

\subsection{Distributors}\label{sec:distributors}

Distributors, also sometimes called \textit{modules}~\cite{StreetEnrichment} or \textit{profunctors}, are to $\Bc$-functors what relations are to functions. More precisely, they equip the category $\Cat(\Bc)$ with proarrows, meaning that any $\Bc$-functor yields a pair of adjoint distributors.

\begin{deft}
Let $(\M,A)$ and $(\N,C)$ be two $\Bc$-categories. A \textbf{distributor} $\phi$ from $(\M,A)$ to $(\N,C)$ is the data of:
\begin{enumerate}
\itb A ``rectangular'' $\Bc$-matrix $\phi: A \to C$,
\itb For all $a,a' \in A$, $c,c' \in C$, a $2$-cell of \textbf{double action}: $$\delta_{c,c',a',a}: \N(c,c')\phi(c',a')\M(a',a) \To \phi(c,a)$$
\end{enumerate}
Satisfying the following two coherence conditions: 
\begin{enumerate}
\itb \textbf{Unitality}: For any $c \in C$, $a \in A$: $$\delta_{ccaa} \bullet (\rho_c^{\N} * 1_{\psi(c,a)} * \rho_a^{\N}) = 1_{\psi(c,a)}$$
\itb \textbf{Associativity of $\delta$}: For any $c_1,c_2,c_3 \in C$, and any $a_1,a_2,a_3 \in A$: 
\[\delta_{c_1c_2a_2a_1} \bullet (1_{\N(c_1,c_2)} * \delta_{c_2c_3a_3a_2} * 1_{\M(a_2,a_1)}) = \delta_{c_1c_3a_3a_1} \bullet (\iota_{c_1c_2c_3}^{\N} * 1_{\psi(c_3,a_3)} * \iota_{a_3a_2a_1}^{\M})\]
\end{enumerate}
\end{deft}

The coherence condition expressing the associativity of $\delta$ could also be represented as the commutativity of a diagram of shape: 
\[
\begin{tikzcd}[row sep = 50, column sep = 20]
\N(c_1,c_2)\psi(c_2,a_2)\M(a_2,a_1)
\arrow[r,""]
& \psi(c_1,a_1)\\
\N(c_1,c_2)\N(c_2,c_3)\psi(c_3,a_3)\M(a_3,a_2)\M(a_2,a_1)
\arrow[u,""]
\arrow[r, ""]
& \N(c_1,c_3)\psi(c_3,a_3)\M(a_3,a_1)
\arrow[u,""]
\end{tikzcd}
\]

\begin{ex}[Representable distributors]
Let $f: (\M,A) \to (\N,C)$ be a $\Bc$-functor. Then $f$ defines two distributors:
\begin{align*}
f_!: (\M,A) &\to (\N,C)\\
(c,a) &\mapsto \N(c,f(a))
\end{align*} 
\begin{align*}
f^!: (\N,C) &\to (\M,A)\\
(a,c) &\mapsto \N(f(a),c)
\end{align*} 
In the bicategory $\Dist(\Bc)$, we have an adjunction $f_! \dashv f^!$: this property expresses the fact that $\Dist(\Bc)$ provides $\Cat(\Bc)$ with the structure of an equipment, hence the fact that distributors can also be sometimes called ``profunctors''. For such a $f$, we can express the family $\delta$ of double action $2$-cells for $f_!$ and $f^!$ by: 
\[\delta_{c_1c_2a_2a_1}^{f_!}:= \iota^{\N}_{c_1f(a_2)f(a_1)} \bullet (\iota^{\N}_{c_1c_2f(a_2)} * f_{a_2a_1})\]
\[\delta^{f^!}_{a_1a_2c_2c_1}:= \iota_{f(a_1)f(a_2)c_1}^{\N} \bullet (f_{a_1a_2} * \iota_{f(a_2)c_2c_1}^{\N})\]
Taking $f$ to be the identity $\Bc$-functor yields the fact that $\M$ is a distributor over the $\Bc$-category $(\M,A)$. In this case, we can easily express the family $\delta$ as: 
\[\delta_{b_1b_2a_2a_1}:= \iota_{b_1a_2a_1} \bullet (\iota_{b_1b_2a_2} * 1_{\M(a_2,a_1)}) = \iota_{b_1b_2a_1} \bullet (1_{\M(b_1,b_2)} * \iota_{b_2a_2a_1})\]
\end{ex}

\begin{ex}[$\Hom$ as a distributor]
Let there be a category $\C$,i.e. a category enriched over the cartesian monoidal category $\Set$, hence a category enriched over the corresponding delooping bicategory $\Bc_{\Set}$. As a $\Bc_{\Set}$-category, $\C$ is given by $(\Hom, \Ob(\C))$; and as we saw above, $\Hom$ is, as the structural matrix of a $\Bc_{\Set}$-category, a distributor. Note that for any $x: a_1\to a_2$ in $\C$, we can define for any $b \in \Ob(\C)$ a functor $\Hom(b,x): \Hom(b,a_1) \To \Hom(b,a_2)$ of precomposition by $f$. This construction can be extended to general case of a bicategory $\Bc$. Let $(\M,A)$ and $(\N,C)$ be two $\Bc$-categories, consider two elements $a_1,a_2 \in A$ of same type $ta$, and consider a $2$-cell $x: \id_{ta} \To \M(a_1,a_2)$ in $\Bc$. Let $\psi: (\M,A) \to (\N,C)$ and $\phi: (\N,C) \to (\M,A)$ be two distributors. Then we define the following $2$-cells, for all $c \in C$:
\begin{enumerate}
\itb $\psi(c,x): \psi(c,a_1) \To \psi(c,a_2)$, given by: \[\psi(c,x) = \delta_{cca_1a_2}^{\psi} \bullet (\rho_c^{\N} * 1_{\psi(c,a_1)} * x)\]
\itb $\phi(x,c): \phi(a_2,c) \To \phi(a_1,c)$, given by: \[\phi(x,c) = \delta_{a_1a_2cc}^{\phi} \bullet (x * 1_{\phi(a_2,c)} * \rho_c^{\N})\]
\end{enumerate}

We give here the diagram describing the construction of $\psi(c,x)$: 
\[
\begin{tikzcd}[sep=huge]
tc
& tc
\arrow[l,"{\id}"' {name={11}}]
& ta_1
\arrow[l,"{\psi(c,a_1)}"' {name={12}}]
& ta_2
\arrow[l,"{\id}"' {name={13}}] 
\\ tc
& tc
\arrow[l,"{\N(c,c)}"' {name={21}}]
& ta_1
\arrow[l,"{\psi(c,a_1)}"' {name={22}}]
& ta_2
\arrow[l,"{\M(a_1,a_2)}"' {name={23}}]
\arrow[lll, "" {name={2}}, phantom]
\\ tc
&
&
& ta_2
\arrow[lll,"{\psi(c,a_2)}"' {name={3}}]
\arrow[Rightarrow, from=11, to=21, "{\rho^{\N}_c}", shorten >=3, shorten <= 5]
\arrow[Rightarrow, from=12, to=22, "1_{\psi(c,a_1)}", shorten >=3, shorten <=5]
\arrow[Rightarrow, from=13, to=23, "x", shorten >=3, shorten <=5]
\arrow[Rightarrow, from=2, to=3, "{\delta_{cca_1a_2}^{\psi}}", shorten >=3, shorten <= 5]
\end{tikzcd}
\]
In the case of usual categories, the bicategory $\Bc_{\Set}$ only has one $0$-cell so there are no questions of ``being of the same type'', and a generalised element $x: \id \To \Hom(a_1,a_2)$ corresponds directly to a morphism $x: a_1 \to a_2$ in $\C$. In this enriched category $\iota$ is the composition functor and $\rho$ is the choice of the identity morphism on any object of $\C$. It follows that $\delta$ is simply the composition of three composable arrows, and we get that for all $b \in \Ob(\C)$, $\Hom(b,x)$ (which is a $2$-cell in $\Bc_{\Set}$, i.e. an arrow in $\Set$, i.e. a function) simply corresponds to the precomposition with $x$, by taking the formula $\Hom(b,x) = \delta\bullet (\rho * 1_{\Hom(b,a_1)} * x)$ (the horizontal composiiton corresponding to the (reversed) usual composition in $\C$).
\end{ex}

\begin{ex}[Relations between presheaves]
Let $X$ be a topological space, consider $F,G$ two sheaves on $X$ and denote by $(\M_F,\si F)$, $(\M_G,\si G)$ the corresponding $\R(X)$-categories. Consider a relation $\phi$ between $F$ and $G$, meaning for any open subset of $U$, a relation between the sets $F(U)$ and $G(U)$ which is compatible with the restriction. Then $\phi$ defines a distributor between $(\M_F,\si F)$, $(\M_G, \si G)$ by sending a pair of sections $(f,g)$ to the greatest open subset on which they are equal up to this relation.
\end{ex}

\begin{ex}[Arrows of $\Bc$]
Let $f: x \to y$ be an arrow in $\Bc$; then $f$ defines a distributors between the $\Bc$-categories $(\id_x,x)$ and $(\id_y,y)$. In this particularly simple case, we have $\delta=1_f$, and all the coherence conditions are trivially satisfied. This assignation defines a faithful functor $\Bc \to \Dist(\Bc)$.
\end{ex}

\begin{ex}[$\V$-profunctors]
Let $\V$ be a symmetric monoidal closed category. Then there is a monoidal structure on $\Cat(\V)$; for any $\V$-categories $\C$ and $\D$, the objects of $\C \otimes \D$ are $\Ob(\C) \times \Ob(\D)$, and $(\C \otimes \D)((c_1,d_1),(c_2,d_2)) = \C(c_1,c_2) \otimes \D(d_1,d_2)$ (see 6.2.9 of~\cite{HandCat2} for more details). Then a distributor between $\C$ in $\D$, i.e. an arrow in $\Dist(\Bv)$, is the same thing as a $\V$-functor $\D^{\op} \otimes \C \to \V$. Indeed, such a $\V$-functor $f$ is defined through the existence for all $(c_1,d_1),(c_2,d_2) \in \Ob(\D^{\op} \otimes \C)$ of an arrow in $\V$: $f_{d_2d_1c_1c_2}: (\D^{\op} \otimes \C)((d_1,c_1),(d_2,c_2)) \to [\phi(d_1,c_1),\phi(d_2,c_2)]$, where $[-,-]$ is the internal hom; using its universal property we get an arrow $\D(d_2,d_1) \otimes \phi(d_1,c_1) \otimes \C(c_1,c_2) \to \phi(d_2,c_2)$, and the coherence conditions for the $\V$-functoriality of $f$ coincide with the coherence conditions for it to define a distributor.
\end{ex}

\begin{ex}[$L^1$ metric]
Let $\Bc = \overline{\Rb}_+$ be the one-object locally ordered bicategory of real numbers (see Example~\ref{realnumbersquantale} for details), and let $E$ and $F$ be two $\Bc$-categories. Then a distributor $\phi: E \to F$ is a function $E \times F \to \overline{\Rb}_+$ which is so that $|\phi(c_1,a_1) - \phi(c_2,a_2)| \leq d(c_1,c_2) + d(a_1,a_2)$; now consider the product space $E \times F$ together with the $L^1$ metric $d((c_1,a_1),(c_2,a_2)) = d(c_1,c_2)+d(a_1,a_2)$, then distributors $E \to F$ are exactly metric maps (i.e. $1$-lipschitz continuous functions) from this product $E \times F$ to $\overline{\Rb}_+$ with respect to this metric.
\end{ex}

The goal of the rest of this section will now be to give the definition of the bicategory $\Dist(\Bc)$. In all that follows, we consider three $\Bc$-categories $(\M,A)$, $(\N,C)$, $(\Pb,D)$, as well as two distributors $\phi$ and $\psi$ as in the following diagram:

\[\begin{tikzcd}
(\M,A)
\arrow[r,"{\phi}"]
& (\N,C)
\arrow[r,"{\psi}"]
& (\Pb,D)
\end{tikzcd}\]

\begin{prop}
Suppose that $\Bc$ is endowed with an involution. Let $\phi: (\M,A) \to (\N,C)$ be a distributor between two \textit{symmetric} $\Bc$-categories~; then we define $\phi^{\circ}: (\N,C) \to (\M,A)$ by $\phi^{\circ}(a,c) = \phi(c,a)^{\circ}$~; then $\phi^{\circ}$ is a distributor.
\end{prop}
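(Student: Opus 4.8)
The plan is to transport the distributor structure of $\phi$ across the involution, defining the double-action $2$-cells of $\phi^{\circ}$ as the images under $(-)^{\circ}$ of those of $\phi$, and then checking the two coherence axioms by applying $(-)^{\circ}$ to the corresponding axioms for $\phi$. Concretely, for $a,a' \in A$ and $c,c' \in C$ I would set
\[
\delta^{\phi^{\circ}}_{a,a',c',c} := (\delta^{\phi}_{c,c',a',a})^{\circ}.
\]
The first thing to verify is that this has the correct type. Applying $(-)^{\circ}$ to the double action $\delta^{\phi}_{c,c',a',a}: \N(c,c')\phi(c',a')\M(a',a) \To \phi(c,a)$, and using that the involution reverses the order of horizontal composites together with the symmetry equalities $\N(c,c')^{\circ} = \N(c',c)$ and $\M(a',a)^{\circ} = \M(a,a')$ and the defining equation $\phi(c',a')^{\circ} = \phi^{\circ}(a',c')$, produces a $2$-cell $\M(a,a')\phi^{\circ}(a',c')\N(c',c) \To \phi^{\circ}(a,c)$. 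This is exactly the shape of a double action for a distributor $(\N,C) \to (\M,A)$.

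For the coherence conditions I would apply $(-)^{\circ}$ directly to the unitality and associativity axioms satisfied by $\delta^{\phi}$. The facts I rely on are that $(-)^{\circ}$ is a functor on each hom-category, hence preserves vertical composition and sends identities to identities ($(1_f)^{\circ} = 1_{f^{\circ}}$); that it reverses horizontal composition, $(\alpha * \beta)^{\circ} = \beta^{\circ} * \alpha^{\circ}$; and the symmetry hypotheses $\rho_a^{\circ} = \rho_a$, $\rho_c^{\circ} = \rho_c$, $\iota_{abc}^{\circ} = \iota_{cba}$ for $\M$ and $\N$. Applying $(-)^{\circ}$ to the unitality axiom $\delta^{\phi}_{ccaa} \bullet (\rho_c^{\N} * 1_{\phi(c,a)} * \rho_a^{\M}) = 1_{\phi(c,a)}$, the right-hand side becomes $1_{\phi^{\circ}(a,c)}$ and, after reversing the horizontal factors and using $\rho_a^{\circ}=\rho_a$, $\rho_c^{\circ}=\rho_c$, the left-hand side becomes $\delta^{\phi^{\circ}}_{aacc} \bullet (\rho_a^{\M} * 1_{\phi^{\circ}(a,c)} * \rho_c^{\N})$, which is precisely the unitality axiom for $\phi^{\circ}$. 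Likewise, applying $(-)^{\circ}$ to the associativity axiom and reindexing turns it term by term into the associativity axiom for $\phi^{\circ}$: the order-reversal of horizontal composition interchanges the $\N$- and $\M$-factors, and the identity $\iota_{c_1c_2c_3}^{\circ}=\iota_{c_3c_2c_1}$ (and its $\M$-analogue) supplies exactly the idempotency cells appearing on the right.

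I do not expect a genuine obstacle: the statement is essentially a bookkeeping verification, and the symmetry hypotheses on $\M$ and $\N$ are tailored precisely so that the reflexivity and idempotency $2$-cells land where they are needed under the involution. The only points demanding care are notational, namely keeping track of the reversal of the horizontal factors under $(-)^{\circ}$ and correctly permuting the four indices of $\delta$ when passing from $\phi$ to $\phi^{\circ}$. In the non-strict case one must also absorb the associativity constraints of $\Bc$ into the composites, which I would suppress in the same way the statements of the coherence axioms already do.
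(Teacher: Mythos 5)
Your proposal is correct and follows essentially the same route as the paper: define $\delta^{\phi^{\circ}}$ as the involute of $\delta^{\phi}$ with permuted indices, use the order-reversal of horizontal composition together with the symmetry equalities to check the typing, and obtain both coherence axioms by applying $(-)^{\circ}$ to those of $\phi$ (the paper writes out unitality exactly as you do and dispatches associativity "in the same way"). No gaps to report.
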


\begin{proof}
Let $a_1,a_2 \in A$ and $c_1,c_2 \in C$ be elements of the considered $\Bc$-categories. Define $\delta^{\psi^{\circ}}_{a_1a_2c_2c_1} = (\delta^{\psi}_{c_1c_2a_2a_1})^{\circ}$; this is a $2$-cell going from $$(\N(c_1,c_2)\phi(c_2,a_2)\M(a_2,a_1))^{\circ} = \M(a_1,a_2)\phi^{\circ}(a_2,c_2)\N(c_2,c_1)$$ to $\phi^{\circ}(a_1,c_1)$. Then the distributor conditions are a direct consequence of the coherence properties of the symmetry of $\M$ and $\N$: 
\begin{align*}
\delta^{\phi^{\circ}}_{aacc} \bullet (\rho_a * 1_{\phi^{\circ}(a,c)} * \rho_c) &= (\delta^{\phi}_{ccaa})^{\circ} \bullet (\rho_a^{\circ} * 1_{\phi(c,a)} * \rho_c^{\circ})\\ 
&= (\delta^{\phi}_{ccaa} \bullet (\rho_c * 1_{\phi(c,a)} * \rho_a))^{\circ}\\ 
&= 1_{\phi(c,a)}^{\circ}\\ 
&= 1_{\phi^{\circ}(a,c)}
\end{align*}

The other condition is treated in the same way.
\end{proof}

\begin{coro}
Suppose that $\Bc$ is endowed with an involution. Then this involution extends to $\Dist_{\s}(\Bc)$.
\end{coro}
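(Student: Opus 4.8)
The plan is to promote $(-)^{\circ}$ to an identity-on-objects pseudofunctor $(-)^{\circ}: \Dist_{\s}(\Bc)^{\op} \to \Dist_{\s}(\Bc)$ squaring to the identity, i.e. an involution on the bicategory $\Dist_{\s}(\Bc)$, compatible with the original involution on $\Bc$ under the embedding $\Bc \to \Dist(\Bc)$ of the earlier example (this being the sense in which it \emph{extends} it). On objects there is nothing to do: a symmetric $\Bc$-category $(\M,A)$ satisfies $\M(a,a')=\M(a',a)^{\circ}$, hence $\M^{\circ}=\M$, so $(\M,A)$ is fixed. On $1$-cells the assignment $\phi \mapsto \phi^{\circ}$ is precisely the one supplied by the previous proposition, which already guarantees that $\phi^{\circ}$ is a distributor between symmetric $\Bc$-categories.

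It then remains to treat $2$-cells, involutivity, and the coherence data. Once $\Dist_{\s}(\Bc)$ has been constructed (later in this section), a $2$-cell $\xi: \phi \To \phi'$ is a family $\xi_{c,a}: \phi(c,a)\To\phi'(c,a)$ compatible with the double actions, and I would set $\xi^{\circ}_{a,c} := (\xi_{c,a})^{\circ}$. Since $(-)^{\circ}: \Bc^{\op}\to\Bc$ preserves the direction of $2$-cells, $\xi^{\circ}$ is genuinely a $2$-cell $\phi^{\circ}\To(\phi')^{\circ}$, and applying $(-)^{\circ}$ to the compatibility square for $\xi$, exactly as in the proof of the proposition, shows it is compatible with $\delta^{\phi^{\circ}}$ and $\delta^{(\phi')^{\circ}}$; functoriality of $(-)^{\circ}$ on each hom-category yields preservation of vertical composition. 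Involutivity is immediate from $(f^{\circ})^{\circ}=f$ and $(\alpha^{\circ})^{\circ}=\alpha$ in $\Bc$, which force $(\phi^{\circ})^{\circ}=\phi$ on matrices, double-action cells, and morphisms alike; and units are preserved strictly, the unit distributor of $(\M,A)$ being $\M$ with $\M^{\circ}=\M$.

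The main obstacle is compatibility with horizontal composition, i.e. producing a coherent comparison isomorphism $(\psi\phi)^{\circ}\cong\phi^{\circ}\psi^{\circ}$ for $\phi:(\M,A)\to(\N,C)$ and $\psi:(\N,C)\to(\Pb,D)$. This genuinely depends on Definition~\ref{CompDistrib}, which builds $\psi\phi$ as a colimit of a bar-type diagram assembled from the matrices $\M,\N,\Pb$ and the double actions; applying $(-)^{\circ}$ transforms this into the diagram computing $\phi^{\circ}\psi^{\circ}$ with the order of all factors and horizontal composites reversed. The crux is that $(-)^{\circ}$, being an involution and hence an isomorphism of bicategories, is in particular an isomorphism of each hom-category and so carries the defining colimit cocone of $\psi\phi$ to a colimit cocone for $\phi^{\circ}\psi^{\circ}$; the induced comparison isomorphisms must finally be checked against the associativity and unit axioms of a pseudofunctor, which reduces to the symmetry identities $\M^{\circ}=\M$, $\N^{\circ}=\N$, $\Pb^{\circ}=\Pb$, $\iota^{\circ}_{xyz}=\iota_{zyx}$ and the contravariant functoriality $(\alpha*\beta)^{\circ}=\beta^{\circ}*\alpha^{\circ}$ of the involution. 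I would naturally defer this last verification to just after Definition~\ref{CompDistrib}.
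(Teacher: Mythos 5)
Your proposal is correct, and its core coincides with the paper's (implicit) argument: the paper offers no proof of this corollary at all, presenting it as an immediate consequence of the preceding proposition — symmetric $\Bc$-categories are fixed by the involution since $\M^{\circ}=\M$, and the proposition supplies the action $\phi\mapsto\phi^{\circ}$ on distributors. What you add beyond the paper is the verification that this data really is a bicategorical involution: the action on $2$-cells $\xi^{\circ}_{a,c}:=(\xi_{c,a})^{\circ}$ (correctly using that $(-)^{\circ}:\Bc^{\op}\to\Bc$ preserves the direction of $2$-cells), strict involutivity, and above all the comparison $(\psi\phi)^{\circ}\cong\phi^{\circ}\psi^{\circ}$, obtained because the involution restricts to an isomorphism of hom-categories and therefore carries the colimit cocone defining $\psi\phi$ in Definition~\ref{CompDistrib} to one defining $\phi^{\circ}\psi^{\circ}$. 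This is a genuine completion rather than a deviation: the bicategory structure of $\Dist_{\s}(\Bc)$ (composition of distributors, morphisms between them) is only constructed after this corollary, and even the later proposition asserting that $\Dist_{\s}(\Bc)$ is an involutive bicategory omits these involution checks. Your decision to defer the horizontal-composition compatibility until after Definition~\ref{CompDistrib} is exactly the right way to handle this forward reference.
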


From now on we describe a composition operation for distributors; the definition is already present, in a less detailed form, in section 3 of~\cite{StreetEnrichment}. The composition of distributors generalizes the relational composition of relations, and necessitates the use of coends. We give the proof, lacking from~\cite{StreetEnrichment}, that a composite of two distributors is still a distributor.

\begin{deft}\label{CompDistrib}
Let $a,d$ be elements of $A$ and $D$ respectively. A \textbf{cowedge of $\psi$ and $\phi$ at $(d,a)$ over $\N$} is the data of:
\begin{enumerate}
\itb A $1$-cell $w: ta \to td$ in $\Bc$.
\itb For all $c \in C$, a $2$-cell $e_c: \psi(d,c)\phi(c,a) \To w$. 
\end{enumerate}
This data must satisfy the properties expressed by the commutativity of each of the following diagrams, for all $c_1,c_2 \in C$:
\[
\begin{tikzcd}[row sep=50, column sep=100]
{\psi(d,c_1)\phi(c_1,a)}
\arrow[r,"e_{c_1}"]
& {w}\\ 
{\psi(d,c_1)\N(c_1,c_2)\phi(c_2,a)}
\arrow[u,"{1 * (\delta \bullet (1 * \rho))}" description]
\arrow[r,"{(\delta \bullet (\rho * 1)) * 1}"']
& {\psi(d,c_2)\phi(c_2,a)}
\arrow[u,"{e_{c_2}}"']
\end{tikzcd}
\]\\

The \textbf{composite} of $\phi$ and $\psi$ is defined pointwise (that is, for each $a \in A$, $d\in D$) as the colimit of all cowedge diagrams of $\phi$ and $\psi$ at $(d,a)$ over $\N$; it is the universal cowedge of $\phi$ and $\psi$ at $(d,a)$. We write:
\[
(\psi\phi)(d,a) = \int^{c: C}_{\N} \psi(d,c)\phi(c,a)
\]
\end{deft}

\begin{prop}
The composite of two distributors is a distributor.
\end{prop}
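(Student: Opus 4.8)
The plan is to observe first that $\psi\phi$ is already a $\Bc$-matrix $A \to D$ by construction, so the whole content of the statement is to equip it with a family of double-action $2$-cells
\[
\delta^{\psi\phi}_{d_1 d_2 a_2 a_1} : \Pb(d_1,d_2)\,(\psi\phi)(d_2,a_2)\,\M(a_2,a_1) \To (\psi\phi)(d_1,a_1)
\]
and to verify the unitality and associativity axioms. The essential tool is our standing hypothesis that $\Bc$ is locally cocomplete and closed, so that pre- and post-composition by a fixed $1$-cell preserves all colimits, and in particular the coends defining the composite. Thus I may slide $\Pb(d_1,d_2)\cdot(-)\cdot\M(a_2,a_1)$ through the coend and identify
\[
\Pb(d_1,d_2)\,(\psi\phi)(d_2,a_2)\,\M(a_2,a_1) \;=\; \int^{c:C}_{\N} \Pb(d_1,d_2)\,\psi(d_2,c)\,\phi(c,a_2)\,\M(a_2,a_1),
\]
the universal cowedge being obtained from the one defining $(\psi\phi)(d_2,a_2)$ by whiskering with $\Pb(d_1,d_2)$ and $\M(a_2,a_1)$.

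To define $\delta^{\psi\phi}$ I use the universal property of this coend. For each $c\in C$ I form the left $\Pb$-action on $\psi$,
\[
\lambda_c := \delta^{\psi}_{d_1 d_2 c c}\bullet\bigl(1_{\Pb(d_1,d_2)} * 1_{\psi(d_2,c)} * \rho^{\N}_c\bigr) : \Pb(d_1,d_2)\,\psi(d_2,c) \To \psi(d_1,c),
\]
the right $\M$-action on $\phi$,
\[
\mu_c := \delta^{\phi}_{c c a_2 a_1}\bullet\bigl(\rho^{\N}_c * 1_{\phi(c,a_2)} * 1_{\M(a_2,a_1)}\bigr) : \phi(c,a_2)\,\M(a_2,a_1) \To \phi(c,a_1),
\]
and I set $\theta_c := e_c \bullet (\lambda_c * \mu_c)$, where $e_c : \psi(d_1,c)\phi(c,a_1)\To(\psi\phi)(d_1,a_1)$ is the universal cowedge of the target coend. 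First I must check that $(\theta_c)_c$ is itself a cowedge for the source coend: this is exactly the commuting-square condition of Definition~\ref{CompDistrib}, which I verify by expanding both composites and reducing, via the associativity axioms of $\delta^{\psi}$ and $\delta^{\phi}$, to the cowedge relation satisfied by the $e_c$. The universal property then yields a unique $\delta^{\psi\phi}$ with $\delta^{\psi\phi}\bullet(1 * e_c * 1) = \theta_c$ for all $c$.

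It remains to verify the two distributor axioms, and the guiding principle is always the same: each axiom is an equation between $2$-cells whose common source is a coend, so it suffices to check it after precomposition with each universal cowedge component, whereupon it collapses to an identity of $2$-cells living over a single index $c$. For unitality, precomposing $\delta^{\psi\phi}_{d d a a}\bullet(\rho^{\Pb}_d * 1 * \rho^{\M}_a)$ with $e_c$ and inserting the definitions of $\lambda_c,\mu_c$ reduces, by the unitality axioms of $\psi$ and $\phi$, to $e_c$ itself, giving the identity. For associativity, after sliding the fixed $1$-cells into the coend, both sides are maps out of $\int^{c}_{\N}\Pb(d_1,d_2)\Pb(d_2,d_3)\psi(d_3,c)\phi(c,a_3)\M(a_3,a_2)\M(a_2,a_1)$; precomposing with the universal cowedge reduces the equation to the associativity axioms of $\delta^{\psi}$ and $\delta^{\phi}$ together with the idempotency cells $\iota^{\Pb}$ and $\iota^{\M}$, which then match on the nose.

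I expect the main obstacle to be the cowedge verification together with the associativity axiom, for two related reasons. First, the source of each equation is genuinely a colimit, so nothing can be checked ``pointwise'' in the naive sense; every step must be mediated by the universal property, and one must keep scrupulous track of \emph{which} coend's universal property is being invoked and of the whiskering isomorphisms coming from preservation of colimits by composition. Second, the actions used to build $\lambda_c$ and $\mu_c$ insert copies of $\rho^{\N}_c$, and one must confirm that these are compatible with the very $\N$-action that is coequalized in the definition of $(\psi\phi)$; this compatibility is precisely what the associativity axioms of $\psi$ and $\phi$ supply, but assembling the resulting diagram is the most delicate bookkeeping in the argument.
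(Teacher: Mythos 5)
Your proposal is correct and follows essentially the same route as the paper: you define $\delta^{\psi\phi}$ on the coend $\int^{c:C}_{\N}\Pb(d_1,d_2)\psi(d_2,c)\phi(c,a_2)\M(a_2,a_1)$ (using that pre- and post-composition preserve colimits) via exactly the paper's components $e_c \bullet (\lambda_c * \mu_c)$, and you verify the cowedge compatibility and both distributor axioms by precomposing with the universal cowedges and reducing to the axioms of $\delta^{\psi}$, $\delta^{\phi}$, $\iota$ and $\rho$. The paper simply carries out in full the interchange-law computations that you outline.
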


\begin{proof}
First of all, we must find a splitting $2$-cell $\delta^{\psi\phi}$, for each $d_1,d_2 \in D$, $a_1,a_2 \in A$: 
\[\delta_{d_1d_2a_2a_1}^{\psi\phi}: \Pb(d_1,d_2)(\psi\phi)(d_2,a_2)\M(a_2,a_1) \To (\psi\phi)(d_1,a_1)\]
Because $\Bc$ is closed, pre- and post-composition functors preserve colimits, hence:
\begin{align*}
\Pb(d_1,d_2)(\psi\phi)(d_2,a_2)\M(a_2,a_1) &= \Pb(d_1,d_2)\big(\int^{c:C}_{\N} \psi(d_2,c)\phi(c,a_2)\big)\M(a_2,a_1)\\
&= \int^{c:C}_{\N} \Pb(d_1,d_2)\psi(d_2,c)\phi(c,a_2)\M(a_2,a_1)
\end{align*}
Thus giving a morphism $\Pb(d_1,d_2)(\psi\phi)(d_2,a_2)\M(a_2,a_1) \To (\psi\phi)(d_1,a_1)$, is the same as giving a family of $2$-cells $f_c: \Pb(d_1,d_2)\psi(d_2,c)\phi(c,a_2)\M(a_2,a_1) \To (\psi\phi)(d_1,a_1)$, for $c \in C$, being compatible in the sense defined above. We are going to use the fact that $(\psi\phi)(d_1,a_1)$ is itself defined as a colimit: write $e_c: \psi(d_1,c)\phi(c,a_1) \To (\psi\phi)(d_1,a_1)$ for the injection of the colimiting cocone, for all $c \in \Ob(\C)$, and define:
\[f_c:= e_c \bullet ((\delta_{d_1d_2cc}^{\psi} \bullet (1_{\Pb(d_1,d_2)\psi(d_2,c)} * \rho_c^{\N})) * (\delta^{\phi}_{cca_2a_1} \bullet (\rho^{\N}_c * 1_{\phi(c,a_2)\M(a_2,a_1)})))\]
Now to prove that this family is compatible, we must show for all $c_1,c_2 \in C$:

\begin{center}
$f_{c_1} \bullet (1_{\Pb(d_1,d_2)\psi(d_2,c_1)} * (\delta_{c_1c_2a_2a_2} \bullet (1_{\N(c_1,c_2)\phi(c_2,a_2)} * \rho_{a_2}^{\M})) * 1_{\M(a_2,a_1)})$\\
$=$\\ 
$f_{c_2} \bullet (1_{\Pb(d_1,d_2)} * (\delta_{d_2d_2c_1c_2} \bullet (\rho_{d_2}^{\Pb} * 1_{\psi(d_2,c_1)\N(c_1,c_2)})) * 1_{\phi(c_2,a_2)\M(a_2,a_1)})$
\end{center}
Define $A$ as the expression $f_{c_1} \bullet \cdots$, then we have the following; we use simplified but unambiguous notations to lighten the computations:
\begin{eqnarray*}
A &=&e_{c_1} \bullet ((\delta_{d_1d_2c_1c_1}^{\psi} \bullet (1_{\Pb_{d_1d_2}\psi_{d_2c_1}} * \rho_{c_1}^{\N})) * (\delta^{\phi}_{c_1c_1a_2a_1} \bullet (\rho^{\N}_{c_1} * 1_{\phi_{c_1a_2}\M_{a_2a_1}}))) \bullet \\ 
& &(1_{\Pb_{d_1d_2}\psi_{d_2c_1}} * (\delta_{c_1c_2a_2a_2} \bullet (1_{\N_{c_1c_2}\phi_{c_2a_2}} * \rho_{a_2}^{\M})) * 1_{\M_{a_2a_1}})\\
&=& e_{c_1} \bullet ((\delta^{\psi}_{d_1d_2c_1c_1} \bullet (1_{\Pb_{d_1d_2}\psi_{d_2,c_1}} * \rho^{\N}_{c_1}) \bullet 1_{\Pb_{d_1d_2}\psi_{d_2c_1}}) * (\delta^{\phi}_{c_1c_1a_2a_1} \bullet \\ 
& & (\rho_{c_1}^{\N} * 1_{\phi_{c_1a_2}\M_{a_2a_1}}) \bullet ((\delta_{c_1c_2a_2a_2}^{\phi} \bullet (1_{\N_{c_1c_2}\phi_{c_2a_2}} * \rho_{a_2}^{\M})) * 1_{\M_{a_2a_1}})))\\
&=& e_{c_1} \bullet ((\delta \bullet (1*\rho))  * (\delta \bullet (\rho * 1_{\phi_{c_1a_2}\M_{a_2a_1}}) \bullet (1_{\id_{tc_1}} * ( \delta \bullet (1 * \rho)) * 1_{\M_{a_2a_1}})))\\
&=& e_{c_1} \bullet ((\delta \bullet (1 * \rho)) * (\delta \bullet (\rho * (\delta_{c_1c_2a_2a_2}^{\phi} \bullet (1_{\N_{c_1c_2}\phi_{c_2a_2}} * \rho_{a_2}^{\M})) * 1_{\M_{a_2a_1}})))
\end{eqnarray*}
Here we can use the distributor properties:
\begin{align*}
&\delta_{c_1c_1a_2a_1}^{\phi} \bullet (\rho_{c_1}^{\N} * (\delta_{c_1c_2a_2a_2}^{\phi} \bullet (1_{\N_{c_1c_2}\phi_{c_2a_2}} * \rho_{a_2}^{\M})) * 1_{\M_{a_2a_1}})\\
&= \delta_{c_1c_1a_2a_1}^{\phi} \bullet (1_{\N(c_1c_1)} * \delta_{c_1c_2a_2a_2} * 1_{\M_{a_2a_1}}) \bullet (\rho_{c_1}^{\N} * 1_{\N_{c_1c_2}\phi_{c_2a_2}} * \rho_{a_2}^{\M} * 1_{\M_{a_2a_1}})\\
&= \delta_{c_1c_2a_2a_1}^{\phi} \bullet (\iota_{c_1c_2c_2}^{\N} * 1_{\phi_{c_2a_2}} * \iota^{\M}_{a_2a_1a_1}) \bullet (\rho_{c_1}^{\N} * 1_{\N_{c_1c_2}\phi_{c_2a_2}} * \rho_{a_2}^{\M} * 1_{\M_{a_2a_1}}) \\
&= \delta_{c_1c_2a_2a_1}^{\phi} \bullet ((\iota_{c_1c_2c_2}^{\N} \bullet (\rho_{c_1}^{\N} * 1_{\N_{c_1c_2}})) * 1_{\phi_{c_2a_2}} * (\iota_{a_2a_1a_1}^{\M} \bullet (\rho_{a_2}^{\M} * 1_{\M_{a_2a_1}})))\\
&= \delta_{c_1c_2a_2a_1}^{\phi} \bullet 1_{\N_{c_1c_2}\phi_{c_2a_2}\M_{a_2a_1}}\\
&= \delta_{c_1c_2a_2a_1}^{\phi}
\end{align*}
Therefore:
\begin{align*}
A&= e_{c_1} \bullet ((\delta^{\psi}_{d_1d_2c_1c_1} \bullet (1_{\Pb(d_1,d_2)\psi(d_2,c_1)} * \rho_{c_1}^{\N})) * \delta_{c_1c_2a_2a_1}^{\phi})
\end{align*}
Now we express $\delta_{c_1c_2a_2a_1}^{\phi}$ in a way which ``goes through $\phi(c_2,a_1)$'', so that we can use the fact that we know that $e_{c_1}$ and $e_{c_2}$ have a cowedge compatibility condition. Consider the following expression $B$:
\begin{align*}
B &= \delta_{c_1c_2a_1a_1}^{\phi} \bullet (1_{\N_{c_1c_2}} * \delta_{c_2c_2a_2a_1}^{\phi} * 1_{\M_{a_1a_1}}) \bullet (1_{\N_{c_1c_2}} * \rho_{c_2}^{\N} * 1_{\phi_{c_2a_2}\M_{a_2a_1}} * \rho_{a_1}^{\M})\\
&= \delta_{c_1c_2a_2a_1}^{\phi} \bullet (\iota_{c_1c_2c_2} * 1_{\phi_{c_2a_2}} * \iota_{a_2a_1a_1}) \bullet (1_{\N_{c_1c_2}} * \rho_{c_2}^{\N} * 1_{\phi_{c_2a_2}\M_{a_2a_1}} * \rho_{a_1}^{\M})\\
&= \delta_{c_1c_2a_1a_2}^{\phi}
\end{align*}
We reformulate $B$ so that we can use the interchange law later:
\begin{align*}
B &= \delta_{c_1c_2a_1a_1}^{\phi} \bullet (1_{\N(c_1,c_2)\phi(c_2,a_1)} * \rho_{a_1}^{\M}) \bullet (\delta_{c_2c_2a_2a_1}^{\phi} \bullet (\rho_{c_2}^{\N} * 1_{\phi(c_2,a_2)\M(a_2,a_1)}))
\end{align*}
Plugging that into our expression of $A$, we can use the interchange law to get:
\begin{align*}
A &= e_{c_1} \bullet (1_{\psi(d_1,c_1)} * (\delta_{c_1c_2a_1a_1}^{\phi} \bullet (1_{\N(c_1,c_2)\phi(c_2,a_1)} * \rho_{a_1}^{\M}))) \bullet C
\end{align*}
Where:
\[C = (\delta_{d_1d_2c_1c_1}^{\psi} \bullet (1_{\Pb_{d_1d_2}\psi_{d_2c_1}} * \rho_{c_1}^{\N})) * 1_{\N_{c_1c_2}} * (\delta_{c_2c_2a_2a_1}^{\phi} \bullet (\rho_{c_2}^{\N} * 1_{\phi_{c_2a_2}\M_{a_2a_1}}))\]
Using the compatibility conditions between $e_{c_1}$ and $e_{c_2}$, we therefore have: 
\[A = e_{c_2} \bullet ((\delta_{d_1d_1c_1c_2}^{\psi} \bullet (\rho_{d_1}^{\Pb} * 1_{\psi(d_1,c_1)\N(c_1,c_2)})) * 1_{\phi(c_2,a_1)}) \bullet C\]
Again, we can use the interchange law, separating relatively to horizontal composition between $1_{\N(c_1,c_2)}$ and $\rho_{c_2}^{\N}$ to get:
\begin{align*}
A&= e_{c_2} \bullet (D * (\delta_{c_2c_2a_2a_1}^{\phi} \bullet (\rho_{c_2}^{\N} * 1_{\phi(c_2,a_2)\M(a_2,a_1)})))
\end{align*}
With: 
\[D = \delta_{d_1d_1c_1c_2}^{\psi} \bullet (\rho_{d_1}^{\Pb} * \delta_{d_1d_2c_1c_1}^{\psi} * 1_{\N(c_1c_2)}) \bullet (1_{\Pb(d_1,d_2)\psi(d_2,c_1)} * \rho_{c_1}^{\N} * 1_{\N(c_1,c_2)})\]
A quick computation can simplify $D$, as we did before with $B$:
\begin{align*}
D&= \delta_{d_1d_1c_1c_2}^{\psi} \bullet (1_{\Pb_{d_1d_1}} * \delta_{d_1d_2c_1c_1}^{\psi} * 1_{\N_{c_1c_2}}) \bullet (\rho_{d_1}^{\Pb} * 1_{\Pb_{d_1d_2}\psi_{d_2c_1}} * \rho * 1_{\N_{c_1c_2}})\\
&= \delta_{d_1d_2c_1c2}^{\psi} \bullet (\iota_{d_1d_1d_2} * 1_{\psi_{d_2c_1}} * \iota_{c_1c_1c_2})\bullet (\rho_{d_1} * 1_{\Pb_{d_1d_2}\psi_{d_2c_1}} * \rho_{c_1} * 1_{\N_{c_1c_2}})\\
&= \delta_{d_1d_2c_1c_2}^{\psi}
\end{align*}
We thus get:
\[A =  e_{c_2} \bullet (\delta_{d_1d_2c_1c_2}^{\psi} * (\delta_{c_2c_2a_2a_1}^{\phi} \bullet (\rho_{c_2}^{\N} * 1_{\phi(c_2,a_2)\M(a_2,a_1)})))\]
Now define:
\[A' = f_{c_2} \bullet (1_{\Pb(d_1,d_2)} * (\delta_{d_2d_2c_1c_2} \bullet (\rho_{d_2}^{\Pb} * 1_{\psi(d_2,c_1)\N(c_1,c_2)})) * 1_{\phi(c_2,a_2)\M(a_2,a_1)})\]
We can simplify it to get $A$. First start by separating horizontally between the two instances of $\rho_{c_2}$, we get (we drop the indices under identities for brevity):
\[A' = e_{c_2} \bullet ((\delta_{d_1d_2c_2c_2}^{\psi} \bullet (1 * \delta_{d_2d_2c_1c_2}^{\psi} * 1) \bullet (1 * \rho_{d_2}^{\Pb} * 1 * \rho_{c_2}^{\N})) * (\delta_{d_2d_2c_1c_2} \bullet (\rho_{d_2}^{\Pb} * 1))) \]
We recognize the pattern on the left-hand side of the expression: like $B$ and $D$ before, it simplifies, into $\delta_{d_1d_2c_1c_2}^{\psi}$. Now the right side is the same as in $A$, and therefore we have $A = A'$, so that conclude this part of the proof. Therefore the family $(f_c)_{c\in \Ob(\C)}$ defines a cowedge, and the colimiting universal property enjoyed by $\Pb(d_1,d_2)(\psi\phi)(d_2,a_2)\M(a_2,a_1)$ yields a $2$-cell:
$$\delta_{d_1d_2a_2a_1}^{\psi\phi}: \Pb(d_1,d_2)(\psi\phi)(d_2,a_2)\M(a_2,a_1) \To (\psi\phi)(d_1,a_1)$$\\ 

Now we must prove that this $\delta$ satisfies the distributor conditions. First for the identity condition, note that for all $a \in A$, $c \in C$, $d \in D$, we have the following commutative diagram: 

\[
\begin{tikzcd}[row sep = 50, column sep = 120]
\psi(d,c)\phi(c,a)
\arrow[r,"e_c"description]
& (\psi\phi)(d,a) \\ 
\Pb(d,d)\psi(d,c)\phi(c,a)\M(a,a) 
\arrow[u,"(\delta^{\psi} * \delta^{\phi}) \bullet (1_{\psi(d,c)} * \rho^{\N}_c * \rho^{\N}_c * 1_{\phi(c,a)})"description]
\arrow[r,"1_{\Pb(d,d)} * e_c * 1_{\M(a,a)}"description]
& \Pb(d,d)(\psi\phi)(d,a)\M(a,a)
\arrow[u,"\delta_{ddaa}^{\psi\phi}"description]\\
\psi(d,c)\phi(c,a) 
\arrow[u,"\rho^{\Pb}_d * 1_{\psi(d,c)\phi(c,a)} * \rho^{\M}_a"description]
\arrow[r,"e_c"description]
& (\psi\phi)(d,a)
\arrow[u,"\rho_d^{\Pb} * 1_{(\psi\phi)(d,a)} * \rho_a^{\M}"description]
\end{tikzcd}
\]

The commutativity of the upper square is ensured by the very definition of $\delta^{\psi\phi}_{ddaa}$, and that of the bottom square is trivially verified through the interchange law. This means that the whole diagram is commutative; given that all colimits are taken over diagrams of the same shape (namely above the $\psi(d,c_1)\N(c_1,c_2)\phi(c_2,a)$, we can safely assert that the composite $e_c \bullet (\delta^{\psi} * \delta^{\phi}) \bullet (1_{\psi(d,c)} * \rho^{\N}_c * \rho^{\N}_c * 1_{\phi(c,a)}) \bullet (\rho^{\Pb}_d * 1_{\psi(d,c)\phi(c,a)} * \rho^{\M}_a)$ is still compatible and yields a cowedge diagram from each $\psi(d,c)\phi(c,a)$ to $(\psi\phi)(d,a)$. By unicity of the coend, we can therefore deduce that the coend of these cowedges is equal to $\delta_{ddaa}^{\psi\phi} \bullet (\rho_d^{\Pb} * 1_{(\psi\phi)(d,a)} * \rho_a^{\M})$. Now, we have the following computation: 

\begin{align*}
&e_c \bullet (\delta^{\psi}_{ddcc} * \delta^{\phi}_{ccaa}) \bullet (1_{\psi(d,c)} * \rho^{\N}_c * \rho^{\N}_c * 1_{\phi(c,a)}) \bullet (\rho^{\Pb}_d * 1_{\psi(d,c)\phi(c,a)} * \rho^{\M}_a) \\ 
&= e_c \bullet ((\delta_{ddcc}^{\psi} \bullet (\rho_d^{\Pb} * 1_{\psi(d,c)} * \rho_c^{\N})) * (\delta_{ccaa}^{\phi} \bullet (\rho_c^{\N} * 1_{\phi(c,a)} * \rho_a^{\M})))\\
&= e_c
\end{align*}

That proves that all these cowedges from $\psi(d,c)\phi(c,a)$ to $(\psi\phi)(d,a)$ are actually trivial and equal to $e_c$; thus proving by unicity of the colimit that their coend is the identity. Therefore we have: 
\[\delta_{ddaa}^{\psi\phi} \bullet (\rho_d^{\Pb} * 1_{(\psi\phi)(d,a)} * \rho_a^{\M}) \simeq 1_{(\psi\phi)(d,a)}\]

For the second distributor condition we give a similar argument based on the functoriality of the colimit (what we did above amounted to prove that for certain particular choices, $(\colim_i f_i)(\colim_i g_i) = \colim_i (f_ig_i)$). First consider the following diagram, of which both squares are commutative because of the definition of $\delta^{\psi\phi}$.

\noindent
\adjustbox{max width=\textwidth}{
\begin{tikzcd}[row sep = 50, column sep = 50, labels={description}]
\psi_{d_1c}\phi_{ca_1}
\arrow[r,"e_c"]
&  (\psi\phi)_{d_1a_1} \\ 
\Pb_{d_1d_2}\psi_{d_2c}\phi_{ca_2}\M_{a_2a_1}
\arrow[r,"1 * e_c * 1"]
\arrow[u, "(\delta \bullet (1 * \rho)) * (\delta \bullet (\rho * 1))"]
& \Pb_{d_1d_2}(\psi\phi)_{d_2a_2}\M_{a_2a_1}
\arrow[u, "\delta^{\psi\phi}"]\\ 
\Pb_{d_1d_2}\Pb_{d_2d_3}\psi_{d_3c}\phi_{ca_3}\M_{a_3a_2}\M_{a_2a_1}
\arrow[r,"1 * e_c * 1"]
\arrow[u,"1 * (\delta \bullet (1 * \rho)) * (\delta \bullet (\rho * 1)) * 1"]
& \Pb_{d_1d_2}\Pb_{d_2d_3}(\psi\phi)_{d_3a_3}\M_{a_3a_2}\M_{a_2a_1}
\arrow[u,"1 * \delta^{\psi\phi} * 1"]
\end{tikzcd}
}\\

The commutativity of this diagram ensures that we can compute at the ``local'' level; in other words, that $\delta^{\psi\phi} \bullet (1 * \delta^{\psi\phi} * 1)$ is the colimit of the $e_c \bullet (\delta \bullet (1 * \rho)) * (\delta \bullet (\rho * 1)) \bullet (1 * (\delta \bullet (1 * \rho)) * (\delta \bullet (\rho * 1)) * 1)$, for $c$ varying. Now, denoting this term by $A$, we have, using first the interchange law and then the distributor condition on $\psi$ and $\phi$ themselves: 
\begin{eqnarray*}
A &=& e_c \bullet (\delta^{\psi}_{d_1d_2cc} * \delta^{\phi}_{cca_2a_1}) \bullet (1_{\Pb(d_1,d_2)} * \delta^{\psi}_{d_2d_3cc} * 1_{\N(c,c)\N(c,c)} * \delta^{\phi}_{cca_2a_3} * 1_{\M(a_2,a_1)}) \\
& & \bullet \ (1_{\Pb(d_1,d_2)\Pb(d_2,d_3)\psi(d_3,c)} * \rho^{\N}_c * \rho^{\N}_c * \rho^{\N}_c * \rho^{\N}_c * 1_{\phi(c,a_3)\M(a_3,a_2)\M(a_2,a_1)}) \\ 
&=&  e_c \bullet (\delta^{\psi}_{d_1d_3cc} * \delta^{\phi}_{cca_3a_1}) \bullet (\iota_{d_1d_2d_3}^{\Pb} * 1_{\psi(d_3,c)} * \iota^{\N}_{ccc} * \iota^{\N}_{ccc} * 1_{\phi(c,a_3)} * \iota^{\M}_{a_3a_2a_1}) \bullet \\ 
& &(1_{\Pb(d_1,d_2)\Pb(d_2,d_3)\psi(d_3,c)} * \rho^{\N}_c * \rho^{\N}_c * \rho^{\N}_c * \rho^{\N}_c * 1_{\phi(c,a_3)\M(a_3,a_2)\M(a_2,a_1)}) \\ 
&=&  e_c \bullet ((\delta^{\psi}_{d_1d_3cc} \bullet (\iota^{\Pb}_{d_1d_2d_3} * 1_{\psi(d_3,c)} * \rho^{\N}_c)) * (\delta^{\phi}_{cca_3a_1} \bullet (\rho^{\N}_c * 1_{\phi(c,a_3)} * \iota^{\M}_{a_3a_2a_1})))
\end{eqnarray*}

Now that we have reformulated $\delta^{\psi\phi} \bullet (1 * \delta^{\psi\phi} * 1)$ as a colimit of $A$, let us reformulate the other term of the equation, namely $\delta^{\psi\phi} \bullet (\iota * 1 * \iota)$. We have the following diagram, which is commutative; the upper part is commutative because of the definition of $\delta^{\psi\phi}$, and the lower part is trivially commutative.

\noindent
\adjustbox{max width=\textwidth}{
\begin{tikzcd}[row sep = 50, column sep = 50, labels={description}]
\psi_{d_1c}\phi_{ca_1}
\arrow[r,"e_c"]
& (\psi\phi)_{d_1a_1} \\ 
\Pb_{d_1d_3}\psi_{d_3c}\phi_{ca_3}\M_{a_3a_1}
\arrow[r,"1 * e_c * 1"]
\arrow[u, "(\delta \bullet (1 * \rho)) * (\delta \bullet (\rho * 1))"]
& \Pb_{d_1d_3}(\psi\phi)_{d_3a_3}\M_{a_3a_1}
\arrow[u, "\delta^{\psi\phi}"]\\ 
\Pb_{d_1d_2}\Pb_{d_2d_3}\psi_{d_3c}\phi_{ca_3}\M_{a_3a_2}\M_{a_2a_1}
\arrow[r,"1 * e_c * 1"]
\arrow[u,"\iota * 1 * \iota"]
& \Pb_{d_1d_2}\Pb_{d_2d_3}(\psi\phi)_{d_3a_3}\M_{a_3a_2}\M_{a_2a_1}
\arrow[u,"\iota * 1 * \iota"]
\end{tikzcd}
}\\

The commutativity of this diagram (plus the fact that everything is indeed compatible if we take the cowedges over $c_1,c_2 \in C$) proves that $\delta^{\psi\phi} \bullet (\iota * 1 * \iota)$ is the colimit of all terms $B = e_c \bullet ((\delta \bullet (1 * \rho)) * (\delta \bullet (\rho * 1))) \bullet (\iota * 1 * \iota)$. Now an easy application of the interchange law yields:

\begin{eqnarray*}
B &=& e_c \bullet ((\delta^{\psi}_{d_3d_1cc} \bullet (1_{\Pb_{d_1d_3}\psi_{d_3c}} * \rho^{\N}_c)) * (\delta^{\phi}_{cca_3a_1} \bullet (\rho^{\N}_c * 1_{\phi_{ca_3}\M_{a_3a_1}}))) \bullet\\ 
& &(\iota^{\Pb}_{d_1d_2d_3} * 1_{\psi_{d_3c}\phi_{ca_3}} * \iota^{\M}_{a_3a_2a_1})\\
&=& e_c \bullet ((\delta^{\psi}_{d_1d_3cc} \bullet (\iota^{\Pb}_{d_1d_2d_3} * 1_{\psi(d_3,c)} * \rho^{\N}_c)) * (\delta^{\phi}_{cca_3a_1} \bullet (\rho^{\N}_c * 1_{\phi(c,a_3)} * \iota^{\M}_{a_3a_2a_1})))
\end{eqnarray*}

Hence $A = B$, whence their colimits are the same, proving that $\psi\phi$ does indeed satisfy the second distributor condition.
\end{proof}

\begin{prop}\label{Mid}
Let $(\M,A)$ be a $\Bc$-category. Then the distributor $\M: (\M,A) \to (\M,A)$ is an identity for the composition of distributors; in other words, for any distributors $\psi$ and $\phi$ having respectively $(\M,A)$ as domain and codomain, we have $\psi \M = \psi$ and $\M\phi = \M$.
\end{prop}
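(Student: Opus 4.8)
The plan is to prove the identity law by exhibiting, for the composite $\M\phi$, the distributor $\phi$ itself as the universal cowedge, and dually for $\psi\M$; I will spell out the case $\M\phi=\phi$, the case $\psi\M=\psi$ being entirely symmetric (acting on the right instead of the left). Writing $\phi\colon(\N,C)\to(\M,A)$, so that $(\M\phi)(a,c)=\int^{a':A}_{\M}\M(a,a')\phi(a',c)$, I claim that the family
\[
e_{a'} := \delta^{\phi}_{aa'cc}\bullet\big(1_{\M(a,a')\phi(a',c)}*\rho^{\N}_{c}\big)\colon \M(a,a')\phi(a',c)\To\phi(a,c)
\]
is the colimiting cowedge at $(a,c)$. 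The conceptual reason this works — and the reason the argument stays short despite the coend being a genuine colimit — is that this cowedge is \emph{split}: the reflexivity cell gives a section $s:=\rho^{\M}_{a}*1_{\phi(a,c)}\colon\phi(a,c)\To\M(a,a)\phi(a,c)$, and $e_{a}\bullet s=\delta^{\phi}_{aacc}\bullet(\rho^{\M}_{a}*1_{\phi(a,c)}*\rho^{\N}_{c})=1_{\phi(a,c)}$ by the unitality axiom for $\delta^{\phi}$. Thus $\phi(a,c)$ is a retract of one of the objects $\M(a,a)\phi(a,c)$ appearing in the cowedge diagram, which is what makes it an absolute (split) colimit.

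First I would verify that $(\phi(a,c),(e_{a'})_{a'})$ is indeed a cowedge. The relevant compatibility square, for $a_1,a_2\in A$, compares $e_{a_1}$ precomposed with the left $\M$-action on $\phi$ against $e_{a_2}$ precomposed with the idempotency cell $\iota_{aa_1a_2}$ (recall that $\M$ acts on itself through $\iota$, as in the representable-distributor example); both composites rewrite to $\delta^{\phi}_{aa_2cc}\bullet(\cdots*\rho^{\N}_{c})$ after one application of the associativity axiom for $\delta^{\phi}$ together with the interchange law. This is the only genuinely computational point, and it is routine string-diagram bookkeeping.

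Then I would establish universality. Given any cowedge $(w,(f_{a'})_{a'})$, I set $u:=f_{a}\bullet(\rho^{\M}_{a}*1_{\phi(a,c)})\colon\phi(a,c)\To w$. The factorisation $u\bullet e_{a'}=f_{a'}$ follows by precomposing the cowedge condition of $(f_{a'})$ at the pair $(a,a')$ with $\rho^{\M}_{a}*1$: the left-hand side becomes $u\bullet e_{a'}$ by interchange, while the right-hand side collapses to $f_{a'}$ using the unitality axiom $\iota_{a,a,a'}\bullet(\rho^{\M}_{a}*1_{\M(a,a')})=1_{\M(a,a')}$ of the $\Bc$-category $(\M,A)$. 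Uniqueness is immediate from the splitting: if $u'\bullet e_{a'}=f_{a'}$ for all $a'$, then $u'=u'\bullet(e_{a}\bullet s)=(u'\bullet e_{a})\bullet s=f_{a}\bullet s=u$. Hence $\phi(a,c)$ carries the universal cowedge, so the canonical comparison $(\M\phi)(a,c)\to\phi(a,c)$ is an isomorphism; since this identification is induced by the universal property it is automatically compatible with the double-action cells, giving $\M\phi=\phi$ as distributors. The main obstacle is purely the coherence bookkeeping in the cowedge-compatibility check and the interchange rearrangements; the structural heart of the proof — that the reflexivity cells split the colimit — is elementary.
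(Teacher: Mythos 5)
Your proposal is correct and follows essentially the same route as the paper: both exhibit $\phi(a,c)$ itself (resp.\ $\psi(c,a)$ on the other side) as the universal cowedge, with injections of the form $\delta\bullet(1*\rho)$, using the associativity axiom of $\delta$ for the cowedge compatibility, the unitality axiom for the factorisation through $\rho^{\M}_a$, and concluding equality of the double-action cells from uniqueness of colimit-induced maps. Your split-colimit framing additionally makes the uniqueness of the mediating map explicit, a point the paper's proof leaves implicit.
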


\begin{proof}
Let $\psi: (\M,A) \to (\N,C)$ be a distributor. We will prove that for any $a \in A$, $c \in C$, $\psi(c,a)$ satisfies the colimit property defining $(\psi\M)(c,a)$. First let $b$ be an element of $A$, then there exist some arrow $f_b:= \delta^{\psi}_{ccba} \bullet (\rho^{\N}_c * 1_{\psi(c,b)\M(b,a)}): \psi(c,b)\M(b,a) \To \psi(c,a)$. We shall show that for any $b_1,b_2 \in A$, the following diagram is commutative: 

\[
\begin{tikzcd}[column sep = 70, row sep = 50, labels={description}]
\psi(c,b_1)\M(b_1,a)
\arrow[r,"f_{b_1}"]
& \psi(c,a)\\
\psi(c,b_1)\M(b_1,b_2)\M(b_2,a)
\arrow[u,"{1 * (\delta \bullet (1 * \rho))}"]
\arrow[r,"{(\delta \bullet (\rho * 1)) * 1}"]
& \psi(c,b_2)\M(b_2,a)
\arrow[u,"f_{b_2}"]
\end{tikzcd}
\]

This will prove that $\psi(c,a)$ is indeed a cowedge, making it a fair candidate for the role of $(\psi\M)(c,a)$. The commutativity of the diagram is a standard use of the distributor property on $\psi$, since $\delta^{\M}$ is given through $\iota$; the up-left side of the diagram is equal to:

\begin{align*}
&\delta^{\psi}_{ccb_1a} \bullet (\rho_c^{\N} * 1_{\psi(c,b_1)\M(b_1,a)}) \bullet (1_{\psi(c,b_1)} * (\delta^{\M}_{b_1b_2aa} \bullet (1_{\M(b_1,b_2)\M(b_2,a)} * \rho_a^{\M}))) \\
&= \delta^{\psi}_{ccb_1a} \bullet (\rho_c^{\N} * 1_{\psi(c,b_1)\M(b_1,a)}) \bullet (1_{\psi(c,b_1)} * \iota^{\M}_{b_1b_2a})\\
&= \delta^{\psi}_{ccb_1a} \bullet (\rho^{\N}_c * 1_{\psi(c,b_1)} * \iota_{b_1b_2a}^{\M})
\end{align*}

And the bottom-right side is:

\begin{align*}
&\delta^{\psi}_{ccb_2a} \bullet (\rho_c^{\N} * 1_{\psi(c,b_2)\M(b_2,a)}) \bullet  ((\delta^{\psi}_{ccb_1b_2} \bullet (\rho^{\N}_c * 1_{\M(b_1,b_2)})) * 1_{\M(b_2,a)})\\
&= \delta^{\psi}_{ccb_2a} \bullet (1_{\N(c,c)} * \delta^{\psi}_{ccb_1b_2} * 1_{\M(b_2,a)}) \bullet (\rho_c^{\N} * \rho_c^{\N} * 1_{\psi(c,b_1)\M(b_1,b_2)\M(b_2,a)})\\
&= \delta^{\psi}_{ccb_1a} \bullet (\iota^{\N}_{ccc} * 1_{\psi(c,b_1)} * \iota^{\M}_{b_1b_2a}) \bullet (\rho^{\N}_c * \rho_c^{\N} * 1_{\psi(c,b_1)\M(b_1,b_2)\M(b_2,a)})\\
&= \delta^{\psi}_{ccb_1a} \bullet (\rho^{\N}_c * 1_{\psi(c,b_1)} * \iota_{b_1b_2a}^{\M})
\end{align*}

This shows that the above diagram is commutative, proving that $\psi(c,a)$ is indeed a cowedge over all $\psi(c,b)\M(b,a)$. We now have to prove that it is a \textit{universal} cowedge. Let there be some other cowedge $k \in \Bc(ta,tc)$ with $f_b: \psi(c,b)\M(b,a) \To k$ for all $b \in A$, and such that for any $b_1,b_2 \in A$, $g_{b_1} \bullet (1_{\psi(c,b_1)} * (\delta^{\M}_{b_1b_2aa} \bullet (1_{\M(b_1,b_2)\M(b_2,a)} * \rho_a^{\M}))) = g_{b_2} \bullet ((\delta^{\psi}_{ccb_1b_2} \bullet (\rho_c^{\N} * 1_{\M(b_1,b_2)})) * 1_{\M(b_2,a)})$. Then we can define $g_a \bullet (1_{\psi(c,a)} * \rho^{\M}_a): \psi(c,a) \To k$. To conclude, we only need to prove that for any $b \in A$, $g_b = g_a \bullet (1_{\psi(c,a)} * \rho_a^{\M}) \bullet f_b$, that is that the following diagram is commutative: 

\[
\begin{tikzcd}[sep=huge,labels={description}]
& k \\
& \psi(c,a) 
\arrow[u,"g_a \bullet (1 * \rho_a)"]\\
\psi(c,b)\M(b,a) 
\arrow[uur,"g_b", bend left=25]
\arrow[ru,"f_b"]
& & \psi(c,a)\M(a,a)
\arrow[uul,"g_a", bend right=25]
\arrow[lu,"f_a"] \\ 
& \psi(c,b)\M(b,a)\M(a,a)
\arrow[ul,"1 * \iota"]
\arrow[ur,"(\delta \bullet (\rho * 1))*1"]
\end{tikzcd}
\]

An easy computation yields that the following square is commutative: 

\[
\begin{tikzcd}[row sep = 40, column sep =70, labels=description]
\psi(c,b)\M(b,a)
\arrow[r,"f_b"]
\arrow[d,"1 * \rho"]
& \psi(c,a) 
\arrow[d,"1 * \rho"]\\
\psi(c,b)\M(b,a)\M(a,a)
\arrow[r,"(\delta \bullet (\rho * 1 ))*1"]
& \psi(c,a)\M(a,a)
\end{tikzcd}
\]

And we have $(1_{\psi(c,b)} * \iota^{\M}_{baa}) \bullet (1_{\psi(c,b)\M(b,a)} * \rho^{\M}_a) = 1_{\psi(c,b),\M(b,a)}$; therefore: 

\begin{align*}
g_b &= g_b \bullet (1_{\psi(c,b)} * \iota^{\M}_{baa}) \bullet (1_{\psi(c,b)\M(b,a)} * \rho^{\M}_a)\\
&= g_a \bullet ((\delta^{\psi}_{ccba} \bullet (\rho^{\N}_c * 1_{\psi(c,b)\M(b,a)})) * 1_{\M(a,a)}) \bullet (1_{\psi(c,b)\M(b,a)} * \rho^{\M}_a)\\
&= g_a \bullet (1_{\psi(c,a)} * \rho_a^{\M}) \bullet f_b
\end{align*}

This shows that indeed $\psi(c,a)$ satisfies the necessary colimit condition and is thus equal to $(\psi\M)(c,a)$. Now this proves that the underlying matrices of $\psi$ and $\psi\M$ are the same; to prove that both are one same distributor we need to prove $\delta^{\psi\M} = \delta^{\psi}$. To prove that, we use the unicity of the coend, by showing that the following diagram is commutative: 

\[
\begin{tikzcd}[row sep = 50, column sep = 100, labels={description}]
\psi_{c_1b}\M_{ba} 
\arrow[r,"{\delta^{\psi} \bullet (\rho * 1)}"]
& \psi_{c_1a_1}\\
\N_{c_1c_2}\psi_{c_2b}\M_{ba_2}\M_{a_2a_1} 
\arrow[u,"(\delta^{\psi} \bullet (1 * \rho)) * \iota"]
\arrow[r,"{1 * (\delta^{\psi} \bullet (\rho * 1)) * 1}"]
& \N_{c_1c_2}\psi_{c_2a_2}\M_{a_2a_1}
\arrow[u,"\delta^{\psi}"]
\end{tikzcd}
\]

Which is only a direct case of the second distributor condition. This shows that $\psi\M = \M$; the other sense works in the same way.

\end{proof}

\begin{coro}
Let $(\M,A)$ be a $\Bc$-category. Then $\M$ is idempotent, i.e. $\M^2 = \M$.
\end{coro}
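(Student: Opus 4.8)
The plan is to derive this as an immediate specialization of Proposition~\ref{Mid}. The first thing I would recall is that $\M$ is not merely the structural matrix of $(\M,A)$ but genuinely a distributor $\M\colon (\M,A) \to (\M,A)$: this was observed in the example on representable distributors, where taking $f$ to be the identity $\Bc$-functor on $(\M,A)$ exhibits $\M$ as a distributor over itself, with double-action $2$-cells $\delta^{\M}_{b_1 b_2 a_2 a_1} = \iota_{b_1 a_2 a_1} \bullet (\iota_{b_1 b_2 a_2} * 1_{\M(a_2,a_1)})$. Hence $\M$ is a legitimate input to the identity law proved in Proposition~\ref{Mid}.

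Given this, I would simply invoke Proposition~\ref{Mid} with the distributor $\psi$ chosen to be $\M$ itself. Since $\M$ has $(\M,A)$ as its domain, the conclusion $\psi\M = \psi$ of that proposition reads $\M\M = \M$, which is precisely the asserted idempotency $\M^2 = \M$. One could equally take $\phi = \M$ and use the dual equation $\M\phi = \phi$; the two routes coincide here since $\M$ is an endodistributor on $(\M,A)$.

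There is essentially no obstacle to overcome: the entire content has already been discharged in the proof of Proposition~\ref{Mid}, which established both the universal colimit property pinning down the underlying matrix and the agreement of the double-action $2$-cells $\delta^{\psi\M} = \delta^{\psi}$. The only point I would spell out is that the composition in the statement $\M^2 = \M$ is the composition of distributors of Definition~\ref{CompDistrib}, so that $\M^2$ denotes $\M\M = \int^{b:A}_{\M} \M(-,b)\M(b,-)$; the corollary then records, in the language anticipated by the earlier remark following the definition of $\Bc$-matrices, that the matrix attached to a $\Bc$-category is idempotent and can therefore be regarded as a monad in $\Dist(\Bc)$.
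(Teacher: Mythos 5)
Your proposal is correct and is exactly the argument the paper intends: the corollary is stated without proof precisely because it follows by specializing Proposition~\ref{Mid} to $\psi = \M$ (equivalently $\phi = \M$), using the fact, already noted in the example on representable distributors, that $\M$ is an endodistributor on $(\M,A)$. Your additional remarks on the coend composition and the monad interpretation are accurate but not needed for the deduction.
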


\begin{coro}
$\Bc$-categories and distributors between them together form a category $\Dist(\Bc)$. If $\Bc$ is endowed with an involution, we denote by $\Dist_{\s}(\Bc)$ the full subcategory of $\Dist(\Bc)$ whose objects are symmetric $\Bc$-categories.
\end{coro}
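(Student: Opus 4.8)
The claim that $\Dist(\Bc)$ is a category amounts to three things: that composition of distributors is well defined, that identities exist, and that composition is associative. The first is exactly the preceding proposition, asserting that the coend of Definition~\ref{CompDistrib} of two distributors is again a distributor. For the second, the identity morphism at a $\Bc$-category $(\M,A)$ is the distributor $\M$ itself: Proposition~\ref{Mid} shows $\psi\M \cong \psi$ and $\M\phi \cong \phi$, so $\M$ satisfies the unit laws. The only genuinely new point is therefore associativity, and this is where I would concentrate the proof.

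To prove associativity I would fix four $\Bc$-categories and three composable distributors
\[
\begin{tikzcd}
(\M,A) \arrow[r,"\phi"] & (\N,C) \arrow[r,"\psi"] & (\Pb,D) \arrow[r,"\chi"] & (\mathbb{Q},E)
\end{tikzcd}
\]
and compare $(\chi\psi)\phi$ and $\chi(\psi\phi)$ pointwise at a fixed pair $(e,a)$. The essential tool is again that $\Bc$ is closed, so that every pre- and post-composition functor preserves colimits; this lets me slide the outer composition factor through the defining coend on each side, obtaining
\begin{align*}
((\chi\psi)\phi)(e,a) &= \int^{c:C}_{\N} \Big(\int^{d:D}_{\Pb} \chi(e,d)\psi(d,c)\Big)\phi(c,a) = \int^{c:C}_{\N}\int^{d:D}_{\Pb} \chi(e,d)\psi(d,c)\phi(c,a),\\
(\chi(\psi\phi))(e,a) &= \int^{d:D}_{\Pb} \chi(e,d)\Big(\int^{c:C}_{\N} \psi(d,c)\phi(c,a)\Big) = \int^{d:D}_{\Pb}\int^{c:C}_{\N} \chi(e,d)\psi(d,c)\phi(c,a).
\end{align*}
Both sides are thus iterated coends of the single assignment $(c,d) \mapsto \chi(e,d)\psi(d,c)\phi(c,a)$, taken in the two possible orders, so associativity reduces to a Fubini theorem for these coends.

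To carry out the Fubini step I would introduce a notion of \emph{double cowedge} at $(e,a)$: a $1$-cell $w$ equipped with $2$-cells $\chi(e,d)\psi(d,c)\phi(c,a)\To w$ that are compatible with the $\Pb$-action in the $d$-variable and with the $\N$-action in the $c$-variable. Because these two actions multiply disjoint tensor factors, the two compatibility squares are independent, and one checks that a universal double cowedge exists and is computed equally by integrating first over $d$ and then over $c$ or vice versa --- each iterated coend satisfies the universal property of the double cowedge precisely because the inner colimit is preserved by the outer composition. The universal property then yields a canonical isomorphism $(\chi\psi)\phi \cong \chi(\psi\phi)$, and a final check shows that it is compatible with the induced double-action $2$-cells, hence an isomorphism of distributors. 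Once $\Dist(\Bc)$ is known to be a category, the statement about $\Dist_{\s}(\Bc)$ is immediate, since a full subcategory of a category is again a category.

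The main obstacle is the Fubini step. The subtle part is not the interchange of colimits itself, which follows from closedness, but rather setting up the double-cowedge compatibility conditions so that the two actions genuinely commute, and verifying at the end that the comparison isomorphism respects the $\delta$ $2$-cells and not merely the underlying $\Bc$-matrices --- exactly the kind of bookkeeping that made the proof that a composite is a distributor lengthy.
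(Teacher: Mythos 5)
Your proposal is correct, and it is actually more complete than what the paper provides: the paper states this result as a bare corollary, with no proof, relying implicitly on the two preceding propositions (that a composite of two distributors is again a distributor, and Proposition~\ref{Mid}, that $\M$ acts as a unit). You rightly observe that those propositions cover only well-definedness of composition and the unit laws, and that associativity is nowhere addressed; your Fubini argument --- sliding the outer factor through the defining colimit using closedness of $\Bc$, then comparing both iterated coends against a universal ``double cowedge'' for which dinaturality in each variable separately suffices --- is the standard and correct way to fill that gap, and your closing remark that the comparison isomorphism must also be checked against the induced double-action $2$-cells identifies exactly the bookkeeping the paper's style would demand. One caveat applies equally to your write-up and to the paper: composites are defined by a universal property, hence only up to canonical isomorphism, so associativity (and even the unit laws, which Proposition~\ref{Mid} states as equalities by identifying a colimit with its comparison object) holds on the nose only after a coherent choice of colimits; strictly speaking $\Dist(\Bc)$ is a category only modulo this standard abuse, which is consistent with its later incarnation in the paper as a bicategory.
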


\begin{deft}
Let $\phi_1, \phi_2: (\M,A) \to (\N,C)$ be two distributors between two $\Bc$-categories. A \textbf{morphism of distributors} from $\phi_1$ to $\phi_2$ is the data for all $c \in C$, $a \in A$, of a $2$-cell $f_{ca}: \phi_1(c,a) \To \phi_2(c,a)$ in $\Bc$, satisfying the following coherence conditions, for all $a,a_1,a_2 \in A$, $c,c_1,c_2 \in C$:
\begin{enumerate}
\item $f_{c_1a} \bullet \delta_{c_1c_2aa}^{\phi_1} \bullet (1_{\N_{c_1c_2}(\phi_1)_{c_2a}} * \rho_a^{\M}) \simeq \delta_{c_1c_2aa}^{\phi_2} \bullet (1_{\N_{c_1c_2}(\phi_2)_{c_2a}} * \rho_a) \bullet (1_{\N_{c_1c_2}} * f_{c_2a})$
\item $f_{ca_2} \bullet \delta_{cca_1a_2}^{\phi_1} \bullet (\rho_c^{\N} * 1_{(\phi_1)_{ca_1}\M_{a_1a_2}}) \simeq \delta_{cca_1a_2}^{\phi_2} \bullet (\rho_c * 1_{(\phi_2)_{ca_1}\M_{a_1a_2}}) \bullet (f_{ca_1} * 1_{\M_{a_1a_2}})$
\end{enumerate}
\end{deft}

\begin{prop}
$\Bc$-categories, distributors between them and morphisms between distributors together form a bicategory that we also denote by $\Dist(\Bc)$. If $\Bc$ is endowed with an involution, then $\Dist_{\s}(\Bc)$ is an involutive bicategory.
\end{prop}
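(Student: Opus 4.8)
The plan is to build the bicategory $\Dist(\Bc)$ on top of the material already established — that composites of distributors are distributors and that each $\M$ is a unit (Proposition~\ref{Mid}) — so that the genuinely new work concerns the $2$-cell layer and the coherence data, most of which will be inherited from $\Bc$ via the universal property of coends. First I would verify that, for fixed $\Bc$-categories $(\M,A)$ and $(\N,C)$, the distributors $(\M,A)\to(\N,C)$ and the morphisms between them form a category $\Dist(\Bc)((\M,A),(\N,C))$. The identity on $\phi$ is the family $(1_{\phi(c,a)})_{c,a}$ and vertical composition is taken pointwise, $(g\bullet f)_{ca}=g_{ca}\bullet f_{ca}$; the two coherence conditions defining a morphism of distributors are equalities of pasting composites in $\Bc$ that are stable under pointwise vertical composition, so the composite is again a morphism of distributors, and associativity and unitality are inherited from vertical composition of $2$-cells in $\Bc$.

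Next I would promote horizontal composition to a functor on these hom-categories. Given morphisms of distributors $f\colon\phi_1\To\phi_2$ and $g\colon\psi_1\To\psi_2$, precomposing the cowedge injections of $\psi_2\phi_2$ with the $2$-cells $g_{dc}*f_{ca}$ produces a family $\psi_1(d,c)\phi_1(c,a)\To(\psi_2\phi_2)(d,a)$ which one checks is compatible in the sense of Definition~\ref{CompDistrib}; the universal property of the coend defining $(\psi_1\phi_1)(d,a)$ then yields a unique induced $2$-cell $(g*f)_{da}$. That this family is a morphism of distributors, that horizontal composition preserves identities and vertical composites, and that the interchange law holds are all established by the same principle: two $2$-cells out of a colimit coincide as soon as they agree after precomposition with every cowedge injection, and the resulting pointwise identities reduce to the interchange law in $\Bc$ together with the defining equation for $\delta^{\psi\phi}$.

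It then remains to supply the coherence data. The unit $1$-cells are the distributors $\M$, and Proposition~\ref{Mid} furnishes the unitors $\M\phi\cong\phi$ and $\phi\M\cong\phi$, whose naturality again follows by coend-uniqueness. For the associator I would use, as in the proof that composites are distributors, that pre- and post-composition in $\Bc$ preserve colimits, so that both $(\chi\psi)\phi$ and $\chi(\psi\phi)$ compute the iterated coend $\int^{c'}\!\int^{c}\chi(e,c')\psi(c',c)\phi(c,a)$; a Fubini argument for coends, combined with the associator of $\Bc$ on the triple product, gives the comparison isomorphism, and the pentagon and triangle axioms then reduce — via the universal properties of the coends involved — to the pentagon and triangle of $\Bc$, which hold by hypothesis. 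Finally, for the involutive case I would extend the assignment $\phi\mapsto\phi^\circ$ of the earlier proposition to $2$-cells by $(f^\circ)_{ac}=(f_{ca})^\circ$, check that this is again a morphism of distributors (its conditions are the $(-)^\circ$-images of those for $f$, using that the involution reverses horizontal composition and fixes the data $\rho,\iota$ of symmetric $\Bc$-categories), and verify that $(-)^\circ\colon\Dist_{\s}(\Bc)^{\op}\to\Dist_{\s}(\Bc)$ is the identity on objects, is a pseudofunctor with $(\psi\phi)^\circ\cong\phi^\circ\psi^\circ$ (obtained by applying the involution of $\Bc$ inside the defining coend and appealing to Fubini), and is involutive.

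I expect the main obstacle to be the construction of the associator and the verification of the pentagon identity: this is the point where the coend description of composition and the non-strict associativity of $\Bc$ interact most delicately, since one must carry the $\Bc$-associators through the Fubini comparison of nested coends and confirm that no coherence obstruction arises beyond the one already guaranteed by the coherence of $\Bc$ itself. The remaining verifications, although numerous, are all instances of the single recurring argument that a $2$-cell out of a coend is determined by its composites with the cowedge injections.
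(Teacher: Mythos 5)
Your proposal is correct and follows essentially the same route as the paper: horizontal composition of morphisms of distributors is induced through the universal property of the coends of Definition~\ref{CompDistrib}, vertical composition is taken pointwise, and the unit $1$-cells are the identity distributors $\M$ with Proposition~\ref{Mid} supplying unitality. In fact you go somewhat further than the paper's own proof, which only verifies that the two compositions of $2$-cells are well defined (i.e.\ again morphisms of distributors) and is silent on the associator, the pentagon and triangle axioms, and the extension of the involution to $2$-cells; your Fubini-for-coends argument and the assignment $(f^{\circ})_{ac}=(f_{ca})^{\circ}$ fill in exactly those omitted verifications in a plausible way.
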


\begin{proof}
We give here the definition for the horizontal composition of morphisms of distributors. Supposes one has the following diagram of distributors:
\[
\begin{tikzcd}[sep=huge]
(\Pb,D)
& (\N,C)
\arrow[l,"\psi_1" {description, name={s1}}]
& (\M,A)
\arrow[l,"\phi_1" {description, name={h1}}]
\\
(\Pb,D)
& (\N,C)
\arrow[l,"\psi_2" {description, name={s2}}]
& (\M,A)
\arrow[l,"\phi_2" {description, name={h2}}]
\arrow[Rightarrow, from={h1}, to={h2}, "f", shorten <= 5, shorten >= 3]
\arrow[Rightarrow, from={s1}, to={s2}, "g", shorten <= 5, shorten >= 3]
\end{tikzcd}
\]
For $a \in A$, $d \in D$, we define the component $(g*f)_{da}$ of the horizontal composite of the morphisms of distributors $f$ and $g$, as the canonical morphism obtained through the consideration of the family of the following cowedges:

\[
\begin{tikzcd}[sep=huge]
& (\psi_2)_{dc_1}(\phi_2)_{c_1a} 
\arrow[r]
& (\psi_2\phi_2)_{da} \\ 
(\psi_1)_{dc_1}(\phi_1)_{c_1a}
\arrow[ru,"{g_{dc_1} * f_{c_1a}}" {description}]
& (\psi_2)_{dc_1}\N_{c_1c_2}(\phi_2)_{c_2a}
\arrow[u,"{1 * (\delta \bullet (1 * \rho))}" {description}]
\arrow[r,"{(\delta \bullet (\rho * 1)) * 1}"]
& (\psi_2)_{dc_2}(\phi_2)_{c_2a}
\arrow[u] \\ 
(\psi_1)_{dc_1}\N_{c_1c_2}(\phi_1)_{c_2a}
\arrow[u,"{1 * (\delta \bullet (1 * \rho))}"]
\arrow[r,"{(\delta \bullet (\rho * 1)) * 1}"]
\arrow[ru,"{g_{dc_1} * 1 * f_{c_2a}}" {description}]
& (\psi_1)_{dc_2}(\phi_1)_{c_2a}
\arrow[ru,"{g_{dc_2} * f_{c_2a}}" {description}]
\end{tikzcd}
\]\\

The commutativity of the above diagram being ensured by the coherence conditions for morphisms of distributors. The vertical composite is more easily defined, simply as a pointwise composition: consider three distributors $\phi_1,\phi_2,\phi_3: (\M,A) \to (\N,C)$, and consider two morphisms $f: \phi_1 \to \phi_2$ and $g: \phi_2 \to \phi_3$. Then $g \bullet f$ is pointwise defined: $(g \bullet f)_{ca} = g_{ca} \bullet f_{ca}$. \\

Now let us prove that the vertical composite $gf$ is still a morphism of distributors; for any $c_1,c_2 \in C$, $a \in A$, we have the following; we simplify the notation for the sake of clarity: 

\begin{align*}
(gf)_{c_1a} \bullet \delta \bullet (1 * \rho) &\simeq g_{c_1a} \bullet f_{c_1a} \bullet \delta \bullet (1 * \rho)\\ 
& \simeq g_{c_1a} \bullet \delta \bullet (1 * \rho) \bullet (1 * f_{c_2a})\\ 
& \simeq \delta \bullet (1 * \rho) \bullet (1 * g_{c_2a}) \bullet (1 * f_{c_2a})\\ 
& \simeq \delta \bullet (1 * \rho) \bullet (1 * (gf)_{c_2a})
\end{align*}

The other condition works in the same way. For the horizontal composition however, as it is defined using the universal property of the colimit, it is more difficult. We want to prove that the $2$-cell:
$(gf)_{d_1a} \bullet \delta_{d_1d_2aa}^{\psi_1\phi_1} \bullet (1_{\Pb_{d_1,d_2}(\psi_1\phi_1)_{d_2,a}} * \rho_a)$ in $\Bc$ is isomorphic to the $2$-cell $\delta_{d_1d_2aa}^{\psi_2\phi_2} \bullet (1_{\Pb_{d_1,d_2}(\psi_2\phi_2)_{d_2,a}} * \rho_a) \bullet (1_{\Pb_{d_1,d_2}} * (gf)_{d_2a})$. To understand better, let us draw the commutative square which defines them at any $c \in C$; the first term is obtained through:

\[
\begin{tikzcd}[row sep = 50, column sep = 50, labels={description}]
\psi_2(d_1,c)\phi_2(c,a)
\arrow[r,"{e_c^{d_1a}}"]
&(\psi_2\phi_2)(d_1,a)\\ 
\psi_1(d_1,c)\phi_1(c,a)
\arrow[r]
\arrow[u,"{g_{d_1c} * f_{ca}}"]
& (\psi_1\phi_1)(d_1,a)
\arrow[u]\\ 
\Pb(d_1,d_2)\psi_1(d_2,c)\phi_1(c,a)\M(a,a)
\arrow[r]
\arrow[u,"{(\delta * \delta) \bullet (1 * \rho * \rho * 1)}"]
& \Pb(d_1,d_2)(\psi_1\phi_1)(d_2,a)
\arrow[u]\\ 
\Pb(d_1,d_2)\psi_1(d_2,c)\phi_1(c,a)
\arrow[r]
\arrow[u,"{1 * \rho}"]
&\Pb(d_1,d_2)(\psi_1\phi_1)(d_2,a)
\arrow[u]
\end{tikzcd}
\]

The diagram to obtain the second term is given by:

\[
\begin{tikzcd}[row sep = 50, column sep = 50, labels={description}]
\psi_2(d_1,c)\phi_2(c,a)
\arrow[r,"{e_c^{d_1a}}"]
&(\psi_2\phi_2)(d_1,a)\\ 
\Pb(d_1,d_2)\psi_2(d_2,c)\phi_2(c,a)\M(a,a)
\arrow[r]
\arrow[u,"{(\delta * \delta) \bullet (1 * \rho * \rho * 1)}"]
& \Pb(d_1,d_2)(\psi_2\phi_2)(d_2,a)\M(a,a)
\arrow[u]\\ 
\Pb(d_1,d_2)\psi_2(d_2,c)\phi_2(c,a)
\arrow[r]
\arrow[u,"{1 * \rho}"]
& \Pb(d_1,d_2)(\psi_2\phi_2)(d_2,a)
\arrow[u]\\ 
\Pb(d_1,d_2)\psi_1(d_2,c)\phi_1(c,a)
\arrow[r]
\arrow[u,"{1 * g_{d_2c} * f_{ca}}"]
&\Pb(d_1,d_2)(\psi_1\phi_1)(d_2,a)
\arrow[u]
\end{tikzcd}
\]

Now we can restrict to proving that in these two diagrams the left part are isomorphic, i.e. that $(g_{d_1c} * f_{ca}) \bullet (\delta * \delta) \bullet (1 * \rho * \rho * 1) \bullet (1 * \rho)$ is isomorphic to $(\delta * \delta) \bullet (1 * \rho * \rho * 1) \bullet (1 * \rho) \bullet (1 * g_{d_2c} * f_{ca})$. A quick computation yields that the rightmost $\delta$ in both expression (the one which relates to $\phi$) can be cancelled with $\rho_a$ and $\rho_c$, and the equality that we want to prove becomes: \[(\delta \bullet (1 * \rho) \bullet g_{d_2c}) * f_{ca} \simeq (g_{d_1c} \bullet \delta \bullet (1 * \rho)) * f_{ca}\]
This is precisely the coherence condition on $g$. The other coherence condition is obtained in the same way, through the one on $f$.
\end{proof}

\begin{ex}[$2$-cells of $\Bc$]
Let there be a $2$-cell $\alpha: f \To g$ between two arrows $f,g: x \to y$ in $\Bc$. Then $\aa$ defines a morphism between the distributors defined by $f$ and $g$. This assignation yields a strict pseudofunctor $\Bc \to \Dist(\Bc)$.
\end{ex}

\begin{ex}[$\V$-natural transformations]
Let $\V$ be a monoidal closed category, and denote by $\Bv$ its delooping. Recall (see~\cite{Kelly} for example for more details) that a $\V$-natural transformation $\alpha$ between two $\V$-functors $f,g: \C \to \D$ is the data for any $c \in \Ob(\C)$ of an arrow $I \to \D(f(c),g(c))$. Now let us denote $(\N,C)$ and $(\Pb,D)$ for the $\Bv$-categories corresponding to $\C$ and $\D$; from $f$ and $g$ we can get four representable distributors $f_!,g_!$ and $f^!,g^!$ between these $\Bv$-categories. Then $\aa$ defines two morphisms of distributors: $\aa_!: f_! \To g_!$ and $\aa^!: f^! \to g^!$. As an example, we describe $\aa_!$; let there be $c \in \Ob(\C)$ and $d \in \Ob(\D)$, then $f_!(d,c) = \D(d,f(c))$ and $g_!(d,c) = \D(d,g(c))$. Define $(\aa_!)_{dc}$ as the composite:
\[\D(d,f(c)) = \D(d,f(c)) \otimes I \to \D(d,f(c)) \otimes \D(f(c),g(c)) \to \D(d,g(c))\] 
Then $\aa_!$ is a morphism of distributors.
\end{ex}

\begin{rk}
Note that we have not given any definition for natural transformation of $\Bc$-functors; however, since $\Dist(\Bc)$ equips $\Cat(\Bc)$ with proarrows, we could in theory define the $2$-structure of $\Cat(\Bc)$ from that of $\Dist(\Bc)$, stating that $2$-cells between $\Bc$-functors are given by $2$-cells between the corresponding representable distributors.
\end{rk}

\subsection{Complete $\Bc$-categories}

In the case of a locale $X$, sheaves over $X$ can be expressed as a subcategory of the category of $X$-categories; that is the subject of~\cite{WaltersCahiers}, which identifies ``Cauchy-complete'' $X$-categories as the good class to recover the topos of sheaves on $X$. Similar results have been obtained for general sites using quantaloids (a particular class of locally ordered bicategories);~\cite{HeymansThesis} shows that the topos $\wh{\C}_{J}$ of sheaves on a site $(\C,J)$ can be obtained as the category of symmetrically complete $\Q$-categories, where $\Q$ is a quantaloid obtained directly from the site $(\C,J)$. We present here, in a style more akin to~\cite{NawazThesis,Gylys} (i.e. using singletons), the notion (already present in~\cite{StreetEnrichment}) of complete $\Bc$-category.

\begin{deft}
A $\Bc$-category $(\M,A)$ is said to be (\textbf{Cauchy-})\textbf{complete} whenever any distributor $\phi: (\N,C) \to (\M,A)$ which has a right adjoint in $\Dist(\Bc)$ is representable, for any $\Bc$-category $(\N,C)$.
\end{deft}

As shown below, the completeness of a $\Bc$-category can be checked on a smaller, particularly important class of distributors, namely the singletons.

\begin{deft}
Let $(\M,A)$ be a $\Bc$-category. A \textbf{presingleton} of $(\M,A)$ is the data of a $0$-cell $* \in \Ob(\Bc)$ and of a distributor $\sigma: (\id_*,*) \to (\M,A)$.
\end{deft}

\begin{rk}\label{SingCondReformulation}
As $\Bc$-categories of the form $(\id_*,*)$ are rather simple, the distributor conditions acquire a simpler expression in the case of presingletons: given a $\Bc$-category $(\M,A)$, a presingleton $\sigma: (\id_*,*) \to (\M,A)$ is equivalently given by
\begin{enumerate}
\itb For all $a \in A$, a $1$-cell $\sigma(a): * \to ta$,
\itb For all $a,b \in A$, a $2$-cell $\delta_{ab}: \M(a,b)\sigma(b) \To \sigma(a)$
\end{enumerate}
subject to the following conditions: 
\begin{enumerate}
\itb For all $a \in A$, $\delta_{aa} \bullet (\rho_a^{\M} * 1_{\sigma(a)}) = 1_{\sigma(a)}$.
\itb For all $a,b,c \in A$, $\delta_{ab} \bullet (1_{\M(a,b)} * \delta_{b,c}) = \delta_{ac} \bullet (\iota_{abc}^{\M} * 1_{\sigma(c)})$.\\
\end{enumerate} 

Similarly, a ``precosingleton'', i.e. a distributor $\tau: (\M,A) \to (\id_*,*)$, is equivalent to the data of
\begin{enumerate}
\itb For any $a \in A$, a $1$-cell $\tau(a): ta \to *$,
\itb For any $a,b \in A$, a $2$-cell $\delta_{a,b}: \tau(a)\M(a,b) \To \tau(b)$
\end{enumerate}
such that
\begin{enumerate}
\itb For any $a \in A$, $\delta_{aa} \bullet (1_{\tau(a)} * \rho_a^{\M}) = 1_{\tau(a)}$
\itb For any $a,b,c \in A$, $\delta_{ba} \bullet (\delta_{cb} * 1_{\M(b,a)}) = \delta_{ca} \bullet (1_{\tau(c)} * \iota_{cba}^{\M})$
\end{enumerate}
\end{rk}

\begin{prop}[$\Bc$-category of presingletons]
Let $(\M,A)$ be a $\Bc$-category, and denote by $\cc PA$ the set of all its presingletons.
\begin{enumerate}
\itb For any $\s: (\id_x,x) \to (\M,A)$, we write $t\s = x$. This defines a typing function $\cc PA \to \Ob(\Bc)$.
\itb For any two presingletons $\s,\t$ of $(\M,A)$, write $\Pb(\s,\t)$ for the colimit of the following diagram in $\Bc(t\t,t\s)$: 
\begin{enumerate}
\itb The objects of the diagram are $1$-cells $t\t \to t\s$ in $\Bc$ for which there exists a morphism of distributors $\s f \to \t$, taken as many times as there is such a morphism of distributors. In other words, they are the image of the projection from the set of pairs $(f,\aa_f)$ with $f: t\t \to t\s$ a $1$-cell in $\Bc$ and $\aa_f: \s f \To \t$ a $2$-cell in $\Dist(\Bc)$.
\itb The arrows of the diagram are the $2$-cells $\beta: f \to g$ for which $\aa_g \bullet (1_{\s} * \beta) \simeq \aa_f$.
\end{enumerate}
\end{enumerate}
Then $(\Pb,\cc PA)$ is a $\Bc$-category.
\end{prop}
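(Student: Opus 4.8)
The plan is to exhibit reflexivity and idempotency $2$-cells making $(\Pb,\cc PA)$ a $\Bc$-category and then to verify the associativity and unitality axioms. First I would unwind the data. By Remark~\ref{SingCondReformulation}, a presingleton $\sigma$ of type $t\sigma$ amounts to $1$-cells $\sigma(a): t\sigma \to ta$ with action cells $\M(a,b)\sigma(b) \To \sigma(a)$. For a $1$-cell $f: t\tau \to t\sigma$, regarded as a distributor $(\id_{t\tau}, t\tau) \to (\id_{t\sigma}, t\sigma)$ via the functor $\Bc \to \Dist(\Bc)$, the composite $\sigma f$ is the presingleton of type $t\tau$ with $(\sigma f)(a) = \sigma(a) f$ (the one-element indexing set of $(\id_{t\sigma}, t\sigma)$, whose structural matrix is $\id_{t\sigma}$, collapses the defining coend to the plain horizontal composite). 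A morphism of distributors $\alpha_f: \sigma f \To \tau$ is thus a compatible family of $2$-cells $\sigma(a) f \To \tau(a)$. Since $\Bc$ is locally cocomplete the colimit $\Pb(\sigma,\tau)$ of the indicated diagram in $\Bc(t\tau,t\sigma)$ exists, with universal property: a $2$-cell $\Pb(\sigma,\tau) \To w$ is the same as a family of $2$-cells $f \To w$, one for each object $(f,\alpha_f)$, compatible with the arrows $\beta$ of the diagram. (In the posetal case of a quantaloid this colimit is $\bigvee\{f : \sigma f \leq \tau\}$, the right lift of $\tau$ through $\sigma$, so $\Pb$ generalizes the lift construction.)

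Next I would define the two structure cells. For reflexivity, $f = \id_{t\sigma}$ satisfies $\sigma\,\id_{t\sigma} = \sigma$ by the identity laws for distributor composition (Proposition~\ref{Mid}), so $(\id_{t\sigma}, 1_\sigma)$ is an object of the diagram computing $\Pb(\sigma,\sigma)$; I set $\rho_\sigma: \id_{t\sigma} \To \Pb(\sigma,\sigma)$ to be the corresponding colimit injection. For idempotency, I use that $\Bc$ is closed, so pre- and post-composition preserve colimits; hence $\Pb(\sigma,\tau)\Pb(\tau,\upsilon) = \colim_{(f,\alpha_f),(g,\beta_g)} fg$, a colimit over the product of the two defining diagrams. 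To each pair I associate $fg: t\upsilon \to t\sigma$ with the factorization $\sigma(fg) = (\sigma f)g \To \tau g \To \upsilon$ obtained by whiskering $\alpha_f$ on the right by $g$ and postcomposing with $\beta_g$; this makes $(fg, \beta_g \bullet (\alpha_f * 1_g))$ an object of the diagram for $\Pb(\sigma,\upsilon)$, whose injection supplies a cocone component $fg \To \Pb(\sigma,\upsilon)$. Checking that this family is compatible with the arrows of the product diagram, it factors through the colimit and yields $\iota_{\sigma\tau\upsilon}: \Pb(\sigma,\tau)\Pb(\tau,\upsilon) \To \Pb(\sigma,\upsilon)$.

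It then remains to verify the coherence axioms. Unitality follows from the unit laws of composition in $\Bc$: whiskering a pair $(f,\alpha_f)$ by $(\id_{t\tau}, 1_\tau)$ returns $(f\,\id_{t\tau}, \alpha_f) = (f,\alpha_f)$, so $\iota_{\sigma\tau\tau} \bullet (1 * \rho_\tau)$ acts as the identity on each colimit injection and hence equals $1_{\Pb(\sigma,\tau)}$; the other unit law is symmetric. Associativity is proved by comparing the two iterated composites over the product of three defining diagrams: both reduce to the single assignment sending $(f,g,h)$ to $fgh: t\omega \to t\sigma$ with factorization $((\sigma f)g)h \To (\tau g)h \To \upsilon h \To \omega$, using associativity of horizontal composition in $\Bc$ together with the associativity of composing morphisms of distributors in $\Dist(\Bc)$; since both sides induce the same cocone into $\Pb(\sigma,\omega)$ they coincide by uniqueness of the factorization through the colimit.

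The main obstacle is the well-definedness of $\iota$, and with it associativity: I must show the candidate cocone $\{fg \To \Pb(\sigma,\upsilon)\}$ respects the arrows $(\beta,\gamma)$ of the product diagram, i.e. that whenever $\beta: f \to f'$ and $\gamma: g \to g'$ are compatible with $\alpha_f,\alpha_{f'}$ and $\beta_g,\beta_{g'}$ respectively, the whiskered cell $\beta * \gamma: fg \to f'g'$ is itself an arrow in the diagram for $\Pb(\sigma,\upsilon)$, namely that $(\beta_{g'} \bullet (\alpha_{f'} * 1_{g'})) \bullet (1_\sigma * (\beta * \gamma)) \simeq \beta_g \bullet (\alpha_f * 1_g)$. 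This is where the genuinely bicategorical content lies: it requires combining $\alpha_{f'} \bullet (1_\sigma * \beta) \simeq \alpha_f$ and $\beta_{g'} \bullet (1_\tau * \gamma) \simeq \beta_g$ through repeated use of the interchange law (and the associators of $\Bc$, should $\Bc$ fail to be strict). Once this compatibility is secured, all remaining verifications are formal manipulations of colimit injections and uniqueness of factorizations.
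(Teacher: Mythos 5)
Your proposal is correct and follows essentially the same route as the paper: $\rho_\sigma$ is the colimit injection at $(\id_{t\sigma},1_\sigma)$, and $\iota_{\sigma\tau\upsilon}$ is induced, via closedness of $\Bc$ (pre-/post-composition preserving colimits), by sending each pair $(f,\alpha_f),(g,\alpha_g)$ to $(fg,\,\alpha_g\bullet(\alpha_f * 1_g))$ in the base of the colimit defining $\Pb(\sigma,\upsilon)$, with the coherence axioms checked on colimit injections. The only difference is one of detail: the paper dismisses both the cocone-compatibility of this assignment and the unitality/associativity axioms as ``standard calculus with colimits'' (pointing to the analogous computation in the proof of Definition~\ref{GrothConstrDef}), whereas you rightly isolate the compatibility with the arrows $(\beta,\gamma)$ of the product diagram as the step carrying the actual bicategorical content.
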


\begin{proof}
Let $\s,\tau,\upsilon$ be three presingletons on $(\M,A)$. Define $\rho_{\s}^{\Pb}$ as the canonical injection of $\id_{t\s}$ in the colimit $\Pb(\s,\s)$. Let $(f,\aa_f: \s f \to \tau)$ and $(g,\aa_g: \tau g \to \upsilon)$ be two elements at the base of the colimits defining $\Pb(\s,\tau)$ and $\Pb(\tau,\upsilon)$; then $(fg,\aa_g \bullet (\aa_f * 1_g))$ is at the base of the colimit defining $\Pb(\s,\upsilon)$. Since $\Bc$ is closed, this suffices to obtain a $2$-cell $\iota_{\s\tau\upsilon}^{\Pb}: \Pb(\s,\tau)\Pb(\tau,\upsilon) \To \Pb(\s,\upsilon)$. Now verifying the coherence conditions is only standard calculus with colimits, see the end of the proof of~\ref{GrothConstrDef} for a very similar proof pattern.
\end{proof}

\begin{ex}[$1$-cells of $\Bc$]
For any objects $x,y$ in $\Bc$, a distributor $(\id_x,x) \to (\id_y,y)$ is the same thing as a $1$-cell $f: x \to y$ in $\Bc$. In particular, any $1$-cell $f: x \to y$ defines a presingleton of $(\id_y,y)$.
\end{ex}

\begin{deft}
A presingleton is said to be a \textbf{singleton} if it has a right adjoint in the bicategory $\Dist(\Bc)$. 
\end{deft}

\begin{rk}
An adjunction $\s \dashv \tau$ is given by:
\begin{enumerate}
\itb A unit $\eta: \id_* \To \int^{a:A}_{\M} \tau(a)\s(a)$
\itb A counit, given for all $a,b\in A$ by a $2$-cell $\s(a)\tau(b) \To \M(a,b)$ 
\end{enumerate}
subject to the usual triangular identities.
\end{rk}

\begin{rk}\label{AdjHomSet}
An adjunction can also be described in a more usual hom-set-theoretical way; for instance, it is easy to prove that a presingleton $\s: (\id_{t\s},t\s) \to (\M,A)$ and a precosingleton $\s^*: (\M,A) \to (\id_{t\s},t\s)$ are adjoint in $\Dist(\Bc)$ if and only if for any presingleton $\tau: (\id_{t\tau},t\tau) \to (\M,A)$ and any $1$-cell $f: t\tau \to t\s$ (i.e. any singleton of type $t\tau$ of $(\id_{t\s},t\s)$), we have the following isomorphism: \[\Pb_{\M,A}(\s f,\tau) \simeq \Pb_{(\id_{t\s},t\s)}(f,\s^*\tau)\]
\end{rk}

\begin{rk}\label{MateFormula}
Even more generally, the notion of adjunction in a bicategory can be reformulated in ``hom-set-theoretical terms'' by considering the notion of \textit{mates}. A rigorous treatment is presented in section 6.1 of~\cite{JohnsonYau2DimCats}; we recall here the main points adapted to our case, of the bicategory $\Dist(\Bc)$. Consider four $\Bc$-categories $(\M_1,A_1)$, $(\M_2,A_2)$, $(\N_1,C_1)$, $(\N_2,C_2)$, and four distributors $\phi_1: (\M_1,A_1) \to (\N_1,C_1)$, $\phi_2: (\M_2,A_2)\to (\N_2,C_2)$, $\psi_{\M}: (\M_1,A_1) \to (\M_2, A_2)$ and $\psi_{\N}: (\N_1,C_1) \to (\N_2,C_2)$. Suppose that $\phi_1$ and $\phi_2$ are maps in $\Dist(\Bc)$, with right adjoints $\phi_1^*$ and $\phi_2^*$. Then there is a bijection of hom-sets: 
\begin{center}
$
\Hom_{\Dist(\Bc)((\M_1,A_1),(\N_2,C_2))}(\phi_2\psi_{\M},\psi_{\N}\phi_1)$

$\cong$

$\Hom_{\Dist(\Bc)((\N_1,C_1),(\M_2,A_2))}(\psi_{\M}\phi_1^*,\phi_2^*\psi_{\N})$
\end{center}

This isomorphism sends any morphism of distributors $\alpha: \phi_2\psi_{\M} \To \psi_{\N}\phi_1$ to its \textbf{mate} which is defined by the following diagram:

\[
\begin{tikzcd}[column sep = 30, row sep = 50]
(\M_2,A_2)
& {}
& (\M_2,A_2)
\arrow[ll,"{\M_2 = \id_{(\M_2,A_2)}}"' {name={112}, description}]
& (\M_1,A_1)
\arrow[l,"{\psi_{\M}}"'{name={13}, description}]
& (\N_1,C_1)
\arrow[l,"{\phi_1^*}"'{name={14}, description}]\\ 
(\M_2,A_2)
& (\N_2,C_2)
\arrow[l,"{\phi_2^*}"{name={21}, description}]
& (\M_2,A_2)
\arrow[l,"{\phi_2}"{name={22}, description}]
\arrow[ll,"{}"{name={212}}, phantom]
& (\M_1,A_1)
\arrow[l,"{\psi_{\M}}"{name={23}, description}]
\arrow[ll,"{}"{name={223}}, phantom]
& {}\\ 
{}
& (\N_2,C_2)
& (\N_1,C_1)
\arrow[l,"{\psi_{\N}}"{name={32}, description}]
& (\M_1,A_1)
\arrow[l,"{\phi_1}"{name={33}, description}]
\arrow[ll,"{}"{name={323}}, phantom]
& (\N_1,C_1)
\arrow[l,"{\phi_1^*}"{name={34}, description}]
\arrow[ll,"{}"{name={334}}, phantom]\\ 
(\M_2,A_2)
& (\N_2,A_2)
\arrow[l,"{\phi_2^*}"{name={41}, description}]
& (\N_1,C_1)
\arrow[l,"{\psi_{\N}}"{name={42}, description}]
& {}
& (\N_1,C_1)
\arrow[ll,"{\N_1 = \id_{(\N_1,C_1)}}"{name={434}, description}]
\arrow[Rightarrow, from={112}, to={212}, "{\eta_{\phi_2}}", shorten >=3, shorten <=5]
\arrow[Rightarrow, from={13}, to={23}, "{1}", shorten >=3, shorten <=5]
\arrow[Rightarrow, from={14}, to={34}, "{1}", shorten >=3, shorten <=5]
\arrow[Rightarrow, from={21}, to={41}, "{1}", shorten >=3, shorten <=5]
\arrow[Rightarrow, from={223}, to={323}, "{\alpha}", shorten >=3, shorten <=5]
\arrow[Rightarrow, from={32}, to={42}, "{1}", shorten >=3, shorten <=5]
\arrow[Rightarrow, from={334}, to={434}, "{\e_{\phi_1}}", shorten >=3, shorten <=5]
\end{tikzcd}
\]

The mate of $\aa$ is therefore given by $(1 * 1 * \e_{\phi_1}) \bullet (1 * \alpha * 1) \bullet (\eta_{\phi_2} * 1 * 1)$. A similar construction exists in the other sense.

\end{rk}

\begin{deft}
Suppose that $\Bc$ is endowed with an involution. A singleton $\s$ of a symmetric $\Bc$-category is said to be \textbf{symmetric} if it is a symmetric map in $\Dist(\Bc)$, i.e. if its right adjoint is $\s^{\circ}$.
\end{deft}

\begin{ex}[$\Bc$-category of singletons]
Let $(\M,A)$ be a $\Bc$-category; we have its $\Bc$-category of presingletons $(\Pb,\cc PA)$. If we restrict $\cc PA$ to only the singletons of $(\M,A)$, we get another $\Bc$-category $(\Sb,\C A)$ which we call the $\Bc$-category of singletons. If $(\M,A)$ is symmetric, we can restrict $(\Pb,\cc PA)$ to symmetric singletons and get a symmetric $\Bc$-category $(\Sb,\C_{\s} A)$. Of course, $\Sb(\tau,\s) = \Pb(\tau,\s)$ when $\tau$ and $\s$ are singletons; however, in this case the computation of the colimit can be simplified as we have $\Sb(\s,\tau) = \s^*\tau$, where $\s^*$ denotes the right adjoint of $\s$; this operation is a composition of distributors, meaning that it is equal to $\int^{a:A}_{\M} \s^*(a)\tau(a)$. This is because in the colimit $\Pb(\s,\tau)$ of all $f: t\tau \to t\s$ for which there exists a $2$-cell $\alpha: \s f \To \tau$ in $\Dist(\Bc)$, the fact that $\s$ has a right adjoint gives an equivalence between $2$-cells $\s f \To \tau$ and $2$-cells $f \To \s^* \tau$. This means that the above formula is still true even if $\tau$ is not a singleton but only a presingleton. 
\end{ex}

\begin{ex}[Representable singletons]
Let $(\M,A)$ be a $\Bc$-category. Choosing an element $a$ of $A$ is the same thing as giving a $\Bc$-functor $f_a: (\id_{ta},ta) \to (\M,A)$. From this $\Bc$-functor, we get a pair of representable distributors $(f_a)_!$ and $f_a^!$ which are adjoint in $\Dist(\Bc)$ and go between $(\id_{ta},ta)$ and $(\M,A)$; hence they form a singleton that we denote by $\M(-,a)$; this family of singletons are called the \textbf{representable singletons} of $(\M,A)$. The value at $b$ of the representable singleton $\M(-,a)$ is of course given by $\M(b,a)$, while the right adjoint takes the value $\M(a,b)$. Note that the unit of the adjunction $\M(-,a) \dashv \M(a,-)$ is given by $\rho_a$ while the counit is given by $\iota^{\M}_{-a-}$. This assignation defines a $\Bc$-functor from $(\M,A)$ to the $\Bc$-category $(\Sb,\C A)$ of its singletons: we call it the \textbf{Yoneda $\Bc$-functor}. If $(\M,A)$ is symmetric, then this Yoneda functor restricts to the $\Bc$-category $(\Sb,\C_{\s} A)$ of symmetric singletons. 
\end{ex}

\begin{ex}[$\V$-presheaves]\label{exvpresheavessing}
Let $\V$ be a monoidal category, and let $\C$ be a $\V$-category. Then any $\V$-presheaf $F: \C^{\op} \to \V$ yields a $\Bv$-presingleton on the $\Bv$-category $(\N,\C)$ associated to $\C$; with any $c \in C$, we associate an object $F(c)$ of $\V$, and the $2$-cell in $\Bv$ (i.e. arrow in $\V$) $\delta_{ab}: \C(a,b)\otimes F(b) \to F(a)$ is simply the restriction morphism. This is an equivalence: all presingletons produce $\V$-presheaves; on the other hand distributors $(\N,C) \to (I,*)$ yield copresheaves $\C \to \Set$. Among these presheaves and copresheaves we have of course the representable ones; for $c \in \Ob(\C)$, we get the adjoint pair $\C(-,c)$ and $\C(c,-)$, which yields a representable singleton of $\cal C$. In general, not all the singletons are representable; when $\V = \Set$, this is the case if and only if $\C$ is Karoubi-complete.
\end{ex}

\begin{ex}[Restriction of sections]
Let $F$ be a presheaf on a topological $X$, and consider the $\R(X)$-category $(\N,\si F)$ of its sections. Let $s$ be a section of $F$ defined on some open subset $U$ of $X$, and consider $V \subseteq U$ another open subset of $X$. Because $F$ is a presheaf, there exists a restriction $s_{|V}$ of $s$ to $V$, and for any other section $t$ of $F$, we have $\N(t,s_{|t}) = V \wedge \N(t,s)$. Now given any $\R(X)$-category $(\M,A)$, we can define for any $a \in A$ and any open subset $U$ of $X$ a \textbf{restriction} singleton by $\s(b) = U \wedge \M(b,a)$; it is easily checked that this defines a symmetric singleton on $(\M,A)$. The representatibility of this singleton is equivalent to the existence of an element $a_{|U}$ in $A$ which is to be understood as the restriction of $a$ to $U$.
\end{ex}

\begin{ex}[Glueing of sections]
Let $F$ be a presheaf on a topological $X$, and consider the $\R(X)$-category $(\N,\si F)$ of its sections. Let $s$ be a section of $F$ defined on some open subset $U$ of $X$, and consider a cover $U = \bigvee_{i\in I} U_i$ of $U$; for each $i\in I$, write $s_i:= s_{|U_i}$. Then for any other section $t$ of $F$, we have $\N(t,s) = \bigvee_{i\in I} \N(t,s_i)$. Now, given any $\R(X)$-category $(\M,A)$ (in which all restriction singletons are representable, i.e. such that any element $a \in A$ has a restriction $a_{|U}$ along any open subset $U \subseteq ta$), we can define for any ``matching family'' $(a_i)_{i\in I}$ (i.e. such that $\M((a_i)_{|ta_i \wedge ta_j},(a_j)_{|ta_i \wedge ta_j}) = ta_i \wedge ta_j$) a \textbf{glueing} singleton by $\s(b) = \bigvee_{i\in I} \N(b,a_i)$; it is easily checked that this defines a symmetric singleton on $(\M,A)$. The representability of this singleton is equivalent to the existence of an element $a \in A$ such that $ta = \bigvee_{i\in I} ta_i$ in $A$, which is to be understood as the glueing of the compatible family $(a_i)_{i\in I}$. 
\end{ex}

\begin{ex}[Maps of $\Bc$]
By a map $\g$ in $\Bc$ we mean a $1$-cell which has a right adjoint $\g^*$ in the bicategorical sense, meaning that there are unit and counit morphism and which satisfy the triangular identities. If $\g: x \to y$ is such a map, then it defines a singleton $\g: (\id_x,x) \to (\id_y,y)$. As it is a distributor, it is possible to compose it with other singletons to obtain again singletons: for instance, if $(\M,A)$ is a $\Bc$-category and $a$ is an element of $A$, then for any map $\g: x \to ta$ we can define a singleton $\M(-,a)\g$. If $\g$ is a symmetric map, meaning that its right adjoint is $\g^{\circ}$ then this singleton $\M(-,a)\g$ is itself a symmetric singleton.
\end{ex}

\begin{lem}[$\Bc$-enriched Yoneda lemma]\label{YonedaSingleton}
Let $(\M,A)$ be a $\Bc$-category, and write $(\Sb,\C A)$ for its $\Bc$-category of singletons. Then for any $a \in A$ and any $\sigma \in \C A$, we have:
\[\Sb(\sigma,\M(-,a)) \simeq \sigma^*(a)\]
\[\Sb(\M(-,a),\sigma) \simeq \sigma(a)\]
\end{lem}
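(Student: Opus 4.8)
The plan is to reduce both isomorphisms to Proposition~\ref{Mid}, the statement that $\M$ is a two-sided identity for the composition of distributors. The key tool is the simplified description of the hom-$1$-cells of $(\Sb, \C A)$ recalled in the example on the $\Bc$-category of singletons: when its first argument is a singleton, the defining colimit collapses to a composite of distributors, so that the hom-$1$-cell is computed by the coend $\int^{b:A}_{\M}$ of the right adjoint of the first argument against the second. I also use that the right adjoint of the representable singleton $\M(-,a)$ is the cosingleton $\M(a,-)$, whose value at $b$ is $\M(a,b)$.

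For the second isomorphism, this description gives $\Sb(\M(-,a), \sigma) = \int^{b:A}_{\M} \M(a,b)\sigma(b)$. By Definition~\ref{CompDistrib} this coend is precisely the value at $a$ of the composite distributor $\M\sigma$, in which $\sigma$ is viewed as a distributor $(\id_{t\sigma}, t\sigma) \to (\M,A)$. As $(\M,A)$ is the codomain of $\sigma$, Proposition~\ref{Mid} gives $\M\sigma = \sigma$, hence $\Sb(\M(-,a), \sigma) \simeq \sigma(a)$. For the first isomorphism, the same description gives $\Sb(\sigma, \M(-,a)) = \int^{b:A}_{\M} \sigma^*(b)\M(b,a)$, which is the value at $a$ of the composite $\sigma^*\M$, where now $\sigma^*$ is a distributor having $(\M,A)$ as domain. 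Proposition~\ref{Mid} then yields $\sigma^*\M = \sigma^*$, whence $\Sb(\sigma, \M(-,a)) \simeq \sigma^*(a)$.

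The mathematical content is thus entirely carried by Proposition~\ref{Mid}, and the only real care is bookkeeping. I would need to keep the variance conventions for $\Bc$-matrices straight, so that $\M(-,a)^*(b) = \M(a,b)$ and $\sigma^*(b): tb \to t\sigma$, and, crucially, to verify that the cowedge diagram computing the relevant hom-$1$-cell of $(\Sb, \C A)$ coincides --- object by object and compatibility condition by compatibility condition --- with the cowedge diagram computing the value at $a$ of the composite with $\M$. Once this identification of diagrams is in place their colimits agree on the nose and both isomorphisms drop out, so the main obstacle is one of matching indices and composition order rather than any genuine difficulty.
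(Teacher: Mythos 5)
Your proposal is correct and takes essentially the same route as the paper: both identify $\Sb(\sigma,\tau)$ with the composite of distributors $\sigma^*\tau$ computed as a coend over $\M$, re-read that coend as composition with the identity distributor $\M$, and conclude via Proposition~\ref{Mid}. The paper carries this out explicitly for $\Sb(\sigma,\M(-,a)) \simeq \sigma^*(a)$ and notes the second isomorphism follows by the same argument; you have merely written out both sides.
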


\begin{proof}
For any a $\in A$, we have $\Sb(\s,\M(-,a)) = \s^* \M(-,a) = \int^{b: A}_{\M} \s^*(b)\M(b,a)$, this is equal to $(\s^*\M)(a)$; because $\M$ is an identity distributor over $(\M,A)$ this is also equal to $\s^*(a)$. The same argument proves the second equation. Note that because $\M(-,a)$ is a singleton, the equation $\Sb(\M(-,a),\s) = \s$ still holds if $\s$ is a presingleton.
\end{proof}

\begin{rk}
We say that a $\Bc$-functor $f: (\M,A) \to (\N,C)$ is fully faithful whenever $f_{aa'}: \M(a,a') \to \N(f(a),f(a'))$ is an isomorphism for all $a,a' \in A$. The $\Bc$-enriched Yoneda lemma ensures that the Yoneda $\Bc$-functor $\yf: (\M,A) \to (\Sb,\C A)$ is fully faithful in this sense.
\end{rk}

\begin{ex}[Categorical Yoneda lemma]
In the case $\Bv = \Bc_{\Set}$ we recover the usual Yoneda lemma; let $\C$ be a small category then representable singletons of $\C$ are the $\Hom(-,c)$ for all the objects $c$ of $\C$. Now recall from Example~\ref{exvpresheavessing} that any presheaf $F: \C^{\op} \to \Set$ yields a presingleton of $\C$. The second equation of the $\Bc$-categorical Yoneda lemma therefore applies (even if the presheaf does not yield a singleton) and yields the usual Yoneda lemma.
\end{ex}

\begin{deft}
Let $(\M,A)$ be a $\Bc$-category. We say that $(\M,A)$ is \textbf{skeletal} if the following two propositions are equivalent for all $x,x' \in A$:
\begin{enumerate}
\itb $x=x'$
\itb $\M(-,x) = \M(-,x')$
\end{enumerate}
\end{deft}

\begin{ex}
Not all $\Bc$-categories are skeletal; for an artificial example consider the $\Bc$-category $(\M,A)$ with $A =\{a_1,a_2\}$ and $\M(a_i,a_j) = \id_x$, with $i,j \in \{1,2\}$ and $ta_1 = ta_2 = x$. Many examples that naturally arise in practice are, however, skeletal.
\end{ex}

\begin{prop}\label{PreSk}
For any $\Bc$-category $(\M,A)$, the category of its presingletons $(\Pb,\cc PA)$ is skeletal.
\end{prop}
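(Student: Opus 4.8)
The plan is to prove the nontrivial implication: assuming $\Pb(-,\sigma) = \Pb(-,\tau)$ as representable singletons of $(\Pb,\cc PA)$, I recover the full presingleton data of $\sigma$ from $\Pb(-,\sigma)$ and conclude $\sigma = \tau$. The converse implication $\sigma = \tau \Rightarrow \Pb(-,\sigma) = \Pb(-,\tau)$ is immediate. First I would unwind the hypothesis: equality of these representable singletons of the $\Bc$-category $(\Pb,\cc PA)$ amounts, after passing to the underlying presingleton data, to the equality of $1$-cells $\Pb(\upsilon,\sigma) = \Pb(\upsilon,\tau)$ for every $\upsilon \in \cc PA$, together with the agreement of the induced double-action $2$-cells, which are obtained from $\iota^{\Pb}$.

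The crucial observation is that each representable singleton $\M(-,a)$ (for $a \in A$) is itself a singleton, hence an element of $\cc PA$, so I may probe $\Pb(-,\sigma)$ at $\upsilon = \M(-,a)$. Since $\M(-,a)$ is a singleton with right adjoint $\M(a,-)$, the formula for $\Pb$ on a singleton argument gives $\Pb(\M(-,a),\sigma) = \M(-,a)^{*}\sigma = \M(a,-)\sigma = \int^{b:A}_{\M}\M(a,b)\sigma(b) = (\M\sigma)(a)$; by Proposition~\ref{Mid}, $\M$ is an identity for distributor composition, so $\M\sigma = \sigma$ and therefore $\Pb(\M(-,a),\sigma) = \sigma(a)$. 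This is precisely the second equation of the Yoneda Lemma~\ref{YonedaSingleton}, valid for presingletons, and it is a genuine equality rather than a mere isomorphism because Proposition~\ref{Mid} produces an equality of distributors. Consequently $\sigma(a) = \Pb(\M(-,a),\sigma) = \Pb(\M(-,a),\tau) = \tau(a)$ for all $a \in A$, so $\sigma$ and $\tau$ have the same underlying $1$-cells.

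It remains to match the double-action cells $\delta^{\sigma}_{ab}$ and $\delta^{\tau}_{ab}$. The plan is to evaluate the singleton structure of $\Pb(-,\sigma)$ on the pair $(\M(-,a),\M(-,b))$: under the Yoneda identifications $\Pb(\M(-,a),\M(-,b)) = \M(a,b)$, $\Pb(\M(-,b),\sigma) = \sigma(b)$ and $\Pb(\M(-,a),\sigma) = \sigma(a)$, the cell $\iota^{\Pb}_{\M(-,a)\,\M(-,b)\,\sigma}$ becomes a $2$-cell $\M(a,b)\sigma(b) \To \sigma(a)$, and I would check that it coincides with $\delta^{\sigma}_{ab}$. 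The cleanest route is again through Proposition~\ref{Mid}: the equality $\M\sigma = \sigma$ is an equality of distributors, hence it also identifies $\delta^{\M\sigma}$ with $\delta^{\sigma}$, and $\iota^{\Pb}$ restricted to representable arguments is built exactly from this composite. Since the structures of $\Pb(-,\sigma)$ and $\Pb(-,\tau)$ agree by hypothesis, this yields $\delta^{\sigma}_{ab} = \delta^{\tau}_{ab}$, and together with the equality of the $1$-cells I conclude $\sigma = \tau$.

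The main obstacle I anticipate is precisely this last identification of $\iota^{\Pb}$ at representable arguments with $\delta^{\sigma}$: it requires unwinding the colimit defining $\Pb(\sigma,\tau)$ and the construction of $\iota^{\Pb}$ from the pairs $(fg,\ \alpha_g \bullet (\alpha_f * 1_g))$, and then tracking everything through the canonical identifications of Proposition~\ref{Mid}. This is bookkeeping with colimits of exactly the kind already carried out in the proof of Proposition~\ref{Mid} and in the construction of the $\Bc$-category of presingletons, but it is where all the coherence must be verified with care; the rest of the argument is a direct application of the Yoneda lemma.
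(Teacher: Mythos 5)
Your proposal follows essentially the same route as the paper's proof: both first use the enriched Yoneda lemma (valid for presingletons) to get $\sigma(a)=\Pb(\M(-,a),\sigma)=\Pb(\M(-,a),\tau)=\tau(a)$, and then reduce equality of the double-action cells to identifying $\iota^{\Pb}_{\M(-,a)\M(-,b)\sigma}$ with $\delta^{\sigma}_{ab}$ under the Yoneda identifications. The bookkeeping step you flag as the main obstacle is exactly what the paper carries out explicitly: it shows $\delta^{\sigma}_{ab}$ lies in the base of the colimit defining $\Pb(\M(-,a),\sigma)$ and then invokes Lemma~\ref{InclCanonSing} to prove that the canonical inclusion $e_{\sigma(a)}$ is the identity, which yields the desired identification.
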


To prove the above proposition, we will need a technical lemma which is extremely useful to simplify computations in $(\Sb,\C A)$.

\begin{lem}\label{InclCanonSing}
Let $\s,\tau$ be two singletons of a $\Bc$-category $(\M,A)$. Let $f: t\tau \to t\s$ be a $1$-cell in $\Bc$ and suppose that there is some $2$-cell $\alpha_f: \s f \to \tau$ in $\Dist(\Bc)$. Then the canonical inclusion $e_f$ of $f$ in $\Pb(\s,\tau) = \s^*\tau$ is: $$e_f \simeq (1_{\s^*} * \aa_f) \bullet (\eta_{\s} * 1_f)$$
\end{lem}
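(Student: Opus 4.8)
The plan is to recognise the right-hand side $(1_{\s^*} * \aa_f) \bullet (\eta_{\s} * 1_f)$ as the adjoint transpose of $\aa_f$ along the adjunction $\s \dashv \s^*$, and then to show that the colimit defining $\Pb(\s,\tau)$ assigns to each object exactly this transpose. Since $\s$ is a singleton, it is a map in $\Dist(\Bc)$ with right adjoint $\s^*$, unit $\eta_{\s}: \id_{(\id_{t\s},t\s)} \To \s^*\s$ and counit $\e_{\s}: \s\s^* \To \M$. For any $1$-cell $g: t\tau \to t\s$, transposition yields a bijection between $2$-cells $\aa: \s g \To \tau$ and $2$-cells $\hat{\aa}: g \To \s^*\tau$ in $\Dist(\Bc)$, given by $\hat{\aa} = (1_{\s^*} * \aa) \bullet (\eta_{\s} * 1_g)$; for $g = f$ and $\aa = \aa_f$ this is precisely the claimed formula, so writing $\hat{\aa}_f$ for the transpose of $\aa_f$, the goal is to prove $e_f \simeq \hat{\aa}_f$.

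I would then analyse the indexing diagram of the colimit $\Pb(\s,\tau)$. Its objects are pairs $(g,\aa_g)$ consisting of a $1$-cell $g: t\tau \to t\s$ and a $2$-cell $\aa_g: \s g \To \tau$, and its arrows $\beta: (g,\aa_g) \to (g',\aa_{g'})$ are the $2$-cells $\beta: g \To g'$ with $\aa_{g'} \bullet (1_{\s} * \beta) \simeq \aa_g$. Applying transposition objectwise replaces this by the diagram of pairs $(g,\hat{\aa}_g)$ with $\hat{\aa}_g: g \To \s^*\tau$; a short pasting computation, using only the interchange law and the naturality of $\eta_{\s}$, shows that $\hat{\aa}_{g'} \bullet \beta$ is the transpose of $\aa_{g'} \bullet (1_{\s} * \beta)$, so that the arrow condition becomes $\hat{\aa}_{g'} \bullet \beta \simeq \hat{\aa}_g$. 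Thus the transposed diagram is exactly the category of admissible $1$-cells over $\s^*\tau$, with the evident commuting triangles as morphisms.

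To conclude I would observe that this transposed diagram has a terminal object, namely $(\s^*\tau, 1_{\s^*\tau})$: it belongs to the diagram because the transpose of $1_{\s^*\tau}$ is a genuine $2$-cell $\s(\s^*\tau) \To \tau$ (the evaluation $\e_{\s} * 1_{\tau}$ induced by the counit, using that $\M\tau = \tau$), and every object $(g,\hat{\aa}_g)$ admits the unique arrow $\hat{\aa}_g$ into it. A colimit taken over a diagram with a terminal object is computed at that object, with injections the unique arrows into it; this both re-proves $\Pb(\s,\tau) = \s^*\tau$ and identifies the injection attached to $(g,\hat{\aa}_g)$ as $\hat{\aa}_g$, whence $e_f \simeq \hat{\aa}_f = (1_{\s^*} * \aa_f) \bullet (\eta_{\s} * 1_f)$.

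The one step requiring care is the middle one: checking that transposition is genuinely a functor between the two indexing diagrams, i.e. that it sends the condition $\aa_{g'} \bullet (1_{\s} * \beta) \simeq \aa_g$ to $\hat{\aa}_{g'} \bullet \beta \simeq \hat{\aa}_g$. This is a routine diagram chase with the interchange law and the naturality square of $\eta_{\s}$, but it is exactly what licenses the transfer of the colimit to the over-category; once it is established, the terminal-object argument makes the identification $e_f \simeq \hat{\aa}_f$ immediate.
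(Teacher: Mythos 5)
Your proof is correct and follows essentially the same route as the paper: both transpose the colimiting diagram for $\Pb(\s,\tau)$ along the adjunction $\s \dashv \s^*$ via the mate formula $(1_{\s^*} * \aa_f) \bullet (\eta_{\s} * 1_f)$, check that the arrow condition $\aa_{g'} \bullet (1_{\s} * \beta) \simeq \aa_g$ transports to $\hat{\aa}_{g'} \bullet \beta \simeq \hat{\aa}_g$, and then identify the injections into the transposed colimit as the transposes themselves. Your explicit terminal-object argument for why that last identification holds is a nice clarification of a step the paper leaves terse, but it is the same underlying argument.
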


\begin{proof}
We have seen in Remark~\ref{AdjHomSet} that there is an isomorphism between $\Pb(\s,\tau)$ and $\Pb(\id_{t\s},\s^*\tau)$ obtained by sending any $1$-cell $f: t\tau \to t\s$ in the base of the colimiting cocone definint $\Pb(\s,\tau)$, that is, with a $2$-cell $\alpha_f: \s f \to \tau$, to its mate $\beta_f = (1_{\s^*},\aa_f) \bullet (\eta_{\s} * 1_f)$ which goes from $f$ to $\s^*\tau$. This mate formula is directly obtained from Remark~\ref{MateFormula}. Now we can construct a whole colimiting cocone going to $\Pb(\id,\s^*\tau)$ by considering all such $(f,\beta_f)$; the fact that the mate construction yields a bijection ensures that we capture in this way all required $1$-cells. Moreover, this diagram does commute as needed: the considered $2$-cells $\zeta: f \To g$ are precisely those such that $\beta_g \bullet (1_{\id} * \zeta) = \beta_f$, i.e. those such that $\beta_g \bullet \zeta = \beta_a$, this shows that $e_f$ is isomorphic to $\beta_f$.

\end{proof}

\begin{proof}[Proof of the proposition~\ref{PreSk}]
This is a direct consequence of the above Yoneda lemma~\ref{YonedaSingleton}, recall that we established in the proof that the formula $\Pb(\s,\M(-,a)) = \s(a)$ for any presingleton $\s$. Now suppose that $\s_1$ and $\s_2$ are two presingletons of $(\M,A)$ such that $\Pb(-,\s_1) = \Pb(-,\s_2)$, then for any $a \in A$, $\s_1(a) = \Pb(\M(-,a),\s_1) = \Pb(\M(-,a),\s_2) = \s_2(a)$. Proving that $\delta^{\s_1} = \delta^{\s_2}$ is more difficult; let us first fix $\s$ and remark that $\delta_{-b}^{\s} \bullet (\iota^{\M}_{-ab} * 1) = \delta^{\s}_{-a} \bullet (1_{\M(-,a)} * \delta^{\s}_{ab})$; this shows that the $2$-cell $\delta^{\s}_{ab}$ is in the base diagram of the colimit defining $\Pb(\M(-,a),\s)$ as a $2$-cell going from $\M(a,b)\s(b)$ to $\s(a)$, which are both vertices of the base diagram. Now, denoting by $g = \M(a,b), f = \s(b)$ and $h = \s(a)$, we have the following commutative diagram: 
\[
\begin{tikzcd}[row sep = 50, column sep = 50, labels=description]
\s(a)
\arrow[r,"{e_h}"]
& \Pb(\M(-,a),\s)\\ 
\M(a,b)\s(b) 
\arrow[u,"{\delta_{ab}^{\s}}"]
\arrow[ru,"{e_{gf}}"]
\arrow[r,"{e_g * e_f}"]
& \Pb(\M(-,a),\M(-,b))\Pb(\M(-,b),\s)
\arrow[u,"{\iota^{\Pb}_{\M(-,a)\M(-,b)\s}}"]
\end{tikzcd}
\]

Now we have $\delta^{\Pb(-,\s_1)}_{\M(-,a)\M(-,b)} = \iota^{\Pb}_{\M(-,a)\M(,-b)\s_1} = \iota^{\Pb}_{\M(-,a)\M(,-b)\s_2} = \delta^{\Pb(-,\s_2)}_{\M(-,a)\M(-,b)}$; to conclude we only need to prove that $e_h = 1_{\s(a)}$. To this end we can apply lemma~\ref{InclCanonSing} to the pair $(h = \s(a), \aa_h = \delta^{\s}_{-a})$ which gives $e_h = (1_{\M(a,-)} * \aa_h) \bullet (\eta_{\M(-,a)} * 1_{h})$, which is isomorphic to $(1_{\M(a,-)} * \delta_{-a}^{\s}) \bullet (\rho_a * 1_{\s(a)})$. A quick computation shows that the horizontal composite of morphisms of distributors $1_{\M(a,-)} * \delta_{-a}^{\s}$ is equal to $\delta_{aa}^{\s}$, meaning that $e_h = \delta^{\s}_aa \bullet (\rho_a * 1_{\s(a)}) = 1_{\s(a)}$, thus concluding the proof.

\end{proof}

We can now give the following alternative definition of completeness (resp. of symmetric completeness) for skeletal (resp. skeletal and symmetric) $\Bc$-categories, the latter meaning completeness with respect to all the symmetric distributors (i.e. those maps in $\Dist(\Bc)$ whose involute are their right adjoint).

\begin{deft}
Let $(\M,A)$ a skeletal $\Bc$-category. We say that $(\M,A)$ is \textbf{complete} if every of its singletons is representable, in other words if for any singleton $\sigma: (\id_*,*) \to (\M,A)$, there is some $x \in A$ such that $* = tx$ and $\sigma = \M(-,x)$. If $(\M,A)$ is symmetric, we say that it is \textbf{symmetrically complete} if every of its \textit{symmetric} singletons is representable.
\end{deft}

\begin{proof}
It suffices to remark that, for any distributor $\phi: (\N,C) \to (\M,A)$ with a right adjoint, $\phi(-,c): (\id_{ta},ta) \to (\M,A)$ is a singleton whose representability is equivalent to that of $\phi$.
\end{proof}

\begin{ex}[Sheaves on a locale]
We have pointed out before that for a presheaf $F$ on a topological space $X$, any open subset $U$ of $X$ defines a symmetric singleton $\s$ which maps any section $s$ of $X$ to its local behaviour on $U$; and we have stated that this singleton $\s$ is representable if and only if there is only one possible local behaviour at $U$. Thus the $\R(X)$-category $\si F$ of sections of $F$ is symmetrically complete if and only if $F$ is a sheaf.
\end{ex}

\begin{lem}\label{RhoIotaSing}
Let $(\M,A)$ be a $\Bc$-category, and write $(\Sb,\C A)$ for its category of singletons. Then for any $a,b,c \in A$, we have: 
\begin{enumerate}
\item For any singleton $\s$ of $(\M,A)$, $\rho^{\Sb}_{\s} = \eta_{\s}$.
\item For any singletons $\s,\tau,\upsilon$ of $(\M,A)$, $\iota^{\Sb}_{\s\tau\upsilon} = 1_{\s^*} * \e_{\tau} * 1_{\upsilon}$.
\item For any $a \in \A$, $\rho_a^{\M} = \rho_{\M(-,a)}^{\Sb}$.
\item For any $a,b,c \in A$, $\iota_{abc}^{\M} = \iota_{\M(-,a)\M(-,b)\M(-,c)}^{\Sb}$.
\end{enumerate}
\end{lem}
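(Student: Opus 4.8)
The four items split into two groups: items (1)--(2) describe the structure $2$-cells of $(\Sb,\C A)$ intrinsically in terms of the adjunctions $\s\dashv\s^*$, while items (3)--(4) are obtained by specialising to the representable singletons, whose unit and counit were already identified in the example on representable singletons as $\rho^\M_a$ and $\iota^\M_{-a-}$. For item (1) I would argue as follows. By construction $\rho^\Sb_\s$ is the canonical injection $e_{\id_{t\s}}$ of $\id_{t\s}$ into the colimit $\Pb(\s,\s)=\s^*\s$, the accompanying witness being the identity morphism of distributors $1_\s:\s\,\id_{t\s}\To\s$. Applying Lemma~\ref{InclCanonSing} with $\tau=\s$, $f=\id_{t\s}$ and $\aa_f=1_\s$ gives at once $\rho^\Sb_\s=e_{\id_{t\s}}\simeq(1_{\s^*}*1_\s)\bullet(\eta_\s*1_{\id_{t\s}})=\eta_\s$.

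For item (2) the candidate is $1_{\s^*}*\e_\tau*1_\upsilon$, which is well-typed as a $2$-cell $\s^*\tau\,\tau^*\upsilon\To\s^*\M\upsilon=\s^*\upsilon$ since $\M$ is the identity distributor (Proposition~\ref{Mid}). The plan is to check that it satisfies the universal property pinning down $\iota^\Sb_{\s\tau\upsilon}$, namely $\iota^\Sb_{\s\tau\upsilon}\bullet(e_f*e_g)=e_{fg}$ for base elements $f$ (with witness $\aa_f:\s f\To\tau$) and $g$ (with witness $\aa_g:\tau g\To\upsilon$), where the composite $fg$ carries the witness $\aa_g\bullet(\aa_f*1_g)$. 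Substituting the expressions for $e_f,e_g,e_{fg}$ from Lemma~\ref{InclCanonSing}, expanding the horizontal composites by the interchange law, and commuting $\aa_f$ and $\aa_g$ past the units, one is left with the sub-configuration $\tau\xrightarrow{1_\tau*\eta_\tau}\tau\tau^*\tau\xrightarrow{\e_\tau*1_\tau}\tau$ in the middle, which collapses to $1_\tau$ by the triangular identity for $\tau\dashv\tau^*$ (together with $\M$ acting as an identity). What survives is exactly $(1_{\s^*}*(\aa_g\bullet(\aa_f*1_g)))\bullet(\eta_\s*1_{fg})=e_{fg}$, so uniqueness of the colimit forces $\iota^\Sb_{\s\tau\upsilon}=1_{\s^*}*\e_\tau*1_\upsilon$. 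This interchange-plus-triangular-identity bookkeeping is where the real work lies, and I expect it to be the main obstacle, since one must keep careful track of the whiskerings and of the coend over which $\s^*\tau$ and $\tau^*\upsilon$ are formed.

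Items (3) and (4) then follow by specialisation. Taking $\s=\M(-,a)$ in (1) and using that the unit of $\M(-,a)\dashv\M(a,-)$ is $\rho^\M_a$ yields $\rho^\Sb_{\M(-,a)}=\eta_{\M(-,a)}=\rho^\M_a$, after the Yoneda identification $\Sb(\M(-,a),\M(-,a))\simeq\M(a,a)$ of Lemma~\ref{YonedaSingleton}; this is (3). Taking $\s=\M(-,a)$, $\tau=\M(-,b)$, $\upsilon=\M(-,c)$ in (2), using $\M(-,b)^*=\M(b,-)$ and the fact that the counit $\e_{\M(-,b)}$ has components $\iota^\M_{xby}$, gives $\iota^\Sb_{\M(-,a)\M(-,b)\M(-,c)}=1_{\M(a,-)}*\iota^\M_{-b-}*1_{\M(-,c)}$. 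Unwinding the Yoneda identifications $\M(a,-)\M(-,b)\simeq\M(a,b)$ and $\M(b,-)\M(-,c)\simeq\M(b,c)$, which are realised precisely by the idempotency $2$-cells $\iota^\M$ witnessing $\M^2=\M$, and using the associativity and unitality coherence of $\iota^\M$, this whiskered counit reduces to $\iota^\M_{abc}$, giving (4). The only delicate point here is the coend bookkeeping in this last reduction, which is entirely analogous to the identification computations already carried out in the proof of Proposition~\ref{PreSk}.
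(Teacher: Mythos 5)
Your proposal is correct and takes essentially the same route as the paper: item (1) via the canonical-inclusion formula of Lemma~\ref{InclCanonSing} (the paper's own proof of (1) just inlines the mate computation underlying that lemma), item (2) by checking the universal property $\iota^{\Sb}_{\s\tau\upsilon}\bullet(e_f*e_g)=e_{fg}$ using Lemma~\ref{InclCanonSing}, interchange and the triangular identity for $\tau\dashv\tau^*$, exactly as in the paper. Items (3)--(4) likewise match the paper's argument, specialising to representable singletons and reducing the whiskered counit $1_{\M(a,-)}*\iota^{\M}_{-b-}*1_{\M(-,c)}$ to $\iota^{\M}_{abc}$ by the same colimit-level bookkeeping.
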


\begin{proof}
$(1)$ Recall that $\rho_{\s}^{\Pb}$ is defined as the canonical inclusion of $\id_{t\s}$ in $\Pb(\s,\s)$ associated to the $2$-cell $1_{\delta_{\s}}: \delta(\s) \to \delta(\s)$, where $\delta(\s) = (\id_{t\s},t\s)$. We have seen in Remark~\ref{AdjHomSet} that there is an isomorphism $\Pb_{\M,A}(\s,\s) \simeq \Pb_{\delta(\s)}(\id_{t\s},\s^*\s)$, which is obtained thanks to an isomorphism between the bases of the respective colimiting cocones. This isomorphism is obtained through the mate construction explicited in Remark~\ref{MateFormula}; now the morphism of distributors $1_{\s}: \delta_{\s} \to \delta_{\s}$, seen as an element of $\Hom(\s \cdot \id_{\delta(\s)}, \s \cdot \id_{\delta(\s)})$ has a mate which is an element $\beta$ of $\Hom(\id_{\delta(\s)} \cdot \id_{\delta(\s)}, \s^*\s)$, this mate is given by the formula:
\begin{align*}
\beta &= (1_{\s^*} * 1_{\s} * \e_{\id}) \bullet (1_{\s^*} * 1_{\s} * 1_{\s^*}) \bullet (\eta_{\s} * 1_{\id} *  1_{\id})\\ 
&= (1_{\s^*\s} * 1_{\id}) \bullet 1_{\s^*\s\s^*} \bullet (\eta_{\s})\\ 
&= \eta_{\s}
\end{align*}

This proves that $(\id_{t\s}, 1_{\delta(\s)})$ is sent to $(\id_{t\s},\eta_{\s})$ by the mate construction between the base of the colimiting diagrams which define $\Pb(\s,\s) \simeq \Pb(\id_{\delta(\s)},\s^*\s)$. Now to conclude, we shall show that the inclusion of $(\id_{t\s},\eta_{\s})$ in $\Pb(\id_{\delta(\s)},\s^*\s)$ is isomorphic to $\eta_{\s}$. This is a general phenomenon as we can construct a whole colimiting cocone satisfying this property: elements of the base must be $1$-cells $g: t\s \to t\s$ together with $2$-cells $\alpha: g \To \s^*\s$, then the compatibility condition for the considered $2$-cells in the base ensure that everything does commute. More clearly, for any pairs $(f,\alpha_f)$ and $(g,\alpha_g)$ with $\alpha_f: f \to \s^*\s$ and $\alpha_g: g \to \s^*\s$, then the considered $2$-cells $\beta: f \To g$ in the base of the colimiting diagram are exactly those which are such that $\alpha_g \bullet (1_{\id} * \beta) = \alpha_f$, i.e. those such that $\alpha_g\bullet \beta = \alpha_f$. This means that the consideration of all $(f_i,\alpha_{f_i})_{i\in I}$ yields a colimiting cocone for $\Pb_{\delta(\s)}(\id_{\delta_{\s}},\s^*\s)$; thus the canonical inclusion of any $(f,\alpha_f)$ in the base of the colimiting cocone is $\alpha_f$. Applied to the case $(\id_{t\s},\eta_{\s})$, this proves that the canonical inclusion is $\eta_{\s}$; because $\rho_{\s}^{\Pb_{(\M,A)}}$ is defined as the canonical inclusion of $(\id_{t\s},1_{\delta(\s)})$, which is sent to $(\id_{t\s},\eta_{\s})$ by the isomorphism between the base of the colimiting cocones, we finally obtain $\rho_{\s} \simeq \eta_{\s}$. \\ 

$(2)$ Recall that $\iota_{\s\t\u}$ is defined through the consideration of the following commutative diagram: 

\[
\begin{tikzcd}[row sep = 30, column sep = 50, labels={description}]
fg
\arrow[r,"{e_{fg}}"]
& \Pb(\s,\u)\\ 
fg 
\arrow[r,"{e_f * e_g}"]
\arrow[u,"{1_{fg}}"]
& \Pb(\s,\t)\Pb(\t,\u)
\arrow[u,"{\iota_{\s\t\u}}"]
\end{tikzcd}
\]

For all $f: t\t \to t\s$, $g: t\u \to t\t$, with $\aa_f: \s f \to \t$ and $\aa_g: \t g \to \u$. Here $e_{fg}$ is yielded by $\aa_{fg} = \aa_g \bullet (\aa_f * 1_g)$. To conclude we only need to prove that $(1_{\s^*} * \e_{\t} * 1_{\u}) \bullet (e_f * e_g) \simeq e_{fg}$; recall that lemma~\ref{InclCanonSing} gives expressions for $e_f$, $e_g$ and $e_{fg}$ using $\aa_f$ and $\aa_g$. From that we can compute: 
\begin{eqnarray*}
& &(1_{\s^*} * \e_{\t} * 1_{\u}) \bullet (e_f * e_g) \\ 
&\simeq &(1_{\s^*} * \e_{\t} * 1_{\u}) \bullet (((1_{\s^*} * \aa_f) \bullet (\eta_{\s} * 1_f)) * (1_{\t^*} * \aa_g) \bullet (\eta_{\t} * 1_g))\\ 
&\simeq& (1_{\s^*} * \e_{\t} * 1_{\u}) \bullet (1_{\s^*} * \aa_f * 1_{\t^*} * \aa_g) \bullet (\eta_{\g} * 1_f * \eta_{\t} * 1_g)\\ 
&\simeq& (1_{\s^*} * ((1_{\s} * \e_{\t}) \bullet (\aa_f * 1_{\t})) * \aa_g) \bullet (\eta_{\g} * 1_f * \eta_{\t} * 1_g)\\ 
&\simeq& (1_{\s^*} * (\aa_{g} \bullet (\e_{\t} * 1_{\t g}) \bullet (1_{\t} * \eta_{\t} * 1_g) \bullet (\aa_f * 1_g))) \bullet (\eta_{\s} * 1_{fg})\\
&\simeq& (1_{\s^*} * (\aa_g \bullet (\aa_f * 1_g))) \bullet (\eta_{\s} * 1_{fg})\\ 
&\simeq& (1_{\s^*} * \aa_{fg}) \bullet (\eta_{\s} * 1_{fg})\\ 
&\simeq& e_{fg}
\end{eqnarray*}

$(3)$ We have $\rho^{\Sb}_{\M(-,a)} = \eta_{\M(-,a)} = \rho_a^{\M}$. \\

$(4)$ Recall that we have $(\e_{\M(-,x)})_{ab} = \iota_{axb}$. We therefore want to compute the horizontal composite: $1_{\M(a,-)} * \iota_{-b-} * 1_{\M(-,c)}$. We will first compute $1_{\M(a,-)} * \iota_{-b-}$. Let us write a setup diagram:

\[
\begin{tikzcd}[sep=huge]
(\id_{ta},ta)
& (\M,A)
\arrow[l,"{\M(a,-)}"'{name={11}}]
& (\M,A)
\arrow[l,"{\M(-,b)\M(b,-)}"'{name={21}}]\\
(\id_{ta},ta)
& (\M,A)
\arrow[l,"{\M(a,-)}"{name={12}}]
& (\M,A)
\arrow[l,"{\M}"{name={22}}]
\arrow[Rightarrow, "{1_{\M(a,-)}}", from=11, to=12,  shorten <= 5, shorten >= 3]
\arrow[Rightarrow, "{\iota_{-b-}}", from=21, to=22,  shorten <= 5, shorten >= 3]
\end{tikzcd}
\]

From this setup diagram, we can draw the defining diagram for the horizontal composite $1_{\M(a,-)} * \iota_{-b-}$ at some $u_1,u_2,v \in A$; because all $2$-cells in the diagram are just compositions of $\iota$, $\rho$ and $1$ in which all $\rho$ can ultimately be cancelled using the coherence relations for $\Bc$-categories, we do not label them. 

\[
\begin{tikzcd}[row sep = 25]
& \M_{au_1}\M_{u_1v}
\arrow[r]
& \M_{av} \\ 
\M_{au_1}\M_{u_1b}\M_{bv}
\arrow[ru]
& \M_{au_1}\M_{u_1u_2}\M_{u_2v}
\arrow[r]
\arrow[u]
& \M_{au_2}\M_{u_2v}
\arrow[u]\\
\M_{au_1}\M_{u_1u_2}\M_{u_2b}\M_{bv}
\arrow[u]
\arrow[r]
\arrow[ru]
& \M_{au_2}\M_{u_2b}\M_{bv}
\arrow[ru]
\end{tikzcd}
\]

The consideration of these diagrams shows that we have: $1_{\M(a,-)} * \iota_{-b-} \simeq \iota_{ab-}: \M(a,b)\M(b,-) \to \M(a,-)$. Similar considerations for the other side of the horizontal composite then show the desired isomorphism.
\end{proof}

\begin{prop}
For any $\Bc$-category $(\M,A)$, the $\Bc$-category $(\Sb, \C A)$ of its singletons is complete. If $(\M,A)$ is symmetric, the $\Bc$-category $(\Sb,\C_{\s} A)$ of its symmetric singletons is symmetrically complete.
\end{prop}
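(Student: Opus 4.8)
The plan is to show that every singleton of $(\Sb,\C A)$ is representable and that $(\Sb,\C A)$ is skeletal (so that the notion of completeness even applies). For skeletality, I would first observe that the proof of Proposition~\ref{PreSk} only ever tests a presingleton against the \emph{representable} singletons $\M(-,a)$, which are themselves objects of $\C A$; running that argument verbatim inside $(\Sb,\C A)$ shows that two singletons $\tau_1,\tau_2$ with $\Sb(-,\tau_1)=\Sb(-,\tau_2)$ coincide, using $\Sb(\M(-,a),\tau_i)=\tau_i(a)$ from Lemma~\ref{YonedaSingleton} and the reconstruction of the $\delta$'s as in~\ref{PreSk}. Now fix a singleton $\Sigma$ of $(\Sb,\C A)$, with right adjoint $\Sigma^*$, and let $\yf\colon(\M,A)\to(\Sb,\C A)$ be the Yoneda $\Bc$-functor, with its representable pair $\yf_!\dashv\yf^!$. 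I define the candidate representing object as $\sigma_0:=\yf^!\Sigma\colon(\id_*,*)\to(\M,A)$, a presingleton of $(\M,A)$; unwinding the composite and using Lemma~\ref{YonedaSingleton} gives $\sigma_0(a)=\Sigma(\M(-,a))$, with double action $\delta^{\sigma_0}_{ab}=\delta^{\Sigma}_{\M(-,a),\M(-,b)}$ under the identification $\Sb(\M(-,a),\M(-,b))=\M(a,b)$, the presingleton axioms for $\sigma_0$ following from those of $\Sigma$ via parts $(3)$ and $(4)$ of Lemma~\ref{RhoIotaSing}.

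The conceptual core is the density of $\yf$, namely that $\yf_!\yf^!=\Sb$ as distributors on $(\Sb,\C A)$. On underlying matrices this is immediate: by Lemma~\ref{YonedaSingleton}, $(\yf_!\yf^!)(\tau,\upsilon)=\int^{a:A}_{\M}\Sb(\tau,\M(-,a))\,\Sb(\M(-,a),\upsilon)=\int^{a:A}_{\M}\tau^*(a)\upsilon(a)$, which is exactly the coend computing $\Sb(\tau,\upsilon)=\tau^*\upsilon$. The compatibility of the two double actions is then a routine verification following the pattern of Proposition~\ref{Mid}, using Lemma~\ref{RhoIotaSing} to recognise the relevant $\iota^{\Sb}$ and $\rho^{\Sb}$. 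Granting $\yf_!\yf^!=\Sb$, Proposition~\ref{Mid} (which makes $\Sb$ a two-sided unit) gives $\Sigma=\Sb\,\Sigma=\yf_!\yf^!\Sigma=\yf_!\sigma_0$, and one computes $(\yf_!\sigma_0)(\tau)=\int^{a}_{\M}\tau^*(a)\sigma_0(a)=\tau^*\sigma_0=\Sb(\tau,\sigma_0)$, so that $\Sigma=\Sb(-,\sigma_0)$.

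It remains to promote $\sigma_0$ from a presingleton to a genuine singleton, so that $\Sb(-,\sigma_0)$ is an honest representable of $(\Sb,\C A)$. I would exhibit the right adjoint explicitly as $\sigma_0^*:=\Sigma^*\yf_!\colon(\M,A)\to(\id_*,*)$. Using density $\yf_!\yf^!=\Sb$ together with full faithfulness of $\yf$ (which gives $\yf^!\yf_!=\M$, the identity distributor on $(\M,A)$), the two composites collapse: $\sigma_0^*\sigma_0=\Sigma^*\yf_!\yf^!\Sigma=\Sigma^*\Sigma$ and $\sigma_0\sigma_0^*=\yf^!\Sigma\Sigma^*\yf_!\Rightarrow\yf^!\yf_!=\M$, so the unit of $\sigma_0\dashv\sigma_0^*$ is that of $\Sigma\dashv\Sigma^*$ and the counit is obtained from that of $\Sigma\dashv\Sigma^*$ by whiskering with $\yf^!(-)\yf_!$. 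I expect the verification of the two triangle identities to be the main obstacle: it is most cleanly organised through the mate calculus of Remark~\ref{MateFormula} (or the hom-set description of Remark~\ref{AdjHomSet}), reducing them to the triangle identities of $\Sigma\dashv\Sigma^*$ and $\yf_!\dashv\yf^!$ and to the coherence of the canonical isomorphisms $\yf_!\yf^!\cong\Sb$ and $\yf^!\yf_!\cong\M$. Once $\sigma_0\in\C A$, the equality $\Sigma=\Sb(-,\sigma_0)$ exhibits $\Sigma$ as representable; by the skeletality established above this representing object is unique, and hence $(\Sb,\C A)$ is complete.

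For the symmetric statement, assume $\Bc$ involutive and $(\M,A)$ symmetric, and let $\Sigma$ be a \emph{symmetric} singleton of $(\Sb,\C_{\s}A)$, i.e. $\Sigma^*=\Sigma^{\circ}$. Since the Yoneda $\Bc$-functor restricts to symmetric singletons and is compatible with the involution — concretely $(\yf^!)^{\circ}=\yf_!$, because for a symmetric singleton $\tau$ one has $\yf_!(\tau,a)=\tau^*(a)=\tau^{\circ}(a)=\tau(a)^{\circ}=(\yf^!)^{\circ}(\tau,a)$ — the right adjoint computed above satisfies $\sigma_0^*=\Sigma^*\yf_!=\Sigma^{\circ}\yf_!=(\yf^!\Sigma)^{\circ}=\sigma_0^{\circ}$. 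Thus $\sigma_0$ is a symmetric singleton, $\sigma_0\in\C_{\s}A$, and $\Sigma=\Sb(-,\sigma_0)$ is representable there; therefore $(\Sb,\C_{\s}A)$ is symmetrically complete.
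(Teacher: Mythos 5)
Your proof is correct, and at its core it manufactures the same representing object as the paper: the paper defines $\phi(a) := \psi(\M(-,a))$, which is exactly your $\sigma_0 = \yf^!\Sigma$ unwound, and its long chain of coend manipulations proving $\Sb(\sigma,\phi) \simeq \psi(\sigma)$ is precisely the pointwise form of your density isomorphism $\yf_!\yf^! = \Sb$. Where you genuinely diverge is in how the singleton structure on $\sigma_0$ is obtained. The paper transfers everything by hand: it sets $\delta^{\phi}_{ab} := \delta^{\psi}_{\M(-,a)\M(-,b)}$, defines $\phi^*(a) := \psi^*(\M(-,a))$, takes $\eta^{\phi} := \eta^{\psi}$ after observing $\phi^*\phi = \psi^*\psi$, and transfers the counit using $\Sb(\M(-,a),\M(-,b)) = \M(a,b)$. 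You instead get $\sigma_0 \dashv \Sigma^*\yf_!$ formally, by composing adjunctions once $\yf_!$ and $\yf^!$ are known to form an adjoint equivalence in $\Dist(\Bc)$. Your route isolates two reusable facts the paper never states (density $\yf_!\yf^! = \Sb$ and full faithfulness $\yf^!\yf_! = \M$ at the level of distributors), makes the symmetric case essentially automatic via $(\yf^!)^{\circ} = \yf_!$, and explicitly settles skeletality of $(\Sb,\C A)$, which the paper leaves implicit. The price is that these two facts, plus the invertibility of the canonical unit and counit needed for $\yf^!$ to be a \emph{left} adjoint, are exactly the verifications you defer; be aware that $\yf^!\yf_! = \M$ is not purely formal in this framework but does follow from the paper's tools, since $(\yf^!\yf_!)(a',a) = \int^{\tau:\C A}_{\Sb}\Sb(\M(-,a'),\tau)\,\Sb(\tau,\M(-,a)) = (\Sb\Sb)(\M(-,a'),\M(-,a)) = \Sb(\M(-,a'),\M(-,a)) = \M(a',a)$, using the idempotency of $\Sb$ (corollary of Proposition~\ref{Mid}) and Lemma~\ref{YonedaSingleton}. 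The paper's hands-on transfer avoids this machinery and is shorter for this single proposition; yours is more modular and would be the better organization if these Yoneda-embedding facts were needed elsewhere.
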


\begin{proof}
Given a singleton $\psi: (\id_{t\psi},t\psi) \to (\Sb,\C A)$ of $(\Sb,\C A)$, we want to show that there is some singleton $\phi: (\id_{t\phi},t\phi) \to (\M,A)$ such that for any other singleton $\sigma: (\id_{t\s},t\s) \to (\M,A)$ of $A$, we have $\psi(\sigma) = \sigma^{*}\phi$. Consider the distributor $\phi: (\id_{t\psi},t\psi) \to (\M,A)$ defined by: \[\phi(a) = \psi(\M(-,a)).\]
Then, denoting by $\Sb_2$ the structural matrix of the category of singletons of $(\Sb,\C A)$, we have: 
\begin{align*}
\Sb(\sigma,\phi) &= \sigma^* \phi\\ 
&= \int^{a: A}_{\M} \sigma^*(a)\psi(\M(-,a))\\ 
&= \int^{a:A}_{\M} \sigma(a) \Sb_2(\Sb(-,\M(-,a)),\psi)\\
&= \int^{a:A}_{\M} \sigma^*(a)(\Sb(\M(-,a),a)\psi)\\
&= \int^{a:A,\ \gamma:\C A}_{\M,\Sb} \sigma^*(a)\Sb(\M(-,a),\gamma)\psi(\gamma)\\
&= \int^{a:A,\ \gamma:\C A}_{\M,\Sb} \sigma^*(a)\gamma(a)\psi(a)\\
&= \int^{\gamma:\C A}_{\Sb} (\sigma^*\gamma)\psi(\gamma)\\
&= \int^{\gamma:\C A}_{\Sb} \Sb(\sigma,\gamma)\psi(\gamma)\\
&= \Sb_2(\Sb(-,\sigma),\psi)\\
&= \psi(\sigma)
\end{align*}

Now we must prove that $\phi$ is indeed a map in $\Dist(\Bc)$. First, to prove that it is a distributor, let us define $\delta^{\phi}_{ab}:= \delta^{\psi}_{\M(-,a)\M(-,b)}$. The distributor coherence conditions of $\phi$ are direct consequences of those of $\psi$. We define the right adjoint $\phi^*$ of $\phi$ as $\phi^*(a) = \psi^*(\M(-,a))$; note that $\psi^*\psi = \int^{\s: \C A}_{\Sb} \psi^*(\s)\psi(\s) = \int^{\s: \C A} \phi^*\s \s^* \phi = \phi^*\phi$, hence we can define the unit $\eta^{\phi}$ to be $\eta^{\psi}$. The same happens with the counit since $\phi(a)\phi^*(b) = \psi(\M(-,a))\psi^*(\M(-,b))$ and $\e^{\psi}_{ab}$ goes into $\Sb(\M(-,a),\M(-,b))$, which is equal to $\M(a,b)$; therefore, $\phi$ is indeed a singleton of $(\M,A)$. This shows that $(\Sb,\C A)$ is complete. This also proves the symmetric case, as $\phi$ is symmetric whenever $\psi$ is.
\end{proof}

\begin{lem}[$\Bc$-enriched co-Yoneda lemma]
A $\Bc$-category is complete if and only if it is the colimit of all its singletons in $\Dist(\Bc)$. A symmetric $\Bc$-category is complete if and only if it is the colimit of all its symmetric singletons in $\Dist_{\s}(\Bc)$.
\end{lem}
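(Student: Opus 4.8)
The plan is to reduce both sides of the equivalence to a single statement about the Yoneda $\Bc$-functor $\yf\colon (\M,A) \to (\Sb,\C A)$, which we already know to be fully faithful. The guiding observation is that the singletons of $(\M,A)$ are exactly the objects of its singleton category $(\Sb,\C A)$, and that, via the enriched Yoneda Lemma~\ref{YonedaSingleton} (which gives $\Sb(\M(-,a),\sigma)\simeq\sigma(a)$ and $\Sb(\sigma,\M(-,a))\simeq\sigma^*(a)$), each singleton $\sigma$ corresponds to the representable singleton $\Sb(-,\sigma)$ of $(\Sb,\C A)$. First I would record the relevant density presentation: by the corollary to Proposition~\ref{Mid}, the structural matrix $\Sb$ of $(\Sb,\C A)$ is idempotent, i.e. $\Sb(\sigma,\tau)\simeq\int^{\upsilon\colon\C A}_{\Sb}\Sb(\sigma,\upsilon)\Sb(\upsilon,\tau)$, which is precisely the co-Yoneda presentation exhibiting $(\Sb,\C A)$ as the colimit in $\Dist(\Bc)$ of its representable singletons. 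Since $(\Sb,\C A)$ is complete by the preceding proposition, those representable singletons are \emph{all} of its singletons, which are in turn all the singletons of $(\M,A)$. Hence the colimit of all singletons of $(\M,A)$ in $\Dist(\Bc)$ is $(\Sb,\C A)$, and the tautological cocone given by the maps $\sigma\colon(\id_{t\sigma},t\sigma)\to(\M,A)$ factors through this colimit via $\yf$. Consequently, the assertion ``$(\M,A)$ is the colimit of its singletons'' is equivalent to ``$\yf$ is an equivalence''.

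With this reformulation both directions become formal. Since $\yf$ is fully faithful, it is an equivalence if and only if it is essentially surjective; and, by the definition of completeness together with skeletality, $\yf$ is essentially surjective if and only if every singleton equals some representable $\M(-,a)$, that is, if and only if $(\M,A)$ is complete. This settles the equivalence in both directions: if $(\M,A)$ is complete then $\yf$ is an essentially surjective fully faithful $\Bc$-functor, hence an equivalence, so $(\M,A)\simeq(\Sb,\C A)$ is the colimit of its singletons; conversely, if $(\M,A)$ is the colimit of its singletons then $\yf$ is an equivalence, whence essentially surjective, forcing every singleton to be representable and $(\M,A)$ to be complete.

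For the symmetric statement I would run the identical argument inside $\Dist_{\s}(\Bc)$, replacing $(\Sb,\C A)$ by the symmetrically complete $(\Sb,\C_{\s} A)$, using that $\yf$ restricts to symmetric singletons and that the involution preserves all the structures involved, so that the density presentation and the characterization of essential surjectivity transport verbatim. The main obstacle is the first paragraph: making fully rigorous that the colimit of the singleton diagram in $\Dist(\Bc)$ is computed by the idempotency of $\Sb$ and that its comparison map is precisely $\yf$. Concretely, this requires identifying an arbitrary cocone over the singleton diagram with a weighted-colimit datum and verifying the universal property pointwise through the double-action $2$-cells; this bookkeeping is the delicate part, while everything downstream is formal manipulation of fully faithful, essentially surjective $\Bc$-functors.
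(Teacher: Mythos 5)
Your proof has a genuine gap, and it sits exactly at the step you defer as ``bookkeeping'': the claimed equivalence between ``$(\M,A)$ is the colimit of its singletons in $\Dist(\Bc)$'' and ``$\yf$ is an equivalence.'' A colimit in $\Dist(\Bc)$ is compared with any cocone vertex by a \emph{distributor}, not by a $\Bc$-functor: the canonical comparison $(\Sb,\C A)\to(\M,A)$ through which the tautological cocone factors is the distributor $(a,\s)\mapsto\s(a)$ (what the paper's own proof denotes $\yf_!$), while $\yf$ itself points the wrong way. Read literally, ``$(\M,A)$ is the colimit'' then amounts to this comparison distributor being invertible in $\Dist(\Bc)$ --- and that is \emph{always} true, by precisely the two ingredients you invoke. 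Indeed, writing $u(\s,a)=\s^*(a)$ and $v(a,\s)=\s(a)$, the Yoneda lemma~\ref{YonedaSingleton} together with idempotency of $\Sb$ gives $(vu)(b,a)\simeq\int^{\s}\Sb(\M(-,b),\s)\Sb(\s,\M(-,a))\simeq\Sb(\M(-,b),\M(-,a))\simeq\M(b,a)$, while $(uv)(\tau,\s)\simeq\int^{a:A}_{\M}\tau^*(a)\s(a)=\Sb(\tau,\s)$; both composites are the identity distributors. Hence \emph{every} $\Bc$-category, complete or not, is isomorphic in $\Dist(\Bc)$ to its category of singletons (Morita invariance of completion), so your reformulation would prove that every $\Bc$-category is the colimit of its singletons, contradicting the ``only if'' direction you are trying to establish. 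Completeness is exactly the property that is \emph{not} preserved by isomorphism in $\Dist(\Bc)$; the difference between ``the comparison is invertible as a distributor'' and ``the comparison is induced by an invertible $\Bc$-functor'' is the entire content of the lemma, and your argument silently identifies the two. (A smaller problem: the paper never defines $2$-cells in $\Cat(\Bc)$, so ``$\yf$ is an equivalence / essentially surjective'' is not even available as stated; with skeletality one should ask that $\yf$ be bijective on objects.)

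The paper's proof is structured so as not to fall into this trap: it never reduces the statement to a property of $\yf$, but verifies the universal property of the specific cocone directly. In the forward direction, given an arbitrary cocone $\{\s'\}$ with vertex $(\N,C)$, it \emph{constructs} the factorization by $\phi(c,a)=\s_a(c)$, where $\s_a$ is the cocone component at the representable $\M(-,a)$; completeness is what makes the verification $\phi\s=\s'$ possible for every singleton $\s$, since $\s=\M(-,a_{\s})$ reduces it to $\int^{a:A}_{\M}\phi(c,a)\M(a,a_{\s})\simeq\phi(c,a_{\s})$. In the converse direction it shows that $(\Sb,\C A)$, equipped with the cocone of representables $\Sb(-,\s)$, satisfies the same universal property, and then identifies $(\M,A)$ with $(\Sb,\C A)$. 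To salvage your strategy you would have to show that the colimit hypothesis yields an identification of $(\M,A)$ with $(\Sb,\C A)$ \emph{as $\Bc$-categories} (on objects, say, using that $(\Sb,\C A)$ is skeletal), i.e. that every singleton is representable; that is where the real mathematical work lies, and it is exactly what is missing from your reduction.
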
   

\begin{proof}
Suppose that $(\M,A)$ is a complete $\Bc$-category, and let there be a diagram as follows in $\Dist(\Bc)$: 
\[
\begin{tikzcd}
(\id_{t\s},t\s)
\arrow[dr,"{\s}",bend right=10]
\arrow[ddr,"{\s'}"',bend right = 20]
\arrow[rr,"f"]
&& (\id_{t\tau},t\tau) 
\arrow[dl,"{\tau}"', bend left = 10]
\arrow[ddl,"{\tau'}", bend left = 20]\\
& (\M,A)
\arrow[d,"\phi",dashed]\\
& (\N,C)
\end{tikzcd}
\]

Note that the cells $f: (\id_{t\s},t\s) \to (\id_{t\tau},t\tau)$ considered are precisely those that make the triangle with $\s$ and $\tau$ commute up to a $2$-cell: i.e. there exists some $2$-cell $\alpha: \tau f \To \s$. In other words, they are the $1$-cells in $\Bc$ which are the base for the colimit which defines $\Pb(\tau,\s)$. Now the goal is to produce the distributor $\phi$ which makes the whole diagram commute. Any such $\phi: (\M,A) \to (\N,C)$ would need to satisfy $\s' = \phi\s$, i.e. for any $c \in C$, $\s'(c) = \int^{a: A}_{\M} \phi(c,a)\s(a) = \int^{a:A}_{\M} \phi(c,a)\M(a,a_{\s}) = \phi(c,a_{\s})$, where $a_{\s}$ is the element representing $\s$ by completeness of $(\M,A)$. Now define $\phi$ by $\phi(c,a) = \s_{a}(c)$ for all $a \in A$, $c \in C$, where $\s_a$ is the presingleton on $(\N,C)$ corresponding to $\M(-,a)$. To prove that it is a distributor we need to exhibit a $2$-cell $\delta^{\phi}_{c_1c_2a_2a_1}$ for each $a_1,a_2 \in A$, $c_1,c_2 \in C$. Because $\iota$ defines a morphism of distributors $\M(-,a_2)\M(a_2,a_1) \To \M(-,a_1)$, we have a $2$-cell $\alpha: \s_{a_2} \M(a_2,a_1) \To \s_{a_1}$ in $\Dist(\Bc)$, i.e. a morphism of distributors; in return this enables us to define $\delta_{c_1c_2a_2a_1}^{\phi}$ as $\alpha_{c_1} \bullet (\delta^{\s_{a_2}}_{c_1c_2} * 1_{\M(a_2,a_1)})$, which is isomorphic to $\delta^{\s_1}_{c_1c_2} \bullet (1_{\N(c_1,c_2)} * \aa_{c_2})$; this is easily proven using the coherence conditions of $\aa$ as a morphism of distributors. The coherence conditions of $\delta^{\phi}$ are direct consequences of that of $\s_a$ and of the fact that $\aa$ is a morphism of distributors; we do not prove them here.\\ 

Now suppose that $(\M,A)$ is the colimit of all its singletons, denote by $\yf: (\M,A) \to (\Sb,\C A)$ the Yoneda $\Bc$-functor, then it defines a representable distributor $\yf_!: (\Sb,\C A) \to (\M,A)$ by $\yf_!(a,\s) = \s(a)$. We will show that $(\Sb,\C A)$ satisfies the universal property of the colimit; let there be a $\Bc$-category $(\N,C)$ as in the above diagram, then there exists a distributor $\phi: (\M,A) \to (\N,C)$ making the diagram commute. Then consider the distributor $\phi\yf_!: (\Sb,\C A) \to (\N,C)$, we have $\phi\yf_!(c,\s) = \int^{a: A }_{\M} \phi(c,a)\s(a) = (\phi\s)(c,a)$, thus proving that the diagram commute and therefore that $(\M,A) \simeq (\Sb,\C A)$, i.e. that $(\M,A)$ is complete.

\end{proof}

\begin{coro}
Let $(\M,A)$ be a $\Bc$-category. Then the completion of $(\M,A)$ is obtained as the colimit of all singletons of $(\M,A)$.
\end{coro}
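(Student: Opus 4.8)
The strategy is to show that the completion of $(\M,A)$ is its $\Bc$-category of singletons $(\Sb,\C A)$, and that this $\Bc$-category is precisely the colimit of all singletons of $(\M,A)$. First I would check that $(\Sb,\C A)$ realises the completion. By the preceding proposition $(\Sb,\C A)$ is complete, and by the remark following Lemma~\ref{YonedaSingleton} the Yoneda $\Bc$-functor $\yf\colon (\M,A)\to(\Sb,\C A)$, which sends $a$ to the representable singleton $\M(-,a)$, is fully faithful. One then verifies that $\yf$ is the universal $\Bc$-functor from $(\M,A)$ into a complete $\Bc$-category, i.e.\ that $(\Sb,\C A)$ is the reflection of $(\M,A)$ into $\Catk(\Bc)$ along the inclusion $i\colon\Catk(\Bc)\hookrightarrow\Cat(\Bc)$.

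For the colimit description, I would observe that the work has essentially been carried out already in the proof of the $\Bc$-enriched co-Yoneda lemma: there it is shown that $(\Sb,\C A)$ satisfies the universal property, in $\Dist(\Bc)$, of the colimit of the diagram of all singletons of $(\M,A)$ (the diagram whose vertices are the objects $(\id_{t\s},t\s)$ indexed by singletons $\s$, with morphisms the $1$-cells $f$ carrying a $2$-cell $\tau f\To\s$), and this verification does not use completeness of $(\M,A)$ itself. Alternatively, and more conceptually, since $(\Sb,\C A)$ is complete the co-Yoneda lemma presents it directly as the colimit of its own singletons; being complete, each singleton of $(\Sb,\C A)$ is representable, hence of the form $\Sb(-,\sigma)$ for a unique $\sigma\in\C A$, and the objects $\sigma$ are exactly the singletons of $(\M,A)$. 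Full faithfulness of $\yf$ together with Lemma~\ref{YonedaSingleton} guarantees that the hom-objects indexing the two diagrams agree, so the two colimits coincide. Either way, the completion $(\Sb,\C A)$ is the colimit of all singletons of $(\M,A)$.

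The main obstacle I anticipate lies in the first step: making precise the notion of ``the completion'' and verifying its universal property, namely that every $\Bc$-functor from $(\M,A)$ to a complete $\Bc$-category factors essentially uniquely through $\yf$. This is exactly where the description of complete $\Bc$-categories as colimits of their singletons, together with the full faithfulness of the Yoneda $\Bc$-functor, must be combined. By contrast, the identification of the two singleton-indexed diagrams in the second step is conceptually routine, though one should take care that the morphisms of the indexing diagram (the $2$-cells $\beta$ with $\alpha_g\bullet(1_{\s}*\beta)\simeq\alpha_f$) transport correctly across the representability bijection.
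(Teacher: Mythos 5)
Your second (``more conceptual'') route is correct, and it is essentially the argument the paper intends: the corollary is stated without proof precisely because it is meant as an immediate consequence of what precedes it. Namely, $(\Sb,\C A)$ is complete by the preceding proposition, so the co-Yoneda lemma exhibits it as the colimit in $\Dist(\Bc)$ of all \emph{its} singletons; and the proof of that completeness proposition already supplies the dictionary you invoke: every singleton $\psi$ of $(\Sb,\C A)$ is representable, $\psi = \Sb(-,\sigma)$, where $\sigma$, defined by $\sigma(a) = \psi(\M(-,a))$, is a singleton of $(\M,A)$, and conversely every $\sigma \in \C A$ gives the singleton $\Sb(-,\sigma)$. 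The one point you should make explicit is the correspondence of \emph{edges} of the two indexing diagrams, i.e.\ that $2$-cells $\Sb(-,\sigma_2)f \To \Sb(-,\sigma_1)$ in $\Dist(\Bc)$ correspond to $2$-cells $\sigma_2 f \To \sigma_1$; this follows by evaluating at representables via Lemma~\ref{YonedaSingleton}, which is what your appeal to full faithfulness of $\yf$ is implicitly doing. Your first paragraph (verifying that $(\Sb,\C A)$ is the reflection of $(\M,A)$ into $\Catk(\Bc)$) duplicates what the paper establishes immediately afterwards via the adjunction $\C \dashv i$; it is legitimate, and indeed needed if one insists on reading ``completion'' as a universal property rather than as the construction $(\M,A) \mapsto (\Sb,\C A)$.

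Your first alternative, however, rests on a misreading of the co-Yoneda proof. In the second half of that proof, the mediating distributor out of $(\Sb,\C A)$ toward a cocone $(\N,C)$ is constructed as the composite $\phi\yf_!$, where $\phi: (\M,A) \to (\N,C)$ is obtained from the universal property of $(\M,A)$ as colimit of its singletons --- that is, exactly from the hypothesis you claim is not used (and the first half likewise uses completeness, to know that the constructed mediating map agrees with the cocone legs at non-representable singletons). For a general, non-complete $(\M,A)$ no such $\phi$ is available, so that verification is \emph{not} already carried out in the paper; one would have to build the mediating map directly (say $\Phi(c,\sigma) := \sigma'(c)$ for the cocone legs $\sigma'$) and check its distributor and cocone compatibilities from scratch. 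Since your second route stands on its own, the proposal as a whole goes through, but alternative (a) should be dropped or repaired.
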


\begin{deft}
We denote by $\Catk(\Bc)$ the full subcategory of $\Cat(\Bc)$ whose objects are the complete $\Bc$-categories. We denote by $\Catsk(\Bc)$ the full subcategory of $\Cat_{\s}(\Bc)$ whose objects are the symmetrically complete symmetric $\Bc$-categories.
\end{deft}

\begin{ex}
Taking into account all of our above examples, we can now give a list:
\begin{enumerate}
\item $\Catsk(\Bc_{\Set})$ is the category of all small Karoubi-complete categories.
\item $\Catsk(\R(X))$ is the category of sheaves of sets over the topological space $X$.
\item $\Catk(\R(X))$ is the category of sheaves of posets over the topological space $X$.
\item For any site $(\C,J)$, denoting by $\R(C,J)$ the quantaloid of $J$-closed sieves associated to $(\C,J)$, $\Catsk(\R(\C,J))$ is the topos $\Sh(\C,J)$ of sheaves over the site. See~\cite{HeymansThesis} for more details.
\end{enumerate}
\end{ex}

\begin{deft}
The assignment $(\M,A) \mapsto (\Sb,\C A)$ defines a functor $\C: \Cat(\Bc) \to \Catk(\Bc)$. For any $\Bc$-functor $f: (\M,A) \to (\N,C)$, define $\C f(\s)$ by $f_! \s$ for any singleton $\s$ of $(\M,A)$. If $\Bc$ is endowed with an involution, this functor restricts to a functor $\C_{\s}: \Cat_{\s}(\Bc) \to \Catsk(\Bc)$.
\end{deft}

\begin{prop}
The functor $\C$ is left adjoint to the inclusion functor $i: \Catk(\Bc) \to \Cat(\Bc)$. The functor $\C_{\s}$ is left adjoint to the inclusion functor $i_{\s}: \Catsk(\Bc) \to \Cat_{\s}(\Bc)$.
\end{prop}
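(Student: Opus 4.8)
The plan is to exhibit $(\Sb,\C A)$, together with the Yoneda $\Bc$-functor $\yf\colon(\M,A)\to(\Sb,\C A)$, as the reflection of $(\M,A)$ along $i$. Since $i$ is the inclusion of a full subcategory, proving $\C\dashv i$ reduces to checking that $\yf$ is a universal arrow: for every complete $\Bc$-category $(\N,C)$ and every $\Bc$-functor $g\colon(\M,A)\to(\N,C)$ there is a unique $\Bc$-functor $h\colon(\Sb,\C A)\to(\N,C)$ with $h\circ\yf=g$. We already know that $(\Sb,\C A)$ is complete, hence a genuine object of $\Catk(\Bc)$, and that $\yf$ is fully faithful. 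I would take $\yf$ for the components of the unit $\eta$; its naturality is the identity $\C g\circ\yf=\yf\circ g$, which on objects unwinds to $g_!\,\M(-,a)=\N(-,g(a))$, an immediate consequence of the Yoneda lemma~\ref{YonedaSingleton} together with the fact that $\M$ is an identity distributor (the computation $(g_!\M(-,a))(c)=\int^{b}_{\M}\N(c,g(b))\M(b,a)=\N(c,g(a))$).

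For existence I would pass through the functor $\C$ itself. Functoriality gives $\C g\colon(\Sb,\C A)\to(\Sb,\C C)$, $\s\mapsto g_!\s$, and since $(\N,C)$ is complete each singleton $g_!\s$ is representable, so one may set $h(\s)$ to be an element of $C$ representing the singleton $g_!\s$. Equivalently $h=\e_C\circ\C g$, where $\e_C\colon(\Sb,\C C)\to(\N,C)$ is a quasi-inverse of the (fully faithful, essentially surjective) Yoneda functor $\yf_C$ attached to the complete target. The action of $h$ on the structural $2$-cells is dictated by the representability isomorphisms, and its $\Bc$-functoriality is checked using Lemma~\ref{YonedaSingleton}. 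Naturality of Yoneda then yields $h\circ\yf=\e_C\circ\C g\circ\yf=\e_C\circ\yf_C\circ g=g$, as required.

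Uniqueness is the crux, and is where completeness is genuinely used. Any admissible $h$ agrees with $g$ on representables, $h(\M(-,a))=g(a)$. By the co-Yoneda lemma an arbitrary singleton $\s$ is the colimit in $\Dist(\Bc)$ of the representables $\M(-,a)$ lying over it, and this colimit is weighted by $\s$, which is a \emph{map} in $\Dist(\Bc)$; hence it is preserved by the left-adjoint distributor $h_!$ induced by any $\Bc$-functor $h$. Since two candidate extensions agree on every representable and both transport this same colimit into $(\N,C)$, they are forced to agree on $\s$, and likewise on the structural $2$-cells. The one delicate point is that the element representing $g_!\s$ is a priori unique only up to isomorphism, so strict uniqueness of $h$ rests on skeletality: the completions we build are skeletal by Proposition~\ref{PreSk}, and it is precisely this that pins down the representing element and makes the density argument yield an equality of $\Bc$-functors rather than merely an isomorphism. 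I expect this forcing-by-preservation-of-the-colimit, together with the bookkeeping of skeletality, to be the main obstacle; everything else is a routine transport of structure.

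Finally, the symmetric statement is obtained by running the identical argument inside $\Dist_{\s}(\Bc)$: the unit is the symmetric Yoneda $\Bc$-functor $(\M,A)\to(\Sb,\C_{\s}A)$, the target $(\Sb,\C_{\s}A)$ is symmetrically complete, and for a symmetrically complete $(\N,C)$ one pushes forward symmetric singletons along $g$ and represents them, the constructions of the previous paragraphs preserving symmetry throughout. The same uniqueness clause, resting on the co-Yoneda presentation of the completion as a colimit of representable symmetric singletons and on Proposition~\ref{PreSk}, gives the bijection $\Catsk(\Bc)(\C_{\s}(\M,A),(\N,C))\cong\Cat_{\s}(\Bc)((\M,A),i_{\s}(\N,C))$, establishing $\C_{\s}\dashv i_{\s}$.
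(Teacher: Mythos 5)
Your proposal takes a genuinely different route from the paper: you verify the universal property of a reflection (existence and uniqueness of the factorization through $\yf$), whereas the paper constructs the unit ($a\mapsto\M(-,a)$) and the counit (sending a singleton to its representing element) explicitly, uses Lemma~\ref{RhoIotaSing} to check that both are $\Bc$-functors, and then verifies the triangle identities; that route never needs a uniqueness argument at all. Your existence step is sound and is essentially the paper's counit in disguise ($h=\e_C\circ\C g$, with $\C g\circ\yf=\yf\circ g$ following from $g_!\circ\M=g_!$).

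The genuine gap is in your uniqueness step. First, a misattribution: what pins down the object $h(\sigma)$ representing $g_!\sigma$ is skeletality of the \emph{target} $(\N,C)$, not of the completion; Proposition~\ref{PreSk} concerns $(\Sb,\C A)$ and says nothing about an arbitrary complete $(\N,C)$, so you must instead invoke the (implicit) convention that objects of $\Catk(\Bc)$ are skeletal. Second, and more seriously, ``and likewise on the structural $2$-cells'' is not likewise: the co-Yoneda/colimit argument determines $h$ on objects only. For the $2$-cells, the $\Bc$-functor axioms yield only self-referential constraints such as $h_{\yf(a)\tau}\bullet\delta^{\tau}_{ab}=\iota^{\N}\bullet(g_{ab}*h_{\yf(b)\tau})$, which do not express $h_{\yf(a)\tau}$ in terms of $g$. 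Concretely, one can conjugate a valid factorization $h$ by a family of automorphisms of the objects $h(\sigma)$ (chosen to be identities on representables): the result is another $\Bc$-functor agreeing with $g$ on $(\M,A)$ and with $h$ on objects, but not on $2$-cells. This is already visible for $\V=\Set$, where splittings of an idempotent through a fixed object are not unique. Excluding such twists requires precisely the strict-equality bookkeeping (singletons being equal, not merely isomorphic, to the canonical coends they must represent) that the paper's unit--counit--triangle-identities route is designed to bypass; as written, the crux of your approach is missing.
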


\begin{proof}
We shall give the unit and counit of this adjunction:
\begin{enumerate}
\itb For any complete $\Bc$-category $(\M,A)$, the counit sends any singleton to the unique element representing it:
\begin{align*}
\e_{(\M,A)}: (\Sb, \C A) &\to (\M,A)\\
\sigma = \M(-,a) &\mapsto a
\end{align*}
\itb For any $\Bc$-category $(\M,A)$, the unit sends any element $a \in A$ to its representable singleton:
\begin{align*}
\eta_{(\M,A)}: (\M,A) &\to (\Sb, \C A)\\
a &\mapsto \M(-,a)
\end{align*}
\end{enumerate}
We must show that these are indeed $\Bc$-functors, and then that they satisfy the triangle identities. Let us fix $(\M,A)$ so that we can stop writing it as index for a moment, and let us start with $\eta$. For any $a,b \in A$, we have: 
\begin{enumerate}
\itb $\eta(a) = \M(-,a)$
\itb $\eta_{ab} = 1_{\M(a,b)}: \M(a,b) \To \Sb(\M(-,a),\M(-,b)) = \M(a,b)$
\end{enumerate}

Now the $\Bc$-functor conditions are rewritten as:

\begin{enumerate}
\itb $\iota_{abc}^{\M} \simeq \iota_{\M(-,a)\M(-,b)\M(-,c)}^{\Sb}$
\itb $\rho_{a}^{\M} \simeq \rho_{\M(-,a)}^{\Sb}$
\end{enumerate}

Which is precisely the object of lemma~\ref{RhoIotaSing}. Now for $\e$, we have: 

\begin{enumerate}
\itb $\e(\M(-,a)) = a$
\itb $\e_{\M(-,a)\M(-,b)} = 1_{\M(a,b)}: \Sb(\M(-,a),\M(-,b)) = \M(a,b) \To \M(a,b)$
\end{enumerate}

And the $\Bc$-functor conditions are the same. Therefore, both $\eta_{(\M,A)}$ and $\e_{(\M,A)}$ are indeed $\Bc$-functors, meaning that $\eta$ and $\e$ are well-defined natural transformations. Now we must show that they satisfy the triangle identities. Those are very straightforward to prove, as they amount to:

\begin{enumerate}
\itb For any $\Bc$-category $(\M,A)$, the following composite is the identity $\Bc$-functor: 
\[
\begin{tikzcd}[row sep = -5, column sep = 30]
(\Sb,\C A) 
\arrow[r, "{\C(\eta_{(\M,A)})}"]
& (\Sb_2, \C \C A) 
\arrow[r, "{\e_{(\Sb_2,\C\C A)}}"]
& (\Sb, \C A) \\
\s
\arrow[r, mapsto,  shorten <= 11, shorten >= 5]
& \Sb(-,\s) 
\arrow[r, mapsto,  shorten <= 5, shorten >= 11]
& \s
\end{tikzcd}
\]
\itb For any skeletal complete $\Bc$-category $(\M,A)$, the following composite is the identity $\Bc$-functor:
\[
\begin{tikzcd}[row sep = -5, column sep = 30]
(\M,A)
\arrow[r, "{\eta_{(\M,A)}}"]
& (\Sb, \C A)
\arrow[r, "{\e_{(\M,A)}}"]
& (\M,A)\\
a
\arrow[r, mapsto,  shorten <= 11]
& \M(-,a)
\arrow[r, mapsto,  shorten >= 11]
& a
\end{tikzcd}
\]

\end{enumerate}

The proof restricts without issue to the symmetric case.
\end{proof}

\begin{coro}
The completion of an already complete $\Bc$-category yields the same original $\Bc$-category. For any $\Bc$-category $(\M,A)$, its completion $(\Sb,\C A)$ satisfies the following universal property: any $\Bc$-functor $(\M,A) \to (\N,C)$, with $(\N,C)$ a complete $\Bc$-category, factorizes uniquely through $\yf_{(\M,A)}: (\M,A) \to (\Sb, \C A)$.
\end{coro}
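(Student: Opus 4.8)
The plan is to deduce both assertions formally from the adjunction $\C \dashv i$ just established, so that no further manipulation of singletons is needed. The key observation is that the inclusion $i: \Catk(\Bc) \to \Cat(\Bc)$ is, being a \emph{full} subcategory inclusion, fully faithful; I would then invoke the standard characterisation of reflective subcategories, namely that the counit of an adjunction is a natural isomorphism exactly when the right adjoint is fully faithful.

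First, for the claim that completing an already complete $\Bc$-category returns it unchanged, I would note that the counit $\e_{(\M,A)}: (\Sb,\C A) \to (\M,A)$ --- described in the proof of the adjunction as sending a representable singleton $\M(-,a)$ to its representing element $a$ --- is therefore an isomorphism of $\Bc$-categories for every complete $(\M,A)$, giving $(\Sb,\C A) \simeq (\M,A)$. If one prefers a hands-on verification, one shows directly that the unit $\eta_{(\M,A)} = \yf_{(\M,A)}$ is an isomorphism: it is fully faithful by the enriched Yoneda lemma~\ref{YonedaSingleton}, while completeness makes it surjective on objects (every singleton is representable) and skeletality makes it injective (the representing element is unique).

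Second, the universal property is precisely the universal property of the unit of the adjunction. Given a $\Bc$-functor $g: (\M,A) \to (\N,C)$ with $(\N,C)$ complete, the adjunction bijection
\[
\Cat(\Bc)\big((\M,A),\, i(\N,C)\big) \;\cong\; \Catk(\Bc)\big(\C(\M,A),\, (\N,C)\big)
\]
yields a unique $\Bc$-functor $\widetilde{g}: (\Sb,\C A) \to (\N,C)$ with $i(\widetilde{g}) \circ \eta_{(\M,A)} = g$; since $\eta_{(\M,A)} = \yf_{(\M,A)}$ this is exactly the asserted factorisation through the Yoneda $\Bc$-functor, with uniqueness already encoded in the bijection. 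The symmetric versions follow word for word upon replacing $\C, i$ by $\C_{\s}, i_{\s}$.

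Since the whole argument is formal, there is no genuine obstacle; the only points deserving care are the two ingredients supplied by the preceding results, namely that the left adjoint $\C$ really lands in $\Catk(\Bc)$ (i.e.\ that $(\Sb,\C A)$ is always complete) and that the unit of $\C \dashv i$ coincides with the Yoneda $\Bc$-functor --- both of which were established just above.
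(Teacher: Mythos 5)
Your proposal is correct and is essentially the paper's intended argument: the corollary is stated immediately after the proposition establishing $\C \dashv i$, and it is meant to follow formally from that adjunction — the counit is an isomorphism because the inclusion $i$ of the full subcategory $\Catk(\Bc)$ is fully faithful, and the universal property is that of the unit $\eta_{(\M,A)} = \yf_{(\M,A)}$. Your additional remarks (the hands-on check via the Yoneda lemma, with skeletality ensuring uniqueness of representing elements) match the ingredients the paper itself uses in verifying the triangle identities.
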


\section{Complete $\Bc$-categories and $2$-presheaves over $\Map(\Bc)$}

The goal of this section is to define a diagram of categories as follows, where $\si$ is some kind of Grothendieck construction functor and $P$ is some kind of pseudofunctor of ``fibers'', and show that there is an adjunction $\C\si \dashv P$:

\[
\begin{tikzcd}[row sep=40, column sep = 20]
{\Cat(\Bc)}
\arrow[d,"{\C}"', shift right=2]
\arrow[d,"{\scriptstyle\dashv}"description, phantom]
&& {[\Map(\Bc)^{\coop},\Cat]}
\arrow[ll,"{\si}"]\\ 
{\Catk(\Bc)}
\arrow[u,"{i}"', shift right=2]
\arrow[urr,"P",bend right=30, start anchor=real east]
\end{tikzcd}
\]

This construction also exists in a symmetric version, the interest of which will be demonstrated in the next section of examples. By $\Map(\Bc)$ we mean the bicategory of $\Bc$ with the same objects as $\Bc$, but in which, for $a,b$ in $\Ob(\Bc)$, $\Map(\Bc)(a,b)$ is the full subcategory of $\Bc(a,b)$ whose objects are $1$-cells with a right adjoint. \\ 

Throughout all this section, $[\Map(\Bc)^{\op},\Cat]$ will be the category of \textit{pseudofunctors} going from $\Map(\Bc)^{\op}$ to $\Cat$, and of \textit{oplax natural transformations} between them. We do not consider here the $2$-structure of that category, which would require delving into the calculus of modifications between natural transformations.

\subsection{The Grothendieck construction $\si$}

Recall that the usual Grothendieck construction (see, for instance,~\cite{CZStacks} for a thorough review) takes an indexed category, i.e. a functor $\C^{\op} \to \Cat$ and yields a category (a fibration) over $\C$. We shall modify this definition to construct, from a pseudofunctor $\Map(\Bc)^{\coop} \to \Cat$ an associated $\Bc$-category. 

\begin{deft}\label{GrothConstrDef}
Let $F: \Map(\Bc)^{\coop} \to \Cat$ be a pseudofunctor. Consider the $\Ob(\Bc)$-typed set $\si F$ of elements of $F$ defined by $$\si F = \bigsqcup_{x \in \Ob(\Bc)} \Ob(F(x)).$$ For any $a,b \in \si F$, define $\N(a,b)$ as the colimit of the following diagram in $\Bc(tb,ta)$:
\begin{enumerate}
\itb The objects are maps $f: tb \to ta$ such that there is an arrow $u: b \to F(f)(a)$ in $F(tb)$ (taken as many times as there is such a $u$). 
\itb For any two such pairs $(f_1,u_1)$ and $(f_2,u_2)$, the arrows considered are the $2$-cells $\alpha: f_1 \To f_2$ such that $F(\alpha)(a)u_2 \simeq u_1$.
\end{enumerate}
Then $(\N,\si F)$ is a $\Bc$-category.
\end{deft}

\begin{proof}
We start by defining the necesary families of $2$-cells $(\rho_a)_{a \in A}$ and $(\iota_{abc})_{a,b,c \in A}$, for $A = \si F$. Let there be some $a \in A$. Since $F$ is a functor, we have $F(\id_a)(a) = a$, hence $\id_a$ is an object of the colimiting diagram defining $\N(a,a)$ and hence there is a canonical $2$-cell $\rho_a: \id_a \To \N(a,a)$.

Now consider three elements $a,b,c \in A$; we want to define $$\iota_{abc}: \N(a,b)\N(b,c) \To \N(a,c).$$ Since $\N(a,c)$ is defined as a colimit, we can use, as we did for $\rho$, the canonical injections. However, we will also describe $\N(a,b)\N(b,c)$ as a colimit, using the fact that pre- and post-composition commute with colimits: indeed, $\N(a,b)\N(b,c)$ is a colimit of the diagram
\[
\begin{tikzcd}
& \N(a,b)\N(b,c) \\ 
f_1g_1 
\arrow[ur]
\arrow[rr,"\alpha * \beta"]
&& f_2g_2
\arrow[ul]
\end{tikzcd}
\]
in the category $\Bc(tc,ta)$, indexed also by the following data:
\begin{enumerate}
\itb $u_1: b \to F(f_1)(a)$
\itb $u_2: b \to F(f_2)(a)$
\itb $v_1: c \to F(g_1)(b)$
\itb $v_2: c \to F(g_2)(b)$
\end{enumerate}
(so that $(f_1,u_1)$ and $(f_2,u_2)$ are objects of the colimiting diagram defining $\N(a,b)$ and that $(g_1,v_1)$ and $(g_2,v_2)$ are objects of the colimiting diagram defining $\N(b,c)$). We therefore have $\alpha: f_1 \To f_2$ and $\beta: g_1 \To g_2$ satisfying $F(\alpha)(a)u_2 \simeq u_1$ and $F(\beta)(b)(v_2) \simeq v_1$.

Now we want to give $2$-cells $f_1g_1 \To \N(a,c)$ and $f_2g_2 \To \N(a,c)$ which commute with $\alpha * \beta$. First, let us drop the index for a moment as we produce a $w: c \to F(fg)(a)$. We have $F(g)(u): F(g)(b) \to F(g)(F(f)(a))$; up to isomorphism (because $F$ is a pseudofunctor and not a strict $2$-functor) this is an arrow $F(g)(b) \to F(fg)(a)$; define $w = F(g)(u)\cdot v: c \to F(fg)(a)$.

The next step is to show that we have $F(\alpha * \beta)(a) w_2 \simeq w_1$ (we resume taking the indexes, $w_1$ and $w_2$ are respectively the $w$ constructed as above for $f_1g_1$ and $f_2g_2$). We start by expliciting $F(\alpha * \beta)(a)$: because $F$ goes into $\Cat$, that amounts at taking the \textit{Godement product} of $F(\beta)$ and $F(\alpha)$; we get: 
\begin{align*}
F(\alpha * \beta)(a)        &=(F(\beta) * F(\alpha))(a) \\
                            &\simeq F(\beta)(F(f_2)(a)) \cdot F(g_1)(F(\alpha)(a))\\
                            &\simeq F(g_2)(F(\alpha)(a)) \cdot F(\beta)(F(f_1)(a))
\end{align*}

What we want to show is therefore:

\[F(\beta)(F(f_1)(a))F(g_2)(F(\alpha)(a))F(g_2)(u_2)v_2 \simeq F(g_1)(u_1)v_1\]

So we compute: 

\begin{eqnarray*}
& & F(\beta)(F(f_1)(a)) \cdot F(g_2)(F(\alpha)(a)) \cdot F(g_2)(u_2) \cdot v_2\\  
&=& F(\beta)(F(f_1)(a)) \cdot F(g_2)(F(\alpha)(a) \cdot u_2) \cdot v_2\\
&\simeq& F(\beta)(F(f_1)(a)) \cdot F(g_2)(u_1) \cdot v_2
\end{eqnarray*}

Now consider the following commutative diagram of naturality of the natural transformation $F(\beta): F(g_2) \To F(g_1)$ along the arrow $u_2: b \to F(f_2)(a)$; the diagram is in the category $F(tc)$:

\[\begin{tikzcd}[row sep = 20, column sep = 60]
F(g_2)(b)
\arrow[r,"F(\beta)(b)"]
\arrow[d,"F(g_2)(u_1)"']
& F(g_1)(b)
\arrow[d,"F(g_1)(u_1)"]\\
F(f_1g_2)(a)
\arrow[r,"F(\beta)(F(f_1)(a))"']
& F(f_1g_1)(a)
\end{tikzcd}\]

The naturality of $F(\beta)$ shows that we have:

\begin{align*}
F(\beta)(F(f_1)(a)) \cdot F(g_2)(u_1) \cdot v_2 &\simeq F(g_1)(u_1) \cdot F(\beta)(b) \cdot v_2\\
&\simeq F(g_1)(u_1) \cdot v_1\\
&= w_1
\end{align*}

This shows that $F(\alpha * \beta)(a)w_2 \simeq w_1$, thus proving that there are canonical injections $f_1g_1 \to \N(a,c)$ and $f_2g_2 \to \N(a,c)$ making the following diagram commute:

$$\begin{tikzcd}
& \N(a,c) \\
f_1g_1
\arrow[ur]
\arrow[rr,"\alpha * \beta"']
&& f_2g_2
\arrow[ul]
\end{tikzcd}$$

Using the universal property of $\N(a,b)\N(b,c)$ as a colimit, this yields a $2$-cell $\iota_{abc}: \N(a,b)\N(b,c) \To \N(a,c)$. \\

Let us sum up the definitions of $\iota$ and $\rho$. For $f: tb \to ta$ with some $u: b \to F(f)(a)$, we will denote by $e_f: f \To \M(b,a)$ the canonical inclusion. Then $\rho_a = e_{\id_a}$; moreover $\iota_{abc}$ is defined by the commutativity of the following diagram: 

\[
\begin{tikzcd}[row sep = 30, column sep = 40]
& \M(a,c)\\ 
fg 
\arrow[ur,"{e_{fg}}", bend left = 20, end anchor=west]
\arrow[r,"{e_f * e_g}"']
& \M(a,b)\M(b,c)
\arrow[u,"{\iota_{abc}}"']
\end{tikzcd}
\]

Now proving the coherence relations becomes easier, notably because we can effectively check them at the base level of the colimits. For any $f: tb \to ta$ with some $u: b \to F(f)(a)$ in $F(tb)$, we have:
\begin{eqnarray*}
\iota_{aab} \bullet (\rho_a * 1_{\M(a,b)}) \bullet e_f &=& \iota_{aab}\bullet (e_{\id_{ta}} * e_f)\\ 
&=& e_{\id_{ta} \cdot f}\\
&=& e_f
\end{eqnarray*}

Hence $\iota_{aab} \bullet (\rho_a * 1) = 1_{\M(a,b)}$; a similar computation shows the other unitality condition. Now for the associativity condition, consider three morphisms $f: tb \to ta$, $g: tc \to tb$ and $h: td \to tc$ in the base of the colimit, with their respective canonical inclusions $e_f,e_g,e_h$. Then:

\begin{eqnarray*}
& &\iota_{acd} \bullet (\iota_{abc} * 1_{\M(c,d)}) \bullet (e_f * e_g * e_h)\\
&=& \iota_{acd} \bullet ((\iota_{abc} \bullet (e_f * e_g)) * e_h)\\ 
&=& \iota_{acd} \bullet (e_{fg} * e_h)\\ 
&=& e_{fgh}\\ 
&=& \iota_{abd} \bullet (e_f * e_{gh})\\ 
&=& \iota_{abd} \bullet (e_f * (\iota_{bcd} \bullet (e_g * e_h)))\\ 
&=& \iota_{abd} \bullet (1_{\M(a,b)} * \iota_{bcd}) \bullet (e_f * e_g * e_h)
\end{eqnarray*}

\end{proof}

\begin{prop}
Let $F$ and $G$ be two pseudofunctors $\Map(\Bc)^{\coop} \to \Cat$, and let $\alpha: F \to G$ be an oplax-natural transformation between them. Consider the following function $\si \alpha: \si F \to \si G$: 
\begin{enumerate}
\itb For all $a \in \si F$, $(\si \alpha)(a) = \alpha_{ta}(a) \in \si G$.
\end{enumerate}
Then $\si \alpha$ defines a $\Bc$-functor.
\end{prop}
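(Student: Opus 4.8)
The plan is to build the family of structure $2$-cells $(\si\alpha)_{ab}$ using the universal property of the colimits defining the structural matrices, exactly in the style of the proof of Definition~\ref{GrothConstrDef}. Write $\N$ for the matrix of $\si F$ and $\N'$ for that of $\si G$. Type-preservation is immediate: since $\alpha_{ta}\colon F(ta)\to G(ta)$ is a functor, $\alpha_{ta}(a)$ is an object of $G(ta)$, so $t\big((\si\alpha)(a)\big)=ta$. First I would record the extra data of the oplax transformation: besides the functors $\alpha_x\colon F(x)\to G(x)$, for each map $f\colon tb\to ta$ it supplies a structure $2$-cell pointing in the direction $\alpha_{tb}\circ F(f)\To G(f)\circ\alpha_{ta}$, with component at $a$ an arrow $(\alpha_f)_a\colon \alpha_{tb}(F(f)(a))\to G(f)(\alpha_{ta}(a))$ of $G(tb)$. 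It is precisely this oplax (rather than lax) variance that makes the construction below well-typed, which is why $\si$ is functorial only on oplax transformations.

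Given a pair $(f,u)$ in the base of the colimit defining $\N(a,b)$, that is $u\colon b\to F(f)(a)$, I set $u':=(\alpha_f)_a\cdot\alpha_{tb}(u)\colon \alpha_{tb}(b)\to G(f)(\alpha_{ta}(a))$, so that $(f,u')$ lies in the base of the colimit defining $\N'\big((\si\alpha)a,(\si\alpha)b\big)$, and I take the canonical inclusion $e'_{(f,u')}\colon f\To \N'\big((\si\alpha)a,(\si\alpha)b\big)$ as the candidate cocone component. The key step is to check that this family is a cocone, i.e. that every $2$-cell $\beta\colon f_1\To f_2$ occurring in the base diagram of $\N(a,b)$ — characterized by $F(\beta)(a)\cdot u_2\simeq u_1$ — also occurs in the base diagram of $\N'$, i.e. $G(\beta)(\alpha_{ta}(a))\cdot u'_2\simeq u'_1$. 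This follows by direct computation from the coherence axiom of the oplax transformation expressing naturality in $2$-cells, namely $(\alpha_{f_1})_a\cdot\alpha_{tb}(F(\beta)(a))=G(\beta)(\alpha_{ta}(a))\cdot(\alpha_{f_2})_a$, combined with functoriality of $\alpha_{tb}$ and the hypothesis $F(\beta)(a)\cdot u_2\simeq u_1$. The universal property of $\N(a,b)$ then yields a unique $(\si\alpha)_{ab}\colon \N(a,b)\To\N'\big((\si\alpha)a,(\si\alpha)b\big)$ with $(\si\alpha)_{ab}\bullet e_{(f,u)}=e'_{(f,u')}$.

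Finally I would verify the two $\Bc$-functor axioms by evaluating both sides on the canonical inclusions $e_{(f,u)}$ (respectively $e_{(f,u)}*e_{(g,v)}$), so that each side collapses to a single canonical inclusion in $\N'$ and the equality can be tested at the base level. The reflexivity law $\rho_{(\si\alpha)(a)}=(\si\alpha)_{aa}\bullet\rho_a$ reduces to the unit coherence of the oplax transformation, which identifies $(\alpha_{\id})_a$ with the canonical comparison $\alpha_{ta}(a)\to G(\id)(\alpha_{ta}(a))$. For compatibility with $\iota$, using the formula $\iota_{abc}\bullet(e_{(f,u)}*e_{(g,v)})=e_{(fg,\,F(g)(u)\cdot v)}$ from the proof of Definition~\ref{GrothConstrDef} and its analogue in $\N'$, the claim reduces to $(\alpha_{fg})_a\cdot\alpha_{tc}(F(g)(u)\cdot v)\simeq G(g)(u')\cdot v'$, which is exactly the composition-coherence axiom of the oplax transformation.

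The main obstacle is bookkeeping rather than conceptual: because $F$ and $G$ are pseudofunctors and not strict $2$-functors, the identities above hold only up to the canonical coherence isomorphisms ($F(g)F(f)\cong F(fg)$ and the unitors), so the cocone condition and especially the $\iota$-compatibility must be threaded through these isomorphisms together with the oplax structure cells. This is the same phenomenon already dealt with in the proof of Definition~\ref{GrothConstrDef}, and I expect no genuinely new difficulty beyond the careful simultaneous tracking of the pseudofunctor and oplax coherence data.
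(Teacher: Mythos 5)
Your proposal follows essentially the same route as the paper's proof: you define $(\si\alpha)_{ab}$ through the universal property of the colimit defining $\N(a,b)$, sending each base pair $(f,u)$ to the canonical injection in $\N'$ of $\bigl(f,\ (\alpha_f)_a \cdot \alpha_{tb}(u)\bigr)$, and then verify the two $\Bc$-functor axioms by evaluating on canonical injections so that everything collapses to equalities of injections $e^G_{fg}$ and $e^G_{\id}$. If anything, you are more careful than the paper, which does not explicitly spell out the cocone-compatibility check against the $2$-cells $\beta$ in the base diagram, nor the appeal to the unit and composition coherence axioms of the oplax transformation needed to identify the resulting injections; your identification of exactly these steps is correct.
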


\begin{proof}
First of all, note that $\si \alpha$ is indeed a type preserving function, since $\alpha_{ta}(a) \in G(ta) = (\si G)_{ta}$. Now we want to define a $2$-cell: $$\N^F(a,b) \To \N^G(\alpha_{ta}(a),\alpha_{tb}(b))$$
for all $a,b \in \si F$. Because $\N^F(a,b)$ is defined as a colimit, it is the same as giving a $2$-cell $f \To \N^G(\alpha_{ta}(a),\alpha_{tb}(b))$ for any map $f: tb \to ta$ in $\Bc$ for which there is a $u: b \to F(f)(a)$ in $F(tb)$. Now the oplax-naturality of $\alpha$ is expressed by a $2$-cell (i.e. a natural transformation) $\alpha_{tb} F(f) \To G(f) \alpha _{ta}$ as in the following diagram in $\Cat$: 

\[
\begin{tikzcd}[sep=huge]
F(ta)
\arrow[r,"\alpha_{ta}"]
\arrow[d,"F(f)"']
& G(ta)
\arrow[d,"G(f)"]\\ 
F(tb)
\arrow[ru,Rightarrow,"\alpha(f)", shorten <= 10, shorten >= 10]
\arrow[r,"\alpha_{tb}"']
& G(tb)
\end{tikzcd}
\]

Now consider $\alpha_{tb}(u): \alpha_{tb}(b) \to \alpha_{tb}(F(f)(a))$; composing with $\alpha(f)(a): \alpha_{tb}(F(f)(a))$ $\To G(f)(\alpha_{ta}(a))$ yields an arrow $v:= \alpha(f)(a) \cdot \alpha_{tb}(u)$ in $G(tb)$, going from $\alpha_{tb}(b)$ to $G(f)(\alpha_{ta}(a))$. This make $f$ into an element of the base of the colimiting cocone defining $\N^{G}(\alpha_{ta}(a),\alpha_{tb}(b))$ and gives a canonical injection $e_f^G: f \To \N^{G}(\alpha_{ta}(a),\alpha_{tb}(b))$, ultimately producing a $2$-cell $(\si \aa)_{ab}: \N^{F}(a,b) \To \N^{G}(\si \aa (a),\si \aa (b))$.

Now we show the coherence properties for $\si \aa$. First note that the following diagram in $\Bc(tb,ta)$ is commutative, for any map $f: tb \to ta$ with a $u: b \to F(f)(a)$ in $F(tb)$, by definition of $(\si \aa)_{ab}$:

\[
\begin{tikzcd}[row sep = 30, column sep = 40]
& \N^{G}(\si \aa (a),\si \aa (b))\\ 
f 
\arrow[ur,"{e_f^G}", bend left = 20, end anchor=west]
\arrow[r,"{e_f^F}"']
& \N^F(a,b)
\arrow[u,"{(\si \aa)_{ab}}"']
\end{tikzcd}
\]

From that we can compute, with $f: tb \to ta$ and $g: tc \to tb$ maps in $\Bc$ in the base of the colimiting cocone defining $\N^F(a,b)$:

\begin{eqnarray*}
& &(\si \aa)_{ac} \bullet \iota_{abc} \bullet (e_f^F * e_g^F)\\ 
&=&(\si \aa)_{ac} \bullet (e_{fg}^F)\\ 
&=& e^G_{fg}
\end{eqnarray*}

And in the other sense: 

\begin{eqnarray*}
& &\iota_{\si\aa(a)\si\aa(b)\si\aa(c)} \bullet ((\si \aa)_{ab} * (\si\aa)_{bc}) \bullet (e_f^F * e_g^F)\\ 
&=&\iota_{\si\aa(a)\si\aa(b)\si\aa(c)} \bullet (((\si \aa)_{ab} \bullet e_f^F) * ((\si\aa)_{bc} \bullet e_g^F))\\ 
&=&\iota_{\si\aa(a)\si\aa(b)\si\aa(c)} \bullet (e_f^G * e_g^G)\\ 
&=&e_{fg}^G
\end{eqnarray*}

This shows that $(\si\aa)$ respects composition. Now for the unitality part it is even easier:

\begin{eqnarray*}
& &(\si\aa)_{aa} \bullet \rho_a\\ 
&=&(\si\aa)_{aa} \bullet e_{\id_{ta}}^F\\ 
&=&e_{\id_{ta}}^G\\ 
&=&\rho_{\si\aa(a)}
\end{eqnarray*}

\end{proof}

\begin{prop}
The above propositions define a functor: $$\si: [\Map(\Bc)^{\coop},\Cat] \to \Cat(\Bc)$$
\end{prop}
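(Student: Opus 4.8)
The plan is to lean on the two preceding propositions, which already provide the value of $\si$ on objects (they show that each pseudofunctor $F$ yields a $\Bc$-category $(\N,\si F)$, cf. Definition~\ref{GrothConstrDef}) and on morphisms (an oplax natural transformation $\alpha$ yields a $\Bc$-functor $\si\alpha$). Since $[\Map(\Bc)^{\coop},\Cat]$ and $\Cat(\Bc)$ are already known to be categories, all that remains is to check that $\si$ preserves identities and composition. The single tool I would use throughout is the universal property of the colimit defining $\N^{F}(a,b)$: a $2$-cell out of $\N^{F}(a,b)$ is determined by its precomposites with the canonical injections $e_{f}^{F}$ indexed by the generating pairs $(f,u)$ with $f\colon tb\to ta$ and $u\colon b\to F(f)(a)$, so both functoriality equations can be verified at the level of these pairs, exactly as the coherence conditions were checked in the previous proposition.

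First I would treat the identity. The transformation $\id_{F}$ has components $(\id_{F})_{x}=\id_{F(x)}$ and identity oplax-naturality $2$-cells $\id_{F}(f)$. On the underlying typed set, $(\si\id_{F})(a)=(\id_{F})_{ta}(a)=a$, so $\si\id_{F}$ is the identity on objects. For the $2$-cell components, unwinding the construction shows that a generating pair $(f,u)$ is sent to $(f,v)$ with $v=\id_{F}(f)(a)\cdot(\id_{F})_{tb}(u)=u$, so the associated injection is unchanged; hence the unique $2$-cell with $(\si\id_{F})_{ab}\bullet e_{f}^{F}=e_{f}^{F}$ for all $f$ is $1_{\N^{F}(a,b)}$, giving $\si\id_{F}=\id_{\si F}$.

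For composition I would take $\alpha\colon F\to G$ and $\beta\colon G\to H$, whose vertical composite has components $(\beta\alpha)_{x}=\beta_{x}\alpha_{x}$ and oplax-naturality $2$-cell the pasting $(\beta\alpha)(f)=(\beta(f)*\alpha_{ta})\bullet(\beta_{tb}*\alpha(f))$, so that $(\beta\alpha)(f)(a)=\beta(f)(\alpha_{ta}(a))\cdot\beta_{tb}(\alpha(f)(a))$. On objects both sides agree since $(\beta\alpha)_{ta}(a)=\beta_{ta}(\alpha_{ta}(a))=(\si\beta\circ\si\alpha)(a)$. For the $2$-cells, I track the generating pair $(f,u)$: under $\si\alpha$ it becomes $(f,v^{\alpha})$ with $v^{\alpha}=\alpha(f)(a)\cdot\alpha_{tb}(u)$, and applying $\si\beta$ yields $(f,v^{\beta})$ with $v^{\beta}=\beta(f)(\alpha_{ta}(a))\cdot\beta_{tb}(v^{\alpha})$, whereas $\si(\beta\alpha)$ sends $(f,u)$ to $(f,v^{\beta\alpha})$ with $v^{\beta\alpha}=(\beta\alpha)(f)(a)\cdot(\beta\alpha)_{tb}(u)$. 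Using the pasting formula and functoriality of $\beta_{tb}$,
\[
v^{\beta\alpha}=\beta(f)(\alpha_{ta}(a))\cdot\beta_{tb}(\alpha(f)(a))\cdot\beta_{tb}(\alpha_{tb}(u))=\beta(f)(\alpha_{ta}(a))\cdot\beta_{tb}(v^{\alpha})=v^{\beta}.
\]
Thus the two constructions produce the same injection $e_{f}^{H}$, and the uniqueness clause of the colimit forces $(\si(\beta\alpha))_{ab}=(\si\beta)_{\si\alpha(a)\,\si\alpha(b)}\bullet(\si\alpha)_{ab}$, which is precisely the $2$-cell component of the composite $\Bc$-functor $\si\beta\circ\si\alpha$ from the proof that $\Cat(\Bc)$ is a category.

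The hard part will be bookkeeping rather than anything conceptual: all the equalities above hold only up to the coherent compositor isomorphisms of the pseudofunctors (this is why the construction records everything ``$\simeq$''), so care is needed to keep the isomorphisms $F(g)(F(f)(a))\cong F(fg)(a)$ and their images under $\alpha,\beta$ consistent, and to write the vertical composite of oplax natural transformations with the correct whiskering conventions so that the pasting identity is applied correctly. Once the image of each generating pair $(f,u)$ is matched on the two sides, both functoriality equations follow formally from the universal property, in the same style as the coherence verifications of the previous proposition.
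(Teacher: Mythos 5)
Your proof is correct and takes essentially the same approach as the paper's: functoriality is checked on objects directly, and on hom-$2$-cells by precomposing with the canonical injections $e_f$ and invoking the universal property of the defining colimits. Your explicit verification that both routes send a generating pair $(f,u)$ to the same pair (the computation $v^{\beta\alpha}=v^{\beta}$ via functoriality of $\beta_{tb}$) makes precise a step the paper leaves implicit in simply writing $e_f^H$ for both composites.
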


\begin{proof}
We need to prove that $\si$ respects composition and identities. Let $\alpha: F \to G$ and $\beta: G \to H$ be two oplax-natural transformations between pseudofunctors. Then for any $a \in \si F$, $\si(\beta \bullet \aa)(a) = (\beta \bullet \aa)_{ta}(a) = \beta_{ta} (\aa_{ta}(a)) = (\si\beta)\cdot(\si \aa)(a)$. Moreover, for any map $f: tb \to ta$ in $\Bc$ with $u: b \to F(f)(a)$, we have: 

\begin{eqnarray*}
& &(\si(\beta \aa))_{ab} \bullet e_f^F\\ 
&=&e_f^H\\ 
&=&(\si \beta)_{\si\aa(a)\si\aa(b)} \bullet e_f^G\\ 
&=&(\si\beta)_{\si\aa(a)\si\aa(b)}\bullet(\si\aa)_{ab}\bullet e_f^F\\ 
&=&((\si \beta)(\si\aa))_{ab} \bullet e_f^F
\end{eqnarray*}

This shows that as $\Bc$-functors we have an equality $(\si \beta\aa) = (\si\beta)(\si\aa)$. Now denoting by $1_F$ the identity oplax-natural transformation on a pseudofunctor $F: \Map(\Bc)^{\coop} \to \Cat$, we have $\si (1_F) = 1_{\si F}$; it is immediate at the level of elements of $\si F$, and for any $a,b \in \si F$ and any map $f: tb \to ta$ in the base of the colimiting cocone defining $\N(a,b)$, we have $(\si 1_F)_{ab} \bullet e_f = e_f$, yielding $(\si 1_F)_{ab} = 1_{\N(a,b)}$.

\end{proof}

\subsection{The pseudofunctor of ``fibers'' of a complete $\Bc$-category}

Given a complete $\Bc$-category $(\M,A)$, we can define a pseudofunctor $\pma: \Map(\Bc)^{\coop} \to \Cat$ by considering its ``fibers'' and the functors between them which are canonically induced by the completeness property.  

\begin{deft}\label{CompFib}
Let $\Bc$ be a closed locally cocomplete involutive bicategory. Let $(\M,A)$ be a complete $\Bc$-category. The \emph{pseudofunctor $\pma: \Map(\Bc)^{\coop} \to \Cat$ of fibers of $\Bc$} is defined as follows: 
\begin{enumerate}
\itb For any $x \in \Ob(\Bc)$, $\pma(x)$ is a category, described by:
\begin{enumerate}
\itb Objects of $\pma(x)$ are elements of $A$ which are of type $x$.
\itb For any two objects $a,b$ of $\pma(x)$, an arrow $a \to b$ is a $2$-cell $\id_x \To \M(b,a)$ in $\Bc$.
\itb The identity arrow on an object $a$ of $\pma(x)$ is given by the $2$-cell $\rho_a: \id_x \To \M(a,a)$.
\itb The composite of two arrows $\theta: a \to b$ and $\chi: b \to c$ in $\pma(x)$ is given by the $2$-cell $\iota_{cba} \bullet (\chi * \theta): \id_x \To \M(c,a)$
\end{enumerate}
\itb For any map $\gamma: x_1 \to x_2$ in $\Bc$, $\pma(\gamma)$ is a functor going from $\pma(x_2)$ to $\pma(x_1)$, defined by:
\begin{enumerate}
\itb For any object $a \in \pma(x_2)$, $\pma(\gamma)(a)$ is the unique element $\gamma \cdot a$ which represents the singleton $\M(-,a)\gamma$. We therefore have: \[\M(-,\pma(\gamma)(a)) = \M(-,a)\gamma\]
\itb For any arrow $\theta: a \to b$ in $\pma(x)$, $\pma(\g)(\theta)$ is the $2$-cell $(1_{\g^*} * \theta * 1_{\g}) \bullet \eta_{\g}$.
\end{enumerate}
\itb For any $2$-cell $\alpha: \g_1 \To \g_2$ in $\Bc(x,y)$ with $x,y \in \Ob(\Bc)$, $\pma(\alpha)$ is a natural transformation of functors $\pma(\alpha): \pma(\g_2) \To \pma(\g_1)$. It is defined by:
\begin{enumerate}
\itb For all $a \in \Ob(\pma(y))$, $\pma(\alpha)_a$ is an arrow from $\pma(\g_2)(a)$ to $\pma(\g_1)(a)$ in $\pma(x)$, given by $(1_{\g_1^{*}} * \rho_a * \alpha) \bullet \eta_{\g_1}$.
\end{enumerate}
\end{enumerate}
\end{deft}

Here the fact that $(\M,A)$ is complete is crucial to the definition of $\pma$, as it is precisely what enables us to define the actions of the maps on the fibers. The proof that $\pma$ is well-defined relies on the following lemma, which expresses the fact that this action of the maps on the fibers is well-behaved.

\begin{lem}
Let $(\M,A)$ be a complete $\Bc$-category. Consider a map $\gamma: x_1 \to x_2$ in $\Bc$, with its right adjoint $\gamma^*$. Then there is an action of $\gamma$ on the fibers of $(\M,A)$, going from $A_{x_2}$ to $A_{x_1}$, defined as follows: for any $a \in A$ of type $x_2$, $a\cdot \g$ is defined as the unique element which represents the singleton $\M(-,a)\gamma$. We therefore have $\M(b,a\cdot \g) = \M(b,a)\gamma$ for any $b \in A$. Moreover, for any $b \in A$, we have $\M(a\cdot \g,b) = \gamma^*\M(a,b)$. This action satisfies the following properties for all $a,b,c \in A$: 
\begin{enumerate}
\item $\iota_{abc\cdot \g} = \iota_{abc} * 1_{\g}$
\item $\iota_{a\cdot\g bc} = 1_{\g^*} * \iota_{abc}$
\item $\iota_{ab\cdot\g c} = \iota_{abc} \bullet (1_{\M(a,b)} * \e_{\g} * 1_{\M(b,c)})$
\item $\rho_{a\cdot\g} = (1_{\g^*} * \rho_a * 1_{\g}) \bullet \eta_{\g}$
\end{enumerate}
\end{lem}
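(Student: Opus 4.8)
The plan is to treat everything as a consequence of a single equality of adjoint pairs in $\Dist(\Bc)$ together with the fact that composing a distributor with the ``point'' distributor attached to a map of $\Bc$ acts by trivial whiskering. Recall that a map $\g\colon x_1\to x_2$ of $\Bc$ yields, via the functor $\Bc\to\Dist(\Bc)$, a singleton $\g\colon(\id_{x_1},x_1)\to(\id_{x_2},x_2)$ whose right adjoint is $\g^*$, with unit $\eta_\g$ and counit $\e_\g$. For $a\in A$ of type $x_2$, the representable singleton $\M(-,a)\colon(\id_{x_2},x_2)\to(\M,A)$ is a map, hence so is the composite $\M(-,a)\g\colon(\id_{x_1},x_1)\to(\M,A)$, whose right adjoint is $\g^*\M(a,-)$. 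Completeness of $(\M,A)$ forces this singleton to be representable, which is exactly what defines $a\cdot\g$: we set $\M(-,a\cdot\g)=\M(-,a)\g$ as presingletons.

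First I would record the two value formulas. Since the intermediate $\Bc$-category $(\id_{x_2},x_2)$ has a single object with hom $\id_{x_2}$, the coend computing $(\M(-,a)\g)(b)$ is trivial and yields $\M(b,a)\g$; evaluating the defining equality at $b$ gives $\M(b,a\cdot\g)=\M(b,a)\g$. For the second formula I would use the Yoneda lemma \ref{YonedaSingleton}: $\M(a\cdot\g,b)=\M(-,a\cdot\g)^*(b)=\Sb(\M(-,a\cdot\g),\M(-,b))$, and since $\M(-,a\cdot\g)=\M(-,a)\g$ while right adjoints of a composite of maps compose as $(\M(-,a)\g)^*=\g^*\M(a,-)$, this equals $\g^*\M(a,b)$.

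The crux is the observation that composing a representable (co)singleton with $\g$ (resp. $\g^*$) whiskers all of its structure. Because the middle coend is trivial, the structure $2$-cell of the presingleton $\M(-,c)\g$ at $(a,b)$ is $\iota_{abc}*1_\g$, while that of the precosingleton $\g^*\M(a,-)$ at $(b,c)$ is $1_{\g^*}*\iota_{abc}$. Since the representable (co)singletons $\M(-,c\cdot\g)$ and $\M(a\cdot\g,-)$ have structure $2$-cells $\iota_{ab(c\cdot\g)}$ and $\iota_{(a\cdot\g)bc}$ respectively, the equalities $\M(-,c\cdot\g)=\M(-,c)\g$ and $\M(a\cdot\g,-)=\g^*\M(a,-)$ (the latter holding on the nose by uniqueness of right adjoints together with the value computation and skeletality of singletons, \ref{PreSk}) yield at once Properties 1 and 2. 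Properties 3 and 4 concern instead the unit and counit of the adjunction $\M(-,a\cdot\g)\dashv\M(a\cdot\g,-)$: for a representable singleton $\M(-,x)\dashv\M(x,-)$ the unit is $\rho_x$ and the counit is $\iota^{\M}_{-x-}$, so $\rho_{a\cdot\g}$ is the unit and $\iota_{-(a\cdot\g)-}$ the counit here. Identifying this with the composite adjunction $\M(-,a)\g\dashv\g^*\M(a,-)$ and invoking the standard formulas for the unit and counit of a composite of adjunctions in a bicategory, namely unit $(1_{\g^*}*\rho_a*1_\g)\bullet\eta_\g$ and counit $\iota^{\M}_{-b-}\bullet(1_{\M(-,b)}*\e_\g*1_{\M(b,-)})$, gives Property 4 directly and, reading the counit on the component $(a,c)$, Property 3.

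The main obstacle is the foundational lemma that composition with the point distributors $\g,\g^*$ is given by whiskering, i.e. that the coend over $(\id_{x_i},x_i)$ collapses so that the composite's matrix, its structure $2$-cell, its unit and its counit are all obtained by pasting $1_\g$ (resp. $1_{\g^*}$) onto the corresponding data of $\M(-,a)$; this goes hand in hand with the compatible identification of the two adjunction structures carried by $\M(-,a\cdot\g)$. The latter forces one to upgrade the a priori ``unique up to isomorphism'' right adjoint to an on-the-nose equality, which is where skeletality of the category of (pre)singletons enters. Once these are in place, Properties 1--4 are immediate matchings of structure and require no computation beyond the interchange-law bookkeeping already performed in the proof of Lemma \ref{RhoIotaSing}.
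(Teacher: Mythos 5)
Your proposal is correct and follows essentially the same route as the paper: both proofs exploit the on-the-nose equality of singletons $\M(-,a)\g = \M(-,a\cdot\g)$ to identify their structure $2$-cells (giving Properties 1 and 2, after noting that composition with the point distributor of $\g$ acts by whiskering) and to identify the unit/counit of the composite adjunction $\M(-,a)\g \dashv \g^*\M(a,-)$ with those of the representable adjunction $\M(-,a\cdot\g)\dashv\M(a\cdot\g,-)$ (giving Properties 3 and 4). Your additional care about upgrading the right adjoint from unique-up-to-isomorphism to an equality via skeletality (Proposition~\ref{PreSk}) is a refinement the paper glosses over, but it does not change the structure of the argument.
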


\begin{proof}
First of all let us prove that $\M(a\cdot \g,b) = \g^* \M(a,b)$. Because we have an equality of singletons $\M(-,a)\g = \M(-,a\cdot \g)$, we also have equality between their right adjoints $\g^*\M(a,-) = \M(a\cdot \g,-)$.

Now for the properties of the action, we are going to prove them using the fact that $\M(-,x)\cdot \gamma$ is a singleton, which is represented by $x\cdot\g$, and therefore we have an equality between singletons $\M(-,x)\cdot\g = \M(-,x\cdot\g)$. This implies more than just an equality at the level of $1$-cells, but also at the level of $2$-cells. In what follows, let us denote by $\s$ and $\tau$ the two singletons: $\s = \M(-,x)\g$ and $\tau = \M(-,x\cdot \g)$. We have, by definition of these distributors, for any two $a,b \in A$:
\begin{enumerate}
\itb $\delta^{\sigma}_{ab} = \iota_{abx} * 1_{\g}$
\itb $\delta^{\tau}_{ab} = \iota_{ab(x\cdot \g)}$
\end{enumerate}

This immediately yields the first proposition. For the second, we must also insist that because they are equal as singletons, they are also equal as maps in $\Dist(\Bc)$, hence $\s^* = \tau^*$, and the equality of the natural transformations $\delta^{\s^*}$ and $\delta^{\tau^*}$ gives the second proposition. For the third and fourth properties, we need to use that there must be an equality between the unities and counities of the adjunctions $\s \dashv \s^*$ and $\tau \dashv \tau^*$. We have: 

\begin{enumerate}
\itb $\e^{\s}_{ab} = \iota_{axb} \bullet (1_{\M(a,x} * \e_{\g} * 1_{\M(x,b)})$
\itb $\eta^{\s} = (1_{\g^*} * \rho_x^{\M} * 1_{\g})$
\itb $\e^{\tau}_{ab} = \iota_{a(x\cdot \g)b}$
\itb $\iota^{\tau} = \rho_{x\cdot \g}^{\M}$
\end{enumerate}

And the last two properties immediately follow.
\end{proof}

\begin{proof}[Proof of the definition~\ref{CompFib}]
We first show that $\pma(\g)$ is indeed a functor. For all $a \in \Ob(\pma(x))$, $\pma(\g)(\id_a) = (1_{\g^*} * \rho_a * 1_{\g}) \bullet \eta_{\g} = \rho_{a \cdot \g)} = \id_{\pma(\g)(a)}$. For all $a,b,c \in \Ob(\pma(x))$, for all $\theta: a \to b$, $\chi: b \to c$, we have:
\begin{align*}
\pma(\g)(\chi\cdot\theta) &= \pma(\g)(\iota_{cba}\bullet (\chi * \theta))\\
&= (1_{\g^*} * (\iota_{cba} \bullet (\chi * \theta)) * 1_{\g}) \bullet \eta_{\g}\\
&= (1_{\g^*} * \iota_{cba} * 1_{\g}) \bullet (1_{\g^*} * \chi * \theta * 1_{\g}) \bullet \eta_{\g}\\
&= \iota_{c\cdot\g b a\cdot \g} \bullet (1_{\g^*} * \chi * \theta * 1_{\g}) \bullet \eta_{\g}
\end{align*}

And:
\begin{align*}
&\pma(\g)(\chi) \cdot \pma(\g)(\theta)\\
&= \iota_{c\cdot\g b\cdot\g a\cdot\g} \bullet (\pma(\g)(\chi) * \pma(\g)(\theta))\\
&= \iota_{c\cdot\g b\cdot\g a\cdot\g} \bullet (((1_{\gc} * \chi * 1_{\g}) \bullet \eta_{\g}) * ((1_{\gc} * \theta * 1_{\g}) \bullet \eta_{\g}))\\
&= \iota_{c\cdot\g b\cdot\g a\cdot\g} \bullet ((1_{\gc} * \chi * 1_{\g\gc} * \theta * 1_{\g}) \bullet (\eta_{\g} * \eta_{\g})\\
&= \iota_{c\cdot\g b a\cdot\g} \bullet (1_{\M(c\cdot g,b)} * \e_{\g} * 1_{\M(b,a\cdot \g)}) \bullet ((1_{\gc} * \chi * 1_{\g\gc} * \theta * 1_{\g}) \bullet (\eta_{\g} * \eta_{\g})\\
&= \iota_{c\cdot\g b a\cdot\g} \bullet (1_{\g^*} * \chi * \e_{\g} * \theta * 1_{\g}) \bullet (\eta_{\g} * \eta_{\g})\\
&= \iota_{c\cdot\g b a\cdot\g} \bullet (1_{\g^*} * \chi * \e_{\g} * \theta * 1_{\g}) \bullet (1_{\g^*} * 1_{\g} * \eta_{\g}) \bullet \eta_{\g}\\
&= \iota_{c\cdot\g b a\cdot\g} \bullet (((1_{\g^*} * \chi) \bullet 1_{\g^*}) * ((\e_{\g} * \theta * 1_{\g}) \bullet (1_{\g} * \eta_{\g}))) \bullet \eta_{\g}\\ 
&= \iota_{c\cdot\g b a\cdot\g} \bullet ((1_{\g^*} * \chi) * ((1_{\id} * \theta * 1_{\g}) \bullet (\e_{\g} * 1_{\id} * 1_{\g}) \bullet (1_{\g} * \eta_{\g}))) \bullet \eta_{\g}\\ 
&= \iota_{c\cdot\g b a\cdot\g} \bullet (1_{\g^*} * \chi * \theta * 1_{\g}) \bullet \eta_{\g}\\ 
\end{align*}

Now we must prove that $\pma(\alpha)$ is indeed a natural transformation, i.e. that for any $x,y \in \Ob(\Bc)$, for any $\g_1,\g_2: x \to y$ maps in $\Bc$, for any $a,b \in A_y$ and for any $\theta: \id_y \To \M(b,a)$, the following diagram commutes in $\pma(x)$ (this mean that the composition is not the same as in $\Bc$):

\[
\begin{tikzcd}[row sep = 70, column sep = 100, labels={description}]
a\cdot \g_1
\arrow[r,"(1_{\g_1^{*}} * \theta * 1_{\g_1}) \bullet \eta_{\g_1}"]
& b \cdot \g_1\\
a \cdot\g_2
\arrow[u,"(1_{\g_1^*} * \rho_a^{\M} * \aa) \bullet \eta_{\g_1}"]
\arrow[r,"(1_{\g_2^{*}} * \theta * 1_{\g_2}) \bullet \eta_{\g_2}"]
& b \cdot \g_2
\arrow[u,"(1_{\g_1^*} * \rho_b * \aa) \bullet \eta_{\g_1}"]
\end{tikzcd}
\]

A quick computation shows that the up/left term is equal to: \[(1_{\g_1^*} * \theta * \aa) \bullet \eta_{\g_1}\]

And the down/right term is equal to:\[(1_{\g_1^{*}} * (\e_{\g_2} \bullet (\alpha * 1_{\g_2^*})) * 1_{\g_2}) \bullet (\eta_{\g_2} * \eta_{\g_1})\]

The equality between these two terms is now a direct consequence of the interchange law and the triangle identities; this shows that $\pma(\alpha)$ is indeed a natural transformation.  \\

Let us now prove that $\pma$ is a pseudofunctor. There are several properties that we must show: 

\begin{enumerate}
\item For any $x,y \in \Ob(\Map(\Bc))$, $\pma$ restricted to $\Bc(x,y)$ is a functor:
\begin{enumerate}
\item $\pma(\beta \bullet \alpha) = \pma(\beta) \bullet \pma(\alpha)$
\item $\pma(1_{\g}) = 1_{\pma(\g)}$
\end{enumerate}
\item $\Id_{\pma(x)} \simeq \pma(\id_x)$, where $\Id$ is the identity functor.
\item There is a natural isomorphism $\pma(\g_2 \cdot \g_1) \simeq \pma(\g_1) \cdot \pma(\g_2)$.
\item Several coherence conditions.
\end{enumerate}

Starting with $1a$, let us fix $x,y \in \Ob(\Bc)$, let there be three maps $\g_1,\g_2,\g_3: x \to y$ in $\Bc$, and let there be two morphisms of maps $\alpha: \g_1 \To \g_2$ and $\beta: \g_2 \To \g_3$. Then:

\begin{eqnarray*}
& & (\pma(\aa) \bullet \pma(\beta))_a\\ 
&=& \pma(\aa)_a \cdot \pma(\beta)_a \ \text{in $\pma(x)$}\\ 
&=& \iota_{a\cdot \g_1 a\cdot\g_2 a\cdot \g_3} \bullet (((1_{\g_1^*} * \rho_a * \alpha) \bullet \eta_{\g_1}) * ((1_{\g_2^*} * \rho_a * \beta) \bullet \eta_{\g_2}))\\
&=& (1_{\g_1^*} * (\iota_{aaa} \bullet (1_{\M(a,a)} * \e_{\g_2} * 1_{\M(a,a)})) * 1_{\g_3}) \bullet\\ 
& & (1_{\g_1^*} * \rho_a * \alpha * 1_{\g_2^*} * \rho_a * \beta) \bullet (\eta_{\g_1} * \eta_{\g_2})\\ 
&=& (1_{\g_1^*} * (\iota_{aaa} \bullet (1_{\M(a,a)} * \rho_a) \bullet (\rho_a * (\e_{\g_2} \bullet (\alpha * 1_{\g_2^*})))) * \beta) \bullet (\eta_{\g_1} * \eta_{\g_2})\\
&=& (1_{\g_1^*} * \rho_a * \beta) \bullet ((\e_{\g_2} \bullet (\aa * 1_{\g_2})) * 1_{\g_2}) \bullet (\eta_{\g_1} * \eta_{\g_2})\\ 
&=& (1_{\g_1^*} * \rho_a * \beta) \bullet (1_{\g_1^*} * \alpha) \bullet \eta_{\g_1}\\ 
&=& (1_{\g_1^*} * \rho_a * (\beta \bullet \alpha)) \bullet \eta_{\g_1}\\ 
&=& \pma(\beta \bullet \alpha)_a
\end{eqnarray*}

The point $1b$ is rather trivial: 

\begin{align*}
\pma(1_{\g})_a &= (1_{\g^{*}} * \rho_a * 1_{\g}) \bullet \eta_{\g}\\
&= \rho_{a\cdot \g}\\
&= \id_{\pma(\g)(a)}\\
&= (1_{\pma(\g)})_a
\end{align*}

Now for the point $2$: 

\begin{align*}
\pma(\id_x)(a) &= a \cdot \id_x\\
&= a
\end{align*}

And, for $\theta: \id_x \To \M(b,a)$:

\begin{align*}
\pma(\id_x)(\theta) &= (1_{\id_x} * \theta * 1_{\id_x}) \bullet \eta_{\id_x}\\
&= \theta
\end{align*}

Hence the unitality isomorphism is even an equality. We go forth to number 3, starting with the remark that whenever it is defined, $a \cdot (\g_2 \cdot \g_1) = (a \cdot \g_2) \cdot \g_1$. This is because $\M(-,a \cdot (\g_2 \cdot \g_1))$ is defined as $\M(-,a) \cdot (\g_2 \cdot \g_1)$, which is always equal to $(\M(-,a)\g_2) \g_1$ (it is a right action). Hence:

\begin{align*}
\pma(\g_2\g_1)(a) &= a \cdot (\g_2\g_1)\\
&= (a\cdot \g_2)\cdot \g_1\\
&= \pma(\g_2)(a) \cdot \g_1\\
&= \pma(\g_1)(\pma(\g_2)(a))\\
&= (\pma(\g_1) \cdot \pma(\g_2))(a)
\end{align*}

And for $\theta: \id_x \To \M(b,a)$:

\begin{align*}
\pma(\g_2\g_1)(\theta) &= (1_{\g_1^*\g_2^*} * \theta * 1_{\g_2\g_1}) \bullet \eta_{\g_2\g_1}\\
&= (1_{\g_1^*} * 1_{\g_2^*} * \theta * 1_{\g_2} * 1_{\g_1}) \bullet (1_{\g_1^*} * \eta_{\g_2} * 1_{\g_1}) \bullet \eta_{\g_1}\\
&= (1_{\g_1^*} * ((1_{\g_2^*} * \theta * 1_{\g_2}) \bullet \eta_{\g_2}) * 1_{\g_1}) \bullet \eta_{\g_1}\\
&= (1_{\g_1^*} * \pma(\g_2)(\theta) * 1_{\g_1}) \bullet \eta_{\g_1}\\
&= \pma(\g_1)(\pma(\g_2)(\theta))\\
&= (\pma(\g_1) \cdot \pma(\g_2))(\theta)
\end{align*}

Again, the compositional isomorphism is an equality; this ensures that the coherence conditions are satisfied and shows that $\pma$ is actually a \textit{strict pseudofunctor}.
\end{proof}

When $(\M,A)$ is a symmetrically complete symmetric $\Bc$-category, the above definition still stands by considering only symmetric maps, which define symmetric singletons $\M(-,a)\g$ of $(\M,A)$. This means that we can also define a pseudofunctor $\pma$ for $(\M,A)$ being symmetrically complete, but this time it is a pseudofunctor $\pma: [\Map_{\s}(\Bc)^{\coop},\Cat]$.

\begin{prop}
Let $f: (\M,A) \to (\N,C)$ be a $\Bc$-functor between two complete $\Bc$-categories. Consider for any $x \in \Ob(\Bc)$ the functor $(P_f)_x: \pma(x) \to \pnc(x)$ which sends an object $a$ to $f(a)$ and an arrow $\theta: \id_x \To \M(b,a)$ to $f_{ab}\bullet \theta: \id_x \To \N(f(b),f(a))$. Then this is indeed a functor, and $P_f$ defines an oplax-natural transformation $\pma \to \pnc$.
\end{prop}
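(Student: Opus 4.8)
The plan is to treat the two assertions in turn: first the functoriality of each $(P_f)_x$, then the construction and verification of the oplax structure of $P_f$. For functoriality, identities are handled by computing $(P_f)_x(\rho_a)=f_{aa}\bullet\rho_a$, which equals $\rho_{f(a)}=\id_{f(a)}$ by the unit coherence condition of the $\Bc$-functor $f$. For composition, given $\theta\colon a\to b$ and $\chi\colon b\to c$ in $\pma(x)$ with composite $\iota_{cba}^{\M}\bullet(\chi*\theta)$, I would postcompose with $f_{ca}$ and rewrite $f_{ca}\bullet\iota_{cba}^{\M}$ as $\iota^{\N}_{f(c)f(b)f(a)}\bullet(f_{cb}*f_{ba})$ using the composition coherence of $f$; the interchange law then yields $\iota^{\N}_{f(c)f(b)f(a)}\bullet\big((f_{cb}\bullet\chi)*(f_{ba}\bullet\theta)\big)$, which is exactly the composite of $(P_f)_x(\chi)$ and $(P_f)_x(\theta)$ in $\pnc(x)$. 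This part is routine.

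The heart of the matter is that $f$ need not commute strictly with the action of maps, so the squares will only commute up to a (possibly non-invertible) $2$-cell, which must be produced. For a map $\g\colon x_1\to x_2$ and an object $a$ of $\pma(x_2)$, I need a morphism $f(a\cdot\g)\to f(a)\cdot\g$ in $\pnc(x_1)$, i.e. a $2$-cell $\id_{x_1}\To\N(f(a)\cdot\g,f(a\cdot\g))$. Using the identity $\N(f(a)\cdot\g,d)=\g^*\N(f(a),d)$ from the action lemma, this amounts to a $2$-cell $\id_{x_1}\To\g^*\N(f(a),f(a\cdot\g))$, which by $\g\dashv\g^*$ is the transpose of a $2$-cell $\g\To\N(f(a),f(a\cdot\g))$. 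The latter is obtained from the structural cell $f_{a,a\cdot\g}\colon\M(a,a\cdot\g)\To\N(f(a),f(a\cdot\g))$ precomposed with $\rho_a^{\M}*1_\g\colon\g=\id_{x_2}\g\To\M(a,a)\g=\M(a,a\cdot\g)$, so I would set
\[(P_f)_\g(a)=\big(1_{\g^*}*(f_{a,a\cdot\g}\bullet(\rho_a^{\M}*1_\g))\big)\bullet\eta_\g.\]
As a sanity check, for $\g=\id_x$ this collapses to $f_{aa}\bullet\rho_a=\rho_{f(a)}=\id_{f(a)}$, so the unit axiom for oplax transformations holds on the nose (recall that $\pma$ and $\pnc$ are \emph{strict} pseudofunctors, so their unitors are identities).

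Next I would check that each $(P_f)_\g$ is a natural transformation between the functors $(P_f)_{x_1}\circ\pma(\g)$ and $\pnc(\g)\circ(P_f)_{x_2}$ from $\pma(x_2)$ to $\pnc(x_1)$; concretely, for $\theta\colon a\to b$ one expands $(P_f)_{x_1}(\pma(\g)(\theta))$ and $\pnc(\g)((P_f)_{x_2}(\theta))$ through the defining formula $(1_{\g^*}*(-)*1_\g)\bullet\eta_\g$, feeds in the coherence of $f$ together with the action-lemma identities $\iota_{ab(c\cdot\g)}=\iota_{abc}*1_\g$ and $\iota_{(a\cdot\g)bc}=1_{\g^*}*\iota_{abc}$, and reduces both sides using the interchange law and the triangle identities for $\g\dashv\g^*$. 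The two remaining oplax axioms are then the compatibility with a $2$-cell $\alpha\colon\g_1\To\g_2$, which matches $(P_f)_{\g_1}$ and $(P_f)_{\g_2}$ against $\pma(\alpha)$ and $\pnc(\alpha)$ (all built from the same transpose-along-$\eta_\g$ pattern), and the compatibility with composition of maps, where I would exploit that the action is strictly functorial, $a\cdot(\g'\g)=(a\cdot\g')\cdot\g$, so that the pseudofunctoriality constraints drop out and the axiom reduces to an identity of transposed cells.

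The main obstacle is this composition-coherence axiom together with the naturality square: both require threading the structural $2$-cells of $f$ through the units and counits of the adjunctions while keeping track of the four-fold horizontal composites, and it is precisely here that the action-lemma identities and the strictness of $\pma$ and $\pnc$ established above are essential to prevent the computation from degenerating into an unmanageable tangle of coherence isomorphisms.
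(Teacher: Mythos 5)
Your proposal is correct and follows essentially the same route as the paper: the functoriality computation is identical, and your structure cell $\big(1_{\g^*}*(f_{a,a\cdot\g}\bullet(\rho_a^{\M}*1_\g))\big)\bullet\eta_\g$ is exactly the paper's $2$-cell $\theta$ (the paper writes it down directly, you derive it by transposing along $\g\dashv\g^*$ via the action lemma), with the naturality check carried out by the same expand-and-reduce computation you describe, and with the composition coherence axiom left unproved in the paper just as it is sketched in your attempt. The only substantive difference is that the paper additionally constructs an inverse $\chi$ for $\theta$, showing $P_f$ is in fact pseudonatural, a strengthening not required by the statement.
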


\begin{proof}Let us start by proving that for any $x \in \Ob(\Bc)$, $(P_f)_x$ is a functor. The identity arrow on $a \in A_x$ is $\rho_a$, and we have $(P_f)_x(\rho_a) = f_{aa} \bullet \rho_{a} = \rho_{f(a)f(a)} = \id_{f(a)}$ in $\pnc(x)$. Now for $\theta: a \to b$ and $\chi: b \to c$ in $\pma(x)$, the composite in $\pma(x)$ is the $2$-cell $\iota_{cba} \bullet (\chi * \theta)$. We have
\begin{align*}
(P_f)_x(\iota_{cba} \bullet (\chi * \theta)) &= f_{ca} \bullet \iota_{cba} \bullet (\chi * \theta)\\ 
&= \iota_{f(c)f(b)f(a)} \bullet (f_{cb} * f_{ba}) \bullet (\chi * \theta) \\ 
&= \iota_{f(c)f(b)f(a)} \bullet ((f_{cb} \bullet \chi) * (f_{ba} * \theta)) \\ 
&= (P_f)_x(\chi) \bullet (P_f)_x(\theta) \ \text{in $\pma(x)$}
\end{align*}  

This shows that $(P_f)_x$ is indeed a functor. Now because we want an oplax natural transformation, we must also add for any $\gamma: x \to y$ map in $\Bc$, a natural transformation (a $2$-cell in $\Cat$) $P_f(\g): (P_f)_x \cdot \pma(\g) \To \pnc(\g) \cdot (P_f)_y$, which satisfies several compatibility properties. As both $(P_F)_x \cdot \pma(\g)$ and $\pnc(\g) \cdot (P_F)_y$ are functors from $\pma(y)$ to $\pnc(x)$, we are brought to define for each $a \in A_y$, an arrow in $\pnc(x)$ going from $((P_F)_x \cdot \pma(\g))(a)$ to $(\pnc(\g)\cdot (P_F)_y)(a)$. To clarify, here are the expression of these composite functors on objects: 

\begin{alignat*}{3}
(P_F)_x \cdot \pma(\g): \pma(y)        &\to \pma(x)        &&\to \pnc(x)\\
                                    a   &\mapsto a\cdot \g  &&\mapsto f(a \cdot \g)
\end{alignat*}

\begin{alignat*}{3}
\pnc(\g) \cdot (P_F)_y: \pma(y) &\to \pnc(y)  &&\to \pnc(x)\\
                            a    &\mapsto f(a) &&\mapsto f(a)\cdot \g
\end{alignat*}

In other words, we want to find a $2$-cell from $f(a \cdot \g)$ to $f(a)\cdot \g$ in $\pnc(x)$, i.e. a $2$-cell $\theta: \id_x \To \N(f(a)\cdot \g,f(a\cdot \g))$ in $\Bc$. We define: 
\[\theta:= (1_{\g^*} * f_{a a\cdot \g}) \bullet (1_{\g^*} * \rho_a^{\M} * 1_{\g}) \bullet \eta_{\g}\]
Note that we can also define:
\[\chi:= (f_{a\cdot \g a} * 1_{\g}) \bullet (1_{\g^*} * \rho_a^{\M} * 1_{\g}) \bullet \eta_{\g}\]

Which is a $2$-cell $\id_x \To \N(f(a\cdot \g),f(a) \cdot \g)$ in $\Bc$, i.e. an arrow $f(a)\cdot \g \to f(a \cdot \g)$ in $\pnc(x)$. Now the following computation ensures that the composite $\chi \bullet \theta$ in $\pnc(x)$ yields the identity on $f(a\cdot \g)$: 
\begin{eqnarray*}
\chi \bullet \theta &=& \iota^{\N}_{f(a)\cdot \g f(a\cdot \g) f(a)\cdot \g} \bullet (\chi * \theta)\\
&=& (1_{\g^{*}} * \iota^{\N}_{f(a)f(a\cdot \g)f(a)} * 1_{\g}) \bullet (1_{\g^{*}} * f_{aa\cdot \g} * f_{a\cdot \g a} * 1_{\g})\\ 
& & \bullet (1_{\g^{*}} * \rho_a^{\M} * 1_{\g\g^{*}} * \rho_a^{\M} * 1_{\g}) \bullet (\eta_{\g} * \eta_{\g})\\
&=& (1_{\g^{*}} * (\iota_{f(a)f(a\cdot\g)f(a)}^{\N} \bullet (f_{aa\cdot \g} * f_{a\cdot \g a}) \bullet (\rho_a^{\M} * 1_{\g\g^{*}} * \rho_a^{\M})) * 1_{\g}) \bullet (\eta_{\g} * \eta_{\g})\\
&=& (1_{\g^{*}} * (f_{aa} \bullet \iota_{aa\cdot \g a}^{\M} \bullet (\rho_a^{\M}  * 1_{\g\g^{*}} * \rho_a^{\M})) * 1_{\g}) \bullet (\eta_{\g} * \eta_{\g})\\
&=& (1_{\g^{*}} * (f_{aa} \bullet \iota_{aaa} \bullet (1_{\M(a,a)} * \e_{\g} * 1_{\M(a,a)}) \bullet  (\rho_a^{\M}  * 1_{\g\g^{*}} * \rho_a^{\M})) * 1_{\g}) \bullet (\eta_{\g} * \eta_{\g})\\
&=& (1_{\g^{*}} * (f_{aa} \bullet \iota_{aaa} \bullet (\rho_a^{\M} * \rho_a^{\M}) \bullet \e_{\g}) * 1_{\g}) \bullet (\eta_{\g} * \eta_{\g})\\
&=& (1_{\g^{*}} * (f_{aa} \bullet \rho_a^{\M} \bullet \e_{\g}) * 1_{\g}) \bullet (\eta_{\g} * \eta_{\g})\\
&=& (1_{\g^{*}} * (\rho_{f(a)}^{\N} \bullet \e_{\g}) * 1_{\g}) \bullet (\eta_{\g} * \eta_{\g})\\
&=& (1_{\g^{*}} * \rho_{f(a)}^{\N} * 1_{\g}) \bullet (1_{\g^{*}} * \e_{\g} * 1_{\g}) \bullet (1_{\g^{*}\g} * \eta_{\g}) \bullet \eta_{\g}\\
&=& (1_{\g^{*}} * \rho_{f(a)}^{\N} * 1_{\g}) \bullet (1_{\g^{*}} * ((\e_{\g} * 1_{\g}) \bullet (1_{\g} * \eta_{\g}))) \bullet \eta_{\g}\\
&=& (1_{\g^{*}} * \rho_{f(a)}^{\N} * 1_{\g}) \bullet \eta_{\g}\\
&=& \rho_{f(a)\cdot \g}^{\N}
\end{eqnarray*}

And we recall that in $\pnc(x)$, the identity on $f(a)\cdot \g$ is given by $\rho_{f(a)\cdot \g}^{\N}$. We have here proven that we have $\chi \bullet\theta = \id_{f(a)\cdot \g}$. Now the other sense works in the same way, and we have that $\theta \bullet \chi = \id_{f(a)\cdot \g}$, meaning that $\theta$ is an isomorphism; we actually proved here that we have an isomorphism in $\pnc(x)$: $f(a)\cdot \g \simeq f(a \cdot \g)$, meaning that $P_f$ will actually be a pseudonatural transformation instead of an oplax-transformation. We write $P_f(\g)_a:= \theta$; now we must prove that this defines a natural transformation $P_f(\g)$. Given any $a,b \in A_x$ and any $\theta:  \id_x \To \M(b,a)$, this amounts to prove the commutativity of the following diagram in $\pnc(y)$: 

\[
\begin{tikzcd}[row sep = 50, column sep = 90, labels=description]
f(a\cdot \g)
\arrow[r,"{P_f(\g)_a}"]
\arrow[d,"{f_{b\cdot \g a\cdot \g} \bullet (1_{\g^*} * \theta * 1_{\g}) \bullet \eta_{\g}}"]
& f(a)\cdot \g 
\arrow[d,"{(1_{\g} * (f_{ba} \bullet \theta) * 1_{\g}) \bullet \eta_{\g}}"]\\ 
f(b\cdot \g)
\arrow[r,"{P_f(\g)_b}"]
& f(b)\cdot \g
\end{tikzcd}
\]

We have: 

\begin{eqnarray*}
& &\iota_{f(b)\cdot \g f(b\cdot \g) f(a\cdot \g)}^{\N} \bullet (P_f(\g)_b * (f_{b\cdot \g a\cdot \g} \bullet (1_{\g^*} * \theta * 1_{\g}) \bullet \eta_{\g}))\\ 
&=&(1_{\g^*} * \iota_{f(b)f(b\cdot \g)f(a\cdot \g)}) \bullet (((1_{\g^*} * f_{bb\cdot\g}) \bullet (1_{\g^*} * \rho_b * 1_{\g}) \bullet \eta_{\g}) * \\ 
& &(f_{b\cdot \g a\cdot \g} \bullet (1_{\g^*} * \theta * 1_{\g}) \bullet \eta_{\g}))\\ 
&=& (1_{\g^*} * (\iota_{f(b)f(b\cdot \g)f(a\cdot \g)} \bullet (f_{bb\cdot \g} * f_{b\cdot\g a}))) \bullet (1_{\g^*} * \rho_b * 1_{\g} * 1_{\g^*} * \theta * 1_\g) \bullet\\ 
& & (\eta_\g * \eta_\g)\\
&=& (1_{\g^*} * (f_{ba\cdot \g} \bullet \iota_{bb\cdot\g a})) \bullet (1_{\g^*} * \rho_b * 1_{\g} * 1_{\g^*} * \theta * 1_\g) \bullet \\ 
& & (\eta_\g * \eta_\g)\\
&=& (1_{\g^*} * (f_{ba\cdot \g} \bullet \iota_{bba} \bullet (1_{\M(b,b)} * \e_{\g} * 1_{\M(b,a)}))) \bullet (1_{\g^*} * \rho_b * 1_{\g} * 1_{\g^*} * \theta * 1_\g) \bullet \\ 
& & (\eta_\g * \eta_\g)\\
&=& (1_{\g^*} * (f_{ba\cdot \g} \bullet \iota_{bba})) \bullet (1_{\g^*} * \rho_b * \theta * 1_\g)\ \bullet \eta_\g\\
&=& (1_{\g^*} * f_{ba\cdot \g}) \bullet (1_{\g^*} * \theta * 1_{\g}) \bullet \eta_{\g}\\
\end{eqnarray*}

And for the other sense: 

\begin{eqnarray*}
& & \iota_{f(b)\cdot\g f(a)\cdot\g f(a\cdot\g)} \bullet (((1_{\g^*} * (f_{ba} \bullet \theta ) * 1_{\g})\bullet \eta_{\g}) * P_f(\g)_a)\\ 
&=& (1_{\g^*} * \iota_{f(b)f(a)\cdot\g f(a\cdot\g)}) \bullet (((1_{\g^*} * (f_{ba} \bullet \theta) * 1_{\g})\bullet \eta_{\g}) * ((1_{\g^*} * f_{aa\cdot \g}) \bullet (1_{\g^*} * \rho_a * 1_{\g}) \bullet \eta_{\g}))\\ 
&=& (1_{\g^*} * (\iota_{f(b)f(a)f(a\cdot\g)} \bullet (1_{\N(f(b),f(a))} * \e_{\g} * 1_{\N(f(a),f(a\cdot \g))}))) \bullet \\ 
& & (((1_{\g^*} * (f_{ba} \bullet \theta) * 1_{\g})\bullet \eta_{\g}) * ((1_{\g^*} * f_{aa\cdot \g}) \bullet (1_{\g^*} * \rho_a * 1_{\g}) \bullet \eta_{\g}))\\
&=& (1_{\g^*} * (\iota_{f(b)f(a)f(a\cdot \g)}) \bullet (1_{\g^*} * (f_{ba} \bullet \theta) * (f_{aa\cdot \g} \bullet \rho_a) * 1_{\g}) \bullet \eta_{\g}\\ 
&=& (1_{\g^*} * f_{ba\cdot\g}) \bullet (1_{\g^*} * (\iota_{baa} \bullet (\theta * \rho_a)) * 1_{\g}) \bullet \eta_{\g}\\ 
&=& (1_{\g^*} * f_{ba\cdot \g}) \bullet (1_{\g^*} * \theta * 1_{\g}) \bullet \eta_{\g}\\
\end{eqnarray*}

Hence $P_f(\g)$ is indeed a natural transformation. To conclude, we need to prove the coherence conditions for oplax-natural transformation; the oplax-unity condition is immediate, and the oplax-naturality conditions takes the form, for any maps $\g_1: x \to y$, $\g_2: y \to z$ in $\Bc$: 
\[
(1_{\pnc(\g_1)} * (P_f)(\g_2)) \bullet ((P_f)(\g_1) * 1_{\pma(\g_2)}) = (P_f)(\g_2\g_1)
\]

We do not give here a proof of this oplax-naturality.
\end{proof}

\begin{rk}
The above proof actually showed that $P_f$ is a pseudonatural transformation, which is stronger than just being an oplax-transformation. Of course $\si$ can also be defined by taking pseudonatural transformations and we can thus obtain a pair of functors $\si^{\pseudo}: [\Map(\Bc)^{\coop},\Cat]^{\pseudo} \leftrightarrows \Catk(\Bc): P^{\pseudo}$; however the adjunction we want to prove will not hold in this more restrained case, as transforming a $\Bc$-functor $\si F \to (\M,A)$ will not yield a pseudonatural transformation $F \to \pma$ in general but only an oplax-natural transformation; see Remark~\ref{oplaxtrfAfgg} for more details.
\end{rk}

To summarize the results of our above construction of $P$, we have: 

\begin{enumerate}
\itb For any $\Bc$-category $(\M,A)$, $\pma: \Map(\Bc)^{\coop} \to \Cat$ is a pseudofunctor with: 
\begin{enumerate}
\itb $\Ob(\pma(x)) = A_x$
\itb An arrow $a \to b$ in $\pma(x)$ is a $2$-cell $\theta: \id_x \To \M(b,a)$
\itb For any map $\g: x \to y$ in $\Bc$, $\pma(\g)$ is a functor $\pma(y) \to \pma(x)$ with:
\begin{enumerate}
\itb For any $a \in A_y$, $\pma(\g)(a) = a \cdot \g$
\itb For any $\theta: \id \to \M(b,a)$, $\pma(\g)(\theta) = (1_{\g^*} * \theta * 1_{\g}) \bullet \eta_{\g}$
\end{enumerate}
\itb For any $\aa: \g_1 \To \g_2$ in $\Map(\Bc)$, $\pma(\aa)$ is a natural transformation $\pma(\g_2) \To \pma(\g_1)$ with:
\begin{enumerate}
\itb $\pma(\aa)_a = (1_{\g_1^*} * \rho_a * \aa) \bullet \eta_{\g_1}$
\end{enumerate}
\end{enumerate}
\itb For any $\Bc$-functor $f: (\M,A) \to (\N,C)$, $P_f$ is an oplax-natural transformation $\pma \to \pnc$ with:
\begin{enumerate}
\itb For any $x \in \Ob(\Bc)$, $(P_f)_x: \pma(x) \to \pnc(x)$ is a functor with:
\begin{enumerate}
\itb For any $a \in A_x$, $(P_f)_x(a) = f(a)$
\itb For any $\theta: \id_x \To \M(b,a)$, $(P_f)_x(\theta) = f_{ba} \bullet \theta$
\end{enumerate}
\itb For any map $\g: x \to y$ in $\Bc$, $(P_f)(\g): (P_f)_x \cdot \pma(\g) \To \pnc(\g) \cdot (P_f)_y$ is a natural transformation with:
\begin{enumerate}
\itb For any $a \in A_y$, $(P_f)(\g)(a) = (1_{\g^*} * f_{aa\cdot \g}) \bullet (1_{\g^*} * \rho_a * 1_{\g}) \bullet \eta_{\g}$.
\end{enumerate}
\end{enumerate}
\end{enumerate}

\begin{prop}
The above propositions define a functor: 
\[P: \Catk(\Bc) \to [\Map(\Bc)^{\coop},\Cat]\]
In the symmetric case, we obtain a functor:
\[P_{\s}: \Catsk(\Bc) \to [\Map_{\s}(\Bc)^{\coop},\Cat]\]
\end{prop}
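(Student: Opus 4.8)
The plan is to verify that the two assignments already constructed---the pseudofunctor $\pma$ attached to a complete $\Bc$-category $(\M,A)$ (well-defined by Definition~\ref{CompFib}) and the oplax-natural transformation $P_f$ attached to a $\Bc$-functor $f$ (well-defined by the preceding proposition)---together respect identities and composition. Since the objects and morphisms of $[\Map(\Bc)^{\coop},\Cat]$ are pseudofunctors and oplax-natural transformations, this is exactly what is needed for $P$ to be a functor, and the only genuine content lies in comparing the structural $2$-cells of the relevant oplax-natural transformations.

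\emph{Identities.} First I would check that $P_{\id_{(\M,A)}}=1_{\pma}$. The identity $\Bc$-functor has $f_{ab}=1_{\M(a,b)}$, so on each fiber $(P_{\id})_x$ sends $a\mapsto a$ and $\theta\mapsto 1\bullet\theta=\theta$, whence $(P_{\id})_x=\Id_{\pma(x)}$. For the structural $2$-cells, evaluating the defining formula at $a$ gives
\[
(P_{\id})(\g)_a=(1_{\g^*}*1_{\M(a,a\cdot\g)})\bullet(1_{\g^*}*\rho_a^{\M}*1_\g)\bullet\eta_\g=(1_{\g^*}*\rho_a^{\M}*1_\g)\bullet\eta_\g=\rho_{a\cdot\g}^{\M},
\]
the last equality being property~(4) of the action lemma above; since $\rho^{\M}_{a\cdot\g}$ is the identity arrow on $a\cdot\g$ in $\pma(x)$, the natural transformation $(P_{\id})(\g)$ is the identity, so $P_{\id}=1_{\pma}$.

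\emph{Composition.} Given $\Bc$-functors $f\colon(\M,A)\to(\N,C)$ and $g\colon(\N,C)\to(\Pb,D)$, I would show that $P_{gf}$ equals the vertical composite $P_g\cdot P_f$. On components this is immediate: $(gf)(a)=g(f(a))$ and, for $\theta\colon\id_x\To\M(b,a)$, the defining formula for composite $\Bc$-functors gives $(gf)_{ba}=g_{f(b)f(a)}\bullet f_{ba}$, so $(P_{gf})_x(\theta)=(gf)_{ba}\bullet\theta=g_{f(b)f(a)}\bullet f_{ba}\bullet\theta=(P_g)_x\big((P_f)_x(\theta)\big)$, hence $(P_{gf})_x=(P_g)_x\cdot(P_f)_x$. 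The substantive step is the equality of structural $2$-cells. Recalling that the vertical composite has structure cell $(P_g\cdot P_f)(\g)=((P_g)(\g)*1_{(P_f)_y})\bullet(1_{(P_g)_x}*(P_f)(\g))$, its value at $a\in A_y$ is, in the fiber of $(\Pb,D)$ over $x$ (whose composition of arrows $\theta\colon p\to q$, $\chi\colon q\to r$ is $\iota^{\Pb}_{rqp}\bullet(\chi*\theta)$), the composite of the two arrows $(P_g)_x\big((P_f)(\g)_a\big)=g_{f(a)\cdot\g,\,f(a\cdot\g)}\bullet(P_f)(\g)_a$ and $(P_g)(\g)_{f(a)}$; this must be shown equal to $(P_{gf})(\g)_a=(1_{\g^*}*(gf)_{a,\,a\cdot\g})\bullet(1_{\g^*}*\rho_a^{\M}*1_\g)\bullet\eta_\g$.

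\emph{Main obstacle and symmetric case.} The equality of these structure cells is the one laborious computation. I would expand both sides into composites of $\iota$, $\rho$, $\eta$, $\e$, the components $f_{--}$, $g_{--}$ and identities, then reduce using the interchange law, the triangle identities for $\g\dashv\g^*$, the functoriality of $g$, and the action-lemma identities relating $\iota^{\Pb}$ on shifted arguments $a\cdot\g$ to $\iota^{\Pb}$ and the (co)unit of $\g\dashv\g^*$. The manipulation runs exactly parallel to the verification that $\chi\bullet\theta=\rho^{\N}_{f(a)\cdot\g}$ carried out in the proof that $P_f$ is oplax-natural, with a single extra outer layer contributed by $g$, the $\e_\g$ produced by the fiberwise composition being cancelled against an $\eta_\g$ by a triangle identity to leave precisely $(P_{gf})(\g)_a$. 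Finally, for the symmetric statement one restricts throughout to symmetric maps $\g$ (so that $\g^*=\g^{\circ}$), which by the remark following Definition~\ref{CompFib} still define symmetric singletons $\M(-,a)\g$; every construction and identity used above restricts verbatim, yielding the functor $P_{\s}\colon\Catsk(\Bc)\to[\Map_{\s}(\Bc)^{\coop},\Cat]$.
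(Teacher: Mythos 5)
Your proposal is correct and follows essentially the same route as the paper's own proof: identities are checked fiberwise and on structure cells (where $(P_{\id})(\g)_a$ reduces to $\rho^{\M}_{a\cdot\g}$ via the action lemma), and for composition the fiberwise equality is immediate while the only substantive step is the equality of structure $2$-cells of the vertical composite with those of $P_{gf}$. The computational plan you outline --- expand, apply interchange, use functoriality of the outer $\Bc$-functor, invoke the action-lemma identity that introduces $\e_\g$, and cancel it against $\eta_\g$ by a triangle identity --- is exactly the chain of reductions the paper carries out explicitly, and the symmetric case is handled identically by restriction to symmetric maps.
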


\begin{proof}
Let $(\M,A)$ be a complete $\Bc$-category, denote by $i: (\M,A) \to (\M,A)$ the identity $\Bc$-functor on $(\M,A)$ which sends $a$ to $a$ and such that $i_{ab} = 1_{\M(a,b)}$. Then for any $a \in A_x$ with $x \in \Ob(\Bc)$, $(P_i)_x(a) = i(a) = a$; now for any $\theta: \id_x \To \M(a,b)$ with $a,b \in A_x$, $(P_i)_x(\theta) = i_{ab} \bullet \theta = \theta$. This shows that the functor $(P_f)_x$ is the identity functor on $\pma(x)$. Now for any map $\g: x \to y$ in $\Bc$ and any $a \in A_x$, $(P_i)(\g)_a = (1_{\g^*} * i_{aa\cdot \g}) \bullet (1_{\g^*} * \rho_a * 1_{\g}) \bullet \eta_{\g} = (1_{\g^*} * \rho_a * 1_{\g}) \bullet \eta_{\g} = \rho_{a\cdot \g}$. This proves that $P_i$ is the identity oplax natural transformation on $\pma$.\\ 

Let $f: (\N,C) \to (\Pb,D)$ and $g: (\M,A) \to (\N,C)$ be two $\Bc$-functors between $\Bc$-categories. We want to prove $P_f \cdot P_g = P_{fg}$. For any $x \in \Ob(\Bc)$ and any $a \in A_x$, $(P_f \cdot P_g)_x(a) = (P_f)_x \cdot (P_g)_x(a) = fg(a)$. For any $\theta: \id_x \To \M(b,a)$ with $b \in A_x$, $(P_f \cdot P_g)_x(\theta) = (P_f)_x \cdot (P_g)_x(\theta) = f_{ba} \bullet g_{ba} \bullet \theta = fg_{ba} \bullet \theta$. This proves that for any $x \in \Ob(\Bc)$, $(P_f \cdot P_g)_x = (P_{fg})_x$. Now consider $a \in A_x$, $\gamma: x \to y$ a map in $\Bc$, then $(P_f \cdot P_g)(\g) = (P_f(\g) * 1_{(P_g)_y}) \bullet (1_{(P_f)_x} * P_g(\g))$; we can compute as follows:
\begin{eqnarray*}
& &(P_f \cdot P_g)(\g)_a\\
&=& (P_f(\g) * 1_{(P_g)_y})_a \bullet (1_{(P_f)_x} * P_g(\g))_a\\ 
&=& P_f(\g)_{g(a)} \bullet (P_f)_x(P_g(\g)_a) \ \text{in $P_{\Pb,D}(x)$}\\
&=& \iota_{fg(a)\cdot\g f(g(a)\cdot \g) fg(a\cdot \g)} \bullet (P_f(\g)_{g(a)} * (P_f)_x(P_g(\g)_a))\\ 
&=& \iota \bullet (1_{\g^*} * (f_{g(a)g(a)\cdot \g} \bullet (g_{aa} * 1_{\g})) * (f_{g(a)\cdot\g g(a\cdot \g)} \bullet (1_{\g^*} * g_{aa\cdot \g}))) \bullet (\rho_{a\cdot\g} * \rho_{a\cdot\g})\\ 
&=& (1_{\g^*} * (\iota_{fg(a)f(g(a)\cdot\g)fg(a\cdot\g)} \bullet (f_{g(a)g(a)\cdot\g} * f_{g(a)\cdot\g g(a\cdot\g)}) \bullet \\ 
& & (g_{aa} * 1_{\g\g^*} * g_{a a\cdot\g}))) \bullet (\rho_{a\cdot\g} * \rho_{a\cdot\g})\\ 
&=& (1_{\g^*} * (f_{g(a)g(a\cdot\g)} \bullet \iota_{g(a)g(a)\cdot\g g(a\cdot \g)} \bullet (g_{aa} * 1_{\g\g^*} * g_{a a\cdot\g}) \bullet\\
& & (\rho_a * 1_{\g\g^*} * \rho_a * 1_{\g}))) \bullet (\eta_{\g} * \eta_{\g})\\ 
&=& (1_{\g^*} * (f_{g(a)g(a\cdot\g)} \bullet \iota_{g(a)g(a)g(a\cdot \g)} \bullet (\rho_{g(a)}* \e_{\g} * (g_{a a\cdot\g} \bullet\\ 
& & (\rho_a * 1_{\g}))))) \bullet (\eta_{\g} * \eta_{\g})\\ 
&=& (1_{\g^*} * (f_{g(a)g(a\cdot\g)} \bullet g_{a a\cdot \g} \bullet (\rho_a * 1_{\g}))) \bullet \eta_{\g}\\ 
&=& (1_{\g^*} * (fg)_{aa\cdot\g}) \bullet \rho_{a\cdot\g}\\ 
&=& P_{fg}(\g)_a
\end{eqnarray*}

Everything carries well into the symmetric case.
\end{proof}

\subsection{The adjunction}

\begin{thm}
There is an adjunction: \[\C\si \dashv P\]
\end{thm}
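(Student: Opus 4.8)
The plan is to bootstrap from the adjunction $\C \dashv i$ already established between the completion functor and the inclusion $i \colon \Catk(\Bc) \hookrightarrow \Cat(\Bc)$. Since $\C\si = \C \circ \si$, for any pseudofunctor $F$ and any complete $\Bc$-category $(\M,A)$ this adjunction supplies a natural isomorphism
\[
\Catk(\Bc)(\C\si F,(\M,A)) \;\cong\; \Cat(\Bc)(\si F, (\M,A)),
\]
so the whole statement reduces to exhibiting a bijection, natural in $F$ and in the complete $\Bc$-category $(\M,A)$,
\[
\Cat(\Bc)(\si F, (\M,A)) \;\cong\; [\Map(\Bc)^{\coop},\Cat](F, \pma).
\]
This is the heart of the matter, and completeness of $(\M,A)$ is precisely what makes the right-hand side defined, through the action $a \mapsto a\cdot\g$ on fibres.

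To produce this bijection I would work throughout with the colimit presentation of the structural matrix $\N$ of $\si F$ from Definition~\ref{GrothConstrDef}, using the canonical injections $e_{(f,u)} \colon f \To \N(a,b)$ indexed by pairs $(f,u)$ with $f \colon tb \to ta$ a map and $u \colon b \to F(f)(a)$ an arrow in $F(tb)$. Given a $\Bc$-functor $h \colon \si F \to (\M,A)$, I set $\theta_x$ to be $h$ on objects of $F(x)$, and on an arrow $u \colon a \to b$ of $F(x)$ to be $h_{ba} \bullet e_{(\id_x,u)} \colon \id_x \To \M(h(b),h(a))$, which is exactly an arrow $\theta_x(a) \to \theta_x(b)$ in $\pma(x)$. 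For the oplax component at a map $\g \colon x \to y$, I take the mate under $\g \dashv \g^*$ of $h_{a,F(\g)(a)} \bullet e_{(\g,\id)} \colon \g \To \M(h(a),h(F(\g)(a)))$, namely $(1_{\g^*} * (h_{a,F(\g)(a)} \bullet e_{(\g,\id)})) \bullet \eta_{\g}$; using $\M(h(a)\cdot\g,-) = \g^*\M(h(a),-)$ from the fibre-action lemma, this is a $2$-cell $\id_x \To \M(h(a)\cdot\g,\, h(F(\g)(a)))$, i.e.\ an arrow $h(F(\g)(a)) \to h(a)\cdot\g$ in $\pma(x)$, which is the required direction for an oplax transformation $F \to \pma$. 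Conversely, from an oplax transformation $\theta$ I define $h$ to be $\theta$ on objects and define $h_{ab}$ out of the colimit $\N(a,b)$ by prescribing, for each $(f,u)$, the mate under $f \dashv f^*$ of the composite in $\pma(tb)$ of $\theta_{tb}(u)$ followed by $\theta(f)_a$ (a $2$-cell $\id_{tb} \To \M(h(a)\cdot f,h(b)) = f^*\M(h(a),h(b))$), so that its mate is a $2$-cell $f \To \M(h(a),h(b))$; the universal property of the colimit then assembles these into $h_{ab} \colon \N(a,b) \To \M(h(a),h(b))$.

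The bulk of the work, and the main obstacle, is showing that these assignments are well defined and mutually inverse. Concretely, one must check: that the family defining $h_{ab}$ is a cocone over the colimiting diagram, which follows from the naturality of each $\theta(f)$ together with the oplax-naturality square relating $\theta(f_1)$, $\theta(f_2)$ and $\theta(\alpha)$ for a $2$-cell $\alpha \colon f_1 \To f_2$; that each $\theta_x$ is a functor and each $\theta(\g)$ a natural transformation satisfying the oplax unit and composition coherences; and that the two passages compose to the identity. This last point is a mate-calculus computation resting on the triangle identities for $\g \dashv \g^*$ and on Lemma~\ref{InclCanonSing}, which identifies the canonical injections $e_{(f,u)}$ with the appropriate mates. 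As in the proof of Definition~\ref{GrothConstrDef}, all of these verifications reduce, via the universal properties of the colimits defining $\N$ and $\pma(\g)$, to checks ``at the base level'' of the indexing diagrams, so they are calculational rather than conceptual.

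Finally I would record naturality of the bijection in both variables, which is routine once the two maps are pinned down, and then conclude by composing with the isomorphism coming from $\C \dashv i$ to obtain $\C\si \dashv P$. I would also note that it is genuinely \emph{oplax} — not pseudonatural — transformations that appear, in agreement with the Remark following the construction of $P_f$: a general $\Bc$-functor $\si F \to (\M,A)$ does not respect the pseudofunctorial structure on the nose, so passing to pseudonatural transformations would destroy surjectivity and hence the bijection.
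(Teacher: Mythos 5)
Your proposal follows essentially the same route as the paper's own proof: reduce via the adjunction $\C \dashv i$ to a bijection $\Cat(\Bc)(\si F,(\M,A)) \cong [\Map(\Bc)^{\coop},\Cat](F,\pma)$, and construct the two mutually inverse assignments with exactly the formulas the paper uses — your mate-theoretic descriptions unfold to the paper's $\Af$ and $\Gf$ (for instance, your mate of the composite of $\theta_{tb}(u)$ with $\theta(f)_a$ is precisely the paper's $\mathfrak{a}_q = (\e_q * \iota^{\M}) \bullet (1_q * \aa(q)_a * \aa_{tb}(u))$). Your verification plan — cocone compatibility at the base of the colimits, functoriality, mutual inverseness via the triangle identities and Lemma~\ref{InclCanonSing}, and the observation that oplax rather than pseudonatural transformations are forced — likewise matches the paper's argument.
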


\begin{proof}
We will give the adjunction $\C\si \dashv P$ as the data, for any $F: \Map(\Bc)^{\op} \to \Cat$ and any complete $\Bc$-category $(\M,A)$, of a bijection of sets: 

\[\Cat_{\kappa}(\C\si F,(\M,A)) \simeq [\Map(\Bc)^{\op},\Cat](F,\pma)\]
which is pseudonatural in $F$ and in $(\M,A)$. We therefore start by defining two functions: 

\[\Af: \Hom_{\Cat(\Bc)}(\si F,(\M,A)) \to \Hom_{[\Map^{\op}(\Bc),\Cat]}(F,\pma)\]
\[\Gf: \Hom_{[\Map^{\op}(\Bc),\Cat]}(F,\pma) \to \Hom_{\Cat(\Bc)}(\si F,(\M,A))\]\\

Note that here we used the fact that the Cauchy functor is left adjoint to the inclusion of complete $\Bc$-categories in general $\Bc$-categories, which yields: \[\Hom_{\Cat_{\kappa}(\Bc)}(\C \si F,(\M,A)) \simeq \Hom_{\Cat(\Bc)}(\si F,(\M,A))\] 

Starting with $\Af$, let $g: \si F \to (\M,A)$ be a $\Bc$-functor; we want to define an oplax-natural transformation $\Af(g): F \to \pma$. This means giving:
\begin{enumerate}
\itb For any object $x$ in $\Bc$, a functor (morphism in $\Cat$) $\Af(g)_x: F(x) \to \pma(x)$.
\itb For any map $\g: x \to y$ in $\Bc$, a natural transformation ($2$-cell in $\Cat$) $\Af(g)(\g): \Af(g)_x \cdot F(\g) \To \pma(\g) \cdot \Af(g)_y$.
\itb A set of coherence conditions satisfied by the above data.
\end{enumerate}

Because $g$ is a $\Bc$-functor, it is type-preserving, hence restricting it at $(\si F)_x = \Ob(F(x))$ goes into $A_x = \Ob(\pma(x))$. We define $\Af(g)_x$ on objects as $g_{|F(x)}$. Now consider an arrow $u: a \to b$ in $F(x)$; we want to obtain an arrow $\Af(g)_x(u): g(a) \to g(b)$ in $\pma(x)$, i.e. a $2$-cell $\id_{x} \To \M(g(b),g(a))$. Seeing that the $\Bc$-functor condition of $g$ gives us a $2$-cell $g_{ba}: \N(b,a) \To \M(g(b),g(a))$ (where $\N$ denotes the structural matrix of $\si F$), we are inclined to search for a $2$-cell $\id_x \To \N(b,a)$. But $\N(b,a)$ is defined as the colimits of these $q: x \to x$ in $\Bc$ such that there exists a $u: a \to F(q)(b)$ in $F(x)$. For $q = \id_x$, we have $u: a \to F(\id_x)(b) = b$, yielding an ``injection'' $2$-cell: \[e_u: \id_x \To \N(b,a)\]

Thus we define $\Af(g)_x(u):= g_{ba} \bullet e_u: \id_x \To \M(g(b),g(a))$. Now we must prove that $\Af(g)_x$ is indeed a functor. For that: 

\begin{enumerate}
\itb For any $a \in F(x)$, $\Af(g)_x(\id_a) = g_{aa} \bullet e_{\id_a}$. But by definition of the $\Bc$-category $\si F$, $e_{\id_a} = \rho_a^{\N}$, hence $\Af(g)_x(\id_a) = g_{aa} \bullet \rho^{\N}_{\id_a} = \rho_{g(a)}$ using the property of the $\Bc$-functor $g$; and $\rho_{g(a)}^{\M}$ is the identity on $g(a)$ in $\pma(x)$. Hence $\Af(g)_x(\id_a) = \id_{\Af(g)_x(a)}$.
\itb Consider two arrows: $u: a \to b$, $v: b \to c$ in $F(x)$. Then in $\pma(x)$, we have:
\begin{align*}
\Afgx(v) \cdot \Afgx(u) &= \iota^{\M}_{\Afgx(c)\Afgx(b)\Afgx(a)} \bullet (\Afgx(v) * \Afgx(u))\\ 
&= \iota^{\M}_{g(c)g(b)g(a)} \bullet ((g_{cb} \bullet e_v) * (g_{ba} \bullet e_u))\\ 
&= \iota^{\M}_{g(c)g(b)g(a)} \bullet (g_{cb} * g_{ba}) \bullet (e_v * e_u)\\ 
&= g_{ca} \bullet \iota^{\N}_{cba} \bullet (e_v * e_u)
\end{align*}

And to conclude we can simply recall that the definition of $\iota^{\N}_{cba}$ is so that $\iota^{N}_{cba} \bullet (e_v * e_u) = e_{vu}$. \\ 
\end{enumerate}

Let $\g: x \to y$ be a map in $\Bc$. We want to construct an invertible natural transformation $\Af(g)(\g): \Af(g)_x \cdot F(\g) \To \pma(\g) \cdot \Af(g)_y$. These functors are going from $F(y)$ to $\pma(x)$; hence that means that we ought to give, for any $a \in F(y)$, an arrow $\Afgg_a: (\Af(g)_x \cdot F(\g))(a) \to (\pma(\g) \cdot \Af(g)_y)(a)$ in $\pma(x)$, which must be natural in $a$. First let us explicit what that means: 
\begin{enumerate}
\itb $(\Afgx \cdot F(\g))(a) = g(F(\g)(a))$
\itb $(\pma(\g) \cdot \Afgy)(a) = g(a)\cdot \g$
\end{enumerate}

Because we search for an arrow in $\pma(x)$, that means we want a $2$-cell $\id_x \To \M(g(F(\g)(a)),g(a)\cdot \g)$. Again, we will most certainly use $g_{F(\g)(a)a}$, hence we must try to produce a $2$-cell $\id_x \To \g^*\N(a,F(\g)(a))$. Because there is an arrow $\id: F(\g)(a) \to F(\g)(a)$ in $F(x)$, we have a canonical injection $e_{\id}: \g \To \N(a,F(\g)(a))$ in $\Bc$. Using the unity of the adjunction which defines $\g$, we get $\id_x \To \g^*\g \To \g^* \N(a,F(\g)(a)) \To \g^* \M(g(a),g(F(\g)(a)))$. We therefore can define: \[\Afgg_a:= (1_{\g^*} * (g_{aF(\g)(a)} \bullet e_{\id})) \bullet \eta_{\g}\]

We do not prove the coherence conditions which ensure that $\Af(g)$ is an oplax-natural transformation. We can however give a quick word about the fact that we restrict to oplax-natural transformations: see Remark~\ref{oplaxtrfAfgg} for that matter. \\

Now that we have given the function $\Af$, we turn to $\Gf$. Let there be an oplax natural transformation $\alpha: F \to \pma$, i.e. the data of:

\begin{enumerate}
\item For all $x \in \Ob(\Bc)$, a functor $\alpha_x: F(x) \to \pma(x)$.
\item For any map $\g: x \to y$ in $\Bc$, a natural transformation $\alpha(\g): \alpha_x \cdot F(\g) \To \pma(\g) \cdot \alpha_y$. This is a natural transformation between functors going from $F(y)$ to $\pma(x)$, and which respectively send an $a \in F(y)$ to $\alpha_x(F(\g)(a))$ and to $\alpha_y(a)\cdot \g$.
\item Several coherence conditions.
\end{enumerate}

From that natural transformation we want to define a $\Bc$-functor $\gfa: (\N,\si F) \to (\M,A)$. Its value on elements of $\si F$ is easy enough to give:  \[\gfa(a) = \alpha_{ta}(a)\]

But now we are interested in defining the constitutive family of $2$-cells for a $\Bc$-functor, i.e. for any $a,b \in \si F$, we need a $2$-cell: \[\N(a,b) \To \M(\aa_{ta}(a),\aa_{tb}(b))\]

Because of the definition of $\N(a,b)$ as a colimit, giving $\gfa_{ab}$ is the same thing as giving, for any map $q: tb \to ta$ in $\Bc$ and any arrow $u: b \to F(q)(a)$ in $F(tb)$, a $2$-cell: \[q \To \M(\aa_{ta}(a),\aa_{tb}(b))\]

In a compatible way. For such a pair $(q,u)$, we have: \[\alpha(q)_a: \id_{tb} \To q^*\M(\aa_{ta}(a),\aa_{tb}(F(q)(a)))\]

Hence using the counit of the adjunction of $q$, we easily have a $2$-cell $q \To \M(\aa_{ta}(a),\aa_{tb}(F(q)(a)))$. Now using $u$, which is an arrow in $F(tb)$, we can define $\alpha_{tb}(u)$, which is an arrow $\aa_{tb}(b) \to \aa_{tb}(F(q)(a))$ in $\pma(tb)$, i.e. a $2$-cell in $\Bc$: \[\alpha_{tb}(u): \id_{tb} \To \M(\aa_{tb}(F(q)(a)),\aa_{tb}(b))\]

And using $\iota$ and an horizontal composite, we find a $2$-cell $\mathfrak{a}_q: q \To \M(\aa_{ta}(a),\aa_{tb}(b))$ in $\Bc$, given by:

\[
\mathfrak{a}_q:= (\e_q * \iota_{\aa_{ta}(a)\aa_{tb}(F(q)(a))\aa_{tb}(b)}^{\M}) \bullet (1_{q} * \aa(q)_a * \aa_{tb}(u))
\]

We leave to the reader the proof that this indeed commutes well with the base of the colimit (up to some additional coherence condition to be added if needed), thus defining a $2$-cell $\N(a,b) \To \M(\aa_{ta}(a),\aa_{tb}(b))$, defined by the commutativity of the following diagram:

\[
\begin{tikzcd}[sep = huge, labels=description]
& \M(\Gc(\alpha)(a),\Gc(\aa)(b))\\ 
q 
\arrow[r,"e_q"]
\arrow[ur,"{\mathfrak{a}_q}", bend left = 30, end anchor=real west]
& \N(a,b) 
\arrow[u,"{\Gc(\aa)_{ab}}"]
\end{tikzcd}
\]

Similarly, we do not write the proof that $\Gc(\aa)$ is a $\Bc$-functor; the proof is similar to several other ones in this document.\\

Now we will prove that we have an adjunction, that means that we will show that $\Af$ and $\Gf$ are inverses to one another as functions. We therefore need to prove: 

\begin{enumerate}
\itb For any oplax natural transformation $\alpha: F \to \pma$, $\Af(\Gf(\alpha)) = \aa$.
\itb For any $\Bc$-functor $g: \si F \to \pma$, $\Gf(\Af(g)) = g$. 
\end{enumerate}

Starting with $\Af\Gf$, let $\alpha$ be an oplax natural transformation $F \to \pma$. To prove $\Af(\Gf(\aa)) = \aa$, we need to prove: 

\begin{enumerate}
\item For all $x \in \Ob(\Bc)$, $\Af(\Gf(\aa))_x: F(x) \to \pma(x)$ is the functor $\alpha_x$.
\begin{enumerate}
\item For all $a \in \Ob(F(x))$, $\Af(\Gf(\aa))_x(a) = \aa_x(a)$.
\item For any $u: a \to b$ in $F(x)$, $\Af(\Gf(\aa))_x(u) = \aa_x(u)$.
\end{enumerate}
\item For any map $\g: x \to y$ in $\Bc$, $\Af(\Gf(\aa))(\g)$ is the natural transformation $\aa(\g)$, going from $\alpha_x \cdot F(\g)$ to $\pma(\g) \cdot \aa_y$.
\end{enumerate}

Starting with $1$a: for all $x \in \Ob(\Bc)$, $\Af(\Gf(\aa))_x = \Gf(\aa)(a) = \aa_x(a)$ by definition. For $1$b, let $u:a \to b$ be an arrow in $F(x)$. Then $\Af(\Gf(\aa))_x(u) = \Gf(\aa)_{ba} \cdot e_u$, where $e_u: \id_x \To \N(a,b)$ is associated to the pair $(\id_x,u)$ in the colimit diagram defining $\N(a,b)$. We want to prove $\Gf(\aa)_{ba} \cdot e_u = \aa_x(u)$. Let us draw the following diagram, which is commutative by definition of $\Gf(\aa)_{ba}$:

\[
\begin{tikzcd}[sep = huge]
& \M(\aa_x(a),\aa_x(b)) \\ 
\id_x
\arrow[ru,"(\e_{\id} * \iota) \bullet (1 * \aa(\id)_a * \aa_x(u))"description, bend left=25]
\arrow[r,"e_u"description]
& \N(a,b)
\arrow[u,"\Gf(\aa)_{ba}"description]
\end{tikzcd}
\]

Now a quick computation yields the following, because $\e_{\id} = 1_{\id}$: 

\begin{align*}
(\e_{id_x} * \iota) \bullet (1_{\id_x} * \aa(\id_x)_a * \alpha_x(u))&= \iota_{\aa_x(a)\aa_x(a)\aa_x(b)} \bullet (\aa(\id_x)_a * \aa_x(u))\\ 
&= \iota_{\aa_x(a)\aa_x(a)\aa_x(b)} \bullet (\id_{\aa_x(a)}^{\pma(x)} * \aa_x(u))\\ 
&= \iota_{\aa_x(a)\aa_x(a)\aa_x(b)} \bullet (\rho_{\aa_x(a)} * \aa_x(u))\\ 
&= \aa_x(u)
\end{align*}

We are able to conclude because of two things: first $\aa(\id_x)_a = \id_{\aa_x(a)}$ (this is one of the coherence condition for an oplax natural transformation) and then this identity in $\pma(x)$ identifies as $\rho_{\aa_x(a)}$. Therefore we have proven the point $1$b, and we can embark onto point $2$. Let $\g: x \to y$ be a map in $\Bc$. We have, for any $a \in F(y)$: 

\[
\Af(\Gf(\aa))(\g)_a = (1_{\g^*} * (\Gf(\aa)_{aF(\g)(a)} \bullet e_{\g,\id_{F(\g)(a)}} )) \bullet \eta_{\g}
\] \\ 

Again, we can use the definition of $\Gf(\aa)_{aF(\g)(a)}$ as a colimit to express the composite $\Gf(\aa)_{aF(\g)(a)} \bullet e_{\g,\id}$ as: \[(\e_{\g} * \iota) \bullet (1_{\g} * \alpha(\g)_a * \alpha_{y}(\id_{F(\g)(a)}))\] \\ 

Now we know that $\alpha_{y}(\id_{F(\g)(a)}) = \id_{\aa_x(F(\g)(a))}$ in $\pma(x)$, that is $\rho_{\aa_x(F(\g)(a))}$, we can represent the composite $\Gf(\aa)_{a F(\g)(a)} \bullet e_{\g,\id}$ as follows: 

\[
\begin{tikzcd}[row sep = 30, column sep = 48]
y
&x
\arrow[l,"\g" {name={11}}]
&&&x
\arrow[lll,"\id" {name={12}}]
&&x
\arrow[ll,"\id" {name={13}}]\\ 
y
&x
\arrow[l,"\g" {name={21}}]
&y
\arrow[l,"\g^*" {name={22}}]
\arrow[ll,"" {name={212}}, phantom]
&&x
\arrow[ll,"{\M(\aa_y(a),\aa_x(F(\g)(a)))}" {name={23}}]
\arrow[lll,"" {name={223}}, phantom]
&&x
\arrow[llll,"" {name={234}}, phantom]
\arrow[ll,"{\M(\aa_x(F(\g)(a)),\aa_x(F(\g)(a)))}" {name={24}}] \\ 
y
&& y
\arrow[ll,"\id" {name={31}}]
&&&&x
\arrow[llll,"{\M(\aa_y(a),\aa_x(F(\g)(a)))}" {name = {32}}]
\arrow[Rightarrow, from=11, to=21, "1", shorten >=3, shorten <= 5]
\arrow[Rightarrow, from=12, to=223, "{\aa(\g)_a}", shorten >=3, shorten <= 5]
\arrow[Rightarrow, from=13, to=24, "{\rho^{\M}_{\aa_x(F(\g)(a))}}", shorten >=3, shorten <= 5]
\arrow[Rightarrow, from=212, to=31, "{\e_{\g}}", shorten >=3, shorten <= 5]
\arrow[Rightarrow, from=234, to=32, "{\iota^{\M}_{\aa_y(a)\aa_x(F(\g)(a))\aa_x(F(\g)(a))}}", shorten >=3, shorten <= 5]
\end{tikzcd}
\]

\begin{align*}
\Af(\Gf(\aa))(\g)_a &= (1_{\g^*} * ((\e_{\g} * \iota) \bullet (1_{\g} * \aa(\g)_a * \rho_{\aa_x(F(\g)(a))}))) \bullet \eta_{\g}\\ 
&= (1_{\g^*} * (\e_{\g} \bullet (1_{\g} * \alpha(\g)_a))) \bullet \eta_{\g}\\
&= (1_{\g^*} * \e_{\g} * 1_{\M(\aa_y(a),\aa_x(F(\g)(a)))}) \bullet (\eta_{\g} * \aa(\g)_a)\\ 
&= (((1_{\g^*} * \e_{\g}) \bullet (\eta_{\g} * 1_{\g})) * 1_{\M(\aa_y(a),\aa_x(F(\g)(a)))}) \bullet \alpha(\g)_a\\ 
&= \alpha(\g)_a
\end{align*}

This shows point 2, and therefore $\Af\Gf = \id_{\Hom_{[\Map(\Bc)^{\op},\Cat]^{\mathrm{oplax}}}(F,\pma)}$. Now for the other sense, let $g: \si P \to (\M,A)$ be a $\Bc$-functor. We want to prove $\Gf\Af(g) = g$. There are two things to prove: 

\begin{enumerate}
\item For any $a \in \si P$, $\Gf(\Af(g))(a) = g(a)$.
\item For any $a,b \in \si P$, $\Gf(\Af(g))_{ab} = g_{ab}$.
\end{enumerate}

The first condition is trivial using the definitions: $\Gf(\Af(g))(a) = \Af(g)_{ta}(a) = g(a)$. Now for the point number 2, recall that $\Gf(\Af(g))_{ab}$ is defined as the following colimit, for all $\g: tb \to ta$, and $u: b \to F(\g)(a)$: 

\[
\begin{tikzcd}[row sep = 30, column sep = 80]
& \M(\aa_x(a),\aa_x(b)) \\ 
\g
\arrow[ru,"(\e_{\g} * \iota) \bullet (1 * \aa(\g)_a * \aa_x(u))"description, bend left=25]
\arrow[r,"e_{u,\g}"description]
& \N(a,b)
\arrow[u,"\Gf(\Af(g))_{ab}"description]
\end{tikzcd}
\]

To prove that $\Gf(\Af(g))_{ab}$ is $g_{ab}$, we therefore only need to prove that: 
\[
g_{ab} \bullet e_{u,\g} = (\e_{\g} * \iota) \bullet (1 * \Af(g)(\g)_a * \Af(g)_{tb}(u))
\]

Using the expressions of $\Af(g)$, we can develop this equation; we have: 

\begin{enumerate}
\itb $\Af(g)(\g)_a = (1_{\g^*} * (g_{aF(\g)(a)} \bullet e_{\g,\id})) \bullet \eta_{\g}$
\itb $\Af(g)_{tb}(u) = g_{ab} \bullet e_{u,\id_{tb}}$
\end{enumerate}

Therefore $g_{ab} \bullet e_{u,\g}$ corresponds to the following diagram: 

\[
\begin{tikzcd}[row sep = 40, column sep = 65]
ta 
& tb
\arrow[l,"\g" {name={11}}]
&& tb
\arrow[ll,"\id" {name={12}}]
&tb
\arrow[l,"\id" {name={13}}]\\ 
ta 
& tb
\arrow[l,"\g" {name={20}}]
& ta
\arrow[l,"\g^*"]
\arrow[ll,"" {name={21}}, phantom]
& tb
\arrow[l,"\g" {name={23}}]
\arrow[ll,"" {name={22}}, phantom] \\ 
&& tb 
& tb
\arrow[l,"{\N(a,F(\g)(a))}" {name={42}}]
& tb 
\arrow[l,"{\N(F(\g)(a),b)}" {name={43}}]\\ 
&& ta 
& tb 
\arrow[l,"{\M(g(a),g(F(\g)(a)))}" {name={52}}]
& tb 
\arrow[l, "{\M(g(F(\g)(a)),g(b))}" {name={53}}]
\arrow[ll, "" {name={5}}, phantom]\\ 
ta
&& ta 
\arrow[ll,"\id" {name={51}}]
&& tb
\arrow[ll,"{\M(g(a),g(b))}" {name={6}}]
\arrow[Rightarrow, from=11, to=20, "", shorten >=3, shorten <= 5]
\arrow[Rightarrow, from=12, to=22, "", shorten >=3, shorten <= 5]
\arrow[Rightarrow, from=21, to=51, "", shorten >=3, shorten <= 5]
\arrow[Rightarrow, from=13, to=43, "", shorten >=3, shorten <= 5]
\arrow[Rightarrow, from=23, to=42, "", shorten >=3, shorten <= 5]
\arrow[Rightarrow, from=42, to=52, "", shorten >=3, shorten <= 5]
\arrow[Rightarrow, from=43, to=53, "", shorten >=3, shorten <= 5]
\arrow[Rightarrow, from=5, to=6, "", shorten >=3, shorten <= 5]
\end{tikzcd}
\]

Using the interchange law, this diagram simplifies into: 

\[
\begin{tikzcd}[row sep = 40, column sep = 70]
ta 
& tb 
\arrow[l,"\g" {name={11}}]
&tb 
\arrow[l,"\id" {name={12}}] \\ 
ta 
& tb 
\arrow[l,"{\N(a,F(\g)(a))}" {name={21}}]
& tb 
\arrow[l,"{\N(F(\g)(a),b)}" {name={22}}] 
\arrow[ll,"" {name={2}}, phantom]\\ 
ta 
&& tb 
\arrow[ll,"{\N(a,b)}" {name={3}}] \\ 
ta 
&& tb
\arrow[ll,"{\M(a,b)}" {name={4}}]
\arrow[Rightarrow, from=12, to=22, "", shorten >=3, shorten <= 5]
\arrow[Rightarrow, from=11, to=21, "", shorten >=3, shorten <= 5]
\arrow[Rightarrow, from=2, to=3, "", shorten >=3, shorten <= 5]
\arrow[Rightarrow, from=3, to=4, "", shorten >=3, shorten <= 5]
\end{tikzcd}
\]

All that is left to see is $\iota_{aF(\g)(a)b}^{\N} \bullet (e_{\id,\g} * e_{u,\id}) = e_{u,\g}$, which is true by definition of $\iota^{\N}$.
\end{proof}

\begin{rk}\label{oplaxtrfAfgg}
Recall that the only limiting factor which prevented us to use pseudonatural transformations was the definition of $\Af(g)$ as an oplax-natural transformation; we gave $\Afgg$ as a non-invertible natural transformation. The reason is mainly that $\g^*$ is not a map in general; more precisely, defining the inverse of $\Afgg_a$ would lead us to give a $\theta: \id_x \To \N(F(\g)(a),a)\g$, and the natural course of action would be to use a canonical injection $\g^* \To \N(F(\g)(a),a)$, which does not exist because $\g^*$ is not a map, even though there does exist a morphism $a \to F(\g\g^*)(a)$ in $F(y)$ (recall that $F$ also reverses $2$-cells). This also means that in the case where we have some kind of denseness condition of maps among $1$-cells of $\Bc$ (something like ``any $1$-cell can be expressed as a colimit of maps''), we can quite probably extend the adjunction to the pseudonatural case; this denseness condition is however not always satisfied: for example in the quantaloid $\R(X)$ of open subsets of a topological space, there is only a map $U \to V$ if one open subset is included in the other, and if not there can still be $1$-cells $U \to V$ if the intersection is not empty. On the other hand, the condition is satisfied in the case $\Bc = \Bc_{\Set}$, where the singleton $\{*\}$ is a map, and any set is a coproduct of singletons. See corollary~\ref{PtFixesDenseness} as well as the discussion before proposition~\ref{FixPtAdjMonCatCat} for more details about this ``denseness'' condition.
\end{rk}

\begin{rk}[Symmetric case]\label{sympasbien}
The above constructions do not all carry too easily to the symmetric case; notably, although the functor $P$ does restrict well to $\Catsk(\Bc)$ and lands in $[\Map_{\s}(\Bc)^{\coop},\Cat]$ in this case, the functor $\si$ does not work as well. It is possible to define $\si F$ for a functor $F: \Map_{\s}(\Bc)^{\coop} \to \Cat$, but then the resulting $\Bc$-category $\si F$ is not symmetric in general; we need to use a symmetrization functor $\Sc$. This symmetrization functor has been described in the quantaloid case in section 3.5 of~\cite{HeymansThesis} as $\M_{\s}(x,y) = \M(x,y) \wedge \M(y,x)^{\circ}$; although it may be possible to extend it to the general case, notably by considering the product $\Mt(x,y) = \M(x,y) \times \M(y,x)^{\circ}$ we will not delve into that subject deeper. Note that even though $(\Mt,A)$ as defined above is a symmetric $\Bc$-category, it cannot be directly defined as the symmetrization of $(\M,A)$ as we would not have an adjunction between the symmetrization functor and $i$ in this case; more precisely, the monad on $\Cat_{\s}(\Bc)$ which sends any symmetric $\Bc$-category $(\M,A)$ to $(\Mt,A)$ is not idempotent, but it is probable that it is only lax-idempotent. Defining the symmetrization as the idempotent monad generated by this lax-idempotent monad would probably yield the correct notion of symmetrization.
\end{rk}

\begin{rk}[Fullness and Faithfulness of $P$]
In general the simple fact that $P_f$ is a pseudonatural transformation instead of an oplax-natural transformation ensures that $P$ is not full in general. Now in general $P$ is not faithful either; let $(\M,A), (\N,C)$ be two $\Bc$-categories, and consider $f,g: (\M,A) \to (\N,C)$ two $\Bc$-functors such that $P_f=P_g$. Then it is easily shown that for any $a \in A$, $f(a) = (P_f)_{ta}(a) = (P_g)_{ta}(a) = g(a)$; what is left to do is to see that $f_{ab} = g_{ab}$ for any $a,b \in A$. In the case of quantaloids, this is always true, yielding that $P$ is always faithful in the quantaloid case, but we will see in~\ref{UndrCatFunc} that $P_-(*)$ in the monoidal case is exactly the ``underlying category'' functor; for any $\theta: I \to \M(a,b)$ we have $f_{ab} \bullet \theta = g_{ab} \bullet \theta$. Therefore the faithfulness of $P$ will be decided in this case by that of $\V(I,-): \V^{\op} \to \Set$; for example when $\V = \Cat$, $P$ is not faithful, but it is for $\V=  \Set, \Ab, R\text{-}\mathbf{mod}, \mathbf{CGTop}$ (the cartesian monoidal category of compactly generated topological spaces). See section 1.3 of~\cite{Kelly} for more details.

We have discussed in Remark~\ref{oplaxtrfAfgg} the fact that in certain particular cases the adjunction can be extended to the pseudonatural case, and not just the oplax-natural one. Notably, proposition~\ref{FixPtAdjMonCatCat} gives a strong sufficient on a monoidal category so that a particular ``denseness'' condition of maps in $\Bv$ applies, ensuring that we can restrict the adjunction to functors $\Bv^{\co} \to \Cat$ and pseudonatural transformations between them; examples include at least $\Set$ and $\Ab$; the condition is that every object of $\V$ is a colimit of copies of $I$. In this case, $P$ becomes full, as we have for any $\Bv$-category $(\N,C)$ that $(\N,C) = \si \pnc$, and given $\alpha: \pma\to \pnc$ we can use $\Gf$ to get the corresponding $\Bv$-functor. In the quantaloid case however, it is even simpler because the $2$-structure is trivial, and $P$ is always full. 

This imply that we have some cases where $P$ is fully faithful: quantaloids on one hand, and the particular cases of monoidal categories expressed above (in general, most concrete monoidal categories should work too). In all these cases, the category $\Cat_{\kappa}(\Bc)$ becomes a reflexive subcategory of $[\Map(\Bv)^{\coop},\Cat]$, which is a strong basis for considering completion of $\Bc$-categories as a sort of ``sheafification''. In the case of quantaloids indeed, see section~\ref{secQuant}, what happens is that $\Cat_{\kappa}(\Bc)$ is a reflexive subcategory of $[\Map(\Q)^{\op},\Poset]$; the symmetric case yields a reflexive subcategory $\Catsk(\Q)$ of $[\Map(\Q)^{\op},\Set]$ and at least in the case of Grothendieck quantaloids~\cite{HSIQGQ} the reflexion is left-exact, i.e. preserves all finite limits. In general, the left-exactness must be checked on all finite bilimits however, as expressed in~\cite{StreetCBS}.
\end{rk}

\section{Examples}\label{secEx}

This section presents two classes of examples in which our adjunction can yield useful results, particularly by studying its fixed points: quantaloids and monoidal categories. These two examples are complementary in that quantaloids have ``trivial'' $2$-structure (without being completely discrete) and monoidal categories have trivial $0$-structure. 

\subsection{Quantaloids}\label{secQuant}

For a general exposition of the theory of quantaloids, refer to~\cite{RosenthalTOQ}. In all this section $\Q$ is a quantaloid, possibly endowed with an involution. Several results of this section can be already found in section 4.2 of~\cite{HeymansThesis} \\ 

One key point in the computation of fixed points of $[\Map(\Bc)^{\op},\Cat]$ is finding a subcategory $\D$ of $\Cat$ such that all fixed points are objects of $[\Map(\Bc)^{\op},\D]$; this is done by remarking that categories of the form $\pma(x)$ are not recovering all categories, because \textit{arrows in $\pma(x)$ are $2$-cells in $\Bc$}. This very important remark can be made into a general principle stating that, as long as we are not talking about compositional properties which are not preserved because the composition of arrows of $\pma(x)$ is not the same as that of $2$-cells in $\Bc$, \textit{the arrow structure of $\D$ corresponds to the $2$-structure of $\Bc$}. In the case of quantaloids, that means that we can restrict the codomain of $P$ to $[\Map(\Q)^{\op},\Poset]$. We will see that the symmetry condition on a bicategory $(\M,A)$ implies that for any $x \in \Ob(\Q)$ the category $\pma(x)$ is discrete, and we thus recover functors from $\Map(\Q)^{\op}$ to $\Set$ through symmetry considerations. The importance of quantaloids as representatives for $\Set$-presheaves comes from the fact that $\Sup$-enriched categories are what we obtain when ``upgrading'' usual $\Set$-categories by asking that they are locally cocomplete and closed. 

\begin{ex}[Complete metric spaces]
Consider the quantale $\overline{\Rb}^+$ of positive real numbers as defined in Example~\ref{realnumbersquantale}; then skeletal (symmetric symmetrically) complete $\Q$-categories are complete metric spaces. The only map in $\Q$ is $0$, hence $[\Map(\Q)^{\op},\Cat]$ is the category $\Cat$. Denoting $\mathbf{CMSp}$ the category of complete metric spaces with a possibly infinite metric, we have an adjunction between $\Cat$ and $\mathbf{CMSp}$. It is given as follows: 

\begin{enumerate}
\itb For any complete metric space $(E,d)$, the corresponding category $P_{(E,d)}$ has as objects the points of $E$; for each pair of points $x,y \in E$, there is an arrow $x \to y$ if and only if $d(x,y)=0$. 
\itb For any category $\D$, we have a $\Q$-category $(\N,\Ob(\D)) = \si \D$ with $\N(x,y)$ being $0$ if there exists some $f: y \to x$ in $\D$, $\infty$ if not. Taking the completion of that category amounts to look at the connected components of $\D$.
\end{enumerate}

This adjunction restricts to an equivalence between sets and discrete metric spaces.
\end{ex}

For more interesting examples, we will characterize the fixed points of the adjunction, and we start by giving descriptions of the form that a pseudofunctor must take if it is to be a fixed point. The following proposition is an immediate consequence of the definition of $\pma$ and assesses the fact that the arrow structure of each category $\pma(q)$ for $q \in \Ob(\Q)$ is modelled by the $2$-structure of $\Q$, i.e. is a poset.

\begin{prop}
Let $(\M,A)$ be a complete $\Q$-category. Then for any $q \in \Ob(\Q)$, $\pma(q)$ is a posetal category.
\end{prop}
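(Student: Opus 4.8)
The plan is to unwind Definition~\ref{CompFib} and exploit the two features that distinguish a quantaloid among bicategories: it is locally posetal, and composition of $1$-cells is order-preserving. Recall that the objects of $\pma(q)$ are the elements of $A$ of type $q$ and that an arrow $a \to b$ is exactly a $2$-cell $\id_q \To \M(b,a)$ in $\Q$. First I would observe that, since $\Q(q,q)$ is a poset, there is at most one such $2$-cell: writing $\leq$ for the order on $\Q(q,q)$, an arrow $a \to b$ exists precisely when $\id_q \leq \M(b,a)$, and it is then unique. This already makes $\pma(q)$ thin, i.e. a preorder on its set of objects.

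To upgrade this preorder to a genuine poset I would then prove antisymmetry. Suppose $a$ and $b$ are objects of $\pma(q)$ with arrows in both directions, so that $\id_q \leq \M(b,a)$ and $\id_q \leq \M(a,b)$. For an arbitrary $c \in A$, the idempotency cell $\iota_{cba}$ gives $\M(c,b)\M(b,a) \leq \M(c,a)$; by monotonicity of composition $\M(c,b)\id_q \leq \M(c,b)\M(b,a)$, and $\M(c,b)\id_q = \M(c,b)$ by unitality, so that $\M(c,b) \leq \M(c,a)$. The symmetric computation with $\iota_{cab}$ and $\id_q \leq \M(a,b)$ gives $\M(c,a) \leq \M(c,b)$, whence $\M(c,a) = \M(c,b)$. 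As $c$ was arbitrary, the representable singletons $\M(-,a)$ and $\M(-,b)$ coincide, and since a complete $\Q$-category is by definition skeletal this forces $a = b$.

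I do not expect a serious obstacle here; the statement is essentially immediate once one notices that arrows in $\pma(q)$ are $2$-cells of $\Q$. The only points requiring a little care are, first, to keep the two halves of ``posetal'' separate---thinness is a purely formal consequence of local posetality and uses nothing about $(\M,A)$, whereas antisymmetry is where the completeness hypothesis enters, and it does so only through the skeletality it guarantees---and, second, to invoke explicitly that composition in a quantaloid preserves the order in each variable (valid because a quantaloid is a $\Sup$-enriched category), so that the inequalities propagate correctly through the idempotency cells.
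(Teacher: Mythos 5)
Your first paragraph is, in substance, the paper's entire proof: the paper treats this proposition as an immediate consequence of the definition of $\pma$, the point being exactly the one you make --- an arrow $a \to b$ in $\pma(q)$ is a $2$-cell $\id_q \To \M(b,a)$ in $\Q$, and since $\Q$ is locally posetal there is at most one such $2$-cell, so $\pma(q)$ is thin. Up to there the two arguments coincide.

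Your second paragraph (antisymmetry) goes beyond what the paper establishes, and it carries a caveat. First, the paper's ``posetal'' means only \emph{thin}: in the very next proposition the paper concludes that $\pma(q)$, for symmetric $(\M,A)$, is merely \emph{equivalent} to a discrete category, passing to equivalence classes of isomorphic objects --- a step that would be superfluous if antisymmetry had already been secured. Second, your justification ``a complete $\Q$-category is by definition skeletal'' does not match the paper's primary definition of completeness (every distributor with a right adjoint is representable): that property is stable under duplicating an element of $A$, so a complete category in this sense need not be skeletal, and for such a category antisymmetry genuinely fails (if $a \neq b$ but $\M(-,a) = \M(-,b)$, there are arrows both ways in $\pma(q)$). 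Skeletality appears only in the paper's alternative reformulation of completeness, which is stated for skeletal categories, and it is indeed implicitly required for Definition~\ref{CompFib} to make sense (uniqueness of the representing element $a \cdot \g$); so your extra step is harmless \emph{in context}, but it should be phrased as invoking that standing skeletality hypothesis rather than the definition of completeness. The order-theoretic computation itself --- $\M(c,b) = \M(c,b)\id_q \leq \M(c,b)\M(b,a) \leq \M(c,a)$ via $\iota_{cba}$ and monotonicity of composition, and symmetrically --- is correct.
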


This means that our adjunction can be directly restricted to an adjunction between the categories $[\Map(\Q)^{\coop},\Poset]$ and $\Catk(\Q)$; in other words the setting of $\Q$-categories is only appropriate to describe posetal sheaves over $\Map(\Q)^{\coop}$ and not general functors going into $\Cat$. The following proposition is even more important, as it underlies the role of symmetry as the crucial element which enables us to recover only set-theoretical sheaves instead of posetal ones.

\begin{prop}
Let $(\M,A)$ be a symetrically complete $\Q$-category. Then for any $q \in \Ob(\Q)$, $\pma(q)$ is equivalent to a discrete category.
\end{prop}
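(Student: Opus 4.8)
The plan is to exploit the fact, already recorded in the previous proposition, that $\pma(q)$ is a posetal category, and to reduce the statement to a purely order-theoretic observation. Recall from Definition~\ref{CompFib} that the objects of $\pma(q)$ are the elements of $A$ of type $q$, and that an arrow $a \to b$ is a $2$-cell $\id_q \To \M(b,a)$ in $\Q$. Since $\Q$ is posetal, such a $2$-cell exists, and is then unique, exactly when $\id_q \leq \M(b,a)$; thus $\pma(q)$ is the category associated to the preorder on $A_q$ given by $a \preceq b \iff \id_q \leq \M(b,a)$ (reflexive via $\rho_a$, transitive via $\iota$). A poset (more generally a thin category) is equivalent to a discrete category precisely when this preorder is symmetric, for then it is an equivalence relation and $\pma(q)$ is a thin groupoid, equivalent to the discrete category on its set of $\preceq$-equivalence classes. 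So I would reduce the whole statement to showing that $a \preceq b$ implies $b \preceq a$.

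The key step is to feed in the symmetry of $(\M,A)$, which holds because a symmetrically complete $\Q$-category is by definition symmetric: for all $a,b \in A$ we have $\M(b,a) = \M(a,b)^{\circ}$. I would first record two elementary facts about the involution $(-)^{\circ} \colon \Q^{\op} \to \Q$. Being the identity on objects and a pseudofunctor, it preserves units up to isomorphism, which in the posetal setting means $\id_q^{\circ} = \id_q$; and, restricted to hom-posets, it gives order-preserving maps $\Q(x,y) \to \Q(y,x)$ which, being an involution, are order isomorphisms. Granting these, the chain $\id_q \leq \M(b,a) = \M(a,b)^{\circ} \iff \id_q^{\circ} \leq \M(a,b) \iff \id_q \leq \M(a,b)$ shows that $a \preceq b$ holds if and only if $b \preceq a$ holds, i.e. the preorder is symmetric.

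To conclude, I would note that symmetry upgrades $\pma(q)$ from a poset to a thin groupoid: given an arrow $f \colon a \to b$ there is a (unique) arrow $g \colon b \to a$, and the composites $g f$ and $f g$ are forced to be $\id_a$ and $\id_b$ since these are the only endomorphisms available in a thin category, so every morphism is invertible. A thin groupoid is equivalent to its skeleton, which is discrete, giving the claim. The argument is short, and I expect no genuine obstacle beyond the order-theoretic bookkeeping of the previous paragraph—verifying that $(-)^{\circ}$ fixes $\id_q$ and is order-preserving on hom-posets—together with the remark that completeness enters only through being the hypothesis under which $\pma$ is defined at all, the symmetry of the fibers coming entirely from the symmetry of $\M$.
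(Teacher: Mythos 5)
Your proposal is correct and follows essentially the same route as the paper's proof: use the posetal structure of $\pma(q)$, apply the involution to get an arrow in the opposite direction from $\id_q \leq \M(b,a)$, observe that in a thin category mutually inverse arrows are forced, and pass to equivalence classes to obtain an equivalent discrete category. Your extra bookkeeping (that $(-)^{\circ}$ fixes $\id_q$ and is an order isomorphism on hom-posets) just makes explicit a step the paper leaves implicit in the phrase ``taking the involute yields $\id_q \leq \M(a,b)$''.
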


\begin{proof}
Let us recall the description of the category $\pma(q)$.
\begin{enumerate}
\itb Objects of $\pma(q)$ are elements of $A$ which are of type $q$: $\Ob(\pma(q)) = A_q$.
\itb For any two $a,b \in \Ob(\pma(q))$, there is at most one arrow $a \to b$ in $\pma(q)$; there is exactly one if and only if $\id_q \leq \M(b,a)$.
\end{enumerate}
This proves that $\pma(q)$ is a posetal category. Now because $\M$ is symmetric, we have that if there is an arrow $a \to b$, meaning that $\id_q \leq \M(b,a)$, then taking the involute yields $\id_q \leq \M(a,b)$, meaning that there is also an arrow $b \to a$. Because $\pma(q)$ is posetal that implies that those arrows are isomorphisms: in other words, all arrows of $\pma(q)$ are isomorphisms. This mean that taking the equivalence classes of objects (i.e. generated by the relations $a \sim b$ if and only if $\id_q \leq \M(a,b)$), one obtains a discrete category which is equivalent to $\pma(q)$.
\end{proof}

This proves that any symetrically complete $\Q$-category, $\pma$ is an element of the subcategory $[\Map(\Q)^{\op},\mathbf{GrpdPoset}]$ of $[\Map(\Q)^{\op},\Cat]$ of functors taking values in posetal groupoids; this subcategory is equivalent to $[\Map(\Q)^{\op},\Set]$ if we assume the axiom of choice, which is necessary to construct the inverse image of a connected component. From now on we can therefore consider that $\pma$ is a (set-valued) presheaf over $\Map(\Q)^{\op}$. Note that the notion of symmetry is crucial there because it is precisely what encapsulates the restriction from posets to sets; moreover we can drop the ``$\coop$'' part simply to a $\op$ part, as the $2$-structure (which was already posetal in the non-symmetric case) disappears completely here.

Now going back to the fixed points of $P\C\si$, the above proposition immediatly implies that any fixed point $F$ can already be imposed to go only to $\Set$ instead of general categories. In what follows, we therefore consider a presheaf $F: \Map_{\s}(\Q)^{\op} \to \Set$. \\ 

As we have pointed out above in Remark~\ref{sympasbien}, even if $F$ goes from $\Map_{\s}(\Q)^{\op}$ to $\Set$, there is no guarantee that $(\M,\si F)$ will be a symmetric $\Q$-category. This can be verified easily as $\M(a,b) = \bigvee \{f: tb \to ta \in \Map_{\s}(\Q)(tb,ta) \: \ f \leq \M(a,b)\}$ and since the involution is not asked to commute with colimits; note that even in the case where it would commute with colimits, there is no guarantee that $\bigvee \{f^{\circ} \leq \M(b,a) \: \  \text{$f$ map}\}$ is going to be $\M(b,a)$. The solution is to use a symmetrization functor, which is introduced in~\cite{HeymansStubbeSCCQEC} and is defined by $\M_{\s}(a,b) = \M(a,b) \wedge \M(b,a)^{\circ}$; this symmetrization is left adjoint to the inclusion functor, which enables us to use the same proof we gave to produce a symmetric version of our adjunction, going between $[\Map_{\s}(\Q)^{\op},\Set]$ and $\Catsk(\Q)$. In all that follows, we denote by $\Sc$ the symmetrization functor.

\begin{deft}
Consider the following notion of covering in $\Map(\Q)$: a family $(f_i)_{i\in I}$ of maps going into an object $q \in \Ob(\Q)$ is said to be \textbf{covering} if: \[\id_q \leq \bigvee_{ i\in I} f_if_i^{\circ}\]
\end{deft}

\begin{rk}
In general, this does not define a Grothendieck topology on $\Map(\Q)$. Indeed, while the easiest two axioms of the definition (maximality and stability under composition), it seems that this definition does not satisfy the property of being stable under pullbacks.
\end{rk}

From then we can state the following proposition from section 4.2 of~\cite{HeymansThesis}:

\begin{prop}[Corollary 4.2.11 of~\cite{HeymansThesis}]\label{ShQ}
Let $F: \Map(\Q)^{\op} \to \Set$ be a presheaf over $\Map(\Q)$. Then $\Sc \si F$ is symetrically complete if and only if:
\begin{enumerate}
\itb For every symmetric singleton $\s$ of $\Sc\si F$, there exists some covering family $(x_i, f_i)_{i\in I}$ with $x_i \in \Ob(F(tx_i))$, $f_i: tx_i \to t\s$ symmetric map for all $i \in I$, such that for any $a \in \si F$, $\s(a)f_i = \M(a,x_i)$ (we say that $\s$ is \textbf{locally representable}).
\itb For any covering family $(x_i,f_i)_{i\in I}$ with $x_i \in F(tx_i)$, $f_i: tx_i \to q$ with a fixed $q \in \Ob(\Q)$ which is compatible in the sense that $f_i^{\circ}f_j \leq \M(x_i,x_j)$, there exists a unique $x \in \si F$ such that for all $i\in I$, $F(f_i)(x_i) = x$.
\end{enumerate}
\end{prop}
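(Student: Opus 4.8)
The plan is to unwind the definition of symmetric completeness and match it against the two listed conditions. By definition $\Sc\si F$ is symmetrically complete precisely when every symmetric singleton $\s$ of $\Sc\si F$ is representable, i.e. $\s = \M(-,x)$ for a (necessarily unique, by skeletality of a completion) element $x \in \si F$. So the goal reduces to showing that, over a quantaloid, representability of all symmetric singletons is equivalent to local representability~(1) together with glueing~(2). Throughout I use the explicit form of the structural matrix in the quantaloid case: writing $\N$ for the matrix of $\si F$, one has $\N(a,b) = \bigvee\{f\colon tb \to ta \text{ a map} : b = F(f)(a)\}$, and $\M = \N_{\s}$ with $\M(a,b) = \N(a,b) \wedge \N(b,a)^{\circ}$, so that singletons and all the values $\M(a,x_i)$ are suprema of maps. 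The quantaloid fact I invoke repeatedly is that composition distributes over these suprema, which holds because $\Q$ is locally cocomplete and closed.

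First I would treat the forward implication, assuming symmetric completeness. Condition~(1) is then immediate: a symmetric singleton $\s$ is representable by some $x$, and the one-element family $(x,\id_{t\s})$ is covering (since $\id_{t\s} \leq \id_{t\s}\id_{t\s}^{\circ}$) with $\s(a)\id_{t\s} = \s(a) = \M(a,x)$. For condition~(2), given a compatible covering family $(x_i,f_i)_{i\in I}$ I would build a candidate symmetric singleton by the glueing formula $\s(a) = \bigvee_{i} \M(a,x_i) f_i^{\circ}$, check that compatibility $f_i^{\circ}f_j \leq \M(x_i,x_j)$ together with the covering identity $\id \leq \bigvee_i f_i f_i^{\circ}$ makes $\s$ a symmetric map (hence a symmetric singleton), and then invoke completeness to represent it by a unique $x$. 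It remains to identify $x$ with the glueing asserted in~(2), which one reads off by translating the equality $\s = \M(-,x)$ back through the description of $\M$ in terms of $F$.

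For the converse I would assume~(1) and~(2) and take an arbitrary symmetric singleton $\s$. By~(1) it admits a covering family $(x_i,f_i)_{i\in I}$ with $\s(a)f_i = \M(a,x_i)$; I first verify this family is compatible, i.e. $f_i^{\circ}f_j \leq \M(x_i,x_j)$, using the symmetry of $\s$ (so $\s^{*}=\s^{\circ}$) and the singleton axioms. Applying~(2) produces the unique glued element $x \in \si F$, and the final step is to show $\s = \M(-,x)$. Here I reconstruct $\s$ from its local data: since $f_i$ is a map one has $f_i f_i^{\circ} \leq \id$ and, by the covering condition, $\id_{t\s} \leq \bigvee_i f_i f_i^{\circ}$, whence $\s(a) = \bigvee_i \s(a) f_i f_i^{\circ} = \bigvee_i \M(a,x_i) f_i^{\circ}$; comparing this with $\M(a,x)$ through the Grothendieck description of $\M$ and the glueing relation for $x$ yields $\s(a) = \M(a,x)$ for all $a$, i.e. representability.

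The main obstacle is the reconstruction step, common to both directions: verifying that the formula $\bigvee_i \M(a,x_i) f_i^{\circ}$ genuinely defines a \emph{symmetric} singleton, and that the element produced by~(2) represents it. This is exactly where the precise choice of ``covering'' ($\id_q \leq \bigvee_i f_i f_i^{\circ}$) and the symmetrization functor $\Sc$ interact, since one must commute suprema past the involution $(-)^{\circ}$ and past composition; the former is legitimate only after symmetrization and does not behave naively for $\Sc$, as flagged in Remark~\ref{sympasbien}. Checking the singleton coherence conditions and the symmetry (rather than the set-level identifications of the glued element) is where the bulk of the careful computation lies.
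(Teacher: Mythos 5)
The paper itself contains no proof of this proposition: it is imported as Corollary 4.2.11 of Heymans' thesis, so there is no internal argument to compare yours against, and I can only judge the proposal on its own terms and against the machinery the paper develops around it. Your skeleton is the natural one, and a substantial part of it is sound. Condition (1) in the forward direction does follow from the one-element covering family $(x,\id_{t\s})$. The glueing formula $\s(a) = \bigvee_i \M(a,x_i)f_i^{\circ}$ does define a symmetric singleton: the unit of $\s \dashv \s^{\circ}$ comes exactly from the covering inequality $\id_q \leq \bigvee_i f_i f_i^{\circ}$ (evaluate at $a = x_i$), and the counit exactly from compatibility $f_i^{\circ}f_j \leq \M(x_i,x_j)$. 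In the converse direction, compatibility of the family furnished by (1) follows as you indicate: $\id \leq \M(x_i,x_i) = \s(x_i)f_i$ gives $f_i^{\circ} \leq \s(x_i)f_if_i^{\circ} \leq \s(x_i)$, whence $f_i^{\circ}f_j \leq \s(x_i)\s(x_j)^{\circ} \leq \M(x_i,x_j)$ by the counit of $\s \dashv \s^{\circ}$. The reconstruction $\s(a) = \bigvee_i \M(a,x_i)f_i^{\circ}$ from the covering identity is also correct.

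The genuine gap is the step you defer in both directions: identifying the element $x$ of condition (2) with a representative of $\s$. This cannot be ``read off by translating through the Grothendieck description''. Writing $\N$ for the matrix of $\si F$ and $\M(a,b) = \N(a,b) \wedge \N(b,a)^{\circ}$ for its symmetrization, the direct comparison $\s \leq \M(-,x)$ requires $f_i^{\circ} \leq \M(x_i,x)$; its half $f_i \leq \N(x,x_i)$ is the glueing relation, but the other half $f_i^{\circ} \leq \N(x_i,x)$ asks the $1$-cell $f_i^{\circ}$, which is \emph{not} a map, to lie under a supremum of maps $h$ with $F(h)(x_i) = x$ --- nothing provides this, and the meet defining $\M$ does not distribute over the suprema involved. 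Likewise, extracting $F(f_i)(x) = x_i$ from an inequality $f_i \leq \bigvee\{h \text{ map} \mid F(h)(x) = x_i\}$ is not automatic, since lying under a supremum of maps with a property does not confer that property on $f_i$. The missing ingredients are: (i) the lemma that the action of symmetric maps on $\si F$ is computed by $F$, i.e. $\M(-,x)f_i = \M(-,F(f_i)(x))$ (a version of which the paper proves later, in showing that $P\C_{\s}\si F = F$ for symmetrically complete $\si F$); and (ii) the rigidity fact that two \emph{comparable symmetric} left adjoints in an involutive quantaloid are equal (from $f \leq g$ one gets $g^{\circ} \leq f^{\circ}$ by mates and $f^{\circ} \leq g^{\circ}$ by applying the involution, hence equality) --- a fact which is false for non-symmetric maps, so it is genuinely symmetry, not a formal translation, that closes the argument. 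Granting (i) and (ii), one gets $\M(a,x) \leq \bigvee_i \M(a,x)f_if_i^{\circ} = \bigvee_i \M(a,x_i)f_i^{\circ} = \s(a)$, and (ii) upgrades this single inequality between symmetric maps of $\Dist_{\s}(\Q)$ to the desired equality. Finally, the parenthetical ``necessarily unique, by skeletality of a completion'' is unjustified: $\Sc\si F$ is not constructed as a completion, its skeletality is not free, and the uniqueness clause of (2) must itself be routed through (i) and (ii) rather than assumed.
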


\begin{rk}
These two conditions are to be understood by comparing with the case where $F$ is a presheaf on a topological space $X$. Then a covering family of maps of some $U \in \Ob(\R(X))$ is simply a covering in the usual sense. The first condition on $\si F$ is essentially a restriction property while the second condition is directly a glueing property. In Example~\ref{exFaiscfixes}, sheaves on $X$ will appear precisely as fixed points of the adjunction in the case $\Bc = \R(X)$.
\end{rk}

Now if $\Sc \si F$ is symetrically complete, then $\C_{\s} \Sc \si F = \Sc \si F$, and therefore we are brought to look at $P \Sc \si F$. 

\begin{prop}
Let $F$ be a presheaf on $\Map(\Q)$ such that $P\C_{\s}\Sc\si F = F$. Then $\Sc\si F$ is symetrically complete.
\end{prop}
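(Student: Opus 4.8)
The plan is to show that the completion is redundant on $\Sc\si F$, i.e.\ that the Yoneda $\Q$-functor $\yf\colon \Sc\si F \to \C_{\s}\Sc\si F$ is essentially surjective. Indeed, by the definition of symmetric completeness (in the skeletal symmetric case), $\Sc\si F$ is symmetrically complete precisely when every symmetric singleton of $\Sc\si F$ is representable, and this is exactly the statement that every object of $\C_{\s}\Sc\si F$ lies in the image of $\yf$ up to isomorphism. Since $\yf$ is moreover always fully faithful (by the remark following Lemma~\ref{YonedaSingleton}), establishing essential surjectivity will identify $\Sc\si F$ with the symmetrically complete category $\C_{\s}\Sc\si F$ and thereby prove the proposition.

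First I would describe the fibers of the pseudofunctor $P\C_{\s}\Sc\si F$. By Definition~\ref{CompFib}, the objects of the category $P\C_{\s}\Sc\si F(x)$ are the elements of $\C_{\s}\Sc\si F$ of type $x$, which by construction of $\C_{\s}$ are exactly the symmetric singletons of $\Sc\si F$ of type $x$; moreover, since $\Sc\si F$ is symmetric, this fiber is equivalent to a \emph{discrete} category by the preceding proposition. Next I would compute the unit $\eta_F\colon F \to P\C_{\s}\Sc\si F$ of the adjunction $\C_{\s}\Sc\si \dashv P$. Unwinding the construction of $\Af$ in the proof of the adjunction theorem, $\eta_F$ is $\Af$ applied to the canonical $\Q$-functor $\si F \to \C_{\s}\Sc\si F$, which on objects is the symmetrized Yoneda embedding; since $\Af(g)_x$ acts on objects as the mere restriction of $g$ to the fiber over $x$, one finds that $\eta_{F,x}$ sends an element $a$ of $F(x)$ to its representable symmetric singleton $\Ms(-,a)$, regarded as an object of $P\C_{\s}\Sc\si F(x)$.

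The hypothesis that $F$ is a fixed point, $P\C_{\s}\Sc\si F = F$, is to be read as the assertion that this unit $\eta_F$ is an isomorphism. In particular, for each $x$ the functor $\eta_{F,x}$ is essentially surjective onto the discrete fiber $P\C_{\s}\Sc\si F(x)$, so every symmetric singleton of type $x$ is isomorphic to one of the form $\Ms(-,a)$. Because the fiber is discrete and $\C_{\s}\Sc\si F$ is skeletal (it is a full subcategory of the category of presingletons, which is skeletal by Proposition~\ref{PreSk}), such an isomorphism forces equality, and hence every symmetric singleton of $\Sc\si F$ is representable. This is exactly the essential surjectivity of $\yf$, and therefore $\Sc\si F$ is symmetrically complete.

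The main obstacle is the bookkeeping through the symmetrization functor $\Sc$: one must check that the identification of the fiber of $\C_{\s}\Sc\si F$ with the symmetric singletons of $\Sc\si F$, together with the formula $\eta_{F,x}(a)=\Ms(-,a)$, survives the passage from $\si F$ to its symmetrization, whose behaviour is delicate (cf.\ Remark~\ref{sympasbien}). A secondary point requiring care is the precise meaning of the fixed-point hypothesis: the argument uses $P\C_{\s}\Sc\si F = F$ as the statement that the adjunction unit $\eta_F$ is invertible, and one should confirm this is the intended reading rather than a bare equality of underlying presheaves, since only invertibility of $\eta_F$ yields the fiberwise essential surjectivity on which the conclusion rests.
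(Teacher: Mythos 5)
Your proposal is correct and follows essentially the same route as the paper: the paper's (much terser) proof likewise identifies the objects of the fiber $P\C_{\s}\Sc\si F(x)$ with the symmetric singletons of $\Sc\si F$ of type $x$, reads the fixed-point hypothesis as saying these coincide with the elements of $(\si F)_x$, and concludes that every symmetric singleton is representable. Your extra care --- interpreting $P\C_{\s}\Sc\si F = F$ via the adjunction unit $\eta_F$ (so that $a \mapsto \Ms(-,a)$ is the identification), and using discreteness of the fibers together with skeletality of the singleton category to upgrade isomorphism to equality --- is exactly what the paper's one-line argument leaves implicit, so your flagged concerns are resolutions of the paper's own shorthand rather than gaps in your argument.
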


\begin{proof}
For any $x \in \Ob(\Q)$, we have $(\si F)_x = \Ob(F(x)) = \Ob(P\C\si F(x)) = (\C\si F)_x$, meaning that any singleton corresponds exactly to an element of $(\si F)_x$, meaning that $\si F$ is symetrically complete.
\end{proof}

This shows that the above condition is necessary. To conclude, we only need to prove the following: 

\begin{prop}
If $\si F$ is symetrically complete, then $P\C_{\s} \si F = F$.
\end{prop}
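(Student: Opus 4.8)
The plan is to reduce the statement to a direct fiberwise comparison of functors and then check agreement on objects, on morphisms, and on the action of maps. First I would use the hypothesis of symmetric completeness: by the corollary asserting that the completion of an already complete $\Bc$-category returns that same $\Bc$-category, we have $\C_{\s}\Sc\si F = \Sc\si F$, so that $P\C_{\s}\Sc\si F = P\Sc\si F$, and it suffices to prove $P\Sc\si F = F$ as objects of $[\Map_{\s}(\Q)^{\op},\Set]$ (writing $\Sc\si F$ for the symmetrization of $\si F$, as in the preceding proposition). On objects this is immediate and exactly parallels the proof just given for the converse: for every $x \in \Ob(\Q)$ the definition of $P$ in~\ref{CompFib} gives the literal equalities $\Ob\big(P\Sc\si F(x)\big) = (\Sc\si F)_x = (\si F)_x = \Ob(F(x))$, so the two presheaves have the same object set in each fiber.

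Next I would argue that each fiber $P\Sc\si F(x)$ is \emph{discrete} on $\Ob(F(x))$, which is what lets us identify it with the set $F(x)$ after passing from posetal groupoids to $\Set$. An arrow $a \to b$ in this fiber is a $2$-cell $\id_x \To \M(b,a)$, i.e.\ $\id_x \leq \M(b,a)$ in the quantaloid; by symmetry $\M(a,b) = \M(b,a)^{\circ}$, whence also $\id_x \leq \M(a,b)$. Precomposing the idempotency inequalities $\M(c,b)\M(b,a) \leq \M(c,a)$ with $\id_x \leq \M(b,a)$ yields $\M(c,b) \leq \M(c,a)$ for every $c$, and symmetrically $\M(c,a)\leq\M(c,b)$, so $\M(-,a) = \M(-,b)$. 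Skeletality of $\Sc\si F$ then forces $a=b$; here skeletality holds because the structural matrix of $\si F$ is a colimit of the same shape as in the proof of Proposition~\ref{PreSk}, and is in any case guaranteed by the uniqueness clause in the glueing condition of Proposition~\ref{ShQ}. Hence each fiber is discrete with object set $\Ob(F(x))$, and the canonical comparison map $F(x) \to P\Sc\si F(x)$ is a bijection.

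It then remains to match the action on maps and on $2$-cells. For a map $\g\colon x\to y$ and $a\in\Ob(F(y))$, we have $P\Sc\si F(\g)(a) = a\cdot\g$, the unique element representing the singleton $\M(-,a)\g$, characterized by $\M(b,a\cdot\g)=\M(b,a)\g$ for all $b$. I would verify, from the colimit formula defining the structural matrix of $\si F$ together with the functoriality of $F$, that $F(\g)(a)$ itself represents $\M(-,a)\g$, i.e.\ that $\M\big(b,F(\g)(a)\big)=\M(b,a)\g$ for all $b$; skeletality then gives $a\cdot\g = F(\g)(a)$, so the two presheaves agree on maps. The action on $2$-cells of $\Map_{\s}(\Q)$ carries no further information, since the $2$-structure is posetal and the fibers are discrete, so the comparison is a natural isomorphism — indeed an equality — of presheaves $P\C_{\s}\Sc\si F = F$.

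The main obstacle I expect is the discreteness step, namely ruling out that two distinct elements of $F(x)$ are glued; this is precisely where symmetric completeness is genuinely used, through skeletality and the uniqueness of glueings, rather than merely on objects. The secondary technical point is the identification $\M(-,a)\g = \M\big(-,F(\g)(a)\big)$, which requires unwinding the colimit description of the Grothendieck construction $\si F$ and the symmetrization $\Sc$, but is otherwise routine once the colimit injections are tracked.
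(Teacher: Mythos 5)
Your architecture matches the paper's: reduce to $P\Sc\si F = F$ using that completion fixes complete categories, check agreement on objects, then check the transition functions by showing that $F(\g)(a)$ represents the singleton $\M(-,a)\g$. The object-level part is fine, and your fiber-discreteness digression is a reasonable supplement to something the paper glosses over (though the cleaner justification of skeletality is that $\C_{\s}\Sc\si F$ is a full sub-$\Q$-category of the category of presingletons, which Proposition~\ref{PreSk} shows is skeletal; your ``same shape of colimit'' appeal and the appeal to Proposition~\ref{ShQ} do not really establish it). The genuine gap is in the step you dismiss as routine: the identity $\M(-,F(\g)(a)) = \M(-,a)\g$. Unwinding the colimit formula and using functoriality of $F$ yields only \emph{one} inequality: each generator $h\g$ of the join $\M(x,a)\g$ (with $h$ a map such that $F(h)(x)=a$) is a generator of $\M(x,F(\g)(a))$, since $F(h\g)(x) = F(\g)(F(h)(x)) = F(\g)(a)$; this gives $\M(-,a)\g \leq \M(-,F(\g)(a))$. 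The reverse inequality admits no such element-chase: a generator $g$ of $\M(x,F(\g)(a))$, i.e.\ a map with $F(g)(x) = F(\g)(a)$, need not lie under any composite $h\g$ with $F(h)(x) = a$ (one cannot take $h = g\g^{\circ}$, since $\g^{\circ}$ is not a map in general and $F(\g^{\circ})F(\g) \neq \id$ in any case). So ``tracking the colimit injections'' stalls precisely at the point where the proof has content.

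The paper closes this gap with a rigidity principle that your proposal never invokes: both sides are symmetric maps (left adjoints) in $\Dist_{\s}(\Q)$, and an inequality between symmetric maps in an involutive quantaloid is automatically an equality — if $\phi \leq \psi$ then $\phi^{\circ} \leq \psi^{\circ}$, whence $\psi \leq \psi\phi^{\circ}\phi \leq \psi\psi^{\circ}\phi \leq \phi$ by the adjunction inequalities. (Note this really uses symmetry/involutivity: for non-symmetric maps in a general quantaloid, distinct comparable left adjoints exist.) Alternatively, you could have closed the gap with tools you already set up: $\M(-,a)\g$ is a symmetric singleton, so symmetric completeness gives $\M(-,a)\g = \M(-,b)$ for some $b$; the routine inequality then yields $\id_{tb} \leq \M(b,b) \leq \M(b,F(\g)(a))$, symmetry yields the reverse $2$-cell, and your discreteness/skeletality argument forces $b = F(\g)(a)$. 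Either way, an argument beyond colimit bookkeeping is indispensable, and as written your proof does not supply one.
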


\begin{proof}
First we have already seen that if $\si F$ is symetrically complete, then we have $\Ob(F(x)) = \Ob(P\C\si F(x))$ for all $x \in \Ob(\Q)$. Now recall that $F$ is a set-valued presheaf, hence we need not to compute arrows in $P\C\si F(x)$. We only need to prove that the transition functions are the same, i.e. that for any $f: q_1 \to q_2$ in $\Q$, $P\C\si F(f) = F(f)$. Now $P\C\si F (f)$ goes from $P\C\si F(q_2)$ to $P\C\si F(q_1)$, that is from $F(q_2)$ to $F(q_1)$ and it sends any $a \in F(q_2)$ to $a\cdot f$ defined as the element of $\si F$ which represents the singleton $\M(-,a)f$, where $\M$ is the structural matrix of $\si F$; here we have used the fact that $\C \si F = \si F$ because $\si F$ is symetrically complete. Therefore, what we want to prove is that $a \cdot f = F(f)(a)$, i.e. that the singletons $\M(-,a)f$ and $\M(-,F(f)(a))$ are equal. We recall both definitions: 
\begin{align*}
\M(x,F(f)a)) &= \bigvee \{g: q_1 \to tx \ \mathrm{map} \ | \ \exists u: F(f)(a) \to F(g)(x) \ \mathrm{in} \ F(q_1) \}\\
\M(x,a)f &= \bigvee \{hf \ | \ h: q_2 \to tx \ \mathrm{map} \ \mathrm{s.t.} \ \exists v: a \to F(h)(x) \ \mathrm{in} \ F(q_2)\}\\
\end{align*}

Now for any $h$ in the second join-defining set, there is some $v: a \to F(h)(x)$ in $F(q_2)$, and taking through $F(f): F(q_2) \to F(q_1)$, one obtains $F(f)(v): F(f)(a) \to F(hf)(x)$, meaning that $hf$ is actually in the first join-defining set, meaning that we have: 

\[\M(-,F(f)(a)) \leq \M(-,a)f\]

This is an inequality between maps in the quantaloid $\Dist(\Q)$; therefore it is an equality, which concludes the proof.

\end{proof}

\begin{ex}[Sheaves on a topological space]\label{exFaiscfixes}
Let $\R(X)$ be the quantaloid of open subsets of a topological space $X$ as defined in Example~\ref{secpreshf}, and let us describe the category $[\Map_{\s}(\R(X))^{\op},\Set]_f$ of fixed points in this case. A symmetric map $W: U \to V$ in $\R(X)$ is an open subset $W \leq U \wedge V$ such that $U \leq W$ and $V \leq W$; because $W \leq U \wedge V$ we necessarily have $W \leq U$ hence there is a map $U \to V$ if and only if $U \leq V$, therefore we recover the usual lattice of open subsets $\Map_{\s}(\R(X)) = \Omega(X)$. This means that the category $[\Map(\R(X))^{\op},\Set]$ is the usual presheaf category over $X$; this is a common procedure: to identify a bicategory $\Bc$ such that $\Map(\Bc)$ is our category of interest, often through a ``bicategory of relations'' techniques. Now all is left to do is to remark that the compatibility conditions of~\ref{ShQ} are in this case the same as sheaf conditions over $X$, proving that the presheaves on $X$ which are fixed points of the adjunction are precisely those that are sheaves. 
\end{ex}

Now for fixed $\Q$-categories, we have the following characterization, which is a direct reformulation of the definitions:

\begin{prop}\label{cpmama}
Let $(\M,A)$ be a complete $\Bc$-category. Then $(\M,A)$ is a fixed point of the adjunction if and only if the following equality is satisfied for all $a,b \in A$, $\M(a,b)$ is given by the following suprema:
\[\bigvee \{fg \ | \ x \in A, f: tx \to ta \text{ map}, g: tb \to tx \text{ map}, f\leq \M(a,x), g \leq \M(x,b)\}\]
\end{prop}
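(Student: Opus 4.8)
The plan is to unwind the two functors $P$ and $\si$ explicitly at $(\M,A)$ and to recognize the displayed formula as the single equation $\M = \M'$, where $\M'$ is the structural matrix of $\si P(\M,A) = \si\pma$. First I recall that $P(\M,A) = \pma$ has $\Ob(\pma(x)) = A_x$ and, since $\Q$ is posetal, that an arrow $a \to b$ in $\pma(x)$ amounts to the inequality $\id_x \leq \M(b,a)$, while $\pma(f)(a) = a\cdot f$ for a map $f$. Feeding this into Definition~\ref{GrothConstrDef}, the structural matrix $\M'$ of $\si\pma$ is, at $(a,b)$, the colimit in $\Q(tb,ta)$ of those maps $f\colon tb\to ta$ admitting an arrow $u\colon b \to \pma(f)(a) = a\cdot f$ in $\pma(tb)$. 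By the lemma computing the action on fibers we have $\M(a\cdot f,b) = f^{*}\M(a,b)$, so such a $u$ is exactly the inequality $\id_{tb} \leq f^{*}\M(a,b)$, which by the adjunction $f \dashv f^{*}$ is equivalent to $f \leq \M(a,b)$. As the indexing diagram is posetal, its colimit is a supremum, and hence
\[
\M'(a,b) \;=\; \bigvee\{\, f\colon tb\to ta \text{ map} \mid f \leq \M(a,b)\,\}.
\]

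Next I identify the fixed-point condition with $\M = \M'$. The counit of $\C\si \dashv P$ at $(\M,A)$ is a $\Bc$-functor $\C\si\pma \to (\M,A)$; since $(\M,A)$ is complete it factors through $\Gf(\id_{\pma})\colon \si\pma \to (\M,A)$, which is the identity on objects and whose hom-component $\M'(a,b) \to \M(a,b)$ is precisely the inclusion of the join above. Thus $(\M,A)$ is a fixed point exactly when this comparison is an isomorphism; being already bijective on objects, its content is that each hom-inclusion $\M'(a,b) \hookrightarrow \M(a,b)$ be an equality, that is $\M = \M'$. Here completeness of $(\M,A)$ is used to guarantee that once $\si\pma$ has the same homs it is itself complete, so that $\C$ acts trivially.

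Finally I rewrite $\M'$ in the composite form of the statement. For the inequality ``$\geq$'', given a map $f \leq \M(a,b)$ I take $x = b$ and $g = \id_{tb}$; the latter is a map with $\id_{tb} \leq \M(b,b)$ via $\rho_b$, and $fg = f$, so every generator of $\M'(a,b)$ already appears on the right. For ``$\leq$'', any composite $fg$ with $f \leq \M(a,x)$ and $g \leq \M(x,b)$ is again a map, since left adjoints compose, and satisfies $fg \leq \M(a,x)\M(x,b) \leq \M(a,b)$ by $\iota_{axb}$, whence $fg \leq \M'(a,b)$. Therefore the right-hand join equals $\M'(a,b)$, and the displayed equation is exactly $\M = \M'$, completing the equivalence.

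The main obstacle I anticipate is the middle step: making rigorous that ``fixed point of the adjunction'' collapses to the on-the-nose equality $\M = \M'$ rather than to a mere Cauchy-equivalence between $\si\pma$ and $(\M,A)$. The clean way is to observe that the comparison $\si\pma \to (\M,A)$ is bijective on objects with monic hom-components, so that an isomorphism after completion forces the hom-components themselves to be isomorphisms; this is the only point where completeness (and skeletality) of $(\M,A)$ is genuinely needed, everything else being routine unwinding of the definitions of $P$ and $\si$.
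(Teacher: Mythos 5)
Your proof is correct, and it is essentially the argument the paper has in mind: the paper prints no proof of Proposition~\ref{cpmama}, introducing it only as ``a direct reformulation of the definitions'', and your three steps carry out exactly that reformulation. The one genuinely non-formal point is your middle step, and you isolate it correctly: writing $\N$ for the matrix of $\si\pma$ computed in your first step, one must show that invertibility of the counit $\C\si\pma\to(\M,A)$ forces the on-the-nose equality $\N=\M$. Your appeal to ``bijective on objects with monic hom-components'' works, but the cleaner justification is naturality of the unit of $\C\dashv i$: the Yoneda $\Bc$-functors satisfy $\C j\circ\yf_{\si\pma}=\yf_{(\M,A)}\circ j$ for the identity-on-objects comparison $j\colon\si\pma\to(\M,A)$, both Yoneda functors are fully faithful by Lemma~\ref{YonedaSingleton}, so full faithfulness of $\C j$ descends to $j$, and isomorphic homs in a quantaloid are equal; monic-ness of the hom-components is not actually needed. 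Beyond this, your third step deserves emphasis, because it shows the displayed supremum collapses to $\bigvee\{f\colon tb\to ta\ \text{map}\mid f\leq\M(a,b)\}=\N(a,b)$: the intermediate object $x$ is redundant, since composites of maps are maps and the choice $(x,g)=(b,\id_{tb})$ recovers every generator. The criterion therefore reads ``maps are dense below every hom $\M(a,b)$'', in agreement with Corollary~\ref{PtFixesDenseness}; note that this makes the example following the proposition (asserting that every complete $\R(X)$-category is a fixed point) incompatible with the proposition itself, since for sections of a sheaf this join sees only restrictions and is strictly below $\M(a,b)$ whenever two sections agree on a proper nonempty part of their common domain. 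That tension is a defect of the example (the matrix exhibited there is not idempotent, hence cannot be the matrix of the $\Q$-category $\si\pma$), not of your proof.
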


As $fg$ is a map when $f$ and $g$ are, and $\id$ is also a map, we have the following corollary, which builds on the ideas mentioned at the end of Remark~\ref{oplaxtrfAfgg}:

\begin{coro}\label{PtFixesDenseness}
Suppose that maps are dense in $\Q$, meaning that for any arrow $f: a \to b$ in $\Q$, we have $f = \bigvee \{\g \leq f \ | \ f \text{ map}\}$. Then any complete $\Q$-category is a fixed point of the adjunction.
\end{coro}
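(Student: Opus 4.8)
The plan is to invoke Proposition~\ref{cpmama} directly: a complete $\Q$-category $(\M,A)$ is a fixed point of the adjunction if and only if, for all $a,b \in A$,
\[
\M(a,b) = \bigvee \{fg \mid x \in A,\ f: tx \to ta \text{ map},\ g: tb \to tx \text{ map},\ f \leq \M(a,x),\ g \leq \M(x,b)\}.
\]
So it suffices to verify this equality under the density hypothesis. First I would establish that the right-hand side, call it $S$, is always at most $\M(a,b)$, an inequality which holds unconditionally: whenever $f \leq \M(a,x)$ and $g \leq \M(x,b)$, monotonicity of pre- and post-composition gives $fg \leq \M(a,x)\M(x,b)$, and the idempotency $2$-cell $\iota_{axb}$, which in the posetal $2$-structure of a quantaloid is simply the inequality $\M(a,x)\M(x,b) \leq \M(a,b)$, yields $fg \leq \M(a,b)$; taking the supremum gives $S \leq \M(a,b)$.

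For the reverse inequality I would use the density of maps together with a judicious choice of the intermediate object, namely $x = a$. The identity $\id_{ta}$ is a map (it is self-adjoint) and satisfies $\id_{ta} \leq \M(a,a)$ by virtue of the reflexivity $2$-cell $\rho_a$. Hence for any map $g : tb \to ta$ with $g \leq \M(a,b)$ the term $\id_{ta}\,g = g$ occurs in the family indexing $S$, so that $S \geq \bigvee\{g \leq \M(a,b) \mid g \text{ map}\}$. The density hypothesis, applied to the $1$-cell $\M(a,b) : tb \to ta$ of $\Q$, states precisely that this last supremum equals $\M(a,b)$, whence $\M(a,b) \leq S$.

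Combining the two inequalities gives $S = \M(a,b)$ for all $a,b \in A$, so the criterion of Proposition~\ref{cpmama} is satisfied and $(\M,A)$ is a fixed point. There is no genuine obstacle in this argument; the only points demanding care are the bookkeeping that $\id_{ta}$ qualifies as a map with $\id_{ta} \leq \M(a,a)$ (reflexivity) and that the posetal nature of a quantaloid collapses the idempotency $2$-cell to the inequality $\M(a,x)\M(x,b) \leq \M(a,b)$. Since the density condition is phrased exactly so as to reconstruct $\M(a,b)$ from its sub-maps, the whole corollary becomes essentially immediate once $x=a$ (equivalently $x=b$) is chosen so as to trivialize one of the two composite factors.
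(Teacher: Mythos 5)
Your proof is correct and is essentially the argument the paper intends: the corollary is presented there as an immediate consequence of Proposition~\ref{cpmama}, justified precisely by the two facts you isolate, namely that $\id_{ta}$ is a map lying below $\M(a,a)$ by reflexivity (so every map $\g \leq \M(a,b)$ occurs in the supremum as $\id_{ta}\,\g$), and that composites of maps bounded by $\M(a,x)\M(x,b) \leq \M(a,b)$ give the reverse inequality. The paper leaves this verification implicit, and your two-inequality write-up fills it in faithfully.
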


This condition is sufficient but it is of course non-necessary, as proven by the following example:

\begin{ex}[$\R(X)$-categories]
Let $(\M,A)$ be a complete $\R(X)$-category, with $X$ a topological space. Then $\si\pma$ has the same sections $A$ as $(\M,A)$, but the matrix $\N$ on $\si\pma$ is defined by: 
\[
\N(a,b) = 
\left\{
\begin{array}{lll}
    ta          &\text{ if $ta \subseteq tb$ and $a = b_{|ta}$.}\\ 
    tb          &\text{ if $tb \subseteq ta$ and $b = a_{|tb}$.}\\
    \emptyset   &\text{ else.}\\ 
\end{array}
\right.
\]

In other words $\N$ only encodes the restriction properties. Now it is only a quick check to verify that indeed we have:

\[
\M(a,b) = \bigvee_{x \in A} \N(a,x)\N(x,b)
\]

Which is the same condition as the one in proposition~\ref{cpmama}. In other words any complete $\R(X)$-category is a fixed point of the adjunction.

\end{ex}

\begin{coro}
The adjunction $\C\si \dashv P$ restricts to an equivalence of categories
\[\Catsk(\Q)_f \simeq [\Map_{\s}(\Q)^{\coop},\Set]_f,\] 
Where $\Catsk(\Q)_f$ is the full subcategory of $\Catsk(\Q)$ consisting of $\Q$-categories satisfying the property of~\ref{cpmama}, while $[\Map_{\s}(\Q)^{\op},\Set]_f$ is the full subcategory of $[\Map_{\s}(\Q)^{\op},\Set]$ consisting of presheaves which satisfy the compatibility property of proposition~\ref{ShQ}.
\end{coro}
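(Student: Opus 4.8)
The plan is to deduce the corollary from the general categorical fact that \emph{any} adjunction restricts to an adjoint equivalence between its two subcategories of fixed points, and then to recognise those fixed-point subcategories as $\Catsk(\Q)_f$ and $[\Map_{\s}(\Q)^{\op},\Set]_f$ by means of the propositions just established. Recall the standard lemma: for an adjunction $L \dashv R$ with $L\colon \mathcal D \to \mathcal C$, $R\colon \mathcal C \to \mathcal D$, unit $\eta\colon 1 \To RL$ and counit $\e\colon LR \To 1$, write $\mathrm{Fix}(\eta)$ for the full subcategory of those $d$ on which $\eta_d$ is invertible and $\mathrm{Fix}(\e)$ for the full subcategory of those $c$ on which $\e_c$ is invertible. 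The triangle identity $R\e_c \bullet \eta_{Rc} = 1$ shows that $\e_c$ invertible forces $\eta_{Rc}$ invertible, so $R$ sends $\mathrm{Fix}(\e)$ into $\mathrm{Fix}(\eta)$; dually $L$ sends $\mathrm{Fix}(\eta)$ into $\mathrm{Fix}(\e)$; and since $\eta_d$ (resp.\ $\e_c$) is then an isomorphism $d \cong RLd$ (resp.\ $LRc \cong c$), these restrictions are mutually inverse equivalences. I would apply this to $L = \C_{\s}\Sc\si$ and $R = P$, i.e.\ to the symmetric version of $\C\si \dashv P$ obtained through the symmetrization functor $\Sc$ as in Remark~\ref{sympasbien} (in the quantaloid case $\Sc$ is genuinely left adjoint to the inclusion, so this symmetric adjunction between $[\Map_{\s}(\Q)^{\op},\Set]$ and $\Catsk(\Q)$ is available and the lemma applies verbatim).

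It then remains to identify the two fixed-point subcategories. On the presheaf side, $F \in \mathrm{Fix}(\eta)$ means $P\C_{\s}\Sc\si F \cong F$, which by the pair of propositions immediately preceding the statement is equivalent to $\Sc\si F$ being symmetrically complete: one proposition shows a fixed point yields a symmetrically complete $\Sc\si F$, the other shows symmetric completeness yields $P\C_{\s}\si F = F$. Proposition~\ref{ShQ} then rephrases symmetric completeness of $\Sc\si F$ as precisely the local representability and glueing conditions that define $[\Map_{\s}(\Q)^{\op},\Set]_f$, so $\mathrm{Fix}(\eta) = [\Map_{\s}(\Q)^{\op},\Set]_f$. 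On the $\Q$-category side, $(\M,A) \in \mathrm{Fix}(\e)$ means $\C_{\s}\Sc\si P(\M,A) \cong (\M,A)$, and Proposition~\ref{cpmama} is exactly the assertion that this holds if and only if each $\M(a,b)$ is given by the displayed supremum of composites $fg$ of maps, i.e.\ if and only if $(\M,A) \in \Catsk(\Q)_f$. Hence $\mathrm{Fix}(\e) = \Catsk(\Q)_f$, and the general lemma produces the desired equivalence $\Catsk(\Q)_f \simeq [\Map_{\s}(\Q)^{\op},\Set]_f$.

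The content is therefore entirely carried by the earlier propositions; the only delicate points are bookkeeping ones, and I expect the mild friction to lie exactly there rather than in any new computation. First, one should check that the invertibility conditions $\eta_F$ iso and $\e_{(\M,A)}$ iso genuinely coincide with the strict equalities $P\C_{\s}\Sc\si F = F$ and $\C_{\s}\Sc\si P(\M,A) = (\M,A)$ used in the cited statements: this is legitimate because the objects on both sides are skeletal (Proposition~\ref{PreSk}, together with the symmetry hypothesis forcing each posetal fibre $\pma(q)$ to be discrete), so the relevant isomorphisms are in fact identities. Second, one must carry the symmetrization functor $\Sc$ consistently through $L$, as flagged in Remark~\ref{sympasbien}; provided $\Sc \dashv$ inclusion holds, no further argument is needed beyond observing that $L$ and $R$ restrict to the fixed points, which is immediate from the triangle identities.
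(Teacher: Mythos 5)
Your proposal is correct and follows exactly the route the paper intends: the corollary is stated without an explicit proof precisely because it is the standard fact that an adjunction restricts to an equivalence between the full subcategories where the unit, respectively the counit, is invertible, combined with the two propositions identifying those subcategories as $[\Map_{\s}(\Q)^{\op},\Set]_f$ (via Proposition~\ref{ShQ}) and $\Catsk(\Q)_f$ (via Proposition~\ref{cpmama}). Your additional bookkeeping remarks --- that skeletality and the discreteness of the fibres in the symmetric case turn the relevant isomorphisms into the strict equalities used in the cited propositions, and that the symmetrization functor $\Sc$ must be carried through the left adjoint as in Remark~\ref{sympasbien} --- are exactly the points the paper leaves implicit.
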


\begin{ex}[Recovering Walters' result from~\cite{WaltersCahiers}]
In the case where $\Q = \R(X)$, we have seen that $\Catsk(\Q)_f = \Catsk(\Q)$ and that $$[\Map_{\s}(\Q)^{\op},\Set]_f = \Sh(X).$$ We thus recover the result of Walters which stated that sheaves on $X$ are the same as symmetric Cauchy-complete categories on $\R(X)$.
\end{ex}

\begin{ex}[Sheaves on a site]
In the case of a site $(\C,J)$ and of the quantaloid $\R(\C,J)$, this recovers the other result of Walters~\cite{Walters}, representing Grothendieck toposes by quantaloids.
\end{ex}

We can now give the following diagram which summarizes the case of quantaloids: 

\[
\begin{tikzcd}[row sep = huge, column sep = 35]
\Catk(\Q)_f
\arrow[r,"i"]
\arrow[d,"\simeq"description]
& \Cat_{\kappa}(\Q)
\arrow[d,"P"]
& \Cat(Q)
\arrow[l,"\C"]\\
{[\Map(\Q)^{\op},\Poset]_f}
\arrow[u,"\simeq" description]
\arrow[r,"i"]
&{[\Map(\Q)^{\op},\Poset]}
\arrow[r,"i"]
&{[\Map(\Q)^{\op},\Cat]}
\arrow[u,"\si"]
\end{tikzcd}
\]

And in the case of involutive quantaloids and symmetric maps, we get: 

\[
\begin{tikzcd}[row sep = huge, column sep = 35]
\Catsk(\Q)_f
\arrow[r,"i"]
\arrow[d,"\simeq"description]
& \Catsk(\Q)
\arrow[d,"P"]
& \Cat_{\s}(Q)
\arrow[l,"\C_{\s}"]\\
{[\Map(\Q)_{\s}^{\op},\Set]_f}
\arrow[u,"\simeq" description]
\arrow[r,"i"]
&{[\Map(\Q)_{\s}^{\op},\Set]}
\arrow[r,"i"]
&{[\Map(\Q)_{\s}^{\op},\Cat]}
\arrow[u,"\Sc\si"]
\end{tikzcd}
\]

\subsection{Monoidal categories}

In all this section, $\V$ will denote a Bénabou cosmos, i.e. a symmetric monoidal closed category which is complete and cocomplete. We denote by $\Bv$ the $1$-object bicategory obtained by delooping; its $1$-cells are objects of $\V$ and its $2$-cells are arrows of $\V$; horizontal composition of $2$-cells in $\Bv$ corresponds to the usual composition of arrows in $\V$ and composition of $1$-cells in $\Bv$ corresponds to the monoidal product in $\V$. Note that because $\Bv$ has only one object, $\Bv^{\coop}$ is actually the same thing as $\Bv^{\mathrm{co}}$, which is the same thing as $\V^{\op}$. For that reason, functors $\Map(\Bv)^{\coop} \to \Cat$ will be denoted as going from $\Map(\Bv)^{\co} \to \Cat$.

\begin{ex}[The functor $\C$]
The completion functor $\C$ is already known and correspond in this case to the usual notion of \textit{Cauchy-completeness} of $\V$-categories~\cite{BorceuxDejean,LackTendasFFCEC}. This notion is closely related to the splitting of idempotents, see 3.16 and 4.23 of~\cite{LackTendasFFCEC} for a characterization of Cauchy-complete $\V$-categories for monoidal categories $\V$ satisfying particular conditions. Several general well-known examples include (this is taken directly from~\cite{nlab:cauchy_complete_category}):
\begin{enumerate}
\item For $\V = \Set$, the completion of a $\V$-category (i.e. in this case a usual category) is the Karoubi enveloppe of the category and the notion of (Cauchy-)completion is the same as the notion of Karoubi-completion.
\item For $\V = \Ab$, the completion of a $\V$-category is a completion under finite direct sums and idempotent splitting.
\end{enumerate}
\end{ex}

\begin{prop}\label{UndrCatFunc}
The functor: 
\[P_{-}(*): \VCat \to \Cat\]
Is the ``underlying category'' functor $\VCat(\mathcal I,-)$, where $\cc I$ is the one-object $\V$-category with $\cc I(*,*) = I$ (with $I$ the neutral element of $\otimes$ in $\V$).
\end{prop}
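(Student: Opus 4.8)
The plan is to evaluate both functors at the unique object $*$ of $\Bv$ and check they agree as functors $\VCat \to \Cat$. First I would record the dictionary for $\Bv$: it has a single $0$-cell $*$ with $\id_* = I$, its $1$-cells are the objects of $\V$, its $2$-cells are the morphisms of $\V$, and horizontal composition is $\otimes$. Under the identification of $\Bv$-categories with $\V$-categories, a $\Bv$-category $(\M,A)$ is a $\V$-category with objects $A$, hom-objects $\M(b,a)$, enriched identities $\rho_a: I \To \M(a,a)$ and enriched composition $\iota_{cba}: \M(c,b)\M(b,a) \To \M(c,a)$. Since $*$ is the only object, the category $\pma(*)$ of Definition~\ref{CompFib} is defined without any appeal to completeness (completeness only enters the action $\pma(\g)$ of maps on fibres), so $(\M,A) \mapsto \pma(*)$ is well defined on all of $\VCat$, not merely on complete $\V$-categories.

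Next I would unwind $\pma(*)$ explicitly: its objects are the elements of $A$, i.e. the objects of the $\V$-category; a morphism $a \to b$ is a $2$-cell $\id_* = I \To \M(b,a)$, i.e. an element of $\V(I,\M(b,a))$; the identity on $a$ is $\rho_a$; and the composite of $\theta: a \to b$ and $\chi: b \to c$ is $\iota_{cba} \bullet (\chi * \theta)$. On the other side, a $\V$-functor $\cc I \to (\M,A)$ is exactly a choice of object $a \in A$ (its value on the unique hom $I = \cc I(*,*)$ being forced to be $\rho_a$), so $\Ob \VCat(\cc I,(\M,A)) = A$; and a $\V$-natural transformation between the functors picking $a$ and $b$ is a single component $I \to \M(b,a)$, its naturality axiom being indexed by the morphisms of $\cc I$ and hence vacuous. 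Identities and vertical composition of such transformations are, by definition of the enriched structure, given by $\rho$ and by the $\otimes$-pairing of components followed by $\iota$.

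Comparing the two descriptions term by term then yields $\pma(*) = \VCat(\cc I,(\M,A))$ as categories; both are visibly the underlying ordinary category of the $\V$-category. For functoriality I would check that for a $\V$-functor $f$ the component $(P_f)_*$ sends $a \mapsto f(a)$ and $\theta \mapsto f_{ba} \bullet \theta$, which is precisely whiskering by $f$, i.e. the action of $\VCat(\cc I, f)$ on objects and morphisms. Together with the previous step this gives the equality of functors $P_{-}(*) = \VCat(\cc I,-)$.

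The main obstacle is the comparison of composition: one must verify that the vertical composite of $\V$-natural transformations out of $\cc I$ is literally $\iota_{cba}\bullet(\chi*\theta)$, which requires unfolding the definition of $\V$-naturality and absorbing the unit coherence isomorphism $I \cong I \otimes I$ that the $*$-notation suppresses. A secondary point is to fix the orientation convention so that $\M(b,a)$ is read as the hom-object ``from $a$ to $b$'', so that the direction of morphisms matches on both sides; once this is pinned down, the remaining verifications (preservation of identities and the action on functors) are routine.
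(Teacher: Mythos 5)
Your proposal is correct and follows essentially the same route as the paper's own proof: identify $\pma(*)$ explicitly (noting it needs no completeness, since completeness only enters through the action of maps on fibres), observe that its objects, morphisms $I \to \M(b,a)$, identities $\rho_a$ and composition $\iota_{cba}\bullet(\chi * \theta)$ are exactly those of $\VCat(\cc I,-)$, and check that $(P_f)_*$ acts as whiskering by $f$. You merely spell out details the paper leaves implicit (that a $\V$-functor $\cc I \to (\M,A)$ is forced to be $\rho_a$ on homs, and that enriched naturality out of $\cc I$ holds automatically by unitality), which is a faithful elaboration rather than a different argument.
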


\begin{proof}
By $P_-(*)$ we mean the functor which to any $\V$-category $(\M,A)$ associates the category $\pma(*)$, where $*$ is the only object of $\Bv$ (this can be defined even if $(\M,A)$ is not complete). Indeed, arrows $b \to a$ of $\pma(*)$ correspond exactly to arrows $I \to \M(a,b)$. Now let $f: (\M,A) \to (\N,C)$ be a $\V$-functor, we have for any $a \in A$, $(P_f)_*(a) = f(a)$, for any $\theta: I \to \M(a,b)$ in $\pma(*)$, $(P_f)_*(\theta) = f_{ab} \bullet \theta$; hence $(P_f)_* = \VCat(\cc I,f)$, which is the same thing as $\V(I,f)$. Note that $P$ also carries information about maps in $\Bv$, as we have a natural transformation $P_f(\g): P_{\D}(\g) \cdot (P_f)_* \simeq (P_f)_* \cdot P_{\C}(\g)$ for any map $\g$.
\end{proof}

In general, there seems to be no easy way to compute maps in $\Bv$; the symmetric case is slightly easier as the involution on $\Bv$ is the identity on $1$-cells. A symmetric map is thus given by:

\begin{enumerate}
\itb An object $x$ of $\V$.
\itb A morphism $\eta: 1 \to x \otimes x$ in $\V$.
\itb A morphism $\e: x \otimes x \to 1$ in $\V$.
\end{enumerate}

Such that:  

\begin{enumerate}
\itb $(\e \otimes \id_x) \circ (\id_x \otimes \eta) = \id_x$
\itb $(\id_x \otimes \e) \circ (\eta \otimes \id_x) = \id_x$
\end{enumerate}

\begin{ex}[Karoubi-complete categories]
Consider the case $\V = \Set$; then a symmetric map in $\Bc_{\Set}$ is the data of a set $X$, the choice of an element $(\eta_1,\eta_2)$ of $X \times X$ such that for any $x \in X$, $x = \eta_2$ and $x = \eta_1$. The only possibility is that $X = \{*\}$ is a singleton, or is empty. In this case the category of functors $[\Map(\Bc_{\Set})^{\op},\Cat]$ is the same as the category $\Cat$. On the other side, we have seen that $\Catsk(\Set)$ is the category of Karoubi-complete categories; our adjunction $\C\si \dashv P$ here becomes the usual adjunction between Karoubi-complete categories and usual categories. 
\end{ex}

Recall that in Remark~\ref{oplaxtrfAfgg}, we made a brief mention of some ``denseness condition'' of maps in general arrows in $\Bc$; this denseness condition is notably false in the elementary case of the locale of open subsets of a topological space. In the case of $\Bc_{\Set}$, however, this condition is true as any set can indeed be written as a colimit of copies of the singleton. As pointed out in the aforementioned remark, that means that we can extend our adjunction to a category of pseudonatural transformations between indexed categories (instead of oplax-natural transformation). However, this very useful property also yields the fact that all complete $\Bc$-categories are fixed points of the adjunction: the following proposition is essentially the same as corollary~\ref{PtFixesDenseness} in the case of one-object bicategories.

\begin{prop}\label{FixPtAdjMonCatCat}
Let $\V$ be a bicategory in which the diagram of all morphisms $I \to X$ is a colimit for any object $X$. Then any complete $\Bv$-category is a fixed point for the adjunction.
\end{prop}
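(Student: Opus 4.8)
The plan is to show that the counit $\e_{(\M,A)}\colon \C\si\pma \to (\M,A)$ of the adjunction is an isomorphism. Unwinding the bijection of the adjunction theorem with $F = \pma$, this counit is the (unique) extension to the completion of the $\Bc$-functor $\Gf(\id_{\pma})\colon \si\pma \to (\M,A)$; since $\Bv$ has a single object, $\si\pma$ has $A$ itself as underlying set and $\Gf(\id_{\pma})$ is the identity on objects. Hence it suffices to prove that $\Gf(\id_{\pma})$ is fully faithful, i.e. that for all $a,b \in A$ the canonical comparison $\N(a,b) \to \M(a,b)$ is invertible, where $\N$ denotes the structural matrix of $\si\pma$; for then $\si\pma \cong (\M,A)$ is already complete, so $\C\si\pma \cong \si\pma \cong (\M,A)$. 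This is exactly the one-object incarnation of Proposition~\ref{cpmama}, and the argument below is the monoidal analogue of Corollary~\ref{PtFixesDenseness}.

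Next I would unfold $\N(a,b)$. By Definition~\ref{GrothConstrDef} applied to the pseudofunctor $\pma$ of Definition~\ref{CompFib}, $\N(a,b)$ is the colimit in $\V$ of the diagram $D$ whose objects are pairs $(\g,u)$ with $\g$ a map in $\Bv$ (a dualizable object of $\V$, with dual $\g^*$) and $u\colon I \to \M(a\cdot\g,b) = \g^*\otimes\M(a,b)$ a morphism of $\V$, an arrow $(\g_1,u_1)\to(\g_2,u_2)$ being a $2$-cell $\g_1 \To \g_2$ compatible with the $u_i$. Using that $\g$ is a map, the adjunction $\Hom_{\V}(I,\g^*\otimes\M(a,b)) \cong \Hom_{\V}(\g,\M(a,b))$ identifies each such $u$ with a morphism $\hat u\colon \g \to \M(a,b)$, so that $D$ becomes the comma-type category of maps of $\V$ lying over $\M(a,b)$, with $\N(a,b) = \operatorname{colim}_{(\g,\hat u)} \g$. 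The family $\{\hat u\}$ is a cocone on this diagram with vertex $\M(a,b)$, and the map it induces out of $\N(a,b)$ is precisely the comparison above; it remains to see this comparison is invertible.

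Finally I would show that $\M(a,b)$, with the cocone $\{\hat u\}$, satisfies the universal property of $\operatorname{colim}_D$. Since the unit $I$ is itself a (self-dual) map, the morphisms $u\colon I \to \M(a,b)$ span a full subdiagram $D' \hookrightarrow D$ on which the diagram is constant at $I$, and by hypothesis $\M(a,b) = \operatorname{colim}(D')$ with structure maps the $u$'s. Given any cocone $\{c_{(\g,\hat u)}\colon \g \to W\}$ on $D$, restriction to $D'$ and this hypothesis produce a unique $h\colon \M(a,b)\to W$ with $h\circ u = c_{(I,u)}$ for every $u\colon I \to \M(a,b)$, and uniqueness of $h$ follows from the same hypothesis. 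To verify $h\circ\hat u = c_{(\g,\hat u)}$ for an arbitrary map $\g$, I would apply the density hypothesis to the object $\g$ itself, writing $\g = \operatorname{colim}_{v\colon I \to \g} I$: each $v$ indexes an arrow of $D$ between $(I,\hat u v)$ and $(\g,\hat u)$, so cocone compatibility gives $c_{(\g,\hat u)}\circ v = c_{(I,\hat u v)} = h\circ(\hat u v) = (h\circ\hat u)\circ v$ for all $v$, whence $c_{(\g,\hat u)} = h\circ\hat u$ because $\g$ is the colimit of these $v$. This gives the universal property, so $\N(a,b)\cong\M(a,b)$ and $(\M,A)$ is a fixed point. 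The main obstacle is bookkeeping rather than substance: matching the variance conventions of $\Map(\Bv)^{\co}$ so that a morphism $v\colon I \to \g$ genuinely indexes an arrow of $D$ with the compatibility reading $c_{(\g,\hat u)}\circ v = c_{(I,\hat u v)}$, and checking that the dualizability isomorphism $\Hom_{\V}(I,\g^*\otimes\M(a,b)) \cong \Hom_{\V}(\g,\M(a,b))$ is natural in the arrows of $D$, so that $D$ and the comma category really coincide.
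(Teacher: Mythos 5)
Your proposal is correct and takes essentially the same route as the paper: the paper's entire proof is the one-line observation that, writing $\si\pma = (\N,A)$, one has $\N = \M$ (so $\si\pma$ is already complete and hence fixed by $\C\si P$), which is exactly the identity your counit/fully-faithfulness reduction isolates. Your cofinality argument — restricting a cocone to the subdiagram of copies of $I$ and then using the density hypothesis on each map $\g$ itself to propagate compatibility — is precisely the verification the paper declares ``immediate,'' so you have supplied the details rather than a different proof.
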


\begin{proof}
This is immediate once we see that, denoting $\si \pma $ by $(\N,A)$, we have $\N = \M$. This does also hold in the symmetric case.
\end{proof}

Recall from section 2.5 of~\cite{Kelly} that the ``underlying category'' functor that we showed to be given by $P_-(*)$ sometimes has a left adjoint which is the ``free $\V$-category on a category'' functor. This can also be recovered with our formalism in certain cases in which we can construct a functor $F: \Map(\Bv)^{\co} \to \Cat$ from the data of its underlying category $F(*)$. This is the object of the following proposition in which we give a sufficient condition:

\begin{prop}
Suppose that $\V$ has a $0$ object such that $0 \times x = 0$ for any $x \in \Ob(\V)$ and that the only maps in $\Bv$ are $0$ and $\id$. Then we can extend the functor $\si$ to $\VCat$ into a functor $(-)_{\V}$ which is left adjoint to $P_-(*)$.
\end{prop}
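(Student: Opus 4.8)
The plan is to present $(-)_{\V}$ as the composite $\Cat \xrightarrow{J} [\Map(\Bv)^{\co},\Cat] \xrightarrow{\si} \VCat$, where $J$ sends an ordinary category to a pseudofunctor canonically reconstructed from it, and then to identify $\si J(\D)$ with the free $\V$-category on $\D$, so that the adjunction follows from Proposition~\ref{UndrCatFunc} together with section~2.5 of~\cite{Kelly}. First I would unwind the hypotheses. Since $\Bv$ has a single object $*$, the bicategory $\Map(\Bv)$ also has one object, and its hom-category is the full subcategory of $\V$ on the two maps $0$ and $\id = I$; here $0$ is a zero object (both initial and terminal in $\V$, being a zero object) and $0 \otimes x = 0$, so $0$ is an absorbing, self-adjoint map. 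A pseudofunctor $F\colon \Map(\Bv)^{\co} \to \Cat$ therefore amounts to a category $F(*)$ equipped with endofunctors $F(I) \cong \Id$ and $F(0)$ with $F(0)F(0)\cong F(0)$, together with the natural transformations induced by the unique $2$-cells $0 \to I$ and $I \to 0$ and by the monoid $\End_{\V}(I)$.

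To define $J$ I would imitate the computation underlying the pseudofunctor of fibers, where the map $0$ acts on a complete $\V$-category as the constant functor at its zero object (the representative of the everywhere-$0$ singleton). Accordingly I set $J(\D) = F_{\D}$ with $F_{\D}(I) = \Id$, $F_{\D}(0)$ the constant functor at a zero object, and $F_{\D}(*)$ equal to $\D$ with such a zero object freely adjoined (which the action of $0$ forces); the actions of the $2$-cells $0 \to I$, $I \to 0$ and of $\End_{\V}(I)$ are then determined by the initiality and terminality of the zero object and by the induced zero-morphisms, the non-identities of $\End_{\V}(I)$ acting as the zero endomorphism of $\Id$, and the pseudofunctor comparison cells are invertible precisely because $0 \otimes x = 0$. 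This data is visibly functorial in $\D$. The heart of the argument is then to compute $\si F_{\D}$: its elements are the objects of $\D$ together with the adjoined zero object, and for $a,b$ in $\D$ the $1$-cell $\N(a,b)$ is the colimit of the diagram of pairs $(f,u)$ with $f \in \{0,I\}$ and $u\colon b \to F_{\D}(f)(a)$. I would show that every vertex lying over $f = 0$, as well as the single vertex over $f = I$ indexed by the zero morphism $b \to a$, maps to a copy of the initial object $0 \in \V$ and is linked to the rest of the diagram only through morphisms out of $0$; since $0$ is initial and $0 \otimes x = 0$ these all collapse to $0$ and are absorbed, leaving $\N(a,b) \simeq \coprod_{\Hom_{\D}(b,a)} I$, the copower of $I$ by the hom-set. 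Thus $\si F_{\D}$ is the free $\V$-category on $\D$, carrying the adjoined object as a zero object.

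Finally I would conclude. By Proposition~\ref{UndrCatFunc} the functor $P_{-}(*)$ is the underlying-category functor $\VCat(\mathcal{I},-)$, whose left adjoint is, by section~2.5 of~\cite{Kelly}, exactly the copower construction computed above; hence $(-)_{\V} := \si \circ J$ is left adjoint to $P_{-}(*)$, with unit the evident inclusion of $\D$ into the underlying category of $(\D)_{\V}$, the bijection of hom-sets following from the universal property of the copower or, alternatively, by transporting the transpose $\Gf$ of the general adjunction $\C\si \dashv P$. The main obstacle is the colimit computation: one must verify carefully that the contribution of the map $0$ is nothing more than the absorbed zero object and does not perturb the copower of copies of $I$ — this is exactly where both hypotheses, that $0$ is a zero object and that $0 \otimes x = 0$, are indispensable — and, relatedly, that the forced $2$-cell actions genuinely assemble into a pseudofunctor $F_{\D}$, functorial in $\D$, and that the adjoined zero object can be reconciled with the precise form of the underlying-category functor.
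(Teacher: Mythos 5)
Your route is genuinely different from the paper's, and it breaks down at the step where you promote a plain category to a pseudofunctor. The paper never factors $(-)_{\V}$ through $[\Map(\Bv)^{\co},\Cat]$: it defines $\C_{\V}$ directly as the $\Bv$-category with the same objects as $\C$ and with $\N(a,b)$ the colimit of copies of $I$ indexed by $\Hom_{\C}(b,a)$ (in effect the coproduct, i.e.\ Kelly's free $\V$-category), and then quotes section 2.5 of Kelly together with Proposition~\ref{UndrCatFunc}. The word \emph{extend} in the statement is precisely the signal that $(-)_{\V}$ is \emph{not} of the form $\si \circ J$: under the stated hypotheses a bare category does not carry enough structure to be turned canonically into an object of $[\Map(\Bv)^{\co},\Cat]$, and that is exactly where your construction fails.

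Concretely, three steps go wrong. (1) Your $F_{\D}$ is not a pseudofunctor: a pseudofunctor must be \emph{strictly} functorial on each hom-category, so the monoid $\V(I,I)$ of $2$-cells $I \To I$ must act multiplicatively on $F_{\D}(I) \cong \Id$; your prescription ``non-identities act as the zero endomorphism'' violates this whenever $\V(I,I)$ has non-trivial units. In the paper's own key example $\V = \Ab$ one has $\V(I,I) = \Z$, and $(-1)\cdot(-1)=1$ forces $F_{\D}(1)=\Id$ while your rule gives $F_{\D}(-1)\circ F_{\D}(-1) = 0$; the alternative rule ``everything acts as the identity'' fails for the reason in (2) below, so there is no functorial-in-$\D$ choice that works. (2) The colimit computation is wrong: writing $0_{ba}$ for the zero morphism of the category with $z$ adjoined, each non-identity $\alpha \in \V(I,I)$ contributes an edge from the vertex $(I,0_{ba})$ to \emph{every} vertex $(I,u)$, and since the $\tau$-edge to the $0$-vertex forces the cocone component at $(I,0_{ba})$ to be the zero map, every cocone must satisfy $c_{(I,u)} \circ \alpha = 0$ for all $u$ and all non-identity $\alpha$. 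In $\Ab$, taking $\alpha = -1$ gives $c_{(I,u)} = 0$ for every $u$, so $\N(a,b)$ collapses to $0$ rather than to $\coprod_{\Hom_{\D}(b,a)} I$: the extra vertices are not ``absorbed'', they destroy the copower. (3) Even granting (1) and (2), the adjoined object $z$ survives as an object of $\si F_{\D}$ with (as one computes) $\N(z,z)=0$, hence $\rho_z$ equal to the zero map; a $\V$-functor $\si F_{\D} \to (\M,A)$ must therefore send $z$ to an object whose $\rho$ is zero, which a general $\V$-category need not possess (only complete ones do, via representability of the zero singleton). Since $P_{-}(*)$ is defined on all of $\VCat$, the hom-set bijection fails: for $\D = \mathbf{1}$ and $(\M,A)$ the one-object $\Ab$-category with $\M(*,*) = \Z$, the left-hand side is empty while the right-hand side is a singleton. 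The repair is the paper's: bypass $[\Map(\Bv)^{\co},\Cat]$ altogether, take the same objects as $\C$ and hom-objects the coproducts of copies of $I$, and then the adjunction is Kelly's.
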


\begin{proof}
Let $\C$ be a category and define $\C_{\V}$ to be the $\Bv$-category with same objects as $\C$ and with $\N(a,b)$ being the colimit of copies of the identity, taken as much as there are arrows $b \to a$ in $\C$, commutating up to the same $2$-cells as in the definition of $\si F$. Then proving the adjunction $(-)_{\V} \dashv P_-(*)$ becomes very straightforward and is already done in 2.5 of~\cite{Kelly}.
\end{proof}

\begin{rk}
The above proposition also works when there is no $0$ object if the only map is $\id$. In this case however, the proposition becomes even simpler because there is an equivalence between $[\Map(\Bv)^{\co},\Cat]$ and $\Cat$, and we do not even need to extend $\si$ as the free construction is directly given by $\si$.
\end{rk}

The above setup where the only maps are $0$ and $\id$ and in which objects of $\V$ are colimits of $\id$ are typical of concrete cases such as $\V = \Set$ and $\V = \Ab$. We can apply this setup to the study of the fixed points of the adjunction, notably these functors $F: \Map(\Bv)^{\co} \to \Cat$ which are so that $F = P\C\si F$. Recall that in the quantaloid case described in the previous section, fixed functors are exactly these for which $\si F$ is complete; now in the case of a one-object bicategory $\Bv$, suppose that $\si F$ is Cauchy-complete, then $\si F = \C \si F$. The fact that $F$ is a fixed point is now expressed by: $F = P\si F$; and we can easily describe $P\si F$: $P\si F(*)$ is the underlying category of $\si F$, meaning that it has the same objects as $F(*)$, while an arrow $a \to b$ in $P\si F$ is a morphism $I \to \M(a,b)$ in $\V$; in turn, $\M(a,b)$ is the colimit of maps $\g$ in $\V$ for which there is an arrow $b\to F(\g)(a)$ in $F(*)$. Now coming back to the concrete case where $\V = \Set$, the only map is $\id$, meaning that $\M(a,b)$ is the colimit in $\Set$ of several times the singleton, one for each arrow $b \to a$ in $F(*)$. Note that because $\id$ is always a map, we will always have at least as many copies of $\id$ in $\M(a,b)$ as there are arrows $b \to a$ in $F(*)$. 

This means that in the case where arrows $I \to \M(a,b)$ correspond to something like ``elements of $\M(a,b)$'' in the sense that the object of $\V$ corresponding to ``elements of $\M(a,b)$'' is the colimit of copies of $I$ for each arrow $I \to \M(a,b)$ (or copies of $\g$ for maps $\g$ with arrows $b \to F(\g)(a)$), we have that $F$ is a fixed point of the adjunction whenever $\si F$ is Cauchy-complete. This is precisely what happens in the case $\V = \Set$, where a set is always given by a colimit of copies of the singleton for each element of the set. Generally this consideration depends on the ``concreteness'' of $\V$, i.e. on how much the ``underlying category'' functor retains information about $\V$. As expressed in 1.3 of~\cite{Kelly}, this is decided by the representable functor $\V(I,-): \V^{\op} \to \Set$ and by how much this representable functor is faithful. More concretely, the above discussion yields the following proposition:

\begin{prop}~\label{FixPtAdjMonoCatFunc}
Let $\V$ be a monoidal category with maps only $\id$ (and possibly $0$) for which any object $X$ is the colimit the diagram of all arrows $I \to X$, with arrows $I \to I$ making the triangles commute. Then any functor $F: \Map(\Bv)^{\co} \to \Cat$ is a fixed point of the adjunction if and only if $\si F$ is Cauchy-complete.
\end{prop}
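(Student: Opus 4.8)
The plan is to analyse the unit of the adjunction $\C\si \dashv P$ at $F$, whose invertibility is exactly the assertion that $F$ is a fixed point, i.e. that $F \cong P\C\si F$. Throughout I use that $P$ is defined only on complete $\Bv$-categories and that, by Proposition~\ref{UndrCatFunc}, $P\si F(*)$ is the underlying category of $\si F = (\N,A)$, so that the elements of $\C\si F$ are the singletons $\C A$ of $\si F$ whereas the elements of $F(*)$ are $A = \Ob F(*)$.

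First I would settle the object level, which is common to both directions. Applying the construction of $\Af$ from the proof of the adjunction to the Yoneda $\Bv$-functor $\yf\colon \si F \to \C\si F$ (which corresponds to $\id_{\C\si F}$ under the completion adjunction), the unit $\eta_F\colon F \to P\C\si F$ is $\Af(\yf)$; hence on objects over $*$ it is the assignment $a \mapsto \N(-,a)$, that is, the canonical inclusion $A \hookrightarrow \C A$ of representable singletons among all singletons of $\si F$. This inclusion is bijective precisely when every singleton of $\si F$ is representable, i.e. precisely when $\si F$ is (Cauchy\nobreakdash-)complete. The forward implication is then immediate: if $F$ is a fixed point, then $\eta_F$ is invertible, hence bijective on objects, forcing $\si F$ to be complete.

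For the converse I would assume $\si F$ complete and show $\eta_F$ is an isomorphism of pseudofunctors. Since the completion of an already complete $\Bv$-category is itself, we have $\C\si F = \si F$ and $\yf$ is (isomorphic to) the identity, so that $P\C\si F = P\si F$ is defined and $\eta_F$ reduces to the comparison $F \to P\si F$, bijective on objects by the previous paragraph. The remaining content is the hom-sets: an arrow $a \to b$ of $P\si F(*)$ is a morphism $I \To \N(b,a)$ of $\V$, while by Definition~\ref{GrothConstrDef} $\N(b,a)$ is the colimit over the pairs $(\g,u)$ with $\g$ a map of $\Bv$ and $u\colon a \to F(\g)(b)$. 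Here both hypotheses enter. As the only maps are $\id$ and $0$, the diagram consists of copies of $I$ indexed by the arrows $a \to b$ of $F(*)$ (the case $\g = \id$, where $F(\id) = \Id$) together with copies of the absorbing object $0$ (the case $\g = 0$); since $0\otimes X = 0$ the latter are redundant in the colimit. The colimit diagram so obtained is the canonical diagram exhibiting $\N(b,a)$ as the colimit of its $I$-points, so the density hypothesis yields a bijection $\V(I,\N(b,a)) \cong F(*)(a,b)$, natural in $a$ and $b$. The transition functor $P\si F(0)$ sends $a$ to the element representing $\N(-,a)\cdot 0$, which by absorbency is the constant singleton at the zero object and so matches $F(0)$; when $0$ is absent the indexing bicategory $\Map(\Bv)^{\co}$ is trivial (as noted after Proposition~\ref{FixPtAdjMonCatCat}) and nothing further is required. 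Hence $\eta_F$ is invertible and $F$ is a fixed point.

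The main obstacle is the hom-set identification $\V(I,\N(b,a)) \cong F(*)(a,b)$ in the converse, the one place where the two hypotheses are genuinely indispensable and must cooperate. Removing all maps other than $\id$ and $0$ is what collapses the defining colimit of $\N(b,a)$ to copies of $I$ indexed by $F(*)$-arrows, while the density condition is what guarantees that $\V(I,-)$ reads this index set off without distortion; that both are needed is visible from $\V = \Ab$, where $\V(I,\coprod_S I)$ is the free abelian group on $S$ rather than $S$ itself, in accordance with $\Ab$ admitting many maps beyond $\id$ and $0$. Some care is also required with the non-identity $2$-cells of the colimit diagram (the morphisms $\alpha\colon \id \To \id$, namely the elements of $\V(I,I)$, and the comparison $0 \To \id$): under the stated hypotheses these either act trivially on $I$-points or factor through the absorbing object, so they leave the bijection intact, but checking this compatibility is the technical heart of the argument.
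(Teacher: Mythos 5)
Your proof is correct and follows essentially the same route as the paper, whose own justification is the informal discussion immediately preceding the statement: the forward direction is the object-level identification of $F(*)$ with the singletons of $\si F$ (imported from the quantaloid case), and the converse combines $\C\si F = \si F$ with the collapse of the colimit defining $\N(b,a)$ to copies of $I$ indexed by $F(*)(a,b)$, the density hypothesis then identifying $I$-points --- exactly your two steps, with your handling of the diagram's $2$-cells and of the transition functor at $0$ being, if anything, more explicit than the paper's.

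One point of divergence worth flagging: your closing claim that the failure for $\V = \Ab$ is ``in accordance with $\Ab$ admitting many maps beyond $\id$ and $0$'' contradicts the paper, which computes that the maps in $\Bc_{\Ab}$ are exactly $0$ and $\Z$ and on that basis applies this very proposition to $\Ab$ in its final example. Your version is in fact the defensible one --- any finitely generated free abelian group is dualizable, e.g. $\Z^2$ with $\eta(1)=\sum_i e_i\otimes e_i$ and $\e(e_i\otimes e_j)=\delta_{ij}$, the paper's argument having only considered units $\eta$ given by simple tensors --- so the tension is with the paper's $\Ab$ example rather than with your proof, but it should be presented as a disagreement with the paper, not as a fact the paper supports.
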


\begin{ex}
Let $\V$ be the monoidal category $\Ab$ of abelian groups. Here a map is a group $G$ together with the choice of an element $(\eta_1,\eta_2)$ of $G \otimes G$ and of a morphism $G \otimes G \to \Z$ (i.e. a bilinear morphism $G \times G \to \Z$) such that for any $g \in G$, we have: 
\begin{align*}
g   &= g \otimes 1 \otimes 1\\ 
    &= (\e \otimes \id_g)(g \otimes \eta_1 \otimes \eta_2)\\ 
    &= \e(g \otimes \eta_1) \cdot \eta_2
\end{align*}
This implies that $G$ is generated by $\eta_2$, in other words $G$ is a \textit{cyclic group}. Reciprocally, it is obvious that $0$ and $\Z$ are maps; let $n \in \N$ be a natural number greater or equal to $2$, then as there are no homomorphisms of groups $\e: \Zp n \to \Z$ apart for the trivial one, $\Zp n$ cannot be a map. The category $\Map(\Bv)$ here is reduced to one object and two $1$-cells $0$ and $\Z$; this gives us an easy description of functors $\Map(\Bv)^{\co} \to \Cat$ as the data of a category together with a ``zero morphism'' which is idempotent, and with natural transformations from the identity functor to itself (and to $0$) corresponding to elements of $\Z$. Now $\si F$ is going to be the ``free'' $\Ab$-category generated by this category, glueing things along $2$-cells $\Z \to \Z$ corresponding to the form of the diagram described in definition~\ref{GrothConstrDef}, meaning that for any $f: b \to a$ in $F(*)$, we have for any $n \in \Z$, that $(n\cdot \id_a) f$ in $F(*)$ is sent to $n \cdot f$ in $\si F$, where $n\cdot \id_a$ is $F(\aa_n)(a)$ with $\aa_n: \Z \to \Z$ the morphism sending $1$ to $n$. Now an above proposition~\ref{FixPtAdjMonoCatFunc} applies and $F$ is a fixed point whenever $\si F$ is Cauchy-complete, i.e. complete under direct sums and idempotent splittings.

Note that any complete $\Bv$-category has an element $a_0$ which is such that $\M(-,a_0) = 0$, which is equal to $a\cdot 0$ for any $a$. Here $\Ab$ does satisfy the condition of~\ref{FixPtAdjMonCatCat} and therefore we can deduce that any complete $\Ab$-category is a fixed point of the adjunction.
\end{ex}

\textbf{Acknowledgements}: Olivia Caramello has benefited for this work of the support of the Université Paris-Saclay in the framework on the Jean D’Alembert 2024 Programme.

\newpage
\printbibliography

\newpage

\textsc{Olivia Caramello} 

\vspace{0.2cm}
{\small \textsc{Dipartimento di Scienza e Alta Tecnologia, Universit\`a degli Studi dell'Insubria, via Valleggio 11, 22100 Como, Italy.}\\
	\emph{E-mail address:} \texttt{olivia.caramello@uninsubria.it}}

\vspace{0.2cm}

{\small \textsc{Istituto Grothendieck ETS, Corso Statuto 24, 12084 Mondovì, Italy.}\\
	\emph{E-mail address:} \texttt{olivia.caramello@igrothendieck.org}}

\vspace{0.2cm}

\small \textsc{Université Paris-Saclay, CentraleSupélec, Mathématiques et Informatique pour la Complexité et les Systèmes, 91190, Gif-sur-Yvette, France.}\\
	\emph{E-mail address:} \texttt{olivia.caramello@centralesupelec.fr}

\vspace{0.6cm}

\textsc{Elio Pivet} 

\vspace{0.2cm}
{\small \textsc{Department of Mathematics, ETH Zurich, Rämistrasse 101
8092 Zurich, Switzerland.}\\
\emph{E-mail address:} \texttt{epivet@ethz.ch}

\vspace{0.2cm}

{\small \textsc{Istituto Grothendieck ETS, Corso Statuto 24, 12084 Mondovì, Italy.}\\
	\emph{E-mail address:} \texttt{elio.pivet@ctta.igrothendieck.org}}

\end{document}